\documentclass[12pt]{amsart}
\usepackage[utf8x]{inputenc}
\usepackage{verbatim}
\usepackage{amssymb}
\usepackage{enumerate}
\usepackage{ifthen}

\newtheorem{theorem}{Theorem}[section]
\newtheorem{proposition}[theorem]{Proposition}
\newtheorem{lemma}[theorem]{Lemma}

\newtheorem{corollary}[theorem]{Corollary}
\newtheorem{claim}{Claim}[theorem]
\newtheorem{problem}{Problem}[section]

\newtheorem{conj}{Conjecture}

\theoremstyle{definition}

\newtheorem{definition}[theorem]{Definition}

\theoremstyle{remark}
\newtheorem{remark}{Remark}

\newif\ifdeveloping


\ifdeveloping
\usepackage[notref,notcite]{showkeys}
\fi

\newcommand{\ecal}{{\mathcal E}}

\newcommand{\acal}{{\mathcal A}}
\newcommand{\bcal}{{\mathcal B}}
\newcommand{\ccal}{{\mathcal C}}

\newcommand{\fcal}{{\mathcal F}}
\newcommand{\gcal}{{\mathcal G}}
\newcommand{\hcal}{{\mathcal H}}
\newcommand{\ical}{{\mathcal I}}

\newcommand{\lcal}{{\mathcal L}}

\newcommand{\pcal}{{\mathcal P}}

\newcommand{\scal}{{\mathcal S}}

\newcommand{\ucal}{{\mathcal U}}

\newcommand{\xcal}{{\mathcal X}}

\newcommand{\mc}[1]{\mathcal{#1}}

\newcommand{\cont}{{2^{\omega}}}

\newcommand{\oo}{{{\omega}_1}}
\newcommand{\oot}{{\omega}_2}

\newcommand{\oon}{{\omega}_n}

\newcommand{\as}[1]{A{[#1]}}

\newcommand{\setm}{\setminus}
\newcommand{\empt}{\emptyset}
\newcommand{\subs}{\subset}

\newcommand{\dom}{\operatorname{dom}}
\newcommand{\ran}{\operatorname{ran}}
\def\<{\left\langle}
\def\>{\right\rangle}
\def\cf{\operatorname{cf}}

\def\br#1;#2;{\bigl[ {#1} \bigr]^ {#2} }

\newcommand{\oohp}{\beth_3}

\newcommand{\oopl}{{\omega}_{{\omega}+1}}

\newcommand{\prin}[1]{\bigstar(#1)}

\newcommand{\ccf}{\operatorname{\mbox{${\chi}$}_{\rm CF}}}
\newcommand{\wccf}{\operatorname{w\mbox{${\chi}$}_{\rm CF}}}

\newcommand{\crc}[3]{{\chi}_{\rm CF}(#2,#1,#3)}
\newcommand{\crcw}[3]{w{\chi}_{\rm CF}(#2,#1,#3)}

\newcommand{\carr}[4]{[#2,#1,#3]\to#4 }
\newcommand{\Carr}[4]{[#2,#1,#3]\Rightarrow#4 }
\newcommand{\carrw}[4]{[#2,#1,#3]\to_w#4 }

\newcommand{\ooh}{{\omega}_3}

\newcommand{\km}{{\kappa}^{+m}}

\newcommand{\kmm}{{\kappa}^{+(m-1)}}

\newcommand{\ifomega}[2]{\ifthenelse{\equal{#1}{{\omega}}}
{#1_{\haomega{#2}}}
{#1^{\hamas{#2}}}}

\newcommand{\haomega}[1]{\ifthenelse{\equal{#1}{0}}{}{#1}    }

\newcommand{\hamas}[1]{\ifthenelse{  \equal{#1}{0}  }  {}
{  \ifthenelse  {\equal{#1}{1}} {+} {+#1}}    }

\newcommand{\induw}[5]{[\ifomega{{#1}}{#3},#1,#2,#4]\to_w #5}

\newcommand{\Induw}[5]{[#1,#2,#3,#4]\to_w #5}

\newcommand{\Indu}[5]{[#1,#2,#3,#4]\to #5}

\newcommand{\ninduu}[5]{[#2,#1,#3,#4]\not\to #5}

\newcommand{\nindu}[5]{[\ifomega{#1}{#3},#1,#2,#4]\not\to #5}

\newcommand{\tla}{{2^{\lambda}}}
\newcommand{\ttla}{2^{2^{\lambda}}}

\newcommand{\tip}{\operatorname{tp}}

\newcommand{\MMM}{\mathbf M}

\newcommand{\MM}[3]{\MMM(#1,#2,#3)}
\newcommand{\BBB}{\mathbf B}
\newcommand{\BB}[1]{\BBB(#1)}

\author[A. Hajnal]{Andr\'as Hajnal}
\address
      { Alfr{\'e}d R{\'e}nyi Institute of Mathematics, Budapest, Hungary  }
\email{ahajnal@renyi.hu}

\author[I. Juh\'asz]{Istv\'an Juh\'asz}
\address
      { Alfr{\'e}d R{\'e}nyi Institute of Mathematics, Budapest, Hungary  }
\email{juhasz@renyi.hu}

\author[L. Soukup]{Lajos Soukup}
\address
      { Alfr{\'e}d R{\'e}nyi Institute of Mathematics, Budapest, Hungary  }
\email{soukup@renyi.hu}
\urladdr{http://www.renyi.hu/$\tilde{}$soukup}

\author[Z. Szentmikl\'ossy]{
Zolt\'an Szentmikl\'ossy}
\address{E\"otv\"os University of Budapest}
\email{zoli@renyi.hu}

\subjclass[2000]{03E35, 03E05}
\keywords{coloring, conflict-free coloring, almost disjoint,
  essentially disjoint}
\title[Conflict free colorings]
   {Conflict free colorings of
(strongly) almost disjoint set-systems}
\thanks{The preparation of this paper was partially
supported by  OTKA grants K 61600 and K 68262.}

\date{March, 2010.}

\begin{document}

\begin{abstract}
$f:\cup\acal\to {\rho}$ is called a {\em conflict free coloring of
the set-system  $\acal$ (with ${\rho}$ colors)} if
\begin{displaymath}\forall A\in \acal\,\, \exists\, {\zeta}<{\rho}\, (\,|A\cap
f^{-1}\{{\zeta}\}|=1\,).
\end{displaymath}
The {\em  conflict free chromatic number} $\ccf(\acal)$ of $\acal$
is the smallest $\rho$ for which $\acal$ admits a conflict free
coloring with ${\rho}$ colors.

$\acal$ is a $(\lambda,\kappa,\mu)$-system if $|\acal| = \lambda$,
$|A| = \kappa$ for all $A \in \acal$, and $\acal$ is
${\mu}$-almost disjoint, i.e. $|A\cap A'|<{\mu}$ for distinct $A,
A'\in \acal$. Our aim here is to study $$\ccf(\lambda,\kappa,\mu)
= \sup \{\ccf(\acal): \acal \mbox{  is a }
(\lambda,\kappa,\mu)\mbox{-system} \}$$ for $\lambda \ge \kappa
\ge \mu$, actually restricting ourselves to $\lambda \ge \omega$
and $\mu \le \omega$.

For instance, we prove that

\smallskip

\begin{itemize}
\item for any limit cardinal $ \kappa$ (or $\kappa = \omega$) and integers\\ $n \ge 0,\,k >
0$, GCH implies
\[
\ccf(\kappa^{+n},t,k+1) =\left\{
\begin{array}{lll}
\kappa^{+(n+1-i)}&\mbox{ if $\,\,i\cdot k < t \le (i+1)\cdot k\,,$}
\\&\makebox[60pt]{}\mbox{$i = 1,...,n$;}\\
{}\\
\kappa& \mbox{ if $\,\,(n+1)\cdot k < t\,$}\,;
\end{array}
\right.
\]

\smallskip

\item if $\lambda \ge \kappa \ge \omega > d > 1\,,$ then
$\,\lambda < \kappa^{+\omega}$ implies $\ccf(\lambda,\kappa,d) <
\omega$\\ and $\lambda \ge \beth_\omega(\kappa)\,$ implies
$\ccf(\lambda,\kappa,d) = \omega\,$;

\smallskip

\item GCH implies $\,\ccf(\lambda,\kappa,\omega) \le \omega_2$ for $\lambda \ge \kappa \ge
\omega_2$ and\\V=L implies $\,\ccf(\lambda,\kappa,\omega) \le
\omega_1$ for $\lambda \ge \kappa \ge \omega_1\,$;

\smallskip

\item the existence of a supercompact cardinal implies\\ the consistency of GCH plus\\
$\ccf(\aleph_{\omega+1},\omega_1,\omega)= \aleph_{\omega+1}$  and\\
$\ccf(\aleph_{\omega+1},\omega_n,\omega) = \omega_2$ for $2 \le n
\le \omega\,$ ;

\smallskip

\item CH implies $\,\ccf(\omega_1,\omega,\omega) = \ccf(\omega_1,\omega_1,\omega) =
\omega_1$, while\\ $MA_{\omega_1}$ implies
$\,\ccf(\omega_1,\omega,\omega) = \ccf(\omega_1,\omega_1,\omega) =
\omega\,$.
\end{itemize}
\end{abstract}

\maketitle

\section{Introduction}

If $\acal$ is a set-system and $\rho$ is a cardinal then a
function $f : \cup \acal \to \rho$ is called a {\em proper
coloring} of $\acal$ with ${\rho}$ colors if $f$ takes at least
$2$ values on each $A \in \acal$. The smallest $\rho$ for which
$\acal$ admits a proper coloring with $\rho$ colors is the {\em
chromatic number of} $\acal$ and is denoted by $\chi(\acal)$. The
chromatic numbers of various set-systems, in particular almost
disjoint ones, had been systematically studied by Erd{\H o}s and
Hajnal and others in \cite{EH3}, \cite{EH1}, and \cite{EH2}.

A function $f:\cup\acal\to {\rho}$  is called a {\em conflict free
coloring} of $\acal$ with ${\rho}$ colors if
\begin{displaymath}\forall A\in \acal\ \exists {\zeta}<{\rho}\ (|A\cap
f^{-1}\{{\zeta}\}|=1).
\end{displaymath}
We say that $f$ is a {\em  weak conflict free coloring } of
$\acal$ if in the above definition the assumption $\dom
(f)=\cup\acal$ is weakened to $\dom(f)\subs\cup\acal$.

The {\em conflict-free chromatic number} and the {\em weak
conflict-free chromatic number} of a set-system $\acal$, denoted
by $$ \ccf(\acal) \mbox{ and }\;w\ccf(\acal)$$ respectively, are
defined as the minimum number of colors needed for a conflict free
or a weak conflict-free coloring of $\acal$, respectively.

Conflict-free colorings of hypergraphs, that is of systems of
finite sets, were first studied in Cheilaris \cite{CHE} and
Pach-Tardos \cite{pach}. Earlier, conflict-free colorings were
mainly considered for some concrete hypergraphs, usually defined
by geometric means \cite{E}. J\'anos Pach  suggested to us that it
would be worth while to study the conflict free colorings of
almost disjoint transfinite set systems. It took little time to
convince us.

Before going on with the story we state a few very elementary
facts. Note first that $\chi(\acal)$ is only defined if every
member of $\acal$ has at least two elements, so from here on this
is assumed for every set-system $\acal$.

\begin{proposition}
\begin{enumerate}[$(1)$]
\item $\chi(\acal) \leq \ccf(\acal)\leq {w\ccf(\acal)+1}$.
\medskip
\item $\chi(\acal)=\ccf(\acal) $ provided $|A|\leq 3$ for all
$A\in\acal$.
\medskip
\item For each $\kappa \geq \omega$ there exists a quadruple system
$\acal$ with $\chi(\acal)=2$ and $\ccf(\acal)= \kappa$.
\end{enumerate}
\end{proposition}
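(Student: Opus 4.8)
The plan is to handle the three parts in order, with the genuine content concentrated in~(3).

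For~(1) both inequalities come straight from unwinding the definitions. For $\chi(\acal)\le\ccf(\acal)$ I would note that if $f$ is conflict free and $A\in\acal$, then some color $\zeta$ meets $A$ in exactly one point; since $|A|\ge 2$ there is a further point of $A$ carrying a color other than $\zeta$, so $f$ takes at least two values on $A$ and is proper. For $\ccf(\acal)\le\wccf(\acal)+1$ I would take a weak conflict free coloring $g$ with $\wccf(\acal)$ colors and extend it to all of $\cup\acal$ by giving one brand new color to every point outside $\dom(g)$. The color witnessing the weak condition on a given $A$ is an old color, and no old color is created at the new points, so it still meets $A$ in exactly one point; hence the extension is conflict free and uses one extra color.

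For~(2) the inequality $\chi(\acal)\le\ccf(\acal)$ is already available from~(1), so it suffices to prove $\ccf(\acal)\le\chi(\acal)$ under the hypothesis $2\le|A|\le 3$. I would show that then every proper coloring $f$ is automatically conflict free: on a set of size $2$ the two values of a proper coloring are distinct, and on a set of size $3$ the admissible color-multiplicity patterns of a proper coloring are $1{+}1{+}1$ and $2{+}1$, each of which contains a color of multiplicity exactly one. Thus a proper coloring with $\chi(\acal)$ colors is conflict free.

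The heart of the proposition is~(3), which separates $\chi$ and $\ccf$ maximally. For a given $\kappa\ge\omega$ I would build a ``doubled'' system on the ground set $X=\kappa\times\{0,1\}$; writing $s_0=(s,0)$ and $s_1=(s,1)$, put
\[
\acal=\{\,\{s_0,s_1,t_0,t_1\}: s<t<\kappa\,\}.
\]
This is a $4$-uniform system (and incidentally any two distinct members meet in at most the two points attached to a common index). Coloring $s_0\mapsto 0$ and $s_1\mapsto 1$ makes every member a $2{+}2$ set, hence non-monochromatic, so $\chi(\acal)=2$; and an injective coloring of the $\kappa$-sized set $X$ makes every quadruple rainbow, so $\ccf(\acal)\le\kappa$. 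The crux is the lower bound $\ccf(\acal)\ge\kappa$.

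For the lower bound I would assume $f:X\to\lambda$ is conflict free with $\lambda<\kappa$ and reach a contradiction. To each index $s<\kappa$ assign the unordered color multiset $p_s=\{f(s_0),f(s_1)\}$. There are only $\lambda$ such multisets when $\lambda$ is infinite, and finitely many when $\lambda$ is finite, so in either case strictly fewer than $\kappa$; since $|\kappa|=\kappa$, pigeonhole produces distinct $s<t$ with $p_s=p_t$. A short case analysis then shows the quadruple $\{s_0,s_1,t_0,t_1\}$ is ``bad'', i.e.\ every color meets it in $0$, $2$, or $4$ points: if $f(s_0)=f(s_1)$ then $p_s=p_t$ forces $f(t_0)=f(t_1)$ to equal the same color and the set is monochromatic, while if $f(s_0)\ne f(s_1)$ then the two colors each occur exactly twice, a $2{+}2$ split. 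Either way no color has multiplicity one, contradicting conflict freeness, so $\ccf(\acal)\ge\kappa$ and hence $\ccf(\acal)=\kappa$. The only real obstacle is packaging this case analysis cleanly; everything else is bookkeeping.
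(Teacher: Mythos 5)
Your proof is correct, and for parts (1) and (2) it matches the paper, whose entire proof is to declare (1) trivial and to derive (2) from the observation that $2+2>3$ --- precisely your multiplicity-pattern count in compressed form. For part (3) you take a genuinely different route. The paper's witness is the family of \emph{all} quadruples $H\in[\kappa]^4$ containing two even and two odd ordinals; there the lower bound is an even shorter pigeonhole carried out separately inside the two parity classes: with $\lambda<\kappa$ colors some two even ordinals share a color, some two odd ordinals share a color, and the quadruple they form has color multiplicities $2+2$ or $4$, with no case analysis about how the two pairs interact. Your ``doubled'' system $\{s_0,s_1,t_0,t_1\}$ is, after identifying $(s,i)$ with $2s+i$, a subfamily of the paper's witness, so your lower bound is formally stronger; the cost is that the pigeonhole must act on the unordered color pairs $p_s$, which forces the short case split you describe --- and that split is sound. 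What your version buys is worth noting: your family is $3$-almost disjoint (distinct members meet in at most $2$ points), so it separates $\chi$ from $\ccf$ already among the strongly almost disjoint systems that are the paper's actual subject, whereas the parity family is maximally far from almost disjoint. In fact your construction is exactly the $n=2$, $k=2$, $t=4$ instance of the paper's later theorem \ref{tm:lb} (the case behind corollary \ref{cor:w1}, $\ccf(\lambda,2k,k+1)=\lambda$), where homogeneity of the ordered pair $\langle f(s_0),f(s_1)\rangle$ plays the role of your multiset pigeonhole; so you have in effect rediscovered, two sections early, the paper's sharper lower-bound machinery.
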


\begin{proof}
The first statement is trivial, the second follows from $2+2>3$.
To see the third, let
\begin{displaymath}
 \acal= \{H\in  \br \kappa;4;:\text{ $H$ contains
two even and two odd ordinals}\}.
 \end{displaymath}
\end{proof}

For any cardinals $ {\mu}$ and ${\nu}$, the set system  $\acal$ is
called {\em $({\mu},{\nu})$-almost disjoint} if
\begin{displaymath}
|\cap\bcal|<\mu
\end{displaymath}
whenever $\bcal \in  \br \acal;{\nu};$. We simply write {\em
$\mu$-almost disjoint} instead of $({\mu}, 2)$-almost disjoint.

A graph $G=\<V,E\>$ is called {\em $({\mu},{\nu})$-almost
disjoint} iff the family $\{E(v):v\in V\}$ is
$({\mu},{\nu})$-almost disjoint, where $E(v)=\{w\in V: \{v,w\} \in
E\}$. In \cite{EH1}, Erd\H {o}s and Hajnal proved, in 1966, that
if $n <\omega$ and  $G$ is an $(n,\omega_1)$-almost disjoint
graph, then $\chi(G)\leq\omega$, which of course means $\chi(E)
\le \omega$. They tried to state a generalization of this result
for set-systems consisting of finite sets, but failed. Such a
generalization was found in the triple paper \cite{EH2} with
B.Rothchild, where some results were proved for finitary
$({\mu},{\nu})$-almost disjoint set-systems. In Part I we prove
results for such set-systems that are improvements of the results
of \cite{EH2}. The work started in \cite{EH2} was continued in the
almost ninety page long triple paper \cite{EGH} of Erd\H os,
Galvin and Hajnal. Although we could find some improvements of the
results of this paper as well, we did not dare to start to
investigate this methodically.

Our main objects of study will be the (weak) conflict free
chromatic numbers of $(\lambda,\kappa,\mu)$-systems: $\acal$ is a
$(\lambda,\kappa,\mu)$-system if $|\acal| = \lambda$, $|A| =
\kappa$ for all $A \in \acal$, and $\acal$ is ${\mu}$-almost
disjoint. We shall always assume that $\lambda \ge \kappa \ge \mu$
and that $\lambda$ is infinite. These assumptions imply that if
$\acal$ is a $(\lambda,\kappa,\mu)$-system then $|\cup \acal| \le
\lambda$, hence $\acal$ has an isomorphic copy $\bcal \subs
[\lambda]^\kappa$. Conversely, if $\mu < \omega$ then for every
$\mu$-almost disjoint $\acal \subs [\lambda]^\kappa$ we have
$|\acal| \le \lambda$.

Now, our basic definition is the following. Let $\psi$ be any one
of the functions $\chi,\, \ccf\,$, or $w\ccf$.

\begin{definition}\label{def:psi}
For $\lambda \ge \kappa \ge \mu$ we set
$$\psi(\lambda,\kappa,\mu)
= \sup \{\psi(\acal): \acal \mbox{  is a }
(\lambda,\kappa,\mu)\mbox{-system} \}.$$
\end{definition}

Let us point out certain basic properties of these. First, it is
obvious that $\,\chi(\lambda,\kappa,\mu) \le
\ccf(\lambda,\kappa,\mu)\,$ and
$$w\ccf(\lambda,\kappa,\mu) \le \ccf(\lambda,\kappa,\mu) \le
w\ccf(\lambda,\kappa,\mu)+1\,.$$ Thus, although in some cases
$w\ccf(\lambda,\kappa,\mu)$ is much easier to handle than
$\ccf(\lambda,\kappa,\mu)$, the results on the former reveal a lot
of information about the latter. Second, it is immediate from
their definitions that they are monotone increasing in their first
and third variables.

Intuitively, it also seems plausible that they are monotone
decreasing in their second variable: the larger the sets, the more
room we have to color them appropriately. For
$\chi(\lambda,\kappa,\mu)$ this is obvious and all our results
confirm this for the other two as well. Alas, we do not have a
formal proof of this, so we propose it as a conjecture.

\begin{conj}\label{co:conj}
If $\lambda \ge \kappa > \kappa' \ge \mu$ with $\lambda$ infinite,
then $$\ccf(\lambda,\kappa,\mu) \le \ccf(\lambda,\kappa',\mu)\,.$$
\end{conj}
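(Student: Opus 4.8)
The plan is to reduce a $(\lambda,\kappa,\mu)$-system to a $(\lambda',\kappa',\mu)$-system by \emph{shrinking} its members. Given an arbitrary $(\lambda,\kappa,\mu)$-system $\acal$, for each $A\in\acal$ I choose a subset $A'\in[A]^{\kappa'}$ and set $\acal'=\{A':A\in\acal\}$. Since $A'\cap B'\subs A\cap B$ whenever $A\ne B$, the family $\acal'$ is again $\mu$-almost disjoint, hence a $(\lambda',\kappa',\mu)$-system for some $\lambda'\le\lambda$; as $\ccf$ is monotone increasing in its first variable, this gives $\ccf(\acal')\le\ccf(\lambda,\kappa',\mu)=:\rho$. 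I then fix a conflict free coloring $f':\cup\acal'\to\rho$ and try to lift it to $\acal$ by putting $f=f'$ on $\cup\acal'$ and assigning one fixed new color $*$ to every point of $\cup\acal\setm\cup\acal'$. For $A\in\acal$ let $\zeta<\rho$ witness conflict freeness of $A'$, so that exactly one $a_0\in A'$ has $f'(a_0)=\zeta$; every point of $A\setm A'$ lying outside $\cup\acal'$ now carries the color $*\ne\zeta$ and so is harmless. Taking the supremum over all $\acal$ would then yield the conjectured inequality (up to the stray color $*$).

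The whole difficulty is concentrated in the \emph{intruders}: points $x\in(A\setm A')\cap\cup\acal'$ with $f'(x)=\zeta$. Such an $x$ lies in $A\cap B'$ for some $B\ne A$, so it carries a genuine $f'$-color, and if that color is $\zeta$ it produces a second point of color $\zeta$ in $A$ and destroys the witness. One cannot steer the colors of intruders, since $f'$ is only guaranteed to exist as a witness for the supremum $\rho$; the only robust cure is to force $A\cap B'=\empt$ for all $B\ne A$, i.e.\ to draw each $A'$ from the \emph{private} points of $A$ (those in no other member of $\acal$). If every $A\in\acal$ has at least $\kappa'$ private points this works: the $A'$ are then pairwise disjoint and avoid all other members, $\cup\acal'$ meets each $A$ exactly in $A'$, there are no intruders, and the lift is conflict free. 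This even settles the weak version cleanly, for the same $f'$ (with domain $\subs\cup\acal'\subs\cup\acal$) already weakly colors $\acal$, giving $\wccf(\acal)\le\wccf(\lambda,\kappa',\mu)$; only the strong version pays the extra color, $\ccf(\acal)\le\rho+1$, and this ``$+1$'' disappears whenever $\rho$ is infinite.

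The main obstacle, and surely the reason this remains a conjecture, is that a member need not possess $\kappa'$ private points at all: there are projective-plane-like incidence systems in which \emph{every} point of every member is shared. For instance, with $\mu=2$ the lines of an affine plane over an infinite field form a $2$-almost disjoint system in which each point lies on many lines, so no point is private and the reduction above collapses. The natural repair is to abandon privacy and instead choose the shrunk subsets by a transfinite recursion along an enumeration $\acal=\{A_\xi:\xi<\lambda\}$, picking $A_\xi'$ together with its witness color so as to dodge the witness colors already committed to the earlier $A_\eta$ that meet $A_\xi$; this is where I expect the argument to break, since at limit stages a single point of $A_\xi$ may belong to cofinally many earlier sets, leaving no room to keep all earlier witnesses intact while securing a fresh one for $A_\xi$. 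A secondary, milder obstacle is removing the harmless ``$+1$'' in the finite range of $\ccf$: reusing an existing color on the leftover points while simultaneously avoiding the witness color $\zeta_A$ of every set through each such point is itself a coloring problem of the same flavor as the original.
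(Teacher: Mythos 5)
You were asked to prove Conjecture \ref{co:conj}, and the paper itself contains no proof of it: the authors state explicitly that although all their results confirm the monotonicity, ``we do not have a formal proof of this, so we propose it as a conjecture.'' So there is no proof to compare against, and the only question is whether your proposal closes the gap. It does not, and your own diagnosis of why is accurate. The shrink-and-lift reduction is sound exactly when every $A\in\acal$ has $\kappa'$ many \emph{private} points; in that case your bookkeeping is correct (the sets $A'$ are pairwise disjoint and avoid all other members, so $\cup\acal'\cap A=A'$, there are no intruders, and the lift gives $\ccf(\acal)\le\rho+1$, with the stray color absorbed when $\rho$ is infinite). But privacy can fail totally, as your affine-plane example shows for $\mu=2$, and note that for the same reason your claimed ``clean'' settlement of the weak version, $\wccf(\acal)\le\wccf(\lambda,\kappa',\mu)$, is likewise established only under the privacy hypothesis --- so neither the strong nor the weak monotonicity is actually proved by the proposal.

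Your fallback recursion is where the real difficulty lives, and your prediction of the failure point is right: $\mu$-almost disjointness bounds $|A_\xi\cap A_\eta|$ only \emph{per pair}, so a single point of $A_\xi$ --- indeed every point of $A_\xi$ --- may lie in $\lambda$ many earlier members, each with a committed witness color; with only $\rho<\lambda$ colors there is no room to dodge them all while preserving the earlier witnesses, and nothing resembling a closure or elementary-chain argument obviously localizes the interference, since the offending earlier sets need not be captured in any small submodel together with $A_\xi$. The paper never circumvents this either; the closest it comes is confirming instances a posteriori: theorem \ref{tm:gch} shows under GCH that $\ccf(\lambda,t,k+1)$ is monotone decreasing in $t<\omega$, and corollaries \ref{cor:w1} and \ref{cor:odd} verify predicted values directly, while the full statement (even the case $\ccf(\lambda,2k-1,k+1)=\lambda$ suggested by corollary \ref{cor:w1}) is left open. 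In short: your analysis correctly isolates the obstruction, but, as you yourself acknowledge, it does not constitute a proof --- which is consistent with the statement's status as an open conjecture in the paper.
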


Third, we note that if $\mu = 1$, i.e. we deal with {\em disjoint}
systems, then trivially $w\ccf(\lambda,\kappa,1) = 1$ and
$\chi(\lambda,\kappa,1) = \ccf(\lambda,\kappa,1) = 2$.
Consequently, in what follows we always assume $\mu \ge 2$.

While working on this paper we found it useful to write
$\carr{\kappa}{\lambda}{\mu}{\rho}$ for the relation
$\ccf(\lambda,\kappa,\mu) \le \rho$ and, analogously,
$\carrw{\kappa}{\lambda}{\mu}{\rho}$ for the relation
$w\ccf(\lambda,\kappa,\mu) \le \rho$.

On one hand, the behavior of these symbols shows much similarity
to the symbol $M(\lambda,\kappa,\mu)\rightarrow B(\rho)$,
investigated in \cite{EH3}, \cite{HJS1}, and \cite{HJS2}, meaning
that every $(\lambda,\kappa,\mu)$-system has a $\rho$-transversal,
i.e. a set $B$ that meets every element of $\acal$ in a non-empty
set of size $<{\rho}$. But the main reason for this apparent
duplication of our notation is that certain variations of these
arrow relations will turn out to be quite useful later.

The paper is naturally divided into three parts as follows:

\smallskip

\noindent Part I.  $\lambda \ge \omega > \kappa \ge \mu$,

\smallskip

\noindent Part II.  $\lambda \ge \kappa \ge \omega > \mu$,

\smallskip

\noindent Part III.  $\lambda \ge \kappa \ge \omega = \mu$,

\smallskip
\noindent and the three parts are largely independent of each
other. However closure arguments, in the ``modern" disguise of
elementary chains, have been extensively used in all three parts.
This method was developed in the papers \cite{MIL,EH3, HJS1,HJS2},
the earlier ones naturally using different terminology.

The main result of Part I is theorem \ref{tm:gch} that gives a
full description of $\ccf(\lambda,\kappa,\mu)$ for this case in
which $\kappa$ (and hence $\mu$) is finite. We also have ZFC
results, for instance corollary \ref{cor:w1} that states
$\ccf(\lambda,2k,k+1) = \lambda$ for any $\lambda \ge \omega$ and
$0 < k < \omega$. Of course, then conjecture \ref{co:conj} would
imply $\ccf(\lambda,t,k+1) = \lambda$ for $k < t < 2k$  as well.
In corollary \ref{cor:odd} we could prove this, with some effort,
for ``almost all" $\lambda$, namely those that are not successors
of singular cardinals.

In Part II we first show that $\ccf(\lambda,\kappa,d)$ is always
countable, i.e. $[\lambda,\kappa,d] \to \omega$ holds, if $\kappa
\ge \omega
> d$. In fact we show something stronger that involves a modified
arrow relation. To get this we first need the following notation.

\begin{definition}
If $f$ is a function and $A$ is any set, we let
$$f[A]=\{f({\alpha}):{\alpha}\in A\cap \dom(f)\}$$  and

$$I_f(A)=\{{\xi} \in \ran(f): |A \cap f^{-1}\{\xi\}|=1\}.$$
\end{definition}
Thus, $f$ is a weak conflict free coloring of a set system $\acal$
exactly if $I_f(A) \ne \emptyset$ for all $A \in \acal$. Keeping
this in mind, we indeed define a strengthening of the relation
$[\lambda,\kappa,\mu] \to \rho$ below.

\begin{definition}
Assume that $\lambda \ge \kappa \ge \rho \ge \omega$ and $\mu \le
\kappa$. Then $[\lambda,\kappa,\mu] \Rightarrow \rho$ denotes that
there is a function $f : \cup \acal \to \rho$ such that $|\rho
\setm I_f(A)| < \rho$ holds for all $A \in \acal$.
\end{definition}

What we actually prove in theorem \ref{tm:korlatos} is
$[\lambda,\kappa,d] \Rightarrow \omega$ whenever $\kappa \ge
\omega > d$.

\medskip
In \cite{EH3} it was proved that $M(\kappa,\kappa^{+n},d)
\rightarrow B((n+1)(d-1)+2)$ and that this is best possible
assuming GCH. In Sections  5, 6, and 7 of Part II we prove
analogous results for our symbols. In some sense, these chapters
are the heart of our present paper. The results and their proofs
seem more complicated than those from Part I, and there are a
number of unsolved problems left.

By theorem \ref{tm:egeszresz}, if $m$ and $d$ are natural numbers
and $\kappa$ is infinite, then
\begin{equation}\notag
w\ccf(\kappa^{+m},\kappa,d) \le {\left\lfloor
{\frac{(m+1)(d-1)+1}2} \right\rfloor+1}.
\end{equation}
>From the other side, theorems \ref{tm:step3'} and \ref{tm:step3}
yield $$w\ccf(\beth_m(\kappa),\kappa,2) \ge \left\lfloor \frac
{m}{2}\right\rfloor+2\,$$ and
$$w\ccf(\beth_m(\kappa),\kappa,2\ell+1) \ge (m+1)\cdot \ell +
1\,,$$ respectively.  Consequently, under GCH we get the exact
values
\begin{displaymath}
\crcw {\kappa}{{\kappa}^{+m}}2={\lfloor m/2\rfloor +2}
\end{displaymath}
and
\begin{displaymath}
\crcw {\kappa}{{\kappa}^{+m}}{2\ell+1}=(m+1)\cdot\ell +1.
\end{displaymath}

It seems to be much more challenging to find the exact values of,
say,  $\,\crc {\omega}{\omega_m}d\,$, even under GCH and for $d =
2$. We conjecture that GCH implies $\crc {\kappa}{\lambda}d= \crcw
{\kappa}{\lambda}d+1$, but we could not even prove that $$\crc
{\omega}{{\omega_m}}2= \lfloor m/2\rfloor +3\,\,$$ holds for each
$m\in {\omega}$. This equality  holds  for $m=0,1$ in ZFC, by
proposition \ref{f:omega}, and for $m=3$ under GCH , by theorem
\ref{tm:ooh}. However, for $m=2$, we cannot prove even the
consistency of $\crc {\omega}{\oot}2=4$.

\bigskip

In Part III we only investigate conflict free colorings of
$(\lambda,\kappa,\omega)$-systems, but it is fairly clear that
most of the results would generalize for arbitrary infinite
cardinals $\mu$ instead of $\omega$. This practically means that
we only follow in the footsteps of the triple paper \cite{HJS1},
leaving the cases covered only in \cite{HJS2} alone. Results for
these cases are reserved for later publications or left for future
generations.

\medskip

By a result of Komj\'ath \cite{KO}, we have
$\chi(2^\omega,\omega,\omega) = \crc
{\omega}{2^{\omega}}{\omega}={2^{\omega}}$, and if
$\clubsuit(\lambda)$ holds for a regular $\lambda$ then $\crc
{\omega}\lambda{\omega}=\lambda$. So, in ZFC, we can not have any
non-trivial upper bound for $\crc {\omega}\lambda{\omega}$. By
theorem \ref{tm:ch_w1w1}, CH implies
$\ccf(\omega_1,\omega_1,\omega) = \omega_1$, so even for
uncountable $\kappa$ we expect to have only uncountable upper
bounds for $\ccf(\lambda,\kappa,\omega)$.

Such bounds can indeed be found, at least consistently. For
instance, theorem \ref{tm:above_oot} says that if $\mu^\omega =
\mu$ holds for each $\mu < \lambda$ with $\cf(\mu) = \omega$, then
we have $[\lambda,\kappa,\omega] \Rightarrow \omega_2$, hence
$\ccf(\lambda,\kappa,\omega) \le \omega_2$, whenever $\oot\le
{\kappa}\le {\lambda}$. Moreover, if in addition we also assume
$\Box_\mu$ for all $\mu$ with $\omega = \cf(\mu) < \mu < \lambda$,
then $\ccf(\lambda,\kappa,\omega) \le \omega_1$ whenever $\omega_1
\le {\kappa}\le {\lambda}$, by theorem \ref{tm:above_oo}.

These results are very sharp, at least modulo large cardinals.
Indeed, we show in section 9 that the existence of a supercompact
cardinal implies the consistency of GCH plus the following two
equalities:
    \begin{itemize}
     \item $\crc \oo\oopl{\omega}=\oopl$,

\smallskip

     \item $\crc\oon\oopl{\omega}=\oot\,$ for  $\,2 \le n\le{\omega}$.
    \end{itemize}

\smallskip

We close each Part by stating the problems that are nagging us
most.

\medskip

Our notation is standard, as e.g. in \cite{KU} . If ${\lambda}$ is
an infinite cardinal then we call {\em a ${\lambda}$-chain of
elementary submodels} a continuous sequence
$\<N_{\alpha}:{\alpha}<{\lambda}\>$ such that $N_0=\empt$,
$\{N_{\alpha}:1\le {\alpha}<{\lambda}\}$ are elementary submodels
of  $\<H_{\theta},\in \>$ for some fixed, appropriately chosen
regular cardinal $\theta$, moreover $|N_{\alpha}|<{\lambda}$,
$N_{\alpha}\in N_{{\alpha}+1}$ and ${\alpha}\subs N_{\alpha}\cap
{\lambda}$ for ${\alpha}<{\lambda}$. If ${\lambda}={\kappa}^+$
then we also assume ${\kappa}\subs N_1$. We put $N_0 = \empt$ to
ensure that $\{N_{{\alpha}+1}\setm
N_{\alpha}:{\alpha}<{\lambda}\}$ be a partition of
$\cup\{N_{\alpha}:{\alpha}<{\lambda}\}$.

\bigskip

\bigskip

\begin{center}
{\bf {\sc Part I. The case $\lambda \ge \omega > \kappa \ge \mu$}}
\end{center}
\nopagebreak
\section{Upper bounds}\label{sc:fin}

It is obvious that for every $\acal \subs \mathcal{P}(\kappa)$ we
have $\ccf({\acal})\le{\kappa}$. Our next result shows that this
inequality remains true for suitably almost disjoint families
$\acal$ of finite subsets of $\kappa^{+n}$ with $\omega > n > 0$,
provided that the members of $\acal$ are large enough.

\begin{theorem}\label{tm:ub}
Let $\kappa \ge \nu \ge \omega$ where $\nu$ is assumed to be
regular, moreover $n \ge 1$ and $k\ge 1$ be natural numbers. If
$\acal$ is a $(k+1,{\nu})$-almost disjoint subfamily of
$[\kappa^{+n-1}]^{<\omega}\, $ such that $|A| > n\cdot k$ for
every $ A \in \acal$, then $\ccf({\acal})\le{\kappa}$.
\end{theorem}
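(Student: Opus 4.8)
The plan is to prove this by induction on $n$, using an elementary chain argument that is the recurring technical engine of the paper. The base case $n=1$ asks us to conflict-free color a $(k+1,\nu)$-almost disjoint family $\acal\subs[\kappa]^{<\omega}$ with sets of size $>k$, using at most $\kappa$ colors; since $|\cup\acal|\le\kappa$ here, this is essentially free — any injection $\cup\acal\to\kappa$ is conflict-free, as each $A$ then has every color appearing at most once, and $|A|\ge 1$ guarantees some color appears exactly once. So the content is entirely in the inductive step.

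For the inductive step, I would assume the result for $n-1$ and consider a $(k+1,\nu)$-almost disjoint family $\acal\subs[\kappa^{+n-1}]^{<\omega}$ with $|A|>n\cdot k$ for all $A$. The idea is to build a continuous $\kappa^{+(n-1)}$-chain of elementary submodels $\langle N_\alpha:\alpha<\kappa^{+(n-1)}\rangle$ of some $\langle H_\theta,\in\rangle$ containing $\acal$, $\kappa$, etc., with $|N_\alpha|<\kappa^{+(n-1)}$ and $\kappa^{+(n-1)}\cap N_\alpha$ an initial segment. Writing $M_\alpha=N_{\alpha+1}\setminus N_\alpha$, these pieces partition $\cup\{N_\alpha\cap\kappa^{+(n-1)}\}=\kappa^{+(n-1)}$. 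For each $A\in\acal$ I would look at how $A$ distributes across the levels of the chain. The key combinatorial point should be that, because $A$ is finite and the family is $(k+1,\nu)$-almost disjoint with $\nu$ regular and $\le\kappa$, only boundedly many sets can cluster in any one model-difference, so within each $M_\alpha$ the trace family $\{A\cap M_\alpha : A\in\acal\}$ is again $(k+1,\nu)$-almost disjoint and — this is the crux — lives on a set of size $<\kappa^{+(n-1)}=\kappa^{+(n-1-1)+1}$, hence embeds into $[\kappa^{+(n-1)-1}]^{<\omega}=[\kappa^{+(n-2)}]^{<\omega}$ up to the relevant structure.

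The delicate bookkeeping is the size condition: to apply the induction hypothesis on each level I need the relevant traces to have more than $(n-1)\cdot k$ elements. Since a given $A$ with $|A|>n\cdot k$ meets the models of the chain, I would argue that $A$ is split by the chain into an ``old'' part inside some $N_\alpha$ and finitely many nonempty traces in higher differences; controlling that the part I hand to the inductive hypothesis retains size $>(n-1)\cdot k$, while the remaining $>k$ elements are dealt with at the current level by an injective coloring (exactly as in the base case, using the final $\kappa$ colors), is where the arithmetic $n\cdot k = (n-1)\cdot k + k$ does its work. I would then amalgamate: use one fixed palette of $\kappa$ colors reused on every level via the inductive colorings on the $M_\alpha$'s, arranged so that each $A$ gets a color realized exactly once either at its ``top'' level (by injectivity there) or inside one of its lower traces (by the inductive conflict-free guarantee).

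\textbf{The main obstacle} I expect is not the coloring itself but verifying that the trace system on each $M_\alpha$ genuinely satisfies both hypotheses of the induction simultaneously — the almost-disjointness is inherited automatically, but guaranteeing the size bound $|A\cap M_\alpha|>(n-1)\cdot k$ for the traces that must carry the conflict-free witness requires choosing, for each $A$, the right level at which to ``charge'' it and showing this choice is consistent across the whole family. The regularity of $\nu$ and the fact that $A$ is finite (so it meets only finitely many levels, with a well-defined top) should make this choice canonical. A secondary subtlety is ensuring the colorings on different levels, all drawn from the same $\kappa$ colors, do not accidentally destroy a singleton-color witness of some $A$ by an unwanted coincidence across levels; this is handled by the standard device of making the top-level contribution injective and disjoint-in-color from what lies below, which the elementarity of the chain lets me arrange uniformly.
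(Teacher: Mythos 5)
Your skeleton --- induction on $n$ via an elementary chain, the split of each $A \in \acal$ into an ``old'' part of size $\le k$ inside $N_\alpha$ and a large trace in the top block, the arithmetic $n\cdot k = (n-1)\cdot k + k$ --- is exactly the paper's. But there is a genuine gap at precisely the point you dismiss as a ``secondary subtlety'': the cross-level color coincidences. The plain statement you induct on is too weak to amalgamate. When you conflict-free color the trace of $A$ in its top block, the inductive guarantee is purely existential --- you get no control over \emph{which} color witnesses the trace --- while the $\le k$ points of $A\cap N_\alpha$, already colored in earlier blocks, may carry exactly that color and kill the witness. Your proposed fix, making each level's colors ``disjoint-in-color from what lies below,'' fails by cardinality: there are $\kappa^{+(n-1)}$ blocks but only $\kappa$ colors, and already the first block (of size $\kappa^{+(n-2)}$, which exceeds $\kappa$ once $n\ge 3$) may exhaust the palette; no amount of elementarity manufactures fresh colors, and for the same reason ``injectivity at the top level'' is unavailable for $n \ge 3$. (Two smaller slips: since $A$ is finite and $A\in N_{\alpha+1}$, we have $A\subs N_{\alpha+1}$, so there are no ``traces in higher differences'' above the membership level --- the picture is one big top trace plus $\le k$ stray points below; and the leftover part of $A$ has size $\le k$, not $>k$.)

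The paper closes this gap by strengthening the induction statement: $(*)_n$ asserts that for every side function $g:\acal\to \br {\kappa};<{\nu};$ there is $f:\kappa^{+n-1}\to\kappa$ with $I_f(A)\setm g(A)\ne\empt$ for all $A\in\acal$, i.e.\ conflict-free colorability with a prescribed small set of \emph{forbidden witness colors} for each set. In the inductive step, when the block $Y_\alpha$ is colored, each trace $A'$ is assigned the forbidden set $g_\alpha(A')=\bigcup\{f_{<\alpha}[A]\cup g(A):A\in\acal_\alpha,\ A\cap Y_\alpha=A'\}$, recording the colors already used on the lower points of every $A$ having that trace. Here the two hypotheses you under-used do real work: since $|A'|>n\cdot k\ge k$, the $(k+1,\nu)$-almost disjointness bounds the number of $A$'s sharing the trace $A'$ by $<\nu$, and the \emph{regularity of $\nu$} keeps $|g_\alpha(A')|<\nu$, so the strengthened hypothesis applies at the next level down. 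The base case must change accordingly: a bare injection no longer suffices, because the witness must avoid $g(A)$; instead one recursively picks $f(\xi)\in\kappa\setm\bigl(f[\xi]\cup\bigcup\{g(A):\max A=\xi\}\bigr)$, making $f(\max A)$ the witness. Without this strengthening, or an equivalent bookkeeping device, your amalgamation step does not close.
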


\begin{proof}
We actually prove the following stronger statement $(*)_n$ by
induction on $n \ge 1$, keeping all the other parameters fixed.

\begin{itemize}
\item[$(*)_n$]
If
 $\,\acal\subs \br \kappa^{+n-1};< \omega;\,\setminus\,[\kappa^{+n-1}]^{\le n\cdot k}\,$
 is
$(k+1,{\nu})$-almost disjoint and $g:\acal\to \br
{\kappa};<{\nu};$ then there is a function $f:\kappa^{+n-1} \to
\kappa\,$ such that
\begin{displaymath}
I_f(A)\setm g(A)\ne \empt
\end{displaymath}
for each $A\in \acal$.
\end{itemize}

\noindent {\bf First step: $n=1$.}

We define an injective function $f:\kappa \to \kappa$ inductively
on $\xi < \kappa$. Assume that we have defined $f\restriction \xi$
and let
$$\acal_\xi =\{A\in \acal: \xi =\max A\}\,.$$ Clearly,
$|\acal_\xi| <  \kappa$, hence $|\bigcup\{g(A):A\in \acal_\xi \}|
< \kappa$ as well. The second inequality uses that $\kappa$ is
regular in case $\nu = \kappa$ and is trivial otherwise. Thus we
may pick
\begin{displaymath}
f(\xi)\in {\kappa}\setm (f[\,\xi]\cup\bigcup\{g(A):A\in \acal_\xi
\})\,.
\end{displaymath}
By the construction, we have $f(\max A)\in I_f(A)\setm g(A)$ for
all $A\in \acal$. (Of course, this construction does not make use
of the almost disjointness or the largeness assumptions made on
$\acal$.)
\medskip

\noindent {\bf Inductive step: $(*)_n\to (*)_{n+1}$.}

Now we start with a $(k+1,{\nu})$-almost disjoint system
$$\,\acal\subs \br \kappa^{+n};<
\omega;\,\setminus\,[\kappa^{+n}]^{\le (n+1)\cdot k}\,$$ and a
function $g:\acal\to \br {\kappa};<{\nu};$. Let us then fix a
${\kappa}^{+n}$-chain of elementary submodels
$\<N_{\alpha}:{\alpha}<{\kappa}^{+n}\>\,$ with $\acal,g \in N_1$.
For every ${\alpha}<{\kappa}^{+n}$ let $Y_{\alpha}= {\kappa}^{+n}
\cap ( N_{{\alpha}+1}\setm N_{\alpha})\,$,
$\acal_{\alpha}=\acal\cap (N_{{\alpha}+1}\setm N_{\alpha})$ and,
finally, $\acal'_{\alpha}=\{A\cap Y_{\alpha}:A\in
\acal_{\alpha}\}.$ We may clearly assume that $|N_{{\alpha}+1}| =
|Y_\alpha| = {\kappa}^{+n-1}$ for all $\alpha < {\kappa}^{+n}$.

For every $A\in \acal\setm  N_{\alpha}$ we have $|A\cap
N_{\alpha}| \le k$  because $\acal$ is $(k+1,{\nu})$-almost
disjoint. So if $A\in \acal_{\alpha}$ then $|A\cap Y_{\alpha}| >
(n+1)\cdot k - k = n\cdot k$, consequently $\acal'_{\alpha}\subs
[Y_\alpha]^{<\omega} \setminus \br Y_{\alpha};\le n\cdot k ;$ and,
clearly, $\acal'_{\alpha}$ is $(k+1,{\nu})$-almost disjoint.

We next define, for each ${\alpha}<{\kappa}^{+n}$, a function
$f_{\alpha}:Y_{\alpha}\to {\kappa}$, using transfinite induction
as follows. Assume that $f_{\xi}$ has been defined for each
${\xi}<{\alpha}<{\kappa}^{+n}$ and set
$f_{<{\alpha}}=\cup\{f_{\xi}:{\xi}<{\alpha}\}$. For any $A'\in
\acal'_{\alpha}$ let
\begin{displaymath}
g_{\alpha}(A')= \bigcup\{f_{<{\alpha}}[A]\cup
g(A):A\in\acal_{\alpha}\land A\cap Y_{\alpha}=A'\}.
  \end{displaymath}
Since $|A'| > n\cdot k \ge k$ (recall that $n \ge 1\,$!) and
$\acal$ is $(k+1,{\nu})$-almost disjoint,
$|\{A\in\acal_{\alpha}:A\cap Y_{\alpha}=A'\}| < \nu$ and hence
$g_{\alpha}(A') \in [\kappa]^{<\nu} $, using that  $\nu$ is
regular.

Thus, the inductive assumption $(*)_n$ can be applied to
$\acal_{\alpha}'$ and $g_{\alpha}$ and yields us a function
$f_{\alpha}:Y_{\alpha} \to {\kappa}$ such that
\begin{displaymath}
  I_{f_{\alpha}}(A')\setm g_{\alpha}(A')
\ne \empt
\end{displaymath}
for each $A'\in \acal'_{\alpha}$.

Finally, let $f=\cup\{f_{\alpha}:{\alpha}<{\kappa}^{+n}\}$. Then
for every $A\in \acal_{\alpha}$ we have
$$
 I_f(A)\setm g(A)\supset
I_{f_{\alpha}}(A\cap Y_{\alpha})\setm g_{\alpha}(A\cap
Y_{\alpha})\ne \empt,$$ hence we are done because $\acal = \bigcup
\{\acal_\alpha : {\alpha < {\kappa}^{+n}}\}$.
\end{proof}

\bigskip

We now give a consistency result in the spirit of theorem
\ref{tm:ub} that uses Martin's axiom.

\begin{theorem}\label{tm:ubma}
Assume $MA_\lambda(K)$, i.e. $MA_\lambda$ for partial orders
satisfying property $K$. Then for every natural number $k$ and for
every $(k+1,\omega)$-almost disjoint system $\acal \subs
[\lambda]^{<\omega}$ such that $|A| > 2k$ for all $A \in \acal$ we
have $\ccf({\acal})\le{\omega}$.
\end{theorem}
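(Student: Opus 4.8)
The plan is to build the colouring by finite approximations and appeal to $MA_\lambda(K)$. For the forcing $\mathbb P$ I would take conditions to be pairs $p=(s_p,\mathcal D_p)$, where $s_p:\cup\acal\to\omega$ is a finite partial colouring and $\mathcal D_p\in[\acal]^{<\omega}$ is a finite set of ``secured'' members, subject to two requirements: every $A\in\mathcal D_p$ carries a designated witness $x_A\in A\cap\dom s_p$ whose colour $s_p(x_A)$ is attained by no other point of $A\cap\dom s_p$; and, crucially, every $A\in\acal$ with $|A\cap\dom s_p|>k$ already lies in $\mathcal D_p$. The ordering is end-extension preserving the secured data: $q\le p$ iff $s_q\supseteq s_p$, $\mathcal D_q\supseteq\mathcal D_p$, each witness $x_A$ (for $A\in\mathcal D_p$) keeps its colour, and no further point of $A$ receives the colour $s_p(x_A)$. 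A filter meeting the dense sets $D_x=\{p:x\in\dom s_p\}$ yields $f=\bigcup_{p\in G}s_p$; since $|A|>2k>k$ forces every member to become secured at the stage its $(k+1)$st point is coloured, the preserved witness colours make $f$ conflict free, i.e. $\ccf(\acal)\le\omega$.

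For density of $D_x$ the first use of the hypotheses appears: colouring a fresh point $x$ interacts with only finitely many members. Indeed, by $(k+1,\omega)$-almost disjointness any finite $D$ contains only finitely many members with $|A\cap D|>k$ (each such $A$ contains one of the finitely many $(k+1)$-subsets of $D$, and each such subset lies in finitely many members). Hence only finitely many secured members meet $x$, and only finitely many members cross the threshold $|A\cap\dom s|=k{+}1$ when $x$ is added; giving $x$ a colour fresh for $s_p$ secures each newly large member (with witness $x$) and breaks no standing promise, so $D_x$ is dense and $\mathbb P\neq\emptyset$.

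The heart of the matter — and the step I expect to be hardest — is property $K$. Given $\{p_\alpha:\alpha<\omega_1\}$ I would thin out by a $\Delta$-system on $\{\dom s_\alpha\}$ with root $R$ and pairwise disjoint ``petals'' $P_\alpha=\dom s_\alpha\setminus R$, and refine so that the colouring of $s_\alpha$ on $R$, the full order/colour type of $p_\alpha$, and its (finitely many, by the count above) secured members together with their witnesses, are all constant; the finitely many members meeting $R$ in more than $k$ points are global and get pinned to a common root witness. Two surviving conditions are compatible exactly when $s_\alpha\cup s_\beta$ with $\mathcal D_\alpha\cup\mathcal D_\beta$ is again a condition, i.e. no promise is broken and every member that first becomes large in the union is secured. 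Here the largeness bound pays off: if $A\subseteq\dom s_\alpha\cup\dom s_\beta$ then, as $|A|>2k$, $A$ already has more than $k$ points in $\dom s_\alpha$ or in $\dom s_\beta$, so it was secured on that side and carries a witness into the union — no genuinely new, uncontrolled commitment is ever forced.

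Consequently a pair $(\alpha,\beta)$ can fail only through a \emph{clash}: a member $A$ secured (say) on the $\alpha$-side has a further point, lying in $P_\beta$, that $s_\beta$ colours with the reserved colour $s_\alpha(x_A)$. The main obstacle is to extract an uncountable subfamily with no clashes. The leverage is once more the almost-disjointness count: each $p_\alpha$ has only finitely many secured members meeting its own petal, so each index is ``responsible'' for clashes with only finitely many partners, and $|A|>2k$ forces any clashing member to be large on one definite side. The aim is to read the clash relation as a locally finite graph on $\omega_1$, whose uncountably many (hence countable) connected components yield an uncountable pairwise compatible set. Ruling out the remaining danger — an index that is the \emph{small-side} partner of uncountably many others — is the delicate point, and it is precisely here that the interplay of $(k+1,\omega)$-almost disjointness with $|A|>2k$ must be pushed; once property $K$ is secured, $MA_\lambda(K)$ meets the $\lambda$ dense sets $D_x$ and produces the colouring.
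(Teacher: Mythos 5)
Your argument stalls at exactly the point you flag, and the gap is real: the ``clash'' relation is locally finite in only one direction. For a fixed $\alpha$ the set $\bigcup\mathcal D_\alpha$ is a finite union of finite sets, so it meets only finitely many of the pairwise disjoint petals $P_\beta$; but for a fixed $\beta$, a single point $y\in P_\beta$ may lie in \emph{uncountably} many members of $\acal$ (for $k\ge 1$, $(k+1,\omega)$-almost disjointness permits all members of the family to pass through one common point), and each such member may be the secured set of some $p_\alpha$. So an index really can be the ``small-side partner'' of uncountably many others, and your plan of reading the clash relation as a locally finite graph and taking components collapses. There is also a secondary unaddressed point: your closure clause demands that every $A$ with $|A\cap\dom s_p|>k$ be secured, so when two conditions are merged you must also secure members that become $>k$-large in the union \emph{without} being fully covered (at most $k$ points on each side); your compatibility analysis treats only the case $A\subseteq\dom s_\alpha\cup\dom s_\beta$. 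That case is repairable by colouring a fresh point of $A$ with a fresh colour, but it is not covered by what you wrote.

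The tool that closes the main gap is the free set theorem for set mappings (Láz\'ar), which is exactly what the paper invokes: a set mapping on $\omega_1$ with finite images admits an uncountable free set, and freeness in \emph{both} directions is what the theorem delivers even when the ``columns'' of the relation are uncountable --- local finiteness in one direction suffices. Moreover, the paper shows that your entire bookkeeping apparatus (secured sets, designated witnesses, reserved colours) is unnecessary. Its conditions are simply finite partial functions $f\in Fn(\lambda,\omega)$ such that $I_f(A)\ne\empt$ for every $A\in\acal$ with $A\subs\dom(f)$, ordered by extension: once $A$ is fully covered its colouring never changes under extension, so this property is preserved automatically and no promises need to be carried along. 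Given $\{f_\alpha:\alpha<\oo\}$, after a $\Delta$-system thinning with disjoint petals $\dom(f'_\alpha)$, one sets $\acal_\alpha=\{A\in\acal:|A\cap\dom(f_\alpha)|>k\}$ (finite, by $(k+1,\omega)$-almost disjointness) and $F(\alpha)=\{\beta:\dom(f'_\beta)\cap\bigcup\acal_\alpha\ne\empt\}$ (finite, since $\bigcup\acal_\alpha$ is finite and the petals are disjoint), and applies the free set theorem to get an uncountable $S$ with $\alpha\notin F(\beta)$, $\beta\notin F(\alpha)$ for $\{\alpha,\beta\}\in[S]^2$. Then for any $A\subs\dom(f_\alpha\cup f_\beta)$, the bound $|A|>2k$ puts more than $k$ points of $A$ on one side, say $A\in\acal_\alpha$, and freeness gives $A\cap\dom(f'_\beta)=\empt$, hence $A\subs\dom(f_\alpha)$ entirely and $I_{f_\alpha\cup f_\beta}(A)=I_{f_\alpha}(A)\ne\empt$: clashes cannot even arise, because the other condition contributes nothing to $A$. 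Your approach is thus not wrong in spirit --- your counting observations are the right ones --- but it is unfinished at its crucial step, and the finish requires replacing the component argument by the free set theorem, at which point the simpler poset makes the side conditions superfluous.
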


\begin{proof}

We first define the poset $\pcal_\acal=\<P_\acal,\le\>$ as
follows: A function $f\in Fn({\lambda},{\omega})$ (that is a
finite partial function from $\lambda$ to $\omega$) is in
$P_\acal$ iff $I_f(A)\ne \empt$ whenever $A\in \acal$ and $A \subs
\dom(f)$. We then let $f\le g$ iff $f\supset g$.

We claim that the poset $\pcal_\acal$ satisfies property $K$.
Indeed, assume that $\{f_{\alpha}:{\alpha}<\oo\} \subs P_\acal$.
Without loss of generality we can assume that
\begin{equation}\label{eq:delta1}
\text{$f_{\alpha}=f\cup^* f_{\alpha}'$, and  $\dom
(f'_{\alpha})\cap \dom (f'_{\beta})=\empt$ for $\{{\alpha},
{\beta}\} \in [\oo]^2$.}
\end{equation}

For each ${\alpha}<\oo$ then $\acal_{\alpha}=\{A\in \acal:|A\cap
\dom (f_{\alpha})| > k\}\,$ is finite because $\acal$ is
$(k+1,\omega)$-almost disjoint. Let
$$F({\alpha})=\{{\beta}<\omega_1:\dom(f_{\beta}')\cap \cup \acal_{\alpha}\ne
\empt\}\,,$$ then $F({\alpha})$ is also finite. So by the
(simplest case of the) free set theorem for set mappings we can
find a set $S \in [\omega_1]^{\omega_1}$ such that ${\alpha}\notin
F({\beta})$ and ${\beta}\notin F({\alpha})$ whenever
$\{\alpha,\beta \} \in [S]^2$.

We claim that $f=f_{\alpha}\cup f_{\beta}\in P_\acal$, hence
$f_\alpha$ and $f_\beta$ are compatible, for any such pair
$\{\alpha,\beta \}$. By (\ref{eq:delta1}), $f$ is a function. So
assume now that $A\in \acal$ with $A\subs \dom (f)$. Since
$|A|>2k$  we can assume that e.g. $|A\cap \dom (f_{\alpha})|
> k$, that is $A\in \acal_{\alpha}$, hence $A\cap \dom
(f'_{\beta})=\empt$. But then $A\subs \dom (f_{\alpha})$ and so
$I_f(A)=I_{f_{\alpha}}(A)\ne \empt$. Thus $f\in P_\acal$,
completing the proof that $P_\acal$ has property $K$.

The rest of the proof is a standard  density argument that we
leave to the reader.
\end{proof}

{\bf Remark:} A slightly weaker statement than theorem
\ref{tm:ubma}, for the chromatic number $\chi$ instead of the
conflict free chromatic number$\ccf$, was proved in \cite[Theorem
5.6]{EGH}. It was asked there, in Problem 2, if the statement
remains true for $(k,\omega_1)$-almost disjoint families. We still
do not know the answer to this.

\section{Lower bounds}

We start this section with presenting a result which implies that
the assumptions on the set systems formulated in theorems
\ref{tm:ub} and \ref{tm:ubma}, namely that their members should be
``suitably large", are really necessary.


\begin{theorem}\label{tm:lb}
Assume that $\lambda \ge \omega$ and $\mu$ are cardinals, $n \ge
2$, $k \ge 1$ are natural numbers such that the partition relation
$$\lambda \to (n)_{\mu^k}^{n-1}$$ holds true. (Of course, if $\mu$
is infinite then $\mu^k = \mu$.) Then we have $\ccf(\lambda,t,k+1)
> \mu$ for every number $\,t$ satisfying $k < t \le n\cdot k$ if $n >
2$ and for every {\em even} number $t$ satisfying $k < t \le
2\cdot k$ if $\,n = 2$.
\end{theorem}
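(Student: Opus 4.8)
The plan is to exhibit a single $(\lambda,t,k+1)$-system $\acal$ with $\ccf(\acal)>\mu$; since $\ccf(\lambda,t,k+1)$ is the supremum of $\ccf(\acal)$ over all such systems, this suffices. I would take as ground set $O=[\lambda]^{n-1}\times k$, thinking of its elements as pairs $\langle B,i\rangle$ with $B\in[\lambda]^{n-1}$ and $i<k$; for fixed $i$ I call $\{\langle B,i\rangle: B\in[\lambda]^{n-1}\}$ the $i$-th \emph{column}. For each $Y\in[\lambda]^n$ put $A_Y=[Y]^{n-1}\times k$. Since $|[Y]^{n-1}|=n$, each $A_Y$ meets every column in exactly $n$ points, so $|A_Y|=n\cdot k$. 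The almost-disjointness is forced by $[Y]^{n-1}\cap[Z]^{n-1}=[Y\cap Z]^{n-1}$: for distinct $Y,Z\in[\lambda]^n$ one has $|Y\cap Z|\le n-1$, hence $|[Y\cap Z]^{n-1}|\le 1$ and therefore $|A_Y\cap A_Z|\le k$.

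The first genuine step is a finite combinatorial selection. For each $Y$ I want $A'_Y\in[A_Y]^t$ whose intersection with every column has size $\ne 1$, i.e. $0$ or $\ge 2$. Writing $c_i$ for the size of the $i$-th column of $A'_Y$, this amounts to solving $\sum_{i<k}c_i=t$ with each $c_i\in\{0\}\cup\{2,3,\dots,n\}$. I expect this to be exactly the step that pins down the hypotheses on $t$: when $n=2$ a column can only contribute $0$ or $2$, so $t$ must be even and $\le 2k$; when $n\ge 3$ the admissible values $\{0,2,3,\dots,n\}$ contain the consecutive block $2,\dots,n$, and then every integer $t$ with $k<t\le nk$ is realizable (take $m=\lceil t/n\rceil$ nonzero columns, check $t/n\le m\le t/2$ and $m\le k$, then split $t$ into $m$ summands each in $[2,n]$). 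The inequality $t>k$ will be used separately, to guarantee distinctness of the $A'_Y$.

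Now set $\acal=\{A'_Y:Y\in[\lambda]^n\}$. Each member has size $t$, and $|A'_Y\cap A'_Z|\le|A_Y\cap A_Z|\le k<k+1$, so $\acal$ is $(k+1)$-almost disjoint. If $Y\ne Z$ and $A'_Y=A'_Z$, then $A'_Y\subseteq A_Y\cap A_Z$ would force $t\le k$, contradicting $t>k$; hence $Y\mapsto A'_Y$ is injective and $|\acal|=|[\lambda]^n|=\lambda$. Thus $\acal$ is a genuine $(\lambda,t,k+1)$-system.

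Finally I verify $\ccf(\acal)>\mu$. Let $f:\cup\acal\to\mu$ be arbitrary; extending $f$ to all of $O$ changes no $I_f(A'_Y)$ (each $A'_Y\subseteq\cup\acal$), so assume $f:O\to\mu$. Define $c:[\lambda]^{n-1}\to\mu^k$ by $c(B)=\langle f(\langle B,i\rangle):i<k\rangle$. By the hypothesis $\lambda\to(n)^{n-1}_{\mu^k}$ there is $H\in[\lambda]^n$ on which $c$ is constant; equivalently, for each $i<k$ the value $f(\langle B,i\rangle)$ is a fixed color $\zeta_i$ for all $B\in[H]^{n-1}$. Consider $A'_H$: for any color $\zeta$, its elements of color $\zeta$ are precisely the full columns $i$ of $A'_H$ with $\zeta_i=\zeta$, a disjoint union whose pieces have sizes $c_i\ne 1$; hence $|A'_H\cap f^{-1}\{\zeta\}|$ is a sum of terms each $0$ or $\ge 2$, and so is never $1$. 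Therefore $I_f(A'_H)=\empt$, and $f$ is not conflict free. As $f$ was arbitrary, $\ccf(\acal)>\mu$. The main obstacle is precisely the combinatorial selection lemma of the second paragraph, which is where the parity restriction at $n=2$ and the range $k<t\le nk$ enter; the homogeneity argument itself is short once the column structure is in place.
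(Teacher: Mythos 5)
Your proposal is correct and follows essentially the same construction as the paper: the ground set $[\lambda]^{n-1}\times k$, the selection of $A'_Y\in\bigl[[Y]^{n-1}\times k\bigr]^t$ with every column meeting it in $\ne 1$ points, the bound $|A'_Y\cap A'_Z|\le k$ via $|[Y]^{n-1}\cap[Z]^{n-1}|\le 1$, and the homogeneity argument killing $I_f(A'_H)$. You merely spell out two details the paper compresses (the arithmetic of realizing column sizes in $\{0\}\cup\{2,\dots,n\}$, which the paper dismisses as ``easy to check,'' and the injectivity of $Y\mapsto A'_Y$ from $t>k$), and both are verified correctly.
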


\begin{proof}

Let us put $H = [\lambda]^{n-1} \times k$, then $|H| = \lambda$.
We shall construct a $(k+1)$-almost disjoint family $\acal \subs
[H]^t$ of cardinality $\lambda$ which does not have  a conflict
free coloring with $\mu$ colors.

For each $Y \in [\lambda]^n$ we may choose a $t$-element set $A_Y
\in \big[[Y]^{n-1}\times k \big]^t$ such that for every $i<k$ we
have
\begin{equation}\label{eq:B}
|\{B\in \br Y;n-1;:\<B,i\>\in A_Y\}|\ne 1.
\end{equation}
This is easy to check and this is the point where $t$ has to be
even in case $n = 2$. Let us now set
\begin{displaymath}
\acal=\{A_Y:Y \in [\lambda]^n \} \subs [H]^t\,,
  \end{displaymath}
then clearly $|\acal| = \lambda$.

Since $|\br Y;n-1;\cap \br Z;n-1;|\le 1$ for distinct $Y,Z \in
[\lambda]^n$, we clearly have $|A_Y\cap A_Z|\le k$, hence $\acal$
is $(k+1)$-almost disjoint, i.e. $\acal$ is a
$(\lambda,t,k+1)$-system. Now, it remains to show that
$\ccf(\acal)>{\mu}$.

Assume that  $f:H \to \mu$ is given and define the map $$g:
[\lambda]^{n-1} \to\, {}^k\mu$$ by the stipulation
$g(B)(i)=f(\<B,i\>)\,$. By our partition relation hypothesis then
there is a $g$-homogeneous set $Y \in [\lambda]^n$. Consider an
arbitrary $\<B,i\>\in A_Y$. By (\ref{eq:B}) there is a $B' \ne B $
with $\<B',i\>\in A_Y$ as well, hence we have
$f(\<B,i\>)=g(B)(i)=g(B')(i)=f(\<B',i\>)$. Since $\<B,i\>$ was
arbitrary we obtain that $f$ is {\em not} a conflict free coloring
of $\acal$, completing the proof.
\end{proof}

We now list a number of easy but quite useful corollaries of
theorem \ref{tm:lb}.

\begin{corollary}\label{cor:wc}
If $\lambda = \omega$ or $\lambda$ is weakly compact then for any
$2 \le d \le t < \omega$ we have $\ccf(\lambda,t,d) = \lambda$.
\end{corollary}

\begin{proof}
To see this, let us first choose a natural number $n > 2$ such
that $t \le n\cdot (d-1)$. By our choice of $\lambda$, for every
$\mu < \lambda$ we have $\lambda \to (n)_{\mu^{d-1}}^{n-1}\,$, in
fact even $\lambda \to (\lambda)_{\mu^{d-1}}^{n-1}\,$. But then
theorem \ref{tm:lb} immediately yields $\ccf(\lambda,t,d) > \mu$,
hence as $\mu <  \lambda$ was arbitrary, $\,\ccf(\lambda,t,d) =
\lambda$.
\end{proof}

Since $\ccf(\omega_1,t,2) \ge \ccf(\omega,t,2)$, it immediately
follows from \ref{cor:wc} and the case $n = 2\,,\,k = 1$ of
theorem \ref{tm:ub} that $\ccf(\omega_1,t,2) = \omega$ whenever $3
\le t < \omega$. Similarly, comparing theorem \ref{tm:ubma} with
corollary \ref{cor:wc} we may conclude that $MA_\lambda(K)$
implies $\ccf(\lambda,t,d) = \omega$ whenever $d \ge 2$ and $t >
2(d-1)$.

An analogous argument as in the proof of corollary \ref{cor:wc},
using the case $n = 2$ of theorem \ref{tm:lb} and the trivial
partition relation $\lambda \to (2)^1_\kappa\,$ for all $\kappa <
\lambda$, yields the following result.

\begin{corollary}\label{cor:w1}
If $\lambda$ is infinite and $1 \le k  < \omega$,  then
$$\ccf(\lambda\,,2k\,,k+1) = \lambda.$$
\end{corollary}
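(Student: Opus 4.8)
The plan is to prove the two inequalities $\ccf(\lambda,2k,k+1)\le\lambda$ and $\ccf(\lambda,2k,k+1)\ge\lambda$ separately. The upper bound is completely trivial: if $\acal$ is any $(\lambda,2k,k+1)$-system then $|\cup\acal|\le\lambda$, and coloring $\cup\acal$ injectively with $\lambda$ colors is obviously conflict free, since in each $A\in\acal$ every color occurs at most once. Hence $\ccf(\acal)\le\lambda$ for every such $\acal$, giving $\ccf(\lambda,2k,k+1)\le\lambda$ upon taking the supremum in Definition \ref{def:psi}.

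For the lower bound I would show that $\ccf(\lambda,2k,k+1)>\mu$ for every cardinal $\mu<\lambda$; since $\ccf(\lambda,2k,k+1)$ is itself a cardinal, this forces $\ccf(\lambda,2k,k+1)\ge\lambda$. Fix such a $\mu$ and apply theorem \ref{tm:lb} with $n=2$ and the present $k$. The parameter $t=2k$ is even and satisfies $k<t\le 2k$, so it lies in exactly the range that the $n=2$ clause of theorem \ref{tm:lb} permits. It therefore remains only to verify the hypothesis of that theorem, namely the partition relation $\lambda\to(2)^1_{\mu^k}$.

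The key observation is that $\mu^k<\lambda$ for every $\mu<\lambda$: if $\mu$ is finite then $\mu^k$ is a finite number, while if $\mu$ is infinite then $\mu^k=\mu<\lambda$ because $k$ is a positive integer. Thus $\mu^k$ is a cardinal strictly below the infinite cardinal $\lambda$, and the relation $\lambda\to(2)^1_{\mu^k}$ holds trivially by the pigeonhole principle: any coloring of the singletons of $\lambda$ (equivalently, any map $\lambda\to\mu^k$) must repeat a value, yielding a $2$-element homogeneous set. With this in hand theorem \ref{tm:lb} delivers $\ccf(\lambda,2k,k+1)>\mu$, exactly as needed.

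Since the whole argument reduces to feeding the correct parameters into theorem \ref{tm:lb}, I do not expect any genuine obstacle here; this is precisely the degenerate $n=2$ shadow of the mechanism used in corollary \ref{cor:wc}. The one point that needs care is to respect the \emph{evenness} restriction of the $n=2$ clause, which is exactly why the statement is phrased for $t=2k$ rather than for all $t$ in the interval $(k,2k]$; recovering the odd values in that interval is genuinely harder and, as the introduction indicates, is only achieved later, and not for all $\lambda$, in corollary \ref{cor:odd}.
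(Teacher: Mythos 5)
Your proposal is correct and follows exactly the paper's route: the paper also derives the corollary by applying the $n=2$ case of theorem \ref{tm:lb} with $t=2k$ together with the trivial pigeonhole relation $\lambda \to (2)^1_{\mu^k}$ for all $\mu < \lambda$, just as in corollary \ref{cor:wc}. Your explicit treatment of the trivial upper bound and of the evenness restriction (which the paper leaves implicit) is accurate and changes nothing essential.
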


On the basis of the conjecture that $\ccf(\lambda,\kappa,\mu)$ is
monotone decreasing in its second argument, it is natural to
expect from \ref{cor:w1} that we also have
$\ccf(\lambda\,,2k-1\,,k+1) = \lambda.$ We shall show below that
this is indeed true for ``most" values of $\lambda$, however the
full statement remains open in ZFC. We first give a somewhat
technical lemma.

\begin{lemma}\label{lm:3}
Let $\lambda$ be a cardinal that admits a coloring $f :
[\lambda]^2 \to \lambda$ of its pairs such that for any partition
$\pcal$ of $\lambda$ with $|\pcal| < \lambda$ there are $P \in
\pcal$ and $\{\alpha,\beta,\gamma\} \in [P]^3$ satisfying $f\{
\alpha,\beta\} = \gamma$. Then, for any $k > 1$, we have
$$\ccf(\lambda\,,2k-1\,,k+1) = \lambda\,.$$
\end{lemma}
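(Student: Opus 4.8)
The plan is to prove $\ccf(\lambda,2k-1,k+1)=\lambda$ by exhibiting a single $(\lambda,2k-1,k+1)$-system $\acal$ with $\ccf(\acal)=\lambda$. The upper bound $\ccf(\lambda,2k-1,k+1)\le\lambda$ is automatic: any such system lives on a ground set of size $\le\lambda$, and an injective coloring with $\lambda$ colors is conflict free. So the whole content is to build $\acal$ that admits no conflict free coloring with fewer than $\lambda$ colors. I would take the ground set $H=\lambda\times(k-1)$, thought of as $k-1$ ``levels'' $0,\dots,k-2$, each a copy of $\lambda$. Call a pair $e\in\br\lambda;2;$ \emph{good} if $f(e)\notin e$, and a triple $T\in\br\lambda;3;$ \emph{good} if $T=e\cup\{f(e)\}$ for some good pair $e\subseteq T$; for each good triple $T$ fix one witnessing good pair $e_T$. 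The crucial observation is that $e_T$ \emph{determines} $T$ via $T=e_T\cup\{f(e_T)\}$, so the map $T\mapsto e_T$ is injective. To each good triple $T=\{\alpha,\beta,\gamma\}$ I attach
\[
A_T=\bigl(T\times\{0\}\bigr)\cup\bigl(e_T\times\{1,\dots,k-2\}\bigr),
\]
a ``monochromatizable'' triple on level $0$ together with $k-2$ columns over $e_T$, so that $|A_T|=3+2(k-2)=2k-1$. I set $\acal=\{A_T:T\text{ good}\}$, padding with pairwise disjoint $(2k-1)$-sets if necessary to reach cardinality $\lambda$ without lowering $\ccf$.

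The coloring argument is the easy half. Given any $c:\cup\acal\to\mu$ with $\mu<\lambda$, I extend it arbitrarily to $H$ and assign each $y<\lambda$ its \emph{profile} $\langle c(y,0),\dots,c(y,k-2)\rangle\in{}^{k-1}\mu$. The profile map partitions $\lambda$ into fewer than $\lambda$ classes (there are at most $\mu^{k-1}<\lambda$ of them), so the hypothesis on $f$ yields a class $P$ and $\{\alpha,\beta,\gamma\}\in\br P;3;$ with $f\{\alpha,\beta\}=\gamma$; hence $T=\{\alpha,\beta,\gamma\}$ is good and $A_T\in\acal$. Since $\alpha,\beta,\gamma$ share a profile, on each level $i$ the relevant points of $A_T$ receive a common color $\nu_i$. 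Thus in $A_T$ the color $\nu_0$ has multiplicity $3+2\cdot|\{i\ge1:\nu_i=\nu_0\}|$, an odd number $\ge 3$, while every other color has even multiplicity; no color occurs exactly once, so $I_c(A_T)=\empt$ and $c$ is not conflict free. As $\mu<\lambda$ and $c$ were arbitrary, $\ccf(\acal)=\lambda$.

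The main obstacle is the almost disjointness, i.e. checking $|A_T\cap A_{T'}|\le k$ for $T\ne T'$. Level $0$ contributes $|T\cap T'|\le 2$ (distinct triples), and levels $1,\dots,k-2$ contribute $|e_T\cap e_{T'}|\cdot(k-2)$; since these parts lie on disjoint levels there is no further overlap. The real danger — and the reason the naive construction indexed by \emph{pairs} fails, producing intersections of size $k+1$ — is that $e_T$ and $e_{T'}$ could coincide, giving a column contribution of $2(k-2)$. This is exactly where the injectivity of $T\mapsto e_T$ is used: $e_T\ne e_{T'}$ forces $|e_T\cap e_{T'}|\le 1$, whence $|A_T\cap A_{T'}|\le 2+(k-2)=k$, as needed. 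Indexing by good triples rather than by good pairs — so that each triple contributes a \emph{single} set, built over a triple-determining pair — is therefore the decisive device that keeps $\acal$ $(k+1)$-almost disjoint while preserving the conflict free lower bound.
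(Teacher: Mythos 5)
Your proof is correct, and it reaches the lemma by a genuinely different construction than the paper's. The paper also proves the lower bound with a single system and applies the hypothesis on $f$ to the partition induced by a candidate coloring, but its system is indexed by \emph{pairs}: it fixes pairwise disjoint blocks $H_\alpha$ of size $k-1$ with designated points $h_\alpha \in H_\alpha$ and takes $A_{\{\alpha,\beta\}}=H_\alpha\cup H_\beta\cup\{h_{f\{\alpha,\beta\}}\}$, so once $g[H_\alpha]=g[H_\beta]=g[H_{f\{\alpha,\beta\}}]$ every color on $A_{\{\alpha,\beta\}}$ occurs at least twice, with no parity count needed. The danger you isolate is then exactly the paper's problem as well: two index pairs generating the same triple (say $f\{\gamma,\delta\}=\delta'$ and $f\{\gamma,\delta'\}=\delta$) would produce $|A_{\{\gamma,\delta\}}\cap A_{\{\gamma,\delta'\}}|=(k-1)+2=k+1$. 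The paper kills such configurations not by your triple-indexing device but by splitting $\ical(f)$ into $\ical_0(f),\ical_1(f),\ical_2(f)$ according to whether $f\{\alpha,\beta\}$ lies below, between, or above $\alpha<\beta$, and proving, by a refinement-of-partitions argument, that some fixed class $\ical_j(f)$ already satisfies a strengthened form of the hypothesis; within a single class the displayed configuration is impossible for order reasons, which bounds intersections by $k$. Your injection $T\mapsto e_T$, justified by the identity $T=e_T\cup\{f(e_T)\}$, achieves the same bound with strictly less machinery: you use the hypothesis exactly as stated, with no order-type case analysis and no strengthening claim, at the modest price of the odd/even multiplicity count on level $0$ (which is correct as written, since the color $\nu_0$ occurs an odd number $\ge 3$ of times and every other color an even number of times). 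Your padding remark is also sound, because a conflict free coloring of the enlarged family restricts to one of $\acal$; the paper leaves the corresponding cardinality point ($|\ical_j(f)|=\lambda$) implicit, so your version is in this respect more self-contained.
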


\begin{proof}
$$\ical(f) = \big\{ \{\alpha,\beta\} \in [\lambda]^2 : f\{
\alpha,\beta\} \notin \{\alpha,\beta\} \big\}$$ naturally
decomposes into the following three parts:
$$\ical_0(f) = \big\{ \{\alpha,\beta\} \in [\lambda]^2 : f\{
\alpha,\beta\} < \alpha < \beta \},$$
$$\ical_1(f) = \big\{ \{\alpha,\beta\} \in [\lambda]^2 : \alpha < f\{
\alpha,\beta\} < \beta \},$$
$$\ical_2(f) = \big\{ \{\alpha,\beta\} \in [\lambda]^2 : \alpha< \beta< f\{
\alpha,\beta\}  \}.$$

We claim that our assumption on $f$ may be strengthened as
follows: There is a fixed $j < 3$ such that for any partition
$\pcal$ of $\lambda$ with $|\pcal| < \lambda$ there are $P \in
\pcal$ and $\{\alpha,\beta\} \in \ical_j(f) \cap [P]^2$ for which
$f\{\alpha,\beta\} \in P$.

Indeed, for every $j < 3$ let $g_j : [\lambda]^2 \to \lambda$ be
chosen in such a way that $g_j$ extends $f \upharpoonright
\ical_j(f)$. Then for one $j < 3\,$ the coloring $g_j$ together
with its index $j$ must satisfy the claim. Otherwise for every $j
< 3$ there is a partition $\pcal_j$ of $\lambda$ with $|\pcal_j| <
\lambda$ such that $g_j\{\alpha,\beta\} \notin P$ whenever
$\{\alpha,\beta\} \in \ical_j(g_j) \cap [P]^2$. But then $$\pcal =
\{P_1 \cap P_2 \cap P_3 : P_j \in \pcal_j,\,j < 3 \}$$ is a
partition of $\lambda$ with $|\pcal| < \lambda$ that cannot
satisfy our original assumption on $f$, a contradiction. So from
here on we assume that $f$ has the stronger property with $j$
fixed.

Take $\lambda$ many pairwise disjoint sets of size $k - 1\,$,
$\{H_\alpha : \alpha < \lambda\}$, and for each $\alpha < \lambda$
fix a member $h_\alpha \in H_\alpha$. For each $\{\alpha,\beta\}
\in \ical(f)$ let $$A_{\{\alpha,\beta\}} = H_\alpha \cup H_\beta
\cup \{h_{f\{\alpha,\beta\}}\}.$$ It is easy to check that then
$\acal = \{A_{\{\alpha,\beta\}} :  \{\alpha,\beta\} \in
\ical_j(f)\}$ is a $(\lambda\,,2k-1\,,k+1)$-system and we claim
that $\ccf(\acal) = \lambda$.

Indeed, consider any map $g : \cup \acal \to \kappa$ with $\kappa
< \lambda$. Then, by our assumption, there is a pair
$\{\alpha,\beta\} \in \ical_j(f)$ such that
$$g[H_\alpha] = g[H_\beta] = g[H_{f\{\alpha,\beta \}}]\,.$$
But clearly, every value taken by $g$ on $A_{\{\alpha,\beta \}}$
is taken at least twice, consequently $g$ is not a conflict free
coloring of $\acal$.
\end{proof}

Let us note that if $\lambda$ is regular and $f : [\lambda]^2 \to
\lambda$ establishes the negative partition relation $$\lambda
\nrightarrow [\lambda]^2_\lambda\,,$$ that is, $f[X] = \lambda$
for every $X \in [\lambda]^\lambda$, then $f$ trivially satisfies
the requirement of lemma \ref{lm:3} as well. Moreover, it is known
that $\lambda \nrightarrow [\lambda]^2_\lambda\,$ is valid
whenever $\lambda = \kappa^+$ for a regular cardinal $\kappa$, see
e.g. \cite{Sh}. Thus, we immediately obtain the following result.

\begin{corollary}\label{cor:odd}
If $\lambda$ is either a limit cardinal or the successor of a
regular cardinal and $1 < k < \omega$ then
$$\ccf(\lambda\,,2k-1\,,k+1) = \lambda\,.$$
\end{corollary}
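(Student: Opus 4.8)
The plan is to verify, in each relevant case, the hypothesis of Lemma \ref{lm:3}: to exhibit a coloring $f : [\lambda]^2 \to \lambda$ such that every partition $\pcal$ of $\lambda$ with $|\pcal| < \lambda$ has a class $P$ containing a triple $\{\alpha,\beta,\gamma\} \in [P]^3$ with $f\{\alpha,\beta\} = \gamma$; the Lemma then yields $\ccf(\lambda,2k-1,k+1) \ge \lambda$, while the reverse inequality is trivial since any injective coloring uses $|\cup\acal| \le \lambda$ colors and is conflict free. First I would dispose of the case $\lambda = \kappa^+$ with $\kappa$ regular. Here $\lambda$ is itself regular, and by the fact recorded just before the statement (see \cite{Sh}) the negative relation $\lambda \nrightarrow [\lambda]^2_\lambda$ holds; as observed there, any witness $f$ satisfying $f[X] = \lambda$ for all $X \in [\lambda]^\lambda$ automatically meets the requirement of Lemma \ref{lm:3}. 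The case $\lambda = \omega$ is covered directly by Corollary \ref{cor:wc}, which applies because $k > 1$ forces $k+1 \le 2k-1$.

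The remaining, and main, case is that of an \emph{uncountable limit} cardinal $\lambda$, where for singular $\lambda$ no single cardinal below $\lambda$ can serve as a negative-relation witness. My plan is to localize the construction on a cofinal family of intervals. Since successors of regular cardinals are cofinal in every limit cardinal, I would fix an increasing sequence $\langle \rho_i : i < \cf(\lambda)\rangle$, cofinal in $\lambda$, with each $\rho_i$ a successor of a regular cardinal, and split $\lambda$ into consecutive intervals $\langle I_i : i < \cf(\lambda)\rangle$ with $|I_i| = \rho_i$. On each $I_i$ I transport, via a bijection $I_i \to \rho_i$, a witness of $\rho_i \nrightarrow [\rho_i]^2_{\rho_i}$ to obtain $\tilde f_i : [I_i]^2 \to I_i$ with $\tilde f_i[[X]^2] = I_i$ for every $X \in [I_i]^{\rho_i}$. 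Then I define $f$ by $f\{\alpha,\beta\} = \tilde f_i\{\alpha,\beta\}$ when $\alpha,\beta$ lie in a common $I_i$, and arbitrarily (say $0$) otherwise; note $f$ maps into $\lambda$ and, crucially, each $\tilde f_i$ keeps its values inside $I_i$, so no coherence between the blocks is needed.

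To verify the hypothesis, given a partition $\pcal$ with $|\pcal| = \chi < \lambda$ I would choose $i$ with $\rho_i > \chi$. Since $\rho_i = |I_i|$ is regular and $I_i$ is covered by at most $\chi < \rho_i$ classes, some class $P$ satisfies $|P \cap I_i| = \rho_i$. Applying the negative relation to $X = P \cap I_i$ gives $\tilde f_i[[X]^2] = I_i$; picking a target $\gamma \in X$ and then a pair $\{\alpha,\beta\} \in [X \setminus \{\gamma\}]^2$ with $\tilde f_i\{\alpha,\beta\} = \gamma$ (possible since $X \setminus \{\gamma\}$ still has size $\rho_i$) produces a genuine $\{\alpha,\beta,\gamma\} \in [P]^3$ with $f\{\alpha,\beta\} = \gamma$, as required. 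The one point needing care — and the only real obstacle I foresee — is exactly this last passage from ``the value $\gamma$ is attained'' to ``it is attained by a pair avoiding $\gamma$'', i.e.\ producing a nondegenerate $3$-element set; this is handled by the size-$\rho_i$ surplus in $X \setminus \{\gamma\}$, the same triviality already invoked for the regular case.
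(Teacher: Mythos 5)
Your proof is correct, and for the successor-of-a-regular case it coincides with the paper's: both invoke $\lambda \nrightarrow [\lambda]^2_\lambda$ for $\lambda = \kappa^+$, $\kappa$ regular, and feed the witnessing coloring into Lemma \ref{lm:3}, with the same routine fix (applying the witness to $X \setminus \{\gamma\}$) to get a nondegenerate triple. Where you genuinely diverge is the limit case. The paper's ``immediately obtain'' there rests on the cheap observation, recorded in the introduction, that $\ccf(\lambda,\kappa,\mu)$ is monotone increasing in its first variable: successors of regular cardinals are cofinal below any uncountable limit $\lambda$, so $\ccf(\lambda,2k-1,k+1) \ge \sup\{\ccf(\rho,2k-1,k+1) : \rho < \lambda,\ \rho \text{ a successor of a regular}\} = \lambda$, with the reverse inequality trivial. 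You instead verify the hypothesis of Lemma \ref{lm:3} at $\lambda$ itself, by gluing transported witnesses $\tilde f_i$ on an interval decomposition $\langle I_i : i < \cf(\lambda)\rangle$ with $|I_i| = \rho_i$ regular and using that the values of $\tilde f_i$ stay inside $I_i$; your verification (choose $\rho_i > |\pcal|$, use regularity of $\rho_i$ to find $P$ with $|P \cap I_i| = \rho_i$, then the surplus in $X \setminus \{\gamma\}$) is sound. This costs more work but buys something the monotonicity argument does not: a \emph{single} $(\lambda,2k-1,k+1)$-system of conflict-free chromatic number $\lambda$, even for singular $\lambda$, where the supremum in Definition \ref{def:psi} need not a priori be attained. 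Your separate dispatch of $\lambda = \omega$ via Corollary \ref{cor:wc} (legitimate since $k > 1$ gives $k+1 \le 2k-1$) is also a point worth making explicit, as neither the interval construction nor the monotonicity argument has any infinite successor cardinals below $\omega$ to work with, so $\omega$ really does need this separate treatment, which the paper glosses over.
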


\medskip

The following corollary of theorem \ref{tm:lb} uses, for $r = n- 2
> 0$, the well-known Erd\H os-Rado partition theorem
$$\beth_{r}(\kappa)^+ \to (\kappa^+)^{r+1}_\kappa\,.$$ Recall
that $\beth_r(\kappa)$ is defined by the recursion
$\beth_0(\kappa) = \kappa\,,\beth_{r+1}(\kappa)=
2^{\beth_r(\kappa)}$.

\begin{corollary}\label{cor:ER}
If $n \ge 3$ and $k < t \le n \cdot k$ then, for every $\kappa \ge
\omega$,
$$\ccf(\beth_{n-2}(\kappa)^+,t\,,k+1) > \kappa\,.$$
Consequently, if $\lambda$ is strong limit then for any $2 \le d
\le t < \omega$ we have $\ccf(\lambda,t,d) = \lambda$.
\end{corollary}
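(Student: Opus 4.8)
The plan is to deduce both assertions from Theorem \ref{tm:lb} by feeding it the Erd\H os--Rado relation. For the first assertion I would apply the theorem with $\mu = \kappa$ and $\lambda = \beth_{n-2}(\kappa)^+$. Setting $r = n-2 \ge 1$, the Erd\H os--Rado theorem gives $\beth_{n-2}(\kappa)^+ \to (\kappa^+)^{n-1}_\kappa$, and since $\kappa^+ > n$ this immediately weakens to $\beth_{n-2}(\kappa)^+ \to (n)^{n-1}_\kappa$. Because $\kappa$ is infinite and $k$ is finite, $\mu^k = \kappa^k = \kappa$, so this is precisely the hypothesis $\lambda \to (n)^{n-1}_{\mu^k}$ of Theorem \ref{tm:lb}. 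As $n \ge 3 > 2$, the theorem applies with no parity restriction on $t$ and yields $\ccf(\beth_{n-2}(\kappa)^+,t,k+1) > \kappa$ for every $t$ with $k < t \le n\cdot k$, which is exactly the first assertion.

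For the consequence about strong limit $\lambda$, I would first write $d = k+1$, so $k = d-1 \ge 1$ and the condition $d \le t$ becomes $k < t$. Given the finite $t$, I choose a natural number $n \ge 3$ large enough that $n\cdot(d-1) \ge t$, placing $t$ in the admissible range $k < t \le n\cdot k$. Assume $\lambda > \omega$ (the case $\lambda = \omega$ is already covered by Corollary \ref{cor:wc}). For each infinite cardinal $\kappa < \lambda$, strong limitness gives $\beth_{n-2}(\kappa) < \lambda$, hence $\beth_{n-2}(\kappa)^+ \le \lambda$; since $\ccf$ is monotone increasing in its first argument, the first assertion yields $\ccf(\lambda,t,d) \ge \ccf(\beth_{n-2}(\kappa)^+,t,d) > \kappa$. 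A strong limit cardinal above $\omega$ is a limit cardinal, so the infinite $\kappa < \lambda$ are cofinal in $\lambda$, and taking the supremum gives $\ccf(\lambda,t,d) \ge \lambda$. The reverse inequality is the trivial bound $\ccf(\acal) \le |\cup\acal| \le \lambda$ coming from an injective coloring, so $\ccf(\lambda,t,d) = \lambda$.

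The argument is essentially pure bookkeeping once Theorem \ref{tm:lb} and Erd\H os--Rado are in hand, so there is no genuine obstacle; the only points demanding a little care are checking that $n$ can be taken $\ge 3$ (so that the theorem imposes no parity condition on $t$), that $\mu^k$ collapses to $\kappa$ for infinite $\kappa$, and that the restriction $\kappa \ge \omega$ in the first part forces the separate, already-known treatment of $\lambda = \omega$ in the second part.
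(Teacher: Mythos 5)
Your proof is correct and follows essentially the same route as the paper's: the first assertion is exactly the paper's application of theorem \ref{tm:lb} via Erd\H os--Rado (with $\mu = \kappa$, $\mu^k = \kappa$, and $n \ge 3$ avoiding the parity condition), and the second assertion is the paper's argument of choosing $n \ge 3$ with $t \le n(d-1)$, using strong limitness to get $\beth_{n-2}(\kappa)^+ < \lambda$, and letting $\kappa < \lambda$ run over all infinite cardinals. Your explicit handling of $\lambda = \omega$ via corollary \ref{cor:wc} and of the trivial upper bound $\ccf(\lambda,t,d) \le \lambda$ only makes explicit what the paper leaves implicit.
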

\begin{proof}
The first part, as mentioned, follows immediately from theorem
\ref{tm:lb} and the Erd\H os-Rado partition theorem. To see the
second, consider any $\kappa < \lambda$ and choose $n \ge 3$ such
that $t \le n\cdot (d-1)$. Then, by the first part, we have
$\ccf(\beth_{n-2}(\kappa)^+,t\,,k+1) > \kappa\,$, moreover
$\beth_{n-2}(\kappa)^+ < \lambda$ as $\lambda$ is strong limit,
hence $\ccf(\lambda,t,d) > \kappa$ as well. This completes the
proof as $\kappa < \lambda$ was arbitrary.
\end{proof}

Our next result yields a lower bound for $\ccf(\lambda,t,k+1)$ for
$t \le 2k$, like corollaries \ref{cor:w1} and \ref{cor:odd}. Of
course, if the statement of corollary \ref{cor:odd} turns out to
be valid for all $\lambda$, as we expect, then it becomes
superfluous.

\begin{theorem}\label{tm:lbch}
Assume that $\lambda$ and $\mu$ are infinite cardinals such that
$\lambda^{<\mu} = \lambda$, moreover  $0 < k < t \le 2k$ are
natural numbers. Then $$\ccf(\lambda,t,k+1) \ge \mu\,.$$
\end{theorem}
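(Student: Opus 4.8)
The plan is to treat the two parities of $t$ separately: the even case is easy and actually gives the stronger bound $\lambda$, while the odd case is the real content and the only place where the hypothesis $\lambda^{<\mu}=\lambda$ is used. In all cases I fix pairwise disjoint \emph{blocks} $\{H_\xi:\xi<\lambda\}$ with $|H_\xi|=\lfloor t/2\rfloor=:m$ (note $m\le k$, and $m\le k-1$ when $t$ is odd) together with distinguished points $h_\xi\in H_\xi$. When $t=2m$ is even I put $A_{\{\alpha,\beta\}}=H_\alpha\cup H_\beta$ for $\{\alpha,\beta\}\in[\lambda]^2$. Distinct members meet in at most one block, so $|A\cap A'|\le m\le k$ and the family is $(k+1)$-almost disjoint of size $\lambda$. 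Given any $g:\bigcup\acal\to\rho$ with $\rho<\lambda$, the value sets $g[H_\xi]\in[\rho]^{\le m}$ take fewer than $\lambda$ values, so two blocks $H_\alpha,H_\beta$ share a value set; then every color occurring on $A_{\{\alpha,\beta\}}$ occurs in both blocks, so $I_g(A_{\{\alpha,\beta\}})=\empt$. This already yields $\ccf(\lambda,t,k+1)=\lambda$ for even $t$, using only $\rho<\lambda$.

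For odd $t=2m+1$ I mimic Lemma \ref{lm:3}: using a function $f:[\lambda]^2\to\lambda$ with $f\{\alpha,\beta\}\notin\{\alpha,\beta\}$ I set $A_{\{\alpha,\beta\}}=H_\alpha\cup H_\beta\cup\{h_{f\{\alpha,\beta\}}\}$. Two distinct members share at most one full block and at most one additional $h$-point, so $|A\cap A'|\le m+1\le k$, and the family is again $(k+1)$-almost disjoint of size $\lambda$. Now, given $g:\bigcup\acal\to\rho$ with $\rho<\mu$, partition $\lambda$ according to the value sets $g[H_\xi]$; this is a partition $\pcal$ into at most $\rho^m=\rho<\mu$ pieces. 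If some piece $C$ contains a triangle $\{\alpha,\beta,f\{\alpha,\beta\}\}$, then $g[H_\alpha]=g[H_\beta]=g[H_{f\{\alpha,\beta\}}]$, whence $g(h_{f\{\alpha,\beta\}})\in g[H_\alpha]$ and every color of $A_{\{\alpha,\beta\}}$ is repeated, i.e. $I_g(A_{\{\alpha,\beta\}})=\empt$; thus $g$ is not conflict free. So the entire odd case is reduced to producing a single $f$ with the property that $\lambda$ \emph{cannot be covered by fewer than $\mu$ sets that are free for $f$} (a set $P$ being free if $f\{\alpha,\beta\}\notin P$ for all $\{\alpha,\beta\}\in[P]^2$).

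The heart of the matter is therefore to construct such an $f$; equivalently, a triple system $T_f=\{\{\alpha,\beta,f\{\alpha,\beta\}\}:\{\alpha,\beta\}\in[\lambda]^2\}$ on $\lambda$ vertices with only $\lambda$ edges and chromatic number $\ge\mu$. I expect this to be the main obstacle, and $\lambda^{<\mu}=\lambda$ should be exactly what makes it possible for \emph{every} admissible $\lambda$, in contrast with Lemma \ref{lm:3} and Corollary \ref{cor:odd}, which force the stronger $\chi=\lambda$ but only when $\lambda$ is a limit or the successor of a regular. The plan is a transfinite recursion of length $\lambda$ organized along a $\lambda$-chain of elementary submodels $\langle N_\alpha:\alpha<\lambda\rangle$ as described in the Introduction: at the stage where the pairs with top element $\alpha$ enter, I choose the values $f\{\alpha,\beta\}$ to fall outside the ``dangerous'' sets already decided inside $N_\alpha$, so that no assignment of fewer than $\mu$ colors can keep all of its color classes free. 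Because $\lambda^{<\mu}=\lambda$, all potential traces of a $<\mu$-partition on the $<\lambda$-sized initial segment can be captured inside a submodel of size $<\lambda$, which is what allows the recursion to defeat every $<\mu$-coloring at once rather than one at a time. (Alternatively, one could quote an Erd\H os--Hajnal style existence theorem for sparse triple systems of chromatic number $\ge\mu$.)

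The delicate point, and the reason a naive attempt fails, is that a single-valued $f$ can never avoid free sets of size $\lambda$ — for regular $\lambda$ these are produced by the set-mapping theorem — so one cannot hope to make every large set non-free. What $\lambda^{<\mu}=\lambda$ delivers is not the absence of large free sets but the impossibility of \emph{covering} $\lambda$ by fewer than $\mu$ of them, and arranging the bookkeeping of the $<\mu$-partitions through the submodel chain so as to guarantee precisely this covering property is the step I anticipate will need the most care. Everything surrounding it — the block sets, the almost-disjointness count $m+1\le k$, and the reduction of conflict freeness to the triangle property — is routine.
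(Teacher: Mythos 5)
Your even case is complete and correct, and in fact reproves corollary \ref{cor:w1} (it is the same idea as the $n=2$ case of theorem \ref{tm:lb}). The odd case has two problems. The smaller one: your almost-disjointness count ``one full block and at most one additional $h$-point'' is wrong at the extreme value $t=2k-1$. Take $f\{0,1\}=2$ and $f\{0,2\}=1$: then $A_{\{0,1\}}$ and $A_{\{0,2\}}$ share the block $H_0$ \emph{and} the two points $h_1\in H_1$, $h_2\in H_2$, so $|A\cap A'|=m+2=k+1$ and the family is not $(k+1)$-almost disjoint. Lemma \ref{lm:3} avoids exactly this by first passing to one of the three classes $\ical_j(f)$, which rules out such crossed configurations; you would need to import that refinement, together with the three-partition argument showing that one fixed $j$ retains the covering property (that argument does survive with $<\mu$ pieces, since a product of three cardinals below the infinite $\mu$ is below $\mu$).

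The serious gap is the construction of $f$, which you correctly identify as the heart of the matter but do not carry out, and your sketch would not work. First, its counting is false: the traces of a partition into $\rho<\mu$ pieces on a set of size $\nu<\lambda$ number $\rho^{\nu}$, which can exceed $\lambda$ even under $\lambda^{<\mu}=\lambda$ (e.g. with $2^{\omega}=\omega_2$, $2^{\omega_1}$ large, $\lambda=\omega_3$, $\mu=\omega_1$: then $\lambda^{\omega}=\lambda$ but there are $2^{\omega_1}>\lambda$ traces of $\omega$-partitions on an $\omega_1$-sized segment), so the submodels cannot ``capture all potential traces''. Second, and more fundamentally, for $\mu=\lambda$ your required object --- an $f$ such that $\lambda$ is not covered by fewer than $\mu$ free sets --- is precisely the hypothesis of lemma \ref{lm:3}, which the paper can verify only when $\lambda$ is a limit or the successor of a regular; its status at successors of singulars is in effect the paper's Problem 1, while theorem \ref{tm:lbch} does apply there (e.g. $\lambda=\mu=\kappa^+=2^{\kappa}$ with $\kappa$ singular). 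So you have reduced the theorem to something strictly harder than the theorem itself. The paper avoids any single-valued pair function: it constructs $\acal$ by a recursion of length $\lambda$ enumerating all pairs $\langle Y,B\rangle$ with $Y\in[\lambda]^{<\mu}$ and $B$ a disjoint subfamily of $[\lambda]^{k}$ of size $<\mu$ --- this enumeration is the only place $\lambda^{<\mu}=\lambda$ is used --- and at each step adds a fresh $(t-k)$-set $x$, disjoint from everything so far, with $x\cup b\in\acal$ for every $b\in B$ (property $\Phi$). In the verification, given a coloring with $\nu<\mu$ colors, one throws the small color classes into $Y$ and chooses, \emph{after seeing the coloring}, a $k$-set $b_z$ with $f[b_z]=z$ for every candidate value set $z$ of size $\le t-k$ taken from the large colors; the single $x$ supplied by $\Phi$ then matches whichever $z=f[x]$ happens to occur. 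This quantifier switch --- adapting the $k$-sets to the coloring instead of fixing a function on pairs in advance, with one generic $x$ serving all candidates simultaneously --- is what lets the recursion diagonalize against only $\lambda$ many small objects rather than against all partitions, and it is the idea missing from your proposal.
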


\begin{proof}
We are going to construct a $(\lambda,t,k+1)$-system $\acal \subs
[\lambda]^t$ that satisfies the following property
$\Phi(\lambda,\mu,k,t)\,$:

For every $Y \in [\lambda]^{<\mu}$ and for every {\em disjoint}
collection $B \subs [\lambda]^k$ with $|B| < \mu$ there is a set
$x \in [\lambda \setminus Y]^{t-k}$ such that $x \cup b \in \acal$
for each $b \in B$.

\smallskip

Before doing this, however, let us show that if $\acal$ satisfies
$\Phi(\lambda,\mu,k,t)\,$ then $\ccf(\acal) \ge \mu$. Indeed, let
$f : \lambda \to \nu$ be given for some $\nu < \mu$, where $\nu$
is infinite if $\mu > \omega$. Let us  put $S = \{\zeta < \nu :
|f^{-1}\{\zeta \}| \ge \omega \}$ if $\mu = \omega$ and $S =
\{\zeta < \nu : |f^{-1}\{\zeta \}| \ge \nu \}$ otherwise. We also
set $Y = \bigcup \{ f^{-1}\{\zeta \} : \zeta \in \nu \setminus S
\}$, clearly then $|Y| < \mu$. Next we consider the collection
$\scal = \{z \subs S : 0 <|z| \le t-k \}$, again we have $|\scal|
< \mu$. It is straight-forward to check that we may select for
each $z \in \scal$ a set $b_z \in [\lambda \setminus Y]^{k}$ so
that $f[b_z] = z$, moreover $B = \{b_z : z \in \scal \}$ is
disjoint.

By $\Phi(\lambda,\mu,k,t)\,$ there is some $x \in [\lambda
\setminus Y]^{t-k}$ such that $x \cup b_z \in \acal$ for each $z
\in \scal$. Now, $x \cap Y = \emptyset$ implies that $z = f[x] \in
\scal$, hence $x \cup b_z \in \acal$. But, as $x \cap b_z =
\emptyset$, the equality $f[x] = f[b_z]( = z)$ witnesses that $f$
is not a conflict free coloring of $\acal$, hence $\ccf(\acal) \ge
\mu$.

\smallskip

Now, we show how to construct $\acal$ satisfying
$\Phi(\lambda,\mu,k,t)\,$ by a transfinite recursion of length
$\lambda\,$. To start with, we fix a $\lambda$-type enumeration of
$[\lambda]^{<\mu} \times \bcal\,$:
$$[\lambda]^{<\mu} \times \bcal = \{\langle Y_\alpha,B_\alpha \rangle : \alpha < \lambda\}\,,$$
where $\bcal$ is the family of all disjoint collections $B \subs
[\lambda]^k$ with $|B| < \mu$. This is possible because
$\lambda^{<\mu} = \lambda$.

Next, assume that $\alpha < \lambda$ and for each $\beta < \alpha$
we have already constructed a $(k+1)$-almost disjoint family
$\acal_\beta \subs [\lambda]^t$ such that $|\acal_\beta| \le
\mu\cdot|\beta|$ if $\mu < \lambda$ and $|\acal_\beta| < \lambda$
if $\mu = \lambda$. We also assume that $\acal_\beta \subs
\acal_\gamma$ whenever $\beta < \gamma < \alpha$.

Now, if $\alpha$ is limit then we simply put $\acal_\alpha =
\cup_{\beta < \alpha}\acal_\beta$. It is easy to see that then all
our inductive hypotheses remain valid. This is obvious if $\mu <
\lambda$, and if $\mu = \lambda$ then it follows because $\lambda$
is regular by the assumption $\lambda^{<\lambda} = \lambda\,$.

If, on the other hand, $\alpha = \beta + 1\,$ then we consider the
pair $\langle Y_\beta\,,B_\beta \rangle\,$ and choose a set $x \in
[\lambda]^{t-k}$ that is disjoint from $\,\,\bigcup \acal_\beta
\cup \bigcup B_\beta \cup Y_\beta\,$. Then we put $$\acal_\alpha =
\acal_{\beta+1} = \acal_\beta \cup \{ b \cup x : b \in B_\beta
\}\,.$$ Again, it is obvious that our inductive hypotheses remain
valid.

Finally, if the transfinite recursion is completed, then we set
$$\acal = \bigcup \{ \acal_\alpha : \alpha < \lambda \}\,.$$
It is obvious from our construction that $\acal \subs [\lambda]^t$
is a $(\lambda,t,k+1)$-system that satisfies property
$\Phi(\lambda,\mu,k,t)\,$ and hence $\ccf(\acal) \ge \mu\,$.
\end{proof}

\begin{corollary}\label{cor:ch}
Let $k$ and $t$ be integers with $1 \le k < t \le 2k$. If
$\kappa^+ = 2^\kappa$ then $\ccf(\kappa^+,t,k+1) = \kappa^+ $.
\end{corollary}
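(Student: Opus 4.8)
The plan is to derive the two inequalities separately, with the lower bound coming entirely from theorem \ref{tm:lbch} and the upper bound being essentially free.

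First I would dispose of the upper bound $\ccf(\kappa^+,t,k+1) \le \kappa^+$. For any $(\kappa^+,t,k+1)$-system $\acal$, the observation made in the introduction gives $|\cup\acal| \le \kappa^+$, so the injective coloring of $\cup\acal$, assigning distinct colors to distinct points, is trivially conflict free and uses at most $\kappa^+$ colors. Hence $\ccf(\acal) \le \kappa^+$ for every such $\acal$, and taking the supremum over all of them yields $\ccf(\kappa^+,t,k+1) \le \kappa^+$.

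For the lower bound I would invoke theorem \ref{tm:lbch} with the choice $\lambda = \mu = \kappa^+$. The arithmetic hypotheses $0 < k < t \le 2k$ are exactly those assumed here, so the only thing left to verify is the cardinal-arithmetic condition $\lambda^{<\mu} = \lambda$, that is $(\kappa^+)^{<\kappa^+} = \kappa^+$. This is precisely where the assumption $2^\kappa = \kappa^+$ enters: since every $\theta < \kappa^+$ satisfies $\theta \le \kappa$, we get $(\kappa^+)^{<\kappa^+} = (\kappa^+)^\kappa = (2^\kappa)^\kappa = 2^{\kappa\cdot\kappa} = 2^\kappa = \kappa^+$. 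With this identity in hand, theorem \ref{tm:lbch} delivers $\ccf(\kappa^+,t,k+1) \ge \kappa^+$, and combining it with the upper bound completes the proof.

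There is no genuinely hard step here: the corollary is engineered to be a clean consequence of theorem \ref{tm:lbch}, and the only computation of any substance is the short identity $(\kappa^+)^\kappa = 2^\kappa$, which is immediate from $\kappa^+ = 2^\kappa$. The one point worth stating with care is simply that the supremum defining $\ccf(\kappa^+,t,k+1)$ ranges over systems living on at most $\kappa^+$ points, so that the trivial injective-coloring upper bound applies uniformly across all of them.
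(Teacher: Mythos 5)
Your proof is correct and matches the paper's intended argument: the corollary is stated there without proof as an immediate consequence of theorem \ref{tm:lbch}, applied exactly as you do with $\lambda = \mu = \kappa^+$ via $(\kappa^+)^{<\kappa^+} = (2^\kappa)^\kappa = \kappa^+$, combined with the trivial injective-coloring upper bound noted at the start of Section 2. No gaps.
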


In particular, as we promised, CH implies $\ccf(\omega_1,t,k+1) =
\omega_1$ for any such $k$ and $t$. Actually, our previous results
enable us to give, under the assumption of GCH, a complete and
rather attractive description of the behavior of
$\,\ccf(\lambda,t,k+1)\,$ for all $\lambda \ge \omega > t > k \ge
1$.

\begin{theorem}\label{tm:gch}
Assume GCH and let $\kappa$ be any limit cardinal or $\kappa =
\omega$, moreover fix the natural number $k \ge 1$. Then for any
$n<\omega$ we have
\[
\ccf(\kappa^{+n},t,k+1) =\left\{
\begin{array}{lll}
\kappa^{+(n+1-i)}&\mbox{ if $\,\,i\cdot k < t \le (i+1)\cdot k\,,$}\\&
\makebox[75pt]{}\mbox{$i = 1,...,n$;}\\
{}\\
\kappa& \mbox{ if $\,\,(n+1)\cdot k < t\,$}.
\end{array}
\right.
\]

\end{theorem}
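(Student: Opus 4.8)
The plan is to establish the matching upper and lower bounds separately, in each case reducing the assertion to one of the results already proved, after a suitable ``rescaling'' of the base cardinal. Fix $n$ and $k$ and recall that every $(\kappa^{+n},t,k+1)$-system is $(k+1)$-almost disjoint, i.e. $(k+1,2)$-almost disjoint, hence also $(k+1,\omega)$-almost disjoint; so the hypotheses of Theorems \ref{tm:ub} and \ref{tm:lb} are available with $\nu=\omega$, and $\mu^k=\mu$ for infinite $\mu$.

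For the upper bounds I would apply Theorem \ref{tm:ub} not with the given $\kappa$ but with the cardinal $\kappa'=\kappa^{+(n+1-i)}$ in the role of ``$\kappa$'' and the integer $i$ in the role of its ``$n$''. Since $(\kappa')^{+(i-1)}=\kappa^{+n}$, every $(\kappa^{+n},t,k+1)$-system $\acal$ is a $(k+1,\omega)$-almost disjoint subfamily of $[(\kappa')^{+(i-1)}]^{<\omega}$ all of whose members have size $t>i\cdot k$; so Theorem \ref{tm:ub} gives $\ccf(\acal)\le\kappa'=\kappa^{+(n+1-i)}$, which is exactly the bound wanted in the range $i\cdot k<t\le(i+1)\cdot k$. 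The last line, $\ccf(\kappa^{+n},t,k+1)\le\kappa$ for $t>(n+1)\cdot k$, is the same argument with $\kappa'=\kappa$ and ``$n$''$\,=n+1$. Note that only the inequality $t>i\cdot k$ is used here; the upper constraint $t\le(i+1)\cdot k$ merely picks out the largest admissible $i$, which yields the smallest and hence sharpest bound.

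For the lower bounds I would split off the top range $i=1$, i.e. $k<t\le 2k$, where the target is $\ccf\ge\kappa^{+n}$. Theorem \ref{tm:lb} only delivers even $t$ here (its case $N=2$), so instead I would invoke Theorem \ref{tm:lbch} (equivalently Corollary \ref{cor:ch} with ``$\kappa$''$\,=\kappa^{+(n-1)}$): since $n\ge 1$ makes $\kappa^{+n}$ regular, GCH gives $(\kappa^{+n})^{<\kappa^{+n}}=\kappa^{+n}$, whence $\ccf(\kappa^{+n},t,k+1)\ge\kappa^{+n}$ for all $k<t\le 2k$. For a range $2\le i\le n$ I would use Theorem \ref{tm:lb} with $N=i+1\ge 3$ and $\mu=\kappa^{+(n-i)}$: by Erd\H os--Rado and GCH, $\beth_{i-1}(\mu)^+=\mu^{+i}=\kappa^{+n}$, so $\kappa^{+n}\to(\mu^+)^{i}_{\mu}\to(i+1)^{i}_{\mu}$, and Theorem \ref{tm:lb} yields $\ccf>\kappa^{+(n-i)}$, i.e. $\ccf\ge\kappa^{+(n+1-i)}$. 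Finally, for $t>(n+1)\cdot k$ I would run the same Erd\H os--Rado computation for each infinite $\mu<\kappa$ with $N=\max(3,\lceil t/k\rceil)$: as $\kappa$ is a limit cardinal, $\beth_{N-2}(\mu)^+=\mu^{+(N-1)}<\kappa\le\kappa^{+n}$, so $\kappa^{+n}\to(N)^{N-1}_\mu$ and $\ccf>\mu$; taking the supremum over $\mu<\kappa$ gives $\ccf\ge\kappa$.

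The only genuinely delicate point is the bookkeeping of which auxiliary result applies where, dictated by how strong a partition arrow GCH plus Erd\H os--Rado can supply on a ground set of size exactly $\kappa^{+n}$. The reason Theorem \ref{tm:lbch} is indispensable in the top range $i=1$ is that for the relevant $\mu=\kappa^{+(n-1)}$ the relation $\kappa^{+n}\to(3)^2_\mu$ fails under GCH (Erd\H os--Rado would need a ground set of size $\beth_1(\mu)^+=\kappa^{+(n+1)}$), so the crude partition-relation bound of Theorem \ref{tm:lb} is too weak for odd $t$, whereas the explicit construction behind Theorem \ref{tm:lbch} still goes through. Matching the upper and lower bounds range by range then finishes the proof; I expect no further obstacle, beyond checking that the limit-cardinal hypothesis on $\kappa$ (which guarantees $\mu^{+(N-1)}<\kappa$ for every finite $N$) and the regularity of $\kappa^{+n}$ for $n\ge 1$ are precisely what make each case close.
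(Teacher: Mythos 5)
Your proposal is, in all essentials, the paper's own proof: the upper bounds come from Theorem \ref{tm:ub} applied with the base cardinal $\kappa^{+(n+1-i)}$ and the parameter $i$ (respectively $\kappa$ and $n+1$ in the bottom range), the $i=1$ lower bound comes from Theorem \ref{tm:lbch} via $\kappa^{+n}=2^{\kappa^{+(n-1)}}$ (the paper quotes Corollary \ref{cor:ch} in exactly this way), and your Erd\H os--Rado computation $\beth_{i-1}(\kappa^{+(n-i)})^+=\kappa^{+n}$ for $2\le i\le n$ is precisely what Corollary \ref{cor:ER} packages. Your diagnosis of why the $i=1$ range cannot be reached by Theorem \ref{tm:lb} for odd $t$ (under GCH one has $\kappa^{+n}\nrightarrow(3)^2_{\kappa^{+(n-1)}}$, so Erd\H os--Rado falls one cardinal short) is also correct and matches the division of labor in the paper.

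There is, however, one concrete gap. In the bottom range $t>(n+1)\cdot k$ your lower bound is obtained by taking a supremum over \emph{infinite} $\mu<\kappa$, and this supremum is over the empty set when $\kappa=\omega$ --- a case the theorem explicitly includes (and the same issue silently affects the $n=0$ instance, which you never treat separately). As written, your argument does not establish $\ccf(\omega_n,t,k+1)\ge\omega$ at all. The paper avoids this by first settling the base case $n=0$, namely $\ccf(\kappa,t,k+1)=\kappa$ for all $0<k<t<\omega$, using Corollary \ref{cor:wc} when $\kappa=\omega$ and the second part of Corollary \ref{cor:ER} when $\kappa$ is a limit (hence strong limit under GCH), and then invoking monotonicity of $\ccf$ in the first variable for the final range. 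Within your own setup the repair is one line: Theorem \ref{tm:lb} explicitly permits finite $\mu$, and for finite $\mu$ the relation $\omega_n\to(N)^{N-1}_{\mu^k}$ follows from Ramsey's theorem, so $\ccf(\kappa^{+n},t,k+1)>\mu$ for every $\mu<\omega$, giving the missing bound $\ge\omega$. With that sentence added, your proof is complete and coincides with the paper's.
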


\medskip

\begin{proof}
Let us note first that by the second part of corollary
\ref{cor:ER} and by corollary \ref{cor:wc} we have
$\ccf(\kappa,t,k+1) = \kappa$ for all $0 < k < t < \omega$ which
shows that our claim holds for $n = 0$. So, from here on we fix $n
\ge 1$.

Let us assume now that $k < t \le 2k$. In this case we may apply
corollary \ref{cor:ch} to $\kappa^{+n} = 2 ^{(\kappa^{+n-1})}$ and
conclude that
$$\ccf(\kappa^{+n},t,k+1) = \kappa^{+n} = \kappa^{+(n+1-1)}.$$

Next, consider the case $i\cdot k < t \le (i+1)\cdot k\,$ with $2
\le i \le n$. Then from $i\cdot k < t$, applying theorem
\ref{tm:ub} to the cardinal $\kappa^{+(n+1-i)}$ and the number
$i$, we obtain $\ccf(\kappa^{+n},t,k+1) \le \kappa^{+(n+1-i)}$.
>From $\,t \le (i+1)\cdot k\,$, on the other hand, applying
corollary \ref{cor:ER} to the number $i+1 \ge 3$ and the cardinal
$\kappa^{+(n-i)}$ we obtain the converse inequality
$\ccf(\kappa^{+n},t,k+1) \ge \kappa^{+(n+1-i)}$.

Finally, assume that $t > (n+1)\cdot k$. Then from theorem
\ref{tm:ub}, applied with the number $n+1$, we conclude
$\ccf(\kappa^{+n},t,k+1) \le \kappa$. But then we must have
$\ccf(\kappa^{+n},t,k+1) = \kappa$ because already
$\ccf(\kappa,t,k+1) = \kappa$.

This concludes the proof because we have checked all the cases.
\end{proof}

It is immediate from theorem \ref{tm:gch} that, in accordance with
our earlier conjecture,  $\ccf(\lambda,t,d)$ is a monotone
decreasing function of $t < \omega$ for fixed $\lambda$ and $d$,
at least if GCH holds.

\smallskip

\begin{problem}
Is $\,\ccf(\lambda,2k-1,k+1) = \lambda\,$ provable in ZFC for all
$\lambda \ge \omega$ and $1 < k < \omega$?
\end{problem}

\bigskip

\bigskip

\begin{center}
\sc Part II. The case $\lambda \ge \kappa \ge \omega > \mu$
\end{center}

\bigskip

\section{$\omega$ colors suffice}
\label{sc:dfirst}

It follows from theorem \ref{tm:ub} that if $\lambda <
\aleph_\omega$ then, for fixed $d < \omega$, we have
$\ccf(\lambda,t,d) \le \omega$ provided that $t < \omega$ is large
enough. The result we prove in this section shows that if we
replace $t$ with any infinite cardinal $\kappa$ then
$\ccf(\lambda,\kappa,d) \le \omega$ holds for {\em all} $\lambda
\ge \kappa$.

\begin{theorem}\label{tm:korlatos}
For any ${\lambda}\ge \kappa \ge \omega$ and $d < \omega$ we have
$\ccf(\lambda,\kappa,d) \le \omega$, in fact even the stronger
relation $[\lambda,\kappa,d] \Rightarrow \omega$.
\end{theorem}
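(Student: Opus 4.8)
The plan is to reduce to the case $\cup\acal\subseteq\lambda$ and then to produce, by induction on $\lambda$ (with $d$ fixed and $\kappa$ allowed to range over the infinite cardinals below the first parameter), a single $f:\cup\acal\to\omega$ for which $\omega\setminus I_f(A)$ is finite for every $A\in\acal$. Note that for a \emph{single} infinite $A$ this is trivial: reserve a countable $\{a_n:n<\omega\}\subseteq A$, put $f(a_n)=n$ and dump the rest into colour $0$. So the entire difficulty lies in doing this simultaneously while respecting the points shared between members of $\acal$.

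The engine will be the ``catching'' lemma that already drives Theorem \ref{tm:ub}. Fix a $\lambda$-chain $\langle N_\alpha:\alpha<\lambda\rangle$ with $\acal\in N_1$ and set $Y_\alpha=(N_{\alpha+1}\setminus N_\alpha)\cap\cup\acal$, so that $\{Y_\alpha:\alpha<\lambda\}$ partitions $\cup\acal$ into pieces of size $<\lambda$. If $A\in\acal$ and $A\notin N_\alpha$, then $|A\cap N_\alpha|<d$: otherwise a $d$-element $s\subseteq A\cap N_\alpha$ would lie in $N_\alpha$, and by $d$-almost disjointness $A$ would be the \emph{unique} member of $\acal$ containing $s$, whence $A\in N_\alpha$ by elementarity, a contradiction. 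Since $\lambda\subseteq\bigcup_\alpha N_\alpha$, were $A$ never caught we would have $A=A\cap\bigcup_\alpha N_\alpha$ with each $|A\cap N_\alpha|<d$, forcing $|A|<d$, which is absurd. Hence every $A$ is caught: there is a least $\beta(A)$ with $A\in N_{\beta(A)+1}$, and $A$ has fewer than $d$ points at levels $<\beta(A)$.

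First I would treat the clean case $\lambda=\kappa^+$ (with $\kappa\subseteq N_1$), where catching yields genuine \emph{localisation}: as $A\in N_{\beta(A)+1}$ and $\kappa\subseteq N_{\beta(A)+1}$, a bijection $\kappa\to A$ inside the model forces $A\subseteq N_{\beta(A)+1}$, so all but $<d$ elements of $A$ lie in the single piece $Y_{\beta(A)}$. Then it suffices to colour each $Y_\alpha$ (of size $\kappa$) by a rainbow witness for the $d$-almost disjoint trace family $\{A\setminus N_\alpha:\beta(A)=\alpha\}$ — an instance of the statement with first parameter $\kappa<\lambda$ — and to route the $<d$ stray ``old'' points of each set into colour $0$. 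Those stray points can spoil only colour $0$, so $\omega\setminus I_f(A)$ stays finite.

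The hard part will be general $\lambda$, where an infinite $A$ is \emph{not} localised: it may meet cofinally many levels, each in only finitely many points, so no single piece carries a rainbow for it, and the \emph{naive} union $f=\bigcup_\alpha f_\alpha$ of per-level rainbows fails — a colour made unique inside two distinct traces $A\cap Y_\alpha$ and $A\cap Y_{\alpha'}$ is counted twice in $A$. The proposal is to assemble each set's cofinite rainbow across the levels it meets, with bookkeeping that keeps the colour-counts equal to one for all but finitely many colours: process the levels in increasing order, at level $\alpha$ apply the inductive hypothesis to the infinite traces living in $Y_\alpha$ (first parameter $|Y_\alpha|<\lambda$) while permitting only finitely many colours to be re-used downward, and send every finite trace into a bounded block of colours. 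Here the catching lemma, $|A\cap N_{\beta(A)}|<d$, together with $d$-almost disjointness is exactly what bounds, for each fixed $A$, the damage caused by foreign designated points and by lower levels to a \emph{finite} set of colours; making this bookkeeping uniform so that the damaged set stays finite for \emph{every} $A$ is the crux. The base of the induction, $\lambda=\kappa$ (in particular $\kappa=\omega$, where $\acal$ is merely countably many $d$-almost disjoint infinite sets), I would obtain by a direct diagonal recursion that builds the rainbow of each set point-by-point and destroys conflicts as they arise.
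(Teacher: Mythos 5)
Your framework---the $\lambda$-chain, the catching lemma ($A\notin N_\alpha$ implies $|A\cap N_\alpha|<d$), localisation when $\kappa<\lambda$, and the observation that a naive union of per-level rainbows double-counts colours when $\lambda=\kappa$---is exactly the paper's, and your treatment of $\lambda>\kappa$ and of the base $\lambda=\kappa=\omega$ is sound. But there is a genuine gap at the diagonal case $\lambda=\kappa>\omega$, and your own ``clean case'' $\lambda=\kappa^+$ consumes precisely this case as its inductive input: the trace families you colour on the pieces $Y_\alpha$ there are $(\kappa,\kappa,d)$-systems. For the diagonal case you offer two devices, neither of which works as stated. The ``direct diagonal recursion'' that handles $\kappa=\omega$ does not lift to uncountable $\kappa$: enumerating $\acal=\{A_\eta:\eta<\kappa\}$ and making $c$ injective on $A_\eta\setminus\bigcup_{\xi<\eta}A_\xi$ leaves the spoiled colour set $c\bigl[A_\eta\cap\bigcup_{\xi<\eta}A_\xi\bigr]$, and for $\eta\ge\omega$ that intersection has size up to $|\eta|\cdot(d-1)$, so its image can exhaust all of $\omega$. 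And the cross-level bookkeeping (``only finitely many colours re-used downward'') is not a construction---you yourself flag making it uniform as ``the crux'', which is to say the key step is missing. Note also that your picture of the difficulty is slightly off: past the capture level $\beta(A)$, a set $A$ meets the naive piece $N_{\beta+1}\setminus N_\beta$ in a set of size $|N_{\beta+1}|$ by elementarity, not in finitely many points; the traces are small only \emph{below} capture.

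The paper's resolution is not bookkeeping but a different definition of the pieces, which makes all cross-level interference disappear: for $\lambda=\kappa$ set $B_\alpha=\bigcup(\acal\cap N_\alpha)$ and $Y_\alpha=N_{\alpha+1}\cap\bigl(\kappa\setminus(B_\alpha\cup N_\alpha)\bigr)$, i.e.\ excise from each piece every point of every already-captured set. Then for $A\in\acal\cap(N_{\alpha+1}\setminus N_\alpha)$ one has: (i) $A\cap Y_\beta=\emptyset$ for all $\beta>\alpha$, since $A\subseteq B_\beta$; (ii) $|A\cap\bigcup_{\gamma<\alpha}Y_\gamma|<d$ by catching; and (iii) $|A\cap B_\alpha|\le|N_\alpha|\cdot d<\kappa$, whence $|A\cap Y_\alpha|=|N_{\alpha+1}|=\kappa_\alpha$ by elementarity. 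So every set is after all localised to a single piece up to fewer than $d$ stray points, the trace family on $Y_\alpha$ is a $(\kappa_\alpha,\kappa_\alpha,d)$-system with $\kappa_\alpha<\kappa$, and the induction descends on the cardinal itself, bottoming out at $\omega$, where your diagonal recursion is exactly the paper's Case 1. With that one modification your outline becomes the paper's first proof; the paper also gives a second, shorter proof by combining the Erd\H{o}s--Hajnal transversal theorem $\MM{\lambda}{\kappa}{d}\to\BB{\omega}$ with Komj\'ath's essential-disjointness theorem for $(\lambda,\omega,d)$-systems, a route entirely absent from your proposal.
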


\begin{proof}[First proof of Theorem  \ref{tm:korlatos}]
We prove $[\lambda,\kappa,d] \Rightarrow \omega$ by transfinite
induction on $\kappa$ and $\lambda$\,: Assuming
$[\kappa',\kappa',d] \Rightarrow \omega$ and $[\lambda',\kappa,d]
\Rightarrow \omega$ for all ${\omega}\le {\kappa}'<{\kappa}$ and
${\kappa}\le {\lambda}'< {\lambda}\,$, we deduce
$[\lambda,\kappa,d] \Rightarrow \omega$.

\medskip
\noindent{\bf Case 1:} ${\lambda}={\kappa}={\omega}$.\\
Let $\acal=\{A_n:n<{\omega}\} \subs [\omega]^\omega$ be $d$-almost
disjoint (actually, $\omega$-almost disjoint would suffice) and
construct $c:{\omega}\to {\omega}$ in such a way that
$c\restriction A_n\setm \cup\{A_m:m<n\}$ is a bijection with range
${\omega}$ for each $n<{\omega}$. Thus $\omega \setminus I_c(A_n)
\subs c[A_n\cap \cup\{A_m:m<n\}]$ is finite for all $A_n \in
\acal$, and we are done.

\medskip

\noindent{\bf Case 2:} ${\lambda}={\kappa}>{\omega}$.\\
Let $\acal\subs \br {\kappa};{\kappa};$
be  $d$-almost disjoint and
 $\<N_{\alpha}:{\alpha}<{\kappa}\>$ be a ${\kappa}$-chain
of elementary submodels with $\acal\in N_1$. For
${\alpha}<{\kappa}$ let $\kappa_\alpha = |N_{\alpha+1}|$,
$B_{\alpha}=\cup(\acal\cap N_{\alpha})$ and
$Y_{\alpha}=N_{{\alpha}+1} \cap ({\kappa}\setm (B_{\alpha}\cup
N_{\alpha}))$.

If $A\in \acal\cap N_{{\alpha}+1}\setm N_{\alpha}$ then
\begin{displaymath}
 |A\cap B_{\alpha}|\le \sum\{|A\cap A'|:A'\in\acal\cap N_{\alpha}\}\le
|N_{\alpha}|\cdot d<{\kappa},
\end{displaymath}
and so  $ |{\kappa}\setm (B_{\alpha}\cup N_{\alpha})| = |A\setm
(B_{\alpha}\cup N_{\alpha})|={\kappa}$. But $A\setm
(B_{\alpha}\cup N_{\alpha})\in N_{{\alpha}+1}$ and $\kappa_\alpha
\subs N_{{\alpha}+1}$ imply
\begin{displaymath}
|Y_\alpha| = |A\cap Y_{\alpha}|=|N_{{\alpha}+1} \cap (A\setm
(B_{\alpha}\cup N_{\alpha}))|=\kappa_\alpha,
\end{displaymath}
consequently
\begin{displaymath}
\acal_{\alpha}=\{A \cap Y_{\alpha}:A\in \acal \cap
N_{{\alpha}+1}\setm N_{\alpha}\} \subs [Y_\alpha]^{\kappa_\alpha},
\end{displaymath}
and $\acal_{\alpha}$ is clearly  $d$-almost disjoint. By the
inductive assumption $[\kappa_\alpha, \kappa_\alpha,d] \Rightarrow
\omega$, there is a function
 $c_{\alpha}: Y_{\alpha}\to {\omega}$
such that ${\omega}\setm I_{c_{\alpha}}(A')$ is finite for all
$A'\in\acal_{\alpha}$.

Let $c'=\cup \{c_{\alpha}:{\alpha}<{\kappa}\}$ and consider the
function $c\supset c'$ which maps ${\lambda}$ into ${\omega}$ in
such a way that $c[{\lambda}\setm \dom(c')]\subs \{0\}$. Now, let
$A\in \acal$ and ${\alpha}<{\kappa}$ be such that $A\in
N_{{\alpha}+1}\setm N_{\alpha}$. Then $A'=A\cap Y_{\alpha}\in
\acal_{\alpha}$, so ${\omega}\setm I_{c_{\alpha}}(A')$ is finite.
But we also have
\begin{equation}\label{eq:bullet1x}
|A \cap \dom (c'\setm c_{\alpha})|<d.
\end{equation}
Indeed, if ${\alpha}<{\beta} < \kappa\,$ then $\,A \cap Y_{\beta}
=\empt$, while $A\notin N_{\alpha}$ implies $|A\cap
N_{\alpha}|<d$, and hence $|A\cap
\cup\{Y_{\gamma}:{\gamma}<{\alpha}\}|<d$ as well. Since
\begin{equation}
I_{c_{\alpha}}(A')\setm I_c(A)
\subset c'[A\setm \dom c_{\alpha}]\cup \{0\},
\end{equation}
it follows that
 ${\omega}\setm I_c(A)$
is finite, and we are done.

\medskip
\noindent{\bf Case 3:} ${\lambda}>{\kappa}$.

Let $\acal \subs \br {\lambda};{\kappa};$ be  $d$-almost disjoint
and $\<N_{\alpha}:{\alpha}<{\lambda}\>$ be a ${\lambda}$-chain of
elementary  submodels with $({\kappa}+1)\cup\{\acal\}\subs N_1$.
For each ${\alpha}<{\lambda}$ let $Y_{\alpha}={\lambda}\cap
N_{{\alpha}+1}\setm N_{\alpha}$, then $\kappa \le |Y_\alpha| =
|N_{\alpha+1}| < \lambda$.

For any $A \in \acal\cap N_{{\alpha}+1}\setm N_{\alpha}$ we have
$|A\cap N_{\alpha}|<d$ and $A\subs N_{{\alpha}+1}$, hence
\begin{displaymath}
\acal_{\alpha}=\{A \setm  N_{\alpha}:A\in \acal\cap
N_{{\alpha}+1}\setm N_{\alpha}\} \subs \br Y_{\alpha};{\kappa};,
\end{displaymath}
and $\acal_{\alpha}$ is $d$-almost disjoint. Now, we may argue
inductively, exactly as in Case 2, to obtain a map $c : \lambda
\to \omega$ such that $\omega \setm I_c(A)$ is finite for each $A
\in \acal$.
\end{proof}

\begin{remark}
P.~Komj\'ath pointed out to us an easy proof of Theorem
\ref{tm:korlatos} for the case $\kappa = \omega$. His proof relied
on a result of his proved in \cite{KO2} claiming that every
$(\lambda,\omega,d)$-system $\acal$ is {\em essentially disjoint},
i.e. one can omit a finite set $F(A)$ from each element $A$ of
$\acal$ in such a way that the sets $A \setm F(A) $ are pairwise
disjoint. By taking a bijection between $A \setm F(A) $ and
$\omega$ for each $A \in \acal$, and then coloring the rest
arbitrarily, we get an appropriate ${\omega}$-coloring. Based on
this observation, and a result of Erd\H os and Hajnal, we shall
give a short alternative proof of theorem \ref{tm:korlatos}.
\end{remark}

We recall from \cite{HJS1} and \cite{HJS2} that a set $X$ is
called a $\tau$-transversal of a family $\,\acal\,$ if $\,0 < |X
\cap A| < \tau\,$ for all $A \in \acal$. Moreover, the symbol $\MM
{\lambda}{\kappa}{\mu}\to \BB {\tau}$ is used there to denote the
statement that every $(\lambda,\kappa,\mu)$-system has a
$\tau$-transversal.

For us it will be useful to introduce the following variation on
this concept: We say that $X$ is a $\tau$-witness for $\acal$ iff
$\,|X \cap A| = \tau\,$ for all $A \in \acal$. Clearly, any
$\tau$-witness is a $\tau^+$-transversal. It is easy to see that
if $\kappa \ge \tau \ge \omega$ then $\MM
{\lambda}{\kappa}{\mu}\to \BB {\tau^+}$ holds iff every
$(\lambda,\kappa,\mu)$-system has a $\tau$-witness.

\begin{definition}
A  $(\lambda,\kappa)$-family $\mc A$ is called {\em essentially
disjoint} (ED, in short) iff for each $A\in \mc A$ there is a set
$F(A)\in \br A;<{\kappa};$ such that the family $\{A\setm
F(A):A\in \mc A\}$ is disjoint.  $\MM {\lambda}{\kappa}{\mu}\to
{\bf ED}$ denotes the statement that every
$(\lambda,\kappa,\mu)$-system is ED.
\end{definition}

\begin{proposition}\label{lm:reduction}
Assume $ {\mu}\le \tau \le {\kappa}\le {\lambda}$ and $\tau \ge
\omega$. Then $$\MM {\lambda}{\kappa}{\mu}\to \BB {{\tau}^+}
\mbox{ and }\, \MM {\lambda}{\tau}{\mu}\to {\bf ED}$$ together
imply $[\lambda,\kappa,\mu] \Rightarrow \tau\,.$
\end{proposition}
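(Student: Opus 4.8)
Let me parse what Proposition~\ref{lm:reduction} asks. We are given a $(\lambda,\kappa,\mu)$-system $\acal$, and two hypotheses: first, $\MM{\lambda}{\kappa}{\mu}\to\BB{\tau^+}$, which by the remark preceding the proposition is equivalent to saying every $(\lambda,\kappa,\mu)$-system has a $\tau$-witness; second, $\MM{\lambda}{\tau}{\mu}\to\mathbf{ED}$, saying every $(\lambda,\tau,\mu)$-system is essentially disjoint. We must produce a coloring $f:\cup\acal\to\tau$ with $|\tau\setm I_f(A)|<\tau$ for all $A\in\acal$, i.e.\ witness $[\lambda,\kappa,\mu]\Rightarrow\tau$.

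The plan is to combine the two hypotheses in sequence. First I would apply the witness hypothesis to $\acal$ to obtain, for each $A\in\acal$, a subset $X_A\subs A$ with $|X_A|=\tau$. Set $\bcal=\{X_A:A\in\acal\}$. The key point is that $\bcal$ is a $(\lambda,\tau,\mu)$-system: each member has size $\tau$, there are at most $\lambda$ of them, and since $X_A\subs A$ the system inherits $\mu$-almost disjointness from $\acal$ (intersections only shrink). Then I would apply the $\mathbf{ED}$ hypothesis to $\bcal$, getting sets $F(X_A)\in[X_A]^{<\tau}$ such that the ``cores'' $Z_A=X_A\setm F(X_A)$ are pairwise disjoint; note $|Z_A|=\tau$ since $|F(X_A)|<\tau$ and $\tau$ is a cardinal. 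Because the $Z_A$ are pairwise disjoint, I can define $f$ on $\bigcup_A Z_A$ by choosing, for each $A$, a bijection $f\restriction Z_A:Z_A\to\tau$; on the remaining points of $\cup\acal$ (those in no $Z_A$) set $f$ to be constantly $0$, say. This $f$ is well-defined precisely because of disjointness of the cores.

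Finally I would verify the target inequality. Fix $A\in\acal$. Since $f\restriction Z_A$ is a bijection onto $\tau$, every color $\zeta<\tau$ is taken by $f$ at some point of $Z_A\subs A$. A color $\zeta$ fails to lie in $I_f(A)$ only if $A$ meets $f^{-1}\{\zeta\}$ in more than one point, which requires $A$ to contain a second point of color $\zeta$ beyond the one in $Z_A$; such extra points lie in $A\setm Z_A$. Hence $\tau\setm I_f(A)\subs f[A\setm Z_A]$, and I would bound $|A\setm Z_A|$. The part of $A\setm Z_A$ coming from $F(X_A)$ has size $<\tau$; the part coming from $A\setm X_A$ could a priori be large, but the colors it contributes are all $\{0\}$ plus whatever it shares, so I should be slightly careful — the cleanest route is to note that $A\setm Z_A=(A\setm X_A)\cup F(X_A)$ and that all of $A\setm X_A\setm\bigcup_{A'}Z_{A'}$ gets color $0$, while $A\cap Z_{A'}$ for $A'\ne A$ has size $<\mu\le\tau$ by almost disjointness; summing, the number of ``bad'' colors is at most $|F(X_A)|+1+\mu<\tau$. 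So $|\tau\setm I_f(A)|<\tau$, as required.

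The main obstacle I anticipate is exactly this last bookkeeping: points of $A$ lying in the cores $Z_{A'}$ of \emph{other} members can carry colors that then appear twice in $A$, so one must use $\mu$-almost disjointness (giving $|A\cap X_{A'}|<\mu$) to control how many such colors there are, and confirm the total stays below $\tau$ using $\mu\le\tau$ and $\tau$ infinite. Everything else — that $\bcal$ is a legitimate $(\lambda,\tau,\mu)$-system, that the cores have full size $\tau$, that $f$ is well-defined — is routine once the disjointness of cores is in hand.
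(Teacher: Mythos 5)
Your proposal contains a genuine gap, and it originates in a misreading of the first hypothesis. In this paper a $\tau$-witness for $\acal$ is a \emph{single} set $X$ with $|X\cap A|=\tau$ for \emph{every} $A\in\acal$; it is not a family of per-member subsets. What you extract instead --- for each $A$ a set $X_A\in\br A;\tau;$ --- is trivially available, since $|A|=\kappa\ge\tau$, and uses $\MM{\lambda}{\kappa}{\mu}\to\BB{\tau^+}$ not at all. So if your argument were correct it would derive $[\lambda,\kappa,\mu]\Rightarrow\tau$ from the ED hypothesis alone, which should already be a warning sign when $\kappa>\tau$.

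The warning is confirmed at your ``summing'' step, which is where the proof actually breaks. You bound the bad colors by $|F(X_A)|+1+\mu$, as though $A$ could meet the core of only one other member. Pairwise disjointness of the cores $Z_{A'}$ only guarantees that each \emph{point} of $A$ lies in at most one core; the set $A\setm X_A$ has size up to $\kappa\ge\tau$ and may meet $\kappa$-many foreign cores, each in fewer than $\mu$ points, and those points receive whatever colors your (arbitrary) bijections happen to assign. For instance, with $\mu=2$, $\tau=\omega$, $\kappa=\omega_1$, take $A_0=\{(\alpha,0):\alpha<\omega_1\}$ and $B_\alpha=\{(\alpha,\beta):\beta<\omega_1\}$, with $X_{A_0}$ and $X_{B_\alpha}$ the first $\omega$ points of each: then every point $(\alpha,0)$ with $\alpha\ge\omega$ can lie in $Z_{B_\alpha}$ and carry some color $n_\alpha$, and nothing in your argument prevents $\{n_\alpha:\alpha\ge\omega\}$ from being all of $\omega$, in which case $\tau\setm I_f(A_0)=\tau$. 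The paper's proof avoids this entirely by using the one global witness $X$: the ED hypothesis is applied to the trace family $\acal'=\{A\cap X:A\in\acal\}$, all points of $\lambda\setm X$ get color $0$, and a point of $A$ inside $X$ lies in $A'=A\cap X$ itself, so by core-disjointness it can belong to a foreign core only if it belongs to $F(A')$; hence the bad set is contained in $\{0\}\cup c[F(A\cap X)]$, of size $<\tau$. In short, the global witness makes $A\cap X$ a \emph{member} of the system being decomposed, which is exactly the control your per-member choice of $X_A$ throws away.
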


\begin{proof}
Let $\mc A\subs \br {\lambda};{\kappa};$ be a
$(\lambda,\kappa,{\mu})$-system. Since $\MM
{\lambda}{\kappa}{\mu}\to \BB {{\tau}^+}$ there is a
$\tau$-witness $X$ for $\acal$. Then
\begin{displaymath}
\acal'=\mc A\restriction X=\{A\cap X:A\in \mc A\}\subs \br
{\lambda};{\tau};
\end{displaymath}
is a $(\lambda,\tau,\mu)$-system. Applying $\MM
{\lambda}{\tau}{\mu}\to {\bf ED}$ for $\acal'$ there is a function
$F:\mc A'\to \br {\lambda};<{\tau};$ such that the family
$\{A'\setm F(A'):A'\in \mc A'\}$ is disjoint.

Let $c:{\lambda}\to {\tau}$ be a function such that
\begin{enumerate}[(i)]
\item $c[{\lambda}\setm X]=\{0\}$,
\item $c\restriction A'\setm F(A')$ is a bijection
between $A'\setm F(A')$ and ${\tau}$ for each $A'\in \mc A'$.
\end{enumerate}
Then for each $A\in \mc A$,
\begin{displaymath}
{\mu}\setm I_c(A)\subs \{0\}\cup c[F(A\cap X)]\in \br
{\tau};<{\tau};,
\end{displaymath}
i.e. $c$ witnesses $\Carr {\kappa}{\lambda}{\rho}{\mu}$
\end{proof}

\begin{proof}[Second proof of theorem \ref{tm:korlatos}]
In \cite[Theorem 8(b)]{EH3} Erd\H os and Hajnal proved that
\begin{equation}\label{eh1x}
\text{$\MM {\lambda}{\kappa}d\to \BB {\omega}$ for $d<{\omega}\le
{\kappa}\le {\lambda}$.}
\end{equation}
Moreover, in \cite[Theorem 2]{KO2}, Komj\'ath proved
\begin{equation}\label{ko1}
\text{$\MM {\lambda}{\omega}d\to {\bf ED}$ for $d<{\omega}\le
{\lambda}$,}
\end{equation}
By proposition \ref{lm:reduction}, (\ref{eh1x}) and (\ref{ko1})
imply $\Carr {\kappa}{\lambda}d{\omega}$. (Actually, instead of
(\ref{eh1x}), $\MM {\lambda}{\kappa}d\to \BB {\oo}$ would be
enough.)
\end{proof}

As a matter of fact, the theorem of Erd\H os and Hajnal,
\cite[Theorem 8(b)]{EH3} that we stated and used above can be
proved with the method of elementary chains as presented in the
first proof of theorem \ref{tm:korlatos}. Moreover, we should
point out that all the results mentioned in this section can also
be deduced from the very general, and therefore rather technical,
main theorem 1.6 of \cite{HJS1}.

\bigskip

\bigskip

\section{A finite upper bound for $w\ccf(\kappa^{+m},\kappa,d)$}

We have seen in the previous section that $\ccf(\lambda,\kappa,d)$
is countable whenever $\lambda\ge \kappa \ge \omega > d$. The aim
of this section is to show that if $\lambda$ is ``not much bigger
than" $\kappa$, namely it is a finite successor of $\kappa$, then
$\ccf(\lambda,\kappa,d)$ is even finite. This is immediate from
the following theorem that is formulated in terms of the weak
conflict free chromatic number.

\begin{theorem}\label{tm:egeszresz}
If ${\kappa}$ is infinite, $d > 0$ and $m$ are natural numbers
then $$w\ccf(\kappa^{+m},\kappa,d) \le {\left\lfloor
{\frac{(m+1)(d-1)+1}2}   \right\rfloor+1}\,,$$ or with our
alternative arrow notation:
\begin{equation}
\label{dag}
\carrw {\kappa}{{\kappa}^{+m}}d
{\left\lfloor {\frac{(m+1)(d-1)+1}2}   \right\rfloor+1}.
\end{equation}
\end{theorem}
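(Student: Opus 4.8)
The plan is to prove (\ref{dag}) by induction on $m$, following the architecture of the elementary-chain argument used in Theorem \ref{tm:ub}. Write $r = d-1$ and $\rho_m = \left\lfloor \frac{(m+1)r+1}{2}\right\rfloor + 1$, so that the target is a weak conflict free coloring of an arbitrary $(\kappa^{+m},\kappa,d)$-system with $\rho_m$ colors. Since a single ``uniquely attained'' color is fragile --- it is destroyed as soon as a later stage adds a few more points to a set --- the mere existence of a weak conflict free coloring is too weak to carry the induction. So I would strengthen the inductive statement to control the \emph{abundance} of uniquely colored elements: roughly, I want a coloring $f_m:\kappa^{+m}\to\rho_m$ for which, for every $A$ in the system, the set $I_{f_m}(A)$ of uniquely attained colors is large enough that it can still survive the addition of $r$ extra points. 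Making this quantitative correctly is the whole game.

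For the inductive step $m \to m+1$, I would fix a $(\kappa^{+(m+1)},\kappa,d)$-system $\acal$ together with a $\kappa^{+(m+1)}$-chain $\langle N_\alpha : \alpha < \kappa^{+(m+1)}\rangle$ of elementary submodels with $\acal \in N_1$, exactly as in Theorem \ref{tm:ub}. Setting $Y_\alpha = \kappa^{+(m+1)} \cap (N_{\alpha+1}\setminus N_\alpha)$, each slice has size $\kappa^{+m}$, and for $A \in \acal \cap (N_{\alpha+1}\setminus N_\alpha)$ the trace $T = A \cap Y_\alpha$ is a set of size $\kappa$, while the tail $E = A \cap N_\alpha$ has at most $r$ points, by $d$-almost disjointness. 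The traces $\acal_\alpha = \{A\cap Y_\alpha : A \in \acal \cap (N_{\alpha+1}\setminus N_\alpha)\}$ form a $(\kappa^{+m},\kappa,d)$-system on $Y_\alpha$, to which the inductive hypothesis applies, furnishing a coloring of each slice with $\rho_m$ colors and many uniquely attained colors on every trace. The global coloring is then assembled from these slice colorings over a single shared palette of $\rho_{m+1}$ colors, the extra $\rho_{m+1}-\rho_m$ colors being reserved to repair the damage caused by the tails.

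The crux --- and the step I expect to be the main obstacle --- is the merging estimate. A naive merge loses one usable color for each of the $r$ tail points, which would force the palette to grow by $r$ per level and yield only the weaker bound $(m+1)r+1$; the theorem asserts the far better growth of about $r/2$ per level, which is precisely what the floor records. So the heart of the argument is a combinatorial lemma showing that a tail of at most $r$ points can be absorbed at the cost of only $\lceil r/2 \rceil$ fresh colors, by pairing up the ways in which tail points can spoil uniquely attained colors and exploiting the infinitely many free points available in the (infinite) traces. Carrying this balancing faithfully through the recursion is exactly what produces $\rho_m = \left\lfloor \frac{(m+1)r+1}{2}\right\rfloor + 1$. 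Finally, the base case $m = 0$, asserting $\wccf(\kappa,\kappa,d)\le \lfloor d/2\rfloor + 1$, must be handled directly; here there need be no ``private'' points at all, so the economical use of $\lfloor d/2\rfloor+1$ colors --- rather than the trivial single color --- is already the essential phenomenon in miniature.
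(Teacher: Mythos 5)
Your chain decomposition is exactly the one used in the paper (the slice systems on $Y_\alpha$ are indeed $(\kappa^{+m},\kappa,d)$-systems, and $|A\cap N_\alpha|\le d-1$ for $A\notin N_\alpha$ is correct), and you rightly observe that a naive merge only yields $(m+1)(d-1)+1$ colors. But the merging lemma you leave open is not just the main obstacle --- in the architecture you propose it is unattainable. Since each slice is colored by the plain inductive hypothesis, with no reference to sets from later slices, the colors of the $r=d-1$ tail points of $A$ are, from the viewpoint of $A$, arbitrary: in the worst case each tail point duplicates a distinct uniquely attained color of the trace and destroys it outright, while a reserved fresh color used once on $A$ restores at most one element of $I_f(A)$. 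So re-establishing the abundance invariant your induction needs (at least $r+1$ surviving unique colors, to serve the next level) costs $r$ fresh colors per level, not $\lceil r/2\rceil$; there is no pairing to exploit, and you land back at the linear bound. (Letting the abundance decay from a large initial stock instead of restoring it gives the same linear total.) Independently, even granting your lemma, a payment of $\lceil r/2\rceil$ per level cannot reproduce the stated formula when $d$ is even: for $d=2$ it gives $m+2$, whereas the theorem asserts $\lfloor m/2\rfloor+2$ --- the true increment alternates between $0$ and $1$, so no fixed positive per-level integer cost can match it. There are also smaller unaddressed points: nothing in the inductive hypothesis guarantees uncolored trace points to host repair points (a weak conflict free coloring may well be total on its slice), and repair points chosen for other sets can land inside $A$.

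The paper obtains the factor two by reversing the information flow rather than repairing after the fact. It strengthens the induction to the precoloring-extension relation $\Induw{\lambda}{\kappa}{d}{k}{x}$ of definition \ref{def:gen_princ}: every partial coloring of a set $C$ with $|A\cap C|\le k$ for all $A\in\acal$ extends to a weak conflict free coloring with the \emph{same} $x$ colors. In the stepping-up theorem \ref{tm:ind_new} the slice at stage $\zeta+1$ is colored \emph{after} the tails: one sets $C_\eta=(C\cap N_\eta)\cup N_\zeta$, notes $|A\cap C_\eta|\le k+(d-1)$, and invokes $\Induw{\lambda}{\kappa}{d}{k+d-1}{x}$; thus the palette stays fixed through all $m$ steps and only the precoloring allowance grows by $d-1$ per level. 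The halving then happens exactly once, in the base case theorem \ref{tm:ind0}: when $A_\zeta$ is processed in the greedy construction, at most $k+d$ of its points carry colors, so $2x>k+d$ already yields a color attained at most once --- with the trace colored last, the adversary must spend \emph{two} points to block one color, and this is where the half price comes from. Unwinding with $k=m(d-1)$ gives $\left\lfloor\frac{(m+1)(d-1)+1}{2}\right\rfloor+1$, the single floor absorbing the parity problem that defeats any per-level payment. In short, the missing idea is to carry precolored points through the induction (at half a color each) instead of surplus uniquely attained colors (at a full color each).
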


We shall actually prove a stronger result than theorem
\ref{tm:egeszresz}. This involves a refined version of our weak
arrow relation whose definition is given next. In this we shall
use $\fcal(A,B)$ to denote the set of all partial functions from
$A$ to $B$.

\begin{definition}\label{def:gen_princ}
Let $ {\lambda} \ge \kappa \ge \omega$ and $d,k,x\in {\omega}$.
Then $$ \Induw {\lambda}{\kappa}dkx$$ abbreviates the following
statement: If $C \subs \lambda$ and $\acal\subs \br
{\lambda};{\kappa};$ is any $d$-almost disjoint system satisfying
$|A\cap C|\le k$ for each $A\in \acal$, then for every partial
function $f \in \fcal(C,x)$ there is a weak conflict free coloring
$g \in \fcal(\lambda,x)$ of $\acal$ such that $g\upharpoonright C
= f$. Note that the last equality is equivalent to $g \supset f$
and $C \cap \dom(g) = \dom(f)$.

For later use we also define the (strict) relation $\Indu
{\lambda}{\kappa}dkx$ as follows: For any $d$-almost disjoint
$\acal\subs \br {\lambda};{\kappa};$ and $f \in \fcal(\lambda,x)$
satisfying $|A\cap \dom(f)|\le k$ for each $A\in \acal$, there is
a conflict free coloring $g : \lambda \to x$ of $\acal$ with $g
\supset f$.
\end{definition}

The main result of this section may be then formulated as follows.
(Note that theorem \ref{tm:egeszresz} is an immediate corollary of
the particular case $k = 0$ of theorem \ref{tm:ind}.)

\begin{theorem}\label{tm:ind}
Let  ${\kappa}$ be an infinite cardinal and $m,d,k $ be natural
numbers with $d>0$. Then
\begin{displaymath}
\induw {\kappa}dmk{\left\lfloor {\frac{(m+1)(d-1)+k+1}{2}}   \right\rfloor+1}.
\end{displaymath}
\end{theorem}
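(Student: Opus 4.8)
The plan is to prove $\induw{\kappa}dmk{x}$ by induction on $m$, where $x = \left\lfloor {\frac{(m+1)(d-1)+k+1}{2}} \right\rfloor+1$. The base case $m=0$ requires showing $\induw{\kappa}d0k{\lfloor (d-1+k+1)/2\rfloor+1}$, i.e. coloring a $d$-almost disjoint family $\acal\subs\br\kappa;\kappa;$ on the ground set $\lambda=\kappa$, extending a prescribed partial function $f$ on a set $C$ with $|A\cap C|\le k$. Here I would exploit that the sets are large (of full size $\kappa$) while the ``forbidden'' interaction—both the almost-disjoint overlaps (size $<d$) and the contact with $C$ (size $\le k$)—is bounded. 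The idea is that for a weak conflict free coloring we need only produce, in each $A$, one color used exactly once; since we have a palette of size roughly $((d-1)+(k+1))/2$, a greedy construction along a well-ordering of $\cup\acal$ should let us reserve, for each newly completed set, a color value that is forced to be unique on that set. The division by $2$ reflects that a color value that appears \emph{twice} in the overlap region can be ``cancelled'' in pairs, so only about half of the $(d-1)+k$ bad elements actually obstruct uniqueness.

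\textbf{For the inductive step $\induw{\kappa}dmk{}\to\induw{\kappa}d{m+1}k{}$, the engine will be an elementary chain argument of exactly the type used in Theorem~\ref{tm:ub} and in Case~2 of Theorem~\ref{tm:korlatos}.} Given $\acal\subs\br\kappa^{+(m+1)};\kappa;$ that is $d$-almost disjoint, a set $C$ with $|A\cap C|\le k$, and $f\in\fcal(C,x)$, I would fix a $\kappa^{+(m+1)}$-chain $\<N_\alpha:\alpha<\kappa^{+(m+1)}\>$ of elementary submodels with $\acal,C,f\in N_1$, and set $Y_\alpha=\kappa^{+(m+1)}\cap(N_{\alpha+1}\setm N_\alpha)$, a set of size $\kappa^{+m}$. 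For $A\in\acal\cap(N_{\alpha+1}\setm N_\alpha)$ the almost-disjointness gives $|A\cap N_\alpha|<d$, so $A\cap Y_\alpha$ still has size $\kappa$ and the localized family $\acal_\alpha=\{A\cap Y_\alpha:A\in\acal\cap(N_{\alpha+1}\setm N_\alpha)\}$ is again $d$-almost disjoint in $\br Y_\alpha;\kappa;$. The crucial bookkeeping is to recombine the constraints: for each piece $A\cap Y_\alpha$ I must treat as a \emph{new} fixed-color set $C_\alpha$ the union of the old $C\cap Y_\alpha$ together with the elements of $A$ lying in earlier models $N_\alpha$ (whose colors are already decided by the previously built $f_{<\alpha}$). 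Since $|A\cap N_\alpha|<d$ adds at most $d-1$ such elements to the at most $k$ coming from $C$, the relevant bound on $|A\cap Y_\alpha \cap C_\alpha|$ is $k+(d-1)$, so I would apply the inductive hypothesis $\induw{\kappa}dm{k+d-1}{}$ on each $Y_\alpha$ and then verify that the union $g=f\cup\bigcup_\alpha f_\alpha$ is a weak conflict free coloring of all of $\acal$.

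\textbf{The main obstacle is reconciling the arithmetic of the floor function across the two accumulating parameters.} The induction on $m$ increases the first coefficient $(m+1)(d-1)$, while inside the chain step I am forced to apply the hypothesis at level $m$ but with $k$ replaced by $k+d-1$, and I must check that
\[
\left\lfloor \frac{(m+1)(d-1)+(k+d-1)+1}{2}\right\rfloor+1
= \left\lfloor \frac{(m+2)(d-1)+k+1}{2}\right\rfloor+1,
\]
so that the same palette size $x$ propagates and no extra color is spent per step—this identity is exactly $(m+1)(d-1)+(k+d-1)=(m+2)(d-1)+k$, which holds, so the bound is genuinely preserved. The delicate point is the handoff of the fixed-color set: I must ensure that a color forced to be unique on a localized piece $A\cap Y_\alpha$ remains unique on the whole of $A$, since $A$ may meet many earlier models and pick up additional appearances of that color value outside $Y_\alpha$. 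The resolution, as in Theorem~\ref{tm:ub}, is that those extra elements all lie in $\bigcup_{\gamma<\alpha}Y_\gamma$ and number fewer than $d$, so they can be folded into the role of the fixed function $f$ at stage $\alpha$—guaranteeing their colors are exactly the ones the localized coloring was instructed to avoid producing a second witness against. Establishing that this folding is consistent, and that the base case delivers precisely the claimed floor bound rather than one more, is where the real care is needed.
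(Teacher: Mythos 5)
Your proposal is correct and takes essentially the same route as the paper: your base case is the paper's Theorem \ref{tm:ind0} (a greedy transfinite construction where the pigeonhole bound $2x > k+d$ produces a color used at most once on each set --- exactly your ``cancellation in pairs'' heuristic), and your elementary-chain step, which folds the at most $k+(d-1)$ previously decided points of each $A$ into the prescribed partial function, is precisely the paper's stepping-up Theorem \ref{tm:ind_new}, asserting that $[\lambda,\kappa,d,k+d-1]\to_w x$ implies $[\lambda^+,\kappa,d,k]\to_w x$. The floor identity you verify, $(m+1)(d-1)+(k+d-1)=(m+2)(d-1)+k$, is the same computation by which the paper closes the induction on $m$.
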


The proof of theorem \ref{tm:ind} will be carried out by induction
on $m$, using theorems \ref{tm:ind0} and \ref{tm:ind_new} below.

\begin{theorem}\label{tm:ind0}
Let  ${\kappa}$ be an infinite cardinal, moreover $\,d$ and $\,x $
be natural numbers with $2x > d$. Then $$\induw
{\kappa}d0{2x-d-1}x\,.$$
\end{theorem}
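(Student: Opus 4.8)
The plan is to prove the unabbreviated statement behind $\induw{\kappa}d0{2x-d-1}x$ by transfinite induction on $\kappa$: given $C\subseteq\kappa$, a $d$-almost disjoint $\acal\subseteq\br\kappa;\kappa;$ with $|A\cap C|\le k:=2x-d-1$ for every $A\in\acal$, and $f\in\fcal(C,x)$, I must produce a weak conflict free coloring $g\in\fcal(\kappa,x)$ of $\acal$ with $g\restriction C=f$. First I would record that, $d$ being finite, $|\acal|\le\kappa$. The combinatorial engine is an elementary counting observation that consumes the hypotheses $2x>d$ and $k=2x-d-1$: if a set $A$ already carries a family $P$ of colored points with $|P|<2x$, then one can choose a color for a single further point $w\in A$ so that some color has multiplicity exactly one on $P\cup\{w\}$. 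Indeed, if no color already occurs exactly once on $P$, then every color that occurs does so at least twice, whence at most $|P|/2<x$ colors are used; painting $w$ with an unused color makes it a unique witness. The relevance is that each set will accumulate at most $(d-1)+k=2x-2<2x$ colored points before its witness is fixed.

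Next I would run the elementary-chain decomposition exactly as in Case~2 of the proof of Theorem~\ref{tm:korlatos}: fix a $\kappa$-chain $\<N_\alpha:\alpha<\kappa\>$ with $\acal,C,f\in N_1$, set $B_\alpha=\cup(\acal\cap N_\alpha)$ and $Y_\alpha=N_{\alpha+1}\cap(\kappa\setminus(B_\alpha\cup N_\alpha))$, so that $|Y_\alpha|=\kappa_\alpha:=|N_{\alpha+1}|<\kappa$. The two facts I would extract are: (i) if $A\in\acal\cap N_{\alpha+1}\setminus N_\alpha$ then $|A\cap N_\alpha|<d$, by $d$-almost disjointness and elementarity, while $|A\cap Y_\alpha|=\kappa_\alpha$; and (ii) such an $A$ is disjoint from every $Y_\gamma$ with $\gamma>\alpha$, since then $A\subseteq B_\gamma$. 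Thus each $A$ meets the colored region in fewer than $d$ points below its own level, receives a private slice of size $\kappa_\alpha$ at its level, and is untouched above it.

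Coloring the levels in increasing order, I would apply the inductive hypothesis at $\kappa_\alpha<\kappa$ to the $d$-almost disjoint family $\acal_\alpha=\{A\cap Y_\alpha:A\in\acal\cap N_{\alpha+1}\setminus N_\alpha\}\subseteq\br Y_\alpha;\kappa_\alpha;$, producing a coloring of $Y_\alpha$ whose witness for each $A\cap Y_\alpha$ avoids the at most $(d-1)+|A\cap C|\le 2x-2$ colors already placed on the lower-level points and the $C$-points of $A$; the counting observation guarantees that such a witness color is available. By (ii) no later level can disturb this witness, so it stays unique on all of $A$. The base case $\kappa=\omega$ I would treat directly: by Komj\'ath's theorem $\MM\lambda\omega d\to\mathbf{ED}$ (relation~(\ref{ko1})) the family is essentially disjoint, each $A$ has an infinite private tail disjoint from $C$, and the same counting observation supplies a witness there.

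The hard part will be the bookkeeping at the interface between levels: the constraints inherited by a rank-$\alpha$ set from its lower-level points and its $C$-points are \emph{forbidden colors} for its witness, whereas the inductive relation $\to_w$ is phrased in terms of a prescribed partial coloring on the ground set $Y_\alpha$. Reconciling these two formulations — and verifying that the governing budget is precisely $(d-1)+k=2x-2<2x$, so that the counting observation never fails — is exactly where the hypotheses $2x>d$ and $k=2x-d-1$ are used up, and is the step demanding the most care.
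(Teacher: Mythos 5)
Your structural facts are all correct — the chain decomposition, the bound $|A\cap N_\alpha|<d$ for $A\notin N_\alpha$, the private slice of size $\kappa_\alpha$, and the disjointness of $A$ from later slices — but the step you defer at the end is not bookkeeping that "demands care": it is an actual obstruction, and the induction as you set it up cannot close. Count the budget. A set $A$ at level $\alpha$ inherits up to $d-1$ colored points from $A\cap N_\alpha$ and up to $k=2x-d-1$ prescribed points from $A\cap\dom(f)$, i.e.\ up to $k+d-1=2x-2$ colored points outside its slice. There are only two ways to feed these into the recursive call, and both fail. If you fold them into the constraint set of the inductive hypothesis (the only thing the relation $\to_w$ can actually see), you need $\Induw{\kappa_\alpha}{\kappa_\alpha}{d}{k+d-1}{x}$, which for $d\ge 2$ is strictly stronger than the statement being proved, so the recursion is circular; the trade ``one step up costs $d-1$ of budget'' is exactly the content of theorem \ref{tm:ind_new}, where it is paid for by raising $\lambda$ to $\lambda^+$ — inside a fixed $\kappa$ there is no slack to pay it, because the parameter $k=2x-d-1$ already exhausts the pigeonhole ($k+d=2x-1<2x$) with nothing to spare. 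If instead you treat the inherited colors as forbidden witness colors, note that in the only nontrivial case $x\ge 2$ we have $2x-2\ge x$, so the inherited points can realize every one of the $x$ colors at least once, leaving no admissible witness color inside $Y_\alpha$; and $\to_w$ gives you no control over which color witnesses $A\cap Y_\alpha$, nor any guarantee that the slice coloring avoids duplicating an inherited color that happened to occur exactly once. Your base case has the same disease: essential disjointness gives finite exceptional sets $F(A)$ of unbounded size, other sets meet $A$ only inside $F(A)$, but nothing caps how many other sets' witness points land in $F(A)$, so $A$ can again see all $x$ colors doubled before its own witness is chosen.

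The paper's proof avoids decomposition entirely, and its one idea is precisely the cap your scheme lacks. Enumerate $\acal=\{A_\eta:\eta<\kappa\}$ and run a transfinite recursion of length $\kappa$ over the sets themselves, adding at most one new colored point per step, while maintaining the invariant (condition (iv) there) that every not-yet-treated set contains at most $d$ of the newly colored points. This is maintainable because, when a point must be placed in $A_\zeta$, one avoids $C$, all earlier $A_\xi$, and the fewer than $\kappa$ many later sets that have already reached the cap $d$ (fewer than $\kappa$ since, by $d$-almost disjointness, each $d$-element subset of the size-$<\kappa$ colored region lies in at most one member of $\acal$). At $A_\zeta$'s turn at most $k+d=2x-1<2x$ of its points are colored, so some color occurs at most once: if exactly once it is already a witness and stays one (all later placements avoid $A_\zeta$), and if not at all, a single fresh point of $A_\zeta$ receives it. This argument is uniform in $\kappa$, needs no base case and has no interlevel interface — which is exactly what your reduction cannot reproduce at the critical value $k=2x-d-1$.
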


\begin{proof}[Proof of Theorem \ref{tm:ind0}]
Let us write $k=2x-d-1$ and assume that a set $C \subs \kappa$, a
$d$-almost disjoint system $\acal\subs \br {\kappa};{\kappa};$,
and a partial function $f \in \fcal(C,x)$ are given  such that
$|A\cap C|\le k$ for each $A\in \acal$. We may clearly assume that
$|\acal| = \kappa$, and hence may fix a one-to-one $\kappa$-type
enumeration $\{A_\eta : \eta < \kappa\}$ of $\acal$.

By transfinite induction we shall define $f_{\eta} \in
\fcal(\kappa,x)$ for ${\eta}\le {\kappa}$ such that the inductive
conditions (i) - (iv) below be valid.
\begin{enumerate}[(i)]
\item
$f_{\eta} \supset f_\zeta \supset f$ for $\eta > \zeta\,$,
\item
$\,C \cap\dom(f_{\eta}\setm f) = \emptyset$ and $|f_{\eta}\setm
f|\le |{\eta}|\,$,
\item $\forall\, {\zeta}<{\eta}$ $\exists\, i<x$
$|A_{\zeta}\cap f_{\eta}^{-1}\{i\}|=1\,$,
\item if ${\gamma}\ge {\eta}$ then
$|A_{\gamma}\cap \dom(f_{\eta}\setm f)|\le d$.
\end{enumerate}
\noindent{\bf Case 1.} {\em ${\eta}=0$.}\\
Put $f_0=f$, then (i) - (iv) hold trivially.

\medskip

\noindent{\bf Case 2.}  ${\eta}$ is a limit ordinal.\\
Put $f_{\eta}=\cup\{f_{\zeta}:{\zeta}<{\eta}\}$. It is again easy
to check that the validity of conditions (i) - (iv) will be
preserved. In particular, (iii) is preserved because, as $x$ is
finite, for each $\zeta < \eta$ there are cofinally many $\xi <
\eta$ satisfying $|A_{\zeta}\cap f_{\xi}^{-1}\{i\}|=1\,$ with the
same $i < x$.
\medskip

\noindent{\bf Case 3.} {\em ${\eta}={\zeta}+1$.}\\
Then we have
\begin{multline}\notag
|A_{\zeta}\cap \dom(f_{\zeta})| = |A_{\zeta}\cap \dom(f)|+
|A_{\zeta}\cap (\dom(f_{\zeta}\setm f))|\le k+d\,,
\end{multline}
consequently,  $2x>2x-1=k+d$ implies that there is $i<x$ such that
$|A_{\zeta}\cap f_{\zeta}^{-1}\{i\}|\le 1$. If there is an $i<x$
such that $|A_{\zeta}\cap f_{\zeta}^{-1}\{i\}|= 1$ then the choice
$f_{\eta}=f_{\zeta}$ clearly works.

Otherwise we may fix $j<x$ with $A_{\zeta}\cap
f_{\zeta}^{-1}\{j\}=\empt$. Let us then put
\begin{displaymath}
\acal_{\zeta}=\{A_{\xi}:{\xi}<{\zeta}\}\cup\{
A_{\gamma}:{\zeta}<{\gamma}\land |A_\gamma \cap
\dom(f_{\zeta}\setm f)| = d\}\,.
\end{displaymath}
Using $|\dom(f_{\zeta}\setm f)|\le |{\zeta}| < \kappa$ and that
$\acal$ is $d$-almost disjoint we get $|\acal_{\zeta}| <{\kappa}$,
moreover we also have $|C \cap A_\zeta| \le k$. Thus we can pick
\begin{displaymath}
\xi_{\zeta}\in A_{\zeta}\setm \big(\cup\acal_{\zeta} \cup C \big)
\end{displaymath}
and put $$f_{\eta} = f_{\zeta+1} =f_{\zeta} \cup \{\langle
\xi_\zeta,j \rangle\}.$$

\smallskip

Then $f_{\eta}$ clearly satisfies (i) and (ii). If ${\xi}<{\zeta}$
then, by our construction, $f_{\eta}\restriction
A_{\xi}=f_{\zeta}\restriction A_{\xi}$, hence, as (iii) is
satisfied by $f_\zeta$, there is $i<x$ such that
$|f_{\eta}^{-1}\{i\}\cap A_{\xi}|=1$. Moreover,
$f_{\eta}^{-1}\{j\}\cap A_{\zeta}=\{\xi_{\zeta}\}$, so (iii) is
satisfied by $f_{\eta}$ as well.

Finally, to show that $f_\eta$ satisfies (iv), consider any
$\gamma \ge {\eta}$. If we have $|A_{\gamma}\cap\,
\dom(f_{\zeta}\setm f)|< d$ then $|A_{\gamma}\cap\,
\dom(f_{\eta}\setm f )|\le d$ holds trivially, because
$|\dom(f_{\eta} \setm f_{\zeta}|\le 1$. If, on the other hand,
$\,|A_{\gamma}\cap \dom(f_{\zeta}\setm f)| = d$ then
$A_{\gamma}\in \acal_{\zeta}$ and so $\xi_{\zeta}\notin
A_{\gamma}$. Thus, in this case, $|A_{\gamma}\cap\,
\dom(f_{\eta}\setm f)|= |A_{\gamma}\cap\, \dom(f_{\zeta}\setm
f)|=d$; in any case $f_\eta$ satisfies (iv).

Obviously, then $f_{\kappa} \in \fcal(\lambda,x)$ is a weak
conflict free coloring of $\acal$ that satisfies $f_\kappa
\upharpoonright C = f$, completing the proof.
\end{proof}

Next we prove a stepping up result for the first parameter of our
new arrow relations. The proof of this will reveal why we chose to
introduce this new relation.

\begin{theorem}\label{tm:ind_new}
Let $\lambda \ge {\kappa} \ge \omega$ and $\,d,k,x\in {\omega}$
with $d > 0$. Then
\begin{enumerate}[$(1)$]
\item $[\lambda,\kappa,d,k+d-1] \to_w x$ implies $[\lambda^+,\kappa,d,k] \to_w x$,
\smallskip
\item  $[\lambda,\kappa,d,k+d-1] \to x$ implies $[\lambda^+,\kappa,d,k] \to x$.
\end{enumerate}
\end{theorem}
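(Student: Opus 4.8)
The plan is to prove both parts (1) and (2) simultaneously by a single level-by-level construction along an elementary chain of length $\lambda^+$, the two cases differing only in which form of the hypothesis arrow is invoked at each step. So I would fix a $\lambda^+$-chain $\<N_\alpha:\alpha<\lambda^+\>$ with all the relevant data (the family $\acal$ and the pre-colouring $f$) in $N_1$; since $\lambda^+=(\lambda)^+$ the chain convention gives $\lambda\subs N_1$, hence $\kappa\subs N_1$, and we may assume $|N_{\alpha+1}|=\lambda$, so that $|N_{\alpha+1}\cap\lambda^+|=\lambda$ for every $\alpha$. Write $Y_\alpha=N_{\alpha+1}\cap\lambda^+\setm N_\alpha$, and let $C$ denote the constraint set of the conclusion (the given set in case (1), and $\dom(f)$ in case (2)), so that $|A\cap C|\le k$ for every $A\in\acal$ in both cases. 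The two standard chain facts I would establish first are: (a) if $A\in\acal\setm N_\alpha$ then $|A\cap N_\alpha|<d$, since any $d$ points of $A\cap N_\alpha$ lie in $N_\alpha$ and, by $d$-almost disjointness, determine $A$ as the \emph{unique} member of $\acal$ containing them, so elementarity forces $A\in N_\alpha$; and (b) every $A\in\acal$ lies in some $N_\gamma$, using that $d$-almost disjointness and $d<\omega$ give $|\acal|\le\lambda^+$, fixing an enumeration of $\acal$ in $N_1$, and noting $A\in N_{\beta+1}$ when $A$ is its $\beta$-th member. Consequently $\alpha(A)=$ the least $\alpha$ with $A\in N_{\alpha+1}$ is well defined, $A\subs N_{\alpha(A)+1}$ (as $A\in N_{\alpha(A)+1}$ and $\kappa\subs N_{\alpha(A)+1}$), and the sets $\{A:\alpha(A)=\alpha\}$ partition $\acal$.

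The heart of the argument is the step at level $\alpha$, processed by recursion in increasing order of $\alpha$. Suppose the colouring has already been defined on $N_\alpha\cap\lambda^+=\bigcup_{\beta<\alpha}Y_\beta$; call this partial colouring $g_{<\alpha}$. I would then apply the hypothesis arrow on the set $N_{\alpha+1}\cap\lambda^+$ (which has size $\lambda$, and the relation is insensitive to a bijection with $\lambda$) to the $d$-almost disjoint family $\hat\acal_\alpha=\{A\in\acal:\alpha(A)=\alpha\}\subs[N_{\alpha+1}\cap\lambda^+]^\kappa$. The crucial choice is the constraint: I take the constraint set to be $C_\alpha=(C\cap N_{\alpha+1})\cup(N_\alpha\cap\lambda^+)$, pre-coloured by $f$ on the first part and by $g_{<\alpha}$ on the second (these agree on the overlap, so the result is a function $f_\alpha$). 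For each $A\in\hat\acal_\alpha$ we have $A\subs N_{\alpha+1}$, so $A\cap C_\alpha\subs(A\cap C)\cup(A\cap N_\alpha)$, which by fact (a) has size at most $k+(d-1)$. This is exactly why the hypothesis must be assumed with fourth parameter $k+d-1$: freezing the fewer-than-$d$ "lower" points of $A$ as part of the constraint is precisely what accounts for the extra $d-1$.

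Applying $[\lambda,\kappa,d,k+d-1]\to_w x$ in case (1) (resp. $[\lambda,\kappa,d,k+d-1]\to x$ in case (2)) now yields a weak (resp. total) conflict free colouring $g_\alpha$ of $\hat\acal_\alpha$ that agrees with $f_\alpha$ on $C_\alpha$; in particular $g_\alpha$ agrees with $g_{<\alpha}$ on all of $N_\alpha\cap\lambda^+$, so setting $g\restriction Y_\alpha:=g_\alpha\restriction Y_\alpha$ is coherent. Taking $g=\bigcup_\alpha(g_\alpha\restriction Y_\alpha)$, every $A\in\acal$ is treated at its own level $\alpha(A)$, and since $A\subs N_{\alpha(A)+1}$ we have $g\restriction A=g_{\alpha(A)}\restriction A$; hence the (weak) conflict free property of $g_{\alpha(A)}$ on $A$ transfers verbatim to $g$. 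Finally $g$ respects the prescribed pre-colouring $f$, and in case (2) the colouring $g$ is total because each $g_\alpha$ is.

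I expect the main obstacle to be precisely the design decision recorded above: one must apply the arrow at level $\alpha$ to the \emph{whole} set $A=(A\cap N_\alpha)\cup(A\setm N_\alpha)$ with the lower part frozen, rather than to $A\setm N_\alpha$ on $Y_\alpha$ alone. The naive latter attempt fails, because a weak conflict free colouring of the "top parts'' only guarantees \emph{some} uniquely coloured point, whose colour may coincide with a colour already used on $A\cap N_\alpha$ and so fail to be unique in $A$; one cannot steer the arrow toward a safe colour. Incorporating $A\cap N_\alpha$ into the constraint, at the cost of $d-1$ in the fourth parameter, is exactly what lets the arrow relation itself resolve this interaction. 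The remaining work, verifying the bookkeeping facts (a) and (b) and checking coherence at limit levels, is routine but should be carried out with care.
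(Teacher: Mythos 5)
Your proposal is correct and follows essentially the same route as the paper's proof: a $\lambda^+$-chain of elementary submodels, with the hypothesis arrow applied at each successor level to the sets appearing there, taking as constraint set $(C\cap N_{\alpha+1})\cup(N_\alpha\cap\lambda^+)$ pre-coloured by $f$ and the previously built colouring, which is exactly the paper's $C_\eta=(C\cap N_\eta)\cup N_\zeta$ with the same $k+(d-1)$ bound via $|A\cap N_\alpha|<d$. The only (harmless) organizational difference is that you verify the conflict-free property for each $A$ once at its own level $\alpha(A)$, using that the colouring on $A$ is frozen afterwards, whereas the paper maintains ``$g_\eta$ is a weak conflict free colouring of $\acal\cap N_\eta$'' as an inductive hypothesis and handles limit stages by a pigeonhole argument on the finitely many colours.
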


\begin{proof}[Proof of Theorem \ref{tm:ind_new}]
(1). Assume that $C \subs \lambda^+$ and the $d$-almost disjoint
system $\acal\subs \br {\lambda^+};{\kappa};$ are such that $|A
\cap C| \le k$ for any $A \in \acal$, moreover $f \in \fcal(C,x)$.
Let $\<N_{\nu}:{\nu}<\lambda^+\>$ be a $\lambda^+$-chain of
elementary submodels such that $\lambda^+,\acal,C,f \in N_1$ and
$\lambda\subs N_1$.

By transfinite induction we shall define $g_{\eta}\in
\fcal(\lambda^+,x)$ for all ${\eta}< \lambda^+$ satisfying the
following inductive hypotheses.
\begin{enumerate}[(i)]
\item $\dom(g_{\eta})\subs N_{{\eta}}$ and $g_\zeta \subset g_\eta$ for $\zeta < \eta$,
\item $g_\eta \upharpoonright C \cap N_\eta = f\restriction C \cap N_{\eta}$,
\item
$g_{\eta}$ is a weak conflict free coloring of $\acal\cap
N_{\eta}$.
\end{enumerate}

\noindent{\bf Case 1.} {\em ${\eta}=0$.}\\
We have to put $g_0 = \emptyset\,$ because $N_0 = \emptyset$. This
works trivially for the same reason.

\medskip

\noindent{\bf Case 2.} {\em ${\eta}$ is limit.}\\
Then we put $g_{\eta}=\cup\{g_{\zeta}:{\zeta}<{\eta}\}$. Now, (i)
and (ii) follow immediately from
$N_{\eta}=\cup\{N_{\xi}:{\xi}<{\eta}\}$. To check (iii), pick
$A\in \acal\cap N_{\eta}$. There is a $\zeta < \eta$ with $A \in
N_\zeta$ and so for every $\nu \in \eta \setm \zeta$ there is
$i_\nu < x$ with $|A\cap  g_{\nu}^{-1}\{i_\nu\}|=1$. As $x$ is
finite, we have an $i < x$ such that $i_\nu = i$ for cofinally
many $\nu \in \eta$, hence $|A\cap  g_{\eta}^{-1}\{i\}|=1$.

\medskip

\noindent{\bf Case 3.} {\em ${\eta}={\zeta}+1$.}\\ Let us put
$C_\eta = (C \cap N_\eta) \cup N_{\zeta}$ and $f_{\eta}=
(f\restriction N_{\eta}) \cup g_{\zeta} \in \fcal(C_\eta,x)$. Then
for all $A\in \acal \cap (N_\eta \setm N_{\zeta}) \subs [\lambda^+
\cap N_\eta]^{\kappa}$ we have
$$|A\cap C_{\eta}|\le  |A\cap C|+|A\cap N_{\zeta}|\le
k+(d-1)\,.$$

But $|\lambda^+ \cap N_\eta| = \lambda$, hence we can apply
$[\lambda,\kappa,d,k+d-1] \to_w x$  to $C_\eta$, $\acal \cap
(N_\eta \setm N_{\zeta})$, and $f_{\eta}$ to find a weak conflict
free coloring $g_{\eta}$ of $\acal\cap (N_{\eta} \setm N_\zeta)$
such that $\dom(g_{\eta})\subs \lambda^+ \cap N_{\eta}$ and
$$g_{\eta}\upharpoonright C_\eta = f_{\eta}\upharpoonright
C_\eta\,.$$ In particular, then $g_\zeta \subs g_\eta$ and since
for every $A \in \acal \cap N_\zeta$ we have $A \subs N_\zeta
\subs C_\eta$ we obtain that $g_\eta$ is a weak conflict free
coloring $g_{\eta}$ of $\acal \cap N_\eta$. Finally,  $C \cap
N_\eta \subs C_\eta$ implies $g_\eta \upharpoonright C \cap N_\eta
= f\restriction C \cap N_{\eta}$, which shows that $g_\eta$
satisfies all three inductive hypotheses and thus completes the
inductive construction.

It is now obvious that the function $g= \bigcup_{\eta <
\lambda^+}g_\eta$ is a weak conflict free coloring of $\acal$ and
satisfies $g \upharpoonright C = f$, which completes the proof of
(1).

\medskip
\noindent (2) can be proved in a completely similar, but even
simpler, manner.
\end{proof}

\begin{proof}[Proof of Theorem \ref{tm:ind}]
To start with, in the case $m = 0$, we have to show
$$[\kappa,\kappa,d,k] \to_w \left\lfloor \frac{k+d}2\right\rfloor +1   $$
for all natural numbers $k$ and $d$. To see this, put $x =
\left\lfloor \frac{k+d}2\right\rfloor +1$ and note that we have
$2x \ge k+d+1$, hence $2x-d-1 \ge k$. But then, applying theorem
\ref{tm:ind0}, we can conclude $\induw {\kappa}d0{2x-d-1}x\,$ and
hence $\induw {\kappa}d0{k}x\,$ as well.

Now, assume that $m > 0$ and theorem \ref{tm:ind} has been
verified for $m-1$, i.e.

\begin{displaymath} \induw
{\kappa}d{m-1}k{\left\lfloor {\frac{m(d-1)+k+1}{2}}
\right\rfloor+1}
\end{displaymath}
holds for all $d$ and $k$. Applying the stepping up theorem
\ref{tm:ind_new} to this formula with $k$ replaced by $k+d-1$ (and
$\lambda = \kappa^{+m-1}$) we obtain
\begin{displaymath}
\induw {\kappa}dmk{\left\lfloor {\frac{(m+1)(d-1)+k+1}{2}}
\right\rfloor+1},
\end{displaymath}
completing the induction step from $m-1$ to $m$.
\end{proof}

\section{A lower bound for $w\ccf(\beth_m(\kappa),\kappa,d)$}
\label{sc:dlast}

Now we know that $w\ccf(\kappa^{+m},\kappa,d)$ is finite, hence it
is natural to attempt to find its exact value. The aim of this
section is to execute this attempt, at least under GCH and for $d
= 2$ or $d$ odd. The case $m = 0$ is relatively easy to deal with,
using the following lemma.

\begin{lemma}\label{lm:0*}
Fix a cardinal $\kappa \ge \omega$ and a natural number $t > 0$.
We have a procedure that assigns to any
$(\kappa,\kappa,2t)$-system $\fcal$ another
$(\kappa,\kappa,2t)$-system $\fcal^*$ in such a way that
$w\ccf(\fcal^*) > t$ holds whenever $w\ccf(\fcal) \ge t$.
\end{lemma}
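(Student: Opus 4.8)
My plan is to first isolate a \emph{peeling} description of $w\ccf$ and then to manufacture $\fcal^*$ so that this description can never reach level $t$. For a system $\gcal$ with $\cup\gcal = W$ and a set $D \subseteq W$, write $\gcal\restriction D = \{A\setminus D : A\in\gcal,\ |A\cap D|\ne 1\}$, viewed as a system on $W\setminus D$. The first step is to prove the equivalence: $w\ccf(\gcal)\le s+1$ holds if and only if there is some $D\subseteq W$ with $w\ccf(\gcal\restriction D)\le s$. For the forward direction take $D = g^{-1}\{0\}$ for an optimal $(s{+}1)$-coloring $g$ with colors $\{0,1,\dots,s\}$: the members met exactly once by $D$ already have color $0$ as a unique witness and drop out, while each remaining member $A$ keeps a witness color $\ne 0$ lying inside $A\setminus D$, so $g\restriction(W\setminus D)$ is an $s$-coloring of $\gcal\restriction D$. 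Conversely, given a witnessing $s$-coloring $h$ of $\gcal\restriction D$, assign the fresh color $0$ to all of $D$ and put $h$ (shifted into $\{1,\dots,s\}$) elsewhere. Applied with $s+1=t$, this reduces the target $w\ccf(\fcal^*)>t$ to the single clause: \textbf{for every} $D\subseteq\cup\fcal^*$ the residual $\fcal^*\restriction D$ still satisfies $w\ccf(\fcal^*\restriction D)\ge t$, which is exactly where the hypothesis $w\ccf(\fcal)\ge t$ must be fed in.

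The construction of $\fcal^*$ must therefore make (a copy of) $\fcal$ survive \emph{every} one-color peel. Plain disjoint copies of $\fcal$ are not enough: if $\fcal$ admits a set meeting each of its members exactly once — which large almost disjoint families often do — then that set, copied into each component, resolves and deletes every copy simultaneously, so $\fcal^*\restriction D$ becomes empty and the peel succeeds. (Indeed, a disjoint union only realizes the maximum of the $w\ccf$'s, never an increase.) To defeat this I would interlock copies of $\fcal$ through an odd/parity gadget — the transfinite analogue of the Fano configuration, whose finite version already has no exact once-transversal — gluing the copies along shared blocks of size $2t-1$, precisely the largest overlap that the relation $(\kappa,\kappa,2t)$ still tolerates. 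The design goal is that no $D$ can simultaneously once-resolve or harmlessly truncate all the copies: some copy always reappears inside $\fcal^*\restriction D$ carrying a subsystem that is a copy of $\fcal$ (or at least a subfamily with $w\ccf\ge t$), and then $w\ccf(\fcal^*\restriction D)\ge w\ccf(\fcal)\ge t$ closes the peel.

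Finally I would check the bookkeeping — $|\fcal^*|=\kappa$ and every member has size $\kappa$ — which is routine, together with the one genuinely hard point: that any two members of $\fcal^*$ meet in fewer than $2t$ points. This almost-disjointness bound is the crux, and it is the reason the statement carries the \emph{even} parameter $2t$: the gluing may spend at most $2t-1$ common points per pair, so the copies can be entangled only so tightly, and one must still verify that this limited entanglement suffices to kill every cheap transversal in every residual. I expect this tension — entangle the copies strongly enough to block each economical $D$, yet keep all pairwise intersections strictly below $2t$ — to absorb essentially all of the work; once the combinatorial geometry of the shared blocks is pinned down, the peeling half described above is automatic.
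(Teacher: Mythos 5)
Your preparatory peeling equivalence is correct (with the convention that a residual containing the empty set admits no weak conflict free coloring at all), but it is also where your proof ends. The lemma asserts the existence of a concrete procedure $\fcal \mapsto \fcal^*$, and you never define one: ``interlock copies of $\fcal$ through an odd/parity gadget, gluing along shared blocks of size $2t-1$'' is a design goal, not a construction. You give no definition of the glued family, no verification of $2t$-almost disjointness, and no proof that every residual retains a subfamily of weak conflict free chromatic number $\ge t$ --- indeed you say explicitly that you expect this tension to ``absorb essentially all of the work.'' So the entire substance of the lemma is missing. Moreover, the target you set yourself --- a copy of $\fcal$ surviving \emph{every} one-color peel --- is stronger than necessary, and it is not what the actual construction achieves, so even the plan points in a slightly wrong direction.

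For contrast, the paper's construction is both different and much lighter than what you envisage. Take $2t$ pairwise disjoint blocks $A_n \in [\kappa]^\kappa$ for $n<2t$, put a disjoint copy $\acal_n$ of $\fcal$ on each block (the copies are \emph{not} glued to one another at all), and add $\kappa$ many new sets $C_\nu = B_\nu \cup X_\nu$, where $\{X_\nu : \nu<\kappa\}$ enumerates all transversals of the blocks (sets meeting each $A_n$ in exactly one point) and the $B_\nu$ are pairwise disjoint padding sets of size $\kappa$. Almost disjointness is then automatic: two distinct transversals of $2t$ disjoint blocks meet in at most $2t-1$ points, and each $C_\nu$ meets each member of each $\acal_n$ in at most one point. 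Given a putative weak conflict free coloring $h$ of $\fcal^*$ with color set $t$, the hypothesis $w\ccf(\acal_n)\ge t$ forces $h[A_n]=t$ for every $n<2t$; pairing the blocks as $(A_{2i},A_{2i+1})$ one picks $x_i\in A_{2i}$ and $y_i\in A_{2i+1}$ with $h(x_i)=h(y_i)=i$, and the transversal $\{x_i,y_i : i<t\}$ equals some $X_\nu$, whence every color occurs at least twice on $C_\nu$ and $I_h(C_\nu)=\empt$, a contradiction. Two points of comparison: the evenness of $2t$ enters through this pairing --- two blocks per color on a transversal --- and not, as you surmised, chiefly through the intersection budget (the bound $|X_\nu\cap X_\mu|\le 2t-1$ comes for free); and the set that defeats the coloring is one of the \emph{new} sets $C_\nu$, not a surviving copy of $\fcal$, which is why no interlocking of the copies and no peeling analysis is ever needed.
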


\begin{proof}
Given any $(\kappa,\kappa,2t)$-system $\fcal$, let us first choose
pairwise disjoint sets
$\{A_n:n<2t\}\cup\{B_{\nu}:{\nu}<{\kappa}\}\subs \br
{\kappa};{\kappa};$. For each $n<2t$ let $\acal_n\subs \br
A_n;{\kappa};$ be an isomorphic copy of $\fcal$. Let
$\{X_{\nu}:{\nu}<{\kappa}\}$ be a one-one enumeration of the
family
\begin{displaymath}
  \{X : |X| = 2t \mbox{ and } |X\cap A_n|= 1 \text{ for each } n<2t\}
\end{displaymath}
of all transversals of  $\{A_n:n<2t\}$. Write $C_{\nu}=B_{\nu}\cup
X_{\nu}$ and let
\begin{displaymath}
\fcal^*=\bigcup\{\acal_n:n<{2t}\}\cup \{C_{\nu}:{\nu}<{\kappa}\}.
\end{displaymath}
Then $\fcal^*$ is $2t$-almost disjoint, because
\begin{itemize}
\item $|C_{\nu}\cap C_{\mu}|=|X_{\nu}\cap X_{\mu}| < 2t$
 for ${\nu} \ne {\mu}$,
\item $|C_{\nu}\cap A|\le 1$ for $A\in \bigcup_{n<{2t}}\acal_n$.
\end{itemize}
Now, assume that $w\ccf(\fcal) \ge t$ and, contrary to our claim,
$h$ is a weak conflict free coloring of $\fcal^*$ with color set
$t$. Then $w\ccf(\acal_n)\ge t$ implies that $h[A_n] = t$ for each
$n < 2t$, hence for each $i < t$ there are $x_i\in A_{2i}$ and
$y_i\in A_{2i+1}$ such that $h(x_i)=h(y_i)=i$.

There is a ${\nu}<{\kappa}$ with   $X_{\nu}=\{x_i, y_i:i<t\}$. But
then $|h^{-1}\{i\}\cap C_{\nu}|\ge  2$ for each $i<t$,  and $h$ is
not a  weak conflict free coloring of $\fcal^*$, a contradiction.
So, indeed, we have $w\ccf(\fcal^*)> t$.

\end{proof}

\begin{theorem}\label{tm:omegaw}
For any cardinal ${\kappa} \ge \omega$ and integer $d \ge 2$ we
have
  \begin{displaymath}\tag{$*_d$}
\crcw {\kappa}{\kappa}d={\left\lfloor {\frac{d}2}   \right\rfloor+1}.
    \end{displaymath}
\end{theorem}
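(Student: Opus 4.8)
The plan is to prove the two inequalities of $(*_d)$ separately, and in fact the upper bound requires no new work. Applying Theorem \ref{tm:egeszresz} with $m=0$ gives at once
$w\ccf(\kappa,\kappa,d) \le \left\lfloor \frac{(0+1)(d-1)+1}{2}\right\rfloor+1 = \left\lfloor \frac{d}{2}\right\rfloor+1$,
so the whole content of the theorem lies in the matching lower bound $w\ccf(\kappa,\kappa,d) \ge \left\lfloor \frac{d}{2}\right\rfloor+1$.

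For the lower bound I would set $t=\left\lfloor \frac{d}{2}\right\rfloor$, noting $t\ge 1$ since $d\ge 2$, and reduce everything to producing a single $(\kappa,\kappa,2t)$-system $\fcal$ with $w\ccf(\fcal) > t$. Indeed, $2t\le d$ guarantees that any $2t$-almost disjoint family is a fortiori $d$-almost disjoint, so such an $\fcal$ is also a $(\kappa,\kappa,d)$-system; hence $w\ccf(\kappa,\kappa,d)\ge w\ccf(\fcal) > t = \left\lfloor \frac{d}{2}\right\rfloor$, i.e. $\ge \left\lfloor \frac{d}{2}\right\rfloor+1$. This reduction also painlessly covers the odd case: the construction at level $t$ serves both $d=2t$ and $d=2t+1$, since $2t\le d$ in either case.

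The construction of such an $\fcal$ is exactly what Lemma \ref{lm:0*} is designed to bootstrap, and I would obtain it by induction on $t\ge 1$, proving the statement $P(t)$: \emph{there is a $(\kappa,\kappa,2t)$-system with $w\ccf>t$.} For the base case $t=1$ I start from a pairwise disjoint family $\fcal_0\subs \br \kappa;\kappa;$ consisting of $\kappa$ sets; this is $2$-almost disjoint and trivially has $w\ccf(\fcal_0)=1\ge 1$, so Lemma \ref{lm:0*} (with $t=1$) yields $\fcal_0^*$, a $(\kappa,\kappa,2)$-system with $w\ccf>1$. For the inductive step, if $\gcal$ witnesses $P(t)$, then $\gcal$ is $2t$-almost disjoint, hence also $2(t+1)$-almost disjoint, so it is a $(\kappa,\kappa,2(t+1))$-system with $w\ccf(\gcal)>t$, i.e. $w\ccf(\gcal)\ge t+1$; applying Lemma \ref{lm:0*} with $t+1$ in place of $t$ produces $\gcal^*$, a $(\kappa,\kappa,2(t+1))$-system with $w\ccf>t+1$, which establishes $P(t+1)$.

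I do not expect a serious obstacle, since the substantive combinatorics is packaged inside Lemma \ref{lm:0*}, which I may cite as already proved. The one point requiring genuine care is the matching of parameters in the induction: Lemma \ref{lm:0*} does \emph{not} raise $w\ccf$ within a fixed value of $t$ (it only passes from $w\ccf\ge t$ to $w\ccf>t$ at that level), so the induction must increase $t$ at every stage, and the step that makes this possible is the harmless re-reading of a $2t$-almost disjoint system as a $2(t+1)$-almost disjoint one. Keeping this index bookkeeping straight, together with the inequality $2t\le d$ that ties the level-$t$ construction to the given $d$, is the only thing that needs to be verified with attention.
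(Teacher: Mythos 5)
Your proposal is correct and follows essentially the same route as the paper: the upper bound is quoted from theorem \ref{tm:egeszresz} with $m=0$, the lower bound is proved by induction on $t$ using lemma \ref{lm:0*} as the stepping-up device (with exactly the same re-reading of a $2t$-almost disjoint system as a $2(t+1)$-almost disjoint one), and odd values of $d$ are absorbed via $2\lfloor d/2\rfloor \le d$, just as the paper deduces both $(*_{2s})$ and $(*_{2s+1})$ from its inductive statement $(\circ_s)$. The only divergence is the base case: the paper exhibits a concrete $(\kappa,\kappa,2)$-system with $w\ccf \ge 2$, namely the family $\lcal$ of lines of a $2$-dimensional vector space over a field of size $\kappa$, whereas you bootstrap the base case by applying lemma \ref{lm:0*} with $t=1$ to a pairwise disjoint family (which has $w\ccf = 1$) --- this is valid, since the lemma's hypothesis and proof go through for $t=1$, though it is worth noting that the paper's geometric base family is not an arbitrary choice: $\lcal$ is reused later, in proposition \ref{f:omega}, where the fact that it has no conflict free coloring with $2$ colors yields $\crc{\kappa}{\kappa}2 = 3$.
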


\begin{proof}
We shall prove, by  induction on $1\le s<{\omega}$, that
\begin{displaymath}\tag{$\circ_s$}
\crcw {\kappa}{\kappa}{2s}\ge s+1.
\end{displaymath}
Then, also applying theorem \ref{tm:egeszresz}, we have
\begin{displaymath}
s+1 \le \crcw {\kappa}{\kappa}{2s}\le\crcw
{\kappa}{\kappa}{2s+1}\le s+1\,,
\end{displaymath}
hence  $(\circ_s)$ implies both $(*_{2s})$ and $(*_{2s+1})$.

\medskip

\noindent{\bf First step:} $s=1$.\\
Take a $2$-dimensional vector space $V$ with $|V| = \kappa$ above
any field of cardinality ${\kappa}$ and let $\lcal$ be the family
of all lines (1-dimensional affine subspaces) in $V$. Then $\lcal$
is $2$-almost disjoint, hence a $(\kappa,\kappa,2)$-system, and it
trivially does not have a weak conflict free coloring with a
single color. So $\crcw {\kappa}{\kappa}2\ge 2=1+1$.

\medskip

\noindent{\bf Induction step:} $s\to (s+1) $.\\
Let $\fcal$ be a $(\kappa,\kappa,2s)$-system with $\wccf(\fcal)
\ge s+1$. We may then apply lemma \ref{lm:0*} to $\fcal$ with $t =
s+1$ to conclude that the $(\kappa,\kappa,2(s+1))$-system
$\fcal^*$ satisfies $w\ccf(\fcal^*) \ge t+1 = (s+1)+1$.
\end{proof}

Theorem \ref{tm:omegaw} shows that the upper bound established in
theorem \ref{tm:egeszresz} is sharp for $m = 0$. We shall show
next that this is also true for all $m > 0$, provided that GCH
holds and $d$ is odd. The following lemma plays the key role in
proving this.

\begin{lemma}\label{lm:l*}
For any cardinals $\lambda \ge \kappa \ge \omega$ and natural
number $\ell > 0$ we have a procedure assigning to any
$(\lambda,\kappa,2\ell+1)$-system $\fcal$ a
$\,(2^\lambda,\kappa,2\ell+1)$-system $\fcal^*$ so that
$\,w\ccf(\fcal) \ge \ell$ implies
$$w\ccf(\fcal^*) \ge w\ccf(\fcal) + \ell\,.$$
\end{lemma}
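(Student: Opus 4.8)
The plan is to mimic the stepping-up gadget of Lemma \ref{lm:0*}, but to produce the cardinal jump $\lambda \to 2^\lambda$ by taking $2^\lambda$ pairwise disjoint isomorphic copies of $\fcal$ rather than a single bank of them. Write $w = \wccf(\fcal)$ and set $\rho = w + \ell - 1$; it suffices to show that $\fcal^*$ admits no weak conflict free coloring whose colour set has size $\le \rho$, since then $\wccf(\fcal^*) \ge \rho + 1 = \wccf(\fcal) + \ell$. First I would fix a disjoint family $\{A_i : i \in I\}$ with $|I| = 2^\lambda$ and copies $\acal_i \cong \fcal$ living on $A_i$, so that $\bigcup_i \acal_i$ already has size $2^\lambda$; the members of $\fcal^*$ will be the members of all the $\acal_i$ together with a carefully chosen family of size-$\kappa$ \emph{combining sets}. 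A key design choice is to take each combining set to consist entirely of copy-points, in fact at most $2\ell$ of them from any single copy, so that the ground set stays exactly $2^\lambda$ and no fresh, uncontrolled ``padding'' colours are ever introduced (every used colour then already occurs on some copy).

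The colour-counting core is immediate: if $h$ is a weak conflict free coloring of $\fcal^*$ with $\le \rho$ colours, then for each $i$ the restriction $h \restriction A_i$ is a weak conflict free coloring of $\acal_i \cong \fcal$, so its colour-trace $R_i = h[A_i]$ satisfies $|R_i| \ge w$, i.e. each copy omits at most $\ell - 1$ colours. Here is where the hypothesis $\wccf(\fcal) \ge \ell$ enters decisively: it yields $\rho = w + \ell - 1 \le 2w - 1 < 2w$, so every trace $R_i$ is a strict majority of $\rho$, and any two traces meet in at least $|R_i| + |R_j| - \rho \ge w - \ell + 1 \ge 1$ colours. Using this ``majority'' property together with the fact that there are only finitely many possible traces among the $2^\lambda$ copies, a pigeonhole (or $\Delta$-system) argument locates $2^\lambda$-many copies sharing a common rich trace, and enough copies of complementary traces to witness every colour that $h$ actually uses at least twice across distinct copies.

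The decisive step, and the one I expect to be the main obstacle, is to arrange the combining sets \emph{in advance} — before $h$ is known — so that for every colouring $h$ with $\le \rho$ colours at least one combining set $C$ has no colour occurring on it exactly once, while the whole family stays $(2\ell+1)$-almost disjoint. This difficulty does not arise in Lemma \ref{lm:0*}: there every copy is forced to realise all $t$ colours, so a single transversal doubles all of them and the padding is automatically harmless, whereas here a copy may omit up to $\ell - 1$ colours and a naive combining set can acquire a singleton colour. I would resolve this by building $C$ as a selection of $\le 2\ell$ points from each of $\kappa$-many copies, choosing the copies and multiplicities (guided by the pigeonhole and majority facts above) so that every colour appearing on $C$ is realised at least twice on it; the abundance of copies is exactly what lets one double every needed colour. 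The final, technical heart is to exhibit such combining sets as a single code-like, $(2\ell+1)$-almost disjoint family in which two members can coincide in at most $2\ell$ points \emph{in total}: the bound $2\ell + 1$ is precisely calibrated so that a combining set may absorb the coincidences it needs in order to double colours without violating the sparseness of the system.
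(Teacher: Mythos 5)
Your reduction (set $\rho = \wccf(\fcal)+\ell-1$ and defeat every weak conflict free colouring with at most $\rho$ colours) is sound, and your trace pigeonhole is essentially the paper's first move: the paper extracts $I\in[\lambda]^{\lambda}$ and an $\ell$-element colour set $M$ with $M\subs h[A_{\alpha}]$ for all $\alpha\in I$, using only $|h[A_{\alpha}]|\ge\ell$ --- which is exactly where the hypothesis $\wccf(\fcal)\ge\ell$ enters; your stronger ``majority overlap'' computations are correct but not needed. The genuine gap is the step you yourself flag and then defer: exhibiting, \emph{before} $h$ is known, a $(2\ell+1)$-almost disjoint family of combining sets such that every candidate $h$ leaves some member with no singleton colour. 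Your proposed resolution --- choose copies and multiplicities ``guided by the pigeonhole and majority facts above'' --- is circular, since those facts are properties of the unknown colouring $h$ and cannot inform an $h$-independent construction. No code-like family is ever exhibited, so the proposal contains no proof of precisely the claim that constitutes the lemma.

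Moreover, your ``key design choice'' discards the mechanism that makes such a family possible. In the paper each combining set is $C^*_{\delta,i}=C_{\delta,i}\cup f_{\delta}(i)$, where $\ccal_{\delta}=\{C_{\delta,i}:i<\lambda\}$ is a \emph{fresh} copy of $\fcal$ on a private block $C_{\delta}$ and $\{f_{\delta}:\delta<\tla\}$ enumerates \emph{all} sequences of pairwise disjoint $2\ell$-element partial transversals of the $A_{\alpha}$'s. Because all but $2\ell$ points of each combining set lie in a private block, any two of the $\tla$-many combining sets automatically meet in at most $2\ell$ points, while the exhaustive enumeration guarantees that the configuration produced by the actual $h$ (two points of colour $m_j$ in each transversal, $j<\ell$) is realized by some $\delta$. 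In your scheme all $\kappa$ points of a combining set lie inside the shared copies, so catching every colouring would force the family to realize essentially all $\kappa$-sequences of candidate monochromatic pairs; but then two members differing in a single pair share $\kappa>2\ell$ points, destroying $(2\ell+1)$-almost disjointness, and thinning the family destroys universality --- you offer no way around this tension. Note also that the paper's kill is not ``every colour on $C$ doubled'': it is that the $\ell$ colours of $M$ are doubled on every $f_{\delta}(i)$, which forces the singleton colour of each $C^*_{\delta,i}$ into $C_{\delta,i}\setm h^{-1}M$, so that $h$ restricted there is a weak conflict free colouring of the spine copy $\ccal_{\delta}$ with colours outside $M$, yielding $|T|\ge\wccf(\fcal)+\ell$. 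The ``padding colours'' you designed against are thus harmless --- the fresh spine is itself a copy of $\fcal$, and its colours are exactly what the final count exploits --- whereas your pre-built all-copy-points sets have no guarantee of even meeting $\dom(h)$ in more than one point (weak colourings are partial), in which case that member carries a singleton colour and fails to constrain $h$ at all; the paper avoids this because its transversal points are, by the quantification over all $f_{\delta}$, chosen from $h$'s own colour classes. As it stands, the proposal is a plan whose decisive construction is missing and whose self-imposed constraint makes that construction at best much harder than, and structurally different from, the paper's.
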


\begin{proof}
Fix the $(\lambda,\kappa,2\ell+1)$-system $\fcal$ and then choose
pairwise disjoint sets
\begin{displaymath}
 \{A_{\alpha}:{\alpha}<{\lambda}\}
\cup\{C_{\delta}:{\delta}<{\tla}\}\subs \br {\tla};{\lambda};\,.
\end{displaymath}
For ${\alpha}<{\lambda}$, resp. ${\delta}<{\tla}$,  let
$\acal_{\alpha}\subs \br A_{\alpha};{\kappa};$, resp.
$\ccal_{\delta}\subs \br C_{\delta};{\kappa};$, be isomorphic
copies of $\fcal$. For every $\delta < \tla$ we also fix a one-one
enumeration $\ccal_{\delta}=\{C_{{\delta},i}:i<{\lambda}\}$. Let
us then put
\begin{displaymath}
\scal=\{S\in  \big[\bigcup_{\alpha < \lambda}A_\alpha
\big]^{2\ell} :\forall {\alpha}<{\lambda}\ |S\cap A_{\alpha}|\le
1\}
\end{displaymath}
and $\{f_{\delta}:{\delta}<{\tla}\}$ be an enumeration of all
functions $f  : {\lambda} \to \scal$ that satisfy
\begin{displaymath}
\text{$f(i) \cap f(j) = \empt\,$ for any $\{i,j\} \in
[{\lambda}]^2$.}
\end{displaymath}
Finally, let $C^*_{{\delta},i}=C_{{\delta},i}\cup f_{\delta}(i)$
and put
\begin{displaymath}
 \fcal^*= \bigcup_{\alpha < \lambda} \acal_\alpha
\cup\{C^*_{{\delta},i} : {\delta}<{\tla},i<{\lambda}\}.
\end{displaymath}

\medskip

\noindent{ \bf Claim 1. }{\em $\fcal^*$ is $(2\ell+1)$-almost
disjoint. }

\smallskip
The only non-trivial case is showing $|C^*_{{\delta},i}\cap
C^*_{\delta',i'}|\le 2\ell$ for $\<{\delta},i\>\ne
\<{\delta}',i'\>$. Clearly, we have
\begin{displaymath}
C^*_{{\delta},i}\cap C^*_{{\delta}',i'}\subs (C_{{\delta}}\cap
C_{{\delta}'})\cup \bigl(f_{{\delta}}(i) \cap
f_{{\delta}'}(i')\bigr).
\end{displaymath}
Now, if ${\delta}\ne {\delta}'$ then $C_{{\delta}}\cap
C_{{\delta}'}=\empt$ and $|f_{{\delta}}(i) \cap
f_{{\delta}'}(i')|\le |f_{{\delta}}(i)|=2\ell$. If, on the other
hand, ${\delta}={\delta}'$ then $f_{{\delta}}(i) \cap
f_{{\delta}}(i')=\empt$ by definition, so $|C^*_{{\delta},i}\cap
C^*_{{\delta},i'}| = |C_{{\delta},i}\cap C_{{\delta},i'}|\le
2\ell\,$ because $\,\ccal_\delta\,$ is $(2\ell+1)$-almost
disjoint.

\medskip

\noindent{ \bf Claim 2. }If $\,w\ccf(\fcal) \ge \ell\,$ then
$w\ccf(\fcal^*) \ge w\ccf(\fcal) + \ell\,.$

\smallskip

The claim is obvious if $w\ccf(\fcal^*) \ge \omega$, so we may
assume that $w\ccf(\fcal^*) < \omega$. Now, let $h$ be any weak
conflict-free coloring of $\fcal^*$ with a finite color set $T$.
By $w\ccf(\acal_{\alpha})\ge \ell$, for each ${\alpha}<{\lambda}$
we have $|h[A_{\alpha}]|\ge \ell$, thus there are $I\in \br
{\lambda};{\lambda};$ and $M =\{m_j:j<\ell\}\in \br T;\ell;$ such
that $h[A_{\alpha}]\supset M$ for each ${\alpha}\in I$.

Let $\{{\alpha}_{{\zeta},n}:{\zeta}<{\lambda}\,,\,n<2\ell\}$ be
distinct elements of $I$. We may then, for each $j<\ell$, pick
$x_{{\zeta},2j}\in A_{{\alpha}_{{\zeta},2j}}$ and
$x_{{\zeta},2j+1}\in A_{{\alpha}_{{\zeta},2j+1}}$ satisfying
$$h(x_{{\zeta},2j})=h(x_{{\zeta},2j+1})=m_j\,.$$
There is ${\delta}<2^{\lambda}$ such that
$f_{\delta}({\zeta})=\{x_{{\zeta},n}:n<2\ell\}$ for all $\zeta <
\lambda$, then for each $m\in M$ and ${i}<{\lambda}$ we have
$|h^{-1}\{m\}\cap f_{\delta}({i})|\ge 2$. It follows that
$h\restriction (C_{\delta}\setm h^{-1}M)$ must be a weak conflict
free coloring of $\ccal_{\delta}$ with color set $T \setm M$,
showing that $|T \setm M| \ge w\ccf(\fcal)$, hence $|T | \ge
w\ccf(\fcal) + \ell$, completing the proof.
\end{proof}

\begin{theorem}\label{tm:step3}
For any $\kappa \ge \omega$ and $m,\ell \in \omega$ with $\ell >
0$ we have $$w\ccf(\beth_m(\kappa),\kappa,2\ell+1) \ge (m+1)\cdot
\ell + 1\,.$$
\end{theorem}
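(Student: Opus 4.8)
The plan is to prove the inequality by induction on $m$, where at each stage I produce an \emph{explicit} witnessing system rather than merely estimating the supremum. Precisely, I would show by induction on $m$ that there exists a $(\beth_m(\kappa),\kappa,2\ell+1)$-system $\fcal_m$ with $w\ccf(\fcal_m) \ge (m+1)\cdot\ell + 1$. Since $w\ccf(\beth_m(\kappa),\kappa,2\ell+1)$ is by definition the supremum of $w\ccf(\acal)$ over all such systems $\acal$, the existence of $\fcal_m$ immediately gives the claimed bound.

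For the base case $m = 0$ I would invoke Theorem \ref{tm:omegaw} with $d = 2\ell$, which yields $\crcw{\kappa}{\kappa}{2\ell} \ge \ell+1$. The system $\fcal_0$ witnessing this (supplied by the explicit construction in the proof of Theorem \ref{tm:omegaw}) is a $(\kappa,\kappa,2\ell)$-system, hence also $(2\ell+1)$-almost disjoint, i.e. a $(\beth_0(\kappa),\kappa,2\ell+1)$-system, and it satisfies $w\ccf(\fcal_0) \ge \ell+1 = (0+1)\cdot\ell + 1$, as required.

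The inductive step is where Lemma \ref{lm:l*} carries the entire load. Assuming $\fcal_m$ has been constructed, I would apply the lemma with $\lambda = \beth_m(\kappa)$ and the same $\ell$. Its hypothesis $w\ccf(\fcal_m) \ge \ell$ holds because $(m+1)\ell + 1 \ge \ell$ (using $\ell > 0$ and $m \ge 0$), so the lemma produces a $(2^{\beth_m(\kappa)},\kappa,2\ell+1)$-system $\fcal_m^*$ with $w\ccf(\fcal_m^*) \ge w\ccf(\fcal_m) + \ell$. Since $2^{\beth_m(\kappa)} = \beth_{m+1}(\kappa)$, I set $\fcal_{m+1} = \fcal_m^*$ and obtain
$$w\ccf(\fcal_{m+1}) \ge w\ccf(\fcal_m) + \ell \ge \big((m+1)\ell + 1\big) + \ell = (m+2)\cdot\ell + 1,$$
which is exactly the bound for $m+1$, closing the induction.

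At this level there is no genuine obstacle: all the real work has been isolated into Lemma \ref{lm:l*}, so the theorem is essentially a bookkeeping corollary in which the successive $+\ell$ increments telescope against the $\beth$-tower in the first coordinate. The only points deserving a moment's care are that the base system $\fcal_0$ genuinely exists (which follows from the explicit construction underlying Theorem \ref{tm:omegaw}, or simply from the fact that a finite supremum of natural numbers is attained) and that the side condition $w\ccf(\fcal_m) \ge \ell$ of Lemma \ref{lm:l*} persists at every step; both are immediate from $\ell \ge 1$.
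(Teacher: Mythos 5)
Your proposal is correct and takes essentially the same route as the paper: the base case comes from Theorem \ref{tm:omegaw} and the inductive step consists of $m$ successive applications of the lift-up Lemma \ref{lm:l*} with $\lambda = \beth_m(\kappa)$ (the paper's one-line proof actually cites theorem \ref{tm:step3} itself at this point, an evident typo for Lemma \ref{lm:l*}). Your insistence on carrying explicit witnessing systems instead of the supremum is a harmless refinement, justified exactly as you say: a supremum of cardinals exceeding a finite bound must be witnessed by a single system.
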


\begin{proof}
By Theorem \ref{tm:omegaw}, we have  $\crcw
{\kappa}{{\kappa}}{2\ell+1}=\ell +1.$ So we may simply apply
theorem \ref{tm:step3} $m$ times to obtain the result.
\end{proof}

As an immediate consequence of theorems \ref{tm:korlatos}
\ref{tm:step3} we obtain the following result.

\begin{theorem}\label{tm:step4}
For every infinite cardinal $\kappa$ and natural number $d > 1$ we
have $$\ccf(\beth_\omega(\kappa),\kappa,d) = \omega\,.$$
\end{theorem}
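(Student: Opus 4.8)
The statement asserts two inequalities, and the upper bound $\ccf(\beth_\omega(\kappa),\kappa,d) \le \omega$ is essentially free: since $\beth_\omega(\kappa) \ge \kappa \ge \omega$ and $d < \omega$, theorem \ref{tm:korlatos} applies verbatim and in fact yields the stronger relation $[\beth_\omega(\kappa),\kappa,d] \Rightarrow \omega$. So the whole task reduces to the matching lower bound $\ccf(\beth_\omega(\kappa),\kappa,d) \ge \omega$, and here I would exploit monotonicity to cut the problem down to a single instance.

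First I would use that $\ccf$ is monotone increasing in its third variable to reduce to $d = 2$: for every $d \ge 2$ we have $\ccf(\beth_\omega(\kappa),\kappa,d) \ge \ccf(\beth_\omega(\kappa),\kappa,2)$, so it is enough to prove $\ccf(\beth_\omega(\kappa),\kappa,2) \ge \omega$. Next, since $\beth_\omega(\kappa) = \sup_{m<\omega}\beth_m(\kappa)$, monotonicity in the first variable together with $\ccf \ge w\ccf$ gives, for every $m < \omega$,
\[
\ccf(\beth_\omega(\kappa),\kappa,2) \ge w\ccf(\beth_\omega(\kappa),\kappa,2) \ge w\ccf(\beth_m(\kappa),\kappa,2).
\]
Thus it suffices to show that $w\ccf(\beth_m(\kappa),\kappa,2)$ is unbounded as $m \to \omega$.

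This last point is exactly what the lower bound of theorem \ref{tm:step3'} provides, namely $w\ccf(\beth_m(\kappa),\kappa,2) \ge \lfloor m/2\rfloor + 2$, which tends to infinity with $m$; hence $\ccf(\beth_\omega(\kappa),\kappa,2) \ge \sup_m (\lfloor m/2\rfloor + 2) = \omega$, and combined with the upper bound this yields equality for all $d \ge 2$. The one point that needs care --- and the reason I lean on \ref{tm:step3'} rather than only on \ref{tm:step3} --- is precisely the case $d = 2$: theorem \ref{tm:step3} handles odd $d = 2\ell + 1$ directly (it gives $w\ccf(\beth_m(\kappa),\kappa,d) \ge (m+1)\ell + 1$, again unbounded in $m$), and an even $d \ge 4$ can be reduced to the odd value $d-1$ by monotonicity in the third variable, but its requirement $\ell \ge 1$ leaves $d = 2$ uncovered, with no smaller admissible exponent to descend to. So the even-exponent estimate is genuinely needed to close this corner case, after which the reduction above makes the rest automatic.
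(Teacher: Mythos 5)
Your proof is correct, and in outline it is the paper's own derivation: the paper obtains theorem \ref{tm:step4} as an immediate consequence of theorems \ref{tm:korlatos} and \ref{tm:step3}, i.e.\ the same combination of the uniform upper bound $[\lambda,\kappa,d]\Rightarrow\omega$ with lower bounds on $w\ccf(\beth_m(\kappa),\kappa,\cdot)$ that are unbounded in $m$, pushed up to $\beth_\omega(\kappa)$ by monotonicity in the first variable. The genuine difference is exactly the point you flag at the end: theorem \ref{tm:step3} requires $\ell\ge 1$, so it covers only odd $d\ge 3$ (and even $d\ge 4$ by descending to $d-1$ via monotonicity in the third variable), while for $d=2$ that monotonicity runs the wrong way for a lower bound and there is no smaller admissible exponent. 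Your reduction of the entire lower bound to the single instance $d=2$ via theorem \ref{tm:step3'} closes this corner, and it is in fact a repair of a small imprecision in the paper: read literally, the paper's citation of \ref{tm:korlatos} and \ref{tm:step3} proves the theorem only for $d\ge 3$, and the needed even-exponent estimate \ref{tm:step3'} (which rests on the $2^{2^\lambda}$-sized stepping-up lemma \ref{lm:2*}) is only proved \emph{after} \ref{tm:step4} in the text. So your route buys a fully airtight argument at the cost of invoking the later and somewhat heavier lemma; the paper's citation is shorter but, as written, leaves the $d=2$ case to the reader.
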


>From theorems \ref{tm:egeszresz} and \ref{tm:step3} we may
immediately deduce the promised exact value of $\crcw
{\kappa}{{\kappa}^{+m}}{2\ell+1}$ under GCH.

\begin{corollary}\label{cor:gchoup3}
If GCH holds then for any cardinal ${\kappa} \ge \omega$ and
integers $m\ge 0$,  $\ell>0$ we have
$$\crcw {\kappa}{{\kappa}^{+m}}{2\ell+1}=(m+1)\cdot\ell +1.$$
\end{corollary}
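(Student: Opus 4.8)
The plan is to sandwich $w\ccf(\kappa^{+m},\kappa,2\ell+1)$ between matching upper and lower bounds supplied by the two theorems quoted just before the corollary, so that the only genuine content is an arithmetic simplification of a floor function together with the observation that GCH collapses the $\beth$-hierarchy onto the successor cardinals.

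For the upper bound I would substitute $d = 2\ell+1$ into theorem \ref{tm:egeszresz}. Then $(m+1)(d-1) = (m+1)\cdot 2\ell$ is even, so the numerator $(m+1)(d-1)+1 = 2(m+1)\ell + 1$ is odd and the floor evaluates exactly, namely $\lfloor \frac{2(m+1)\ell+1}{2}\rfloor = (m+1)\ell$. Hence theorem \ref{tm:egeszresz} yields $w\ccf(\kappa^{+m},\kappa,2\ell+1) \le (m+1)\ell + 1$.

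For the lower bound I would invoke theorem \ref{tm:step3}, which asserts $w\ccf(\beth_m(\kappa),\kappa,2\ell+1) \ge (m+1)\ell + 1$. Under GCH one has $\beth_m(\kappa) = \kappa^{+m}$: this follows by a trivial induction on $m$, since $\beth_0(\kappa) = \kappa$ and $\beth_{r+1}(\kappa) = 2^{\beth_r(\kappa)} = (\beth_r(\kappa))^{+}$ by GCH. Substituting $\beth_m(\kappa) = \kappa^{+m}$ therefore gives $w\ccf(\kappa^{+m},\kappa,2\ell+1) \ge (m+1)\ell + 1$, and combining the two inequalities delivers the claimed equality.

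There is no real obstacle here; the only point demanding a moment's care is checking that the parity of $(m+1)(d-1)$ makes the floor in theorem \ref{tm:egeszresz} tight precisely when $d$ is odd. This is exactly the reason the clean closed form survives for odd $d$, whereas for even $d$ the floor strictly loses information and the corresponding exact value is, as the paper notes, only determined up to the weak-to-strong discrepancy and otherwise left open.
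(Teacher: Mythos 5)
Your proposal is correct and follows essentially the same route as the paper, which deduces the corollary immediately from theorems \ref{tm:egeszresz} and \ref{tm:step3}. Your floor computation $\left\lfloor \frac{2(m+1)\ell+1}{2}\right\rfloor = (m+1)\ell$ and the GCH induction giving $\beth_m(\kappa)=\kappa^{+m}$ are exactly the (implicit) content of the paper's one-line derivation.
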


We do not know, in general, if an exact formula like this can be
obtained for $\crcw {\kappa}{{\kappa}^{+m}}{2\ell}$, but we do
know this in the simplest case $\ell =1\,.$ The key to this is
again a ``lift up" lemma in the spirit of lemmas \ref{lm:0*} and
\ref{lm:l*}.

\begin{lemma}\label{lm:2*}
For any $\lambda \ge \kappa \ge \omega$, we can assign to every
$(\lambda,\kappa,2)$-system $\fcal$ a
$\,(2^{2^\lambda},\kappa,2)$-system $\fcal^*$ so that if
$\,w\ccf(\fcal)$ is finite then
$$w\ccf(\fcal^*) > w\ccf(\fcal)\,.$$
\end{lemma}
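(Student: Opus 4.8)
The plan is to imitate the ``lift up'' construction of Lemma \ref{lm:l*}, using isomorphic copies of $\fcal$ both as a \emph{palette} that pins down the colour set and as \emph{victims} that will be forced to use an extra colour, the novelty being a doubling gadget adapted to the case $d=2$. Write $r=w\ccf(\fcal)$, which we assume finite. First I would lay down a palette: a family of pairwise disjoint sets carrying isomorphic copies $\acal_\eta\cong\fcal$. Since $w\ccf(\acal_\eta)=r$, any weak conflict free colouring $h$ of $\fcal^*$ with a colour set $T$ of size $r$ must satisfy $h[\bigcup\acal_\eta]=T$ on each copy; with enough disjoint copies this makes every colour of $T$ ``dense'' in the palette. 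For the victims I take further disjoint copies $\ccal_\delta\cong\fcal$, with members $\{C_{\delta,i}:i<\lambda\}$, and to each $C_{\delta,i}$ I attach a two-element \emph{gadget} $e_{\delta,i}=\{a_{\delta,i},b_{\delta,i}\}$ drawn from the palette, setting $C^*_{\delta,i}=C_{\delta,i}\cup e_{\delta,i}$ and $\fcal^*=\bigcup_\eta\acal_\eta\cup\{C^*_{\delta,i}\}$.

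The reduction argument is the easy half. Suppose $h$ is a weak conflict free colouring of $\fcal^*$ with $|T|\le r$; if $|T|<r$ then $h$ already fails on any palette copy, so $|T|=r$ and $h[\,\cdot\,]=T$ on every palette copy. The gadgets are to be chosen so that for the relevant $\delta$ all pairs $e_{\delta,i}$ are \emph{monochromatic in one common colour} $m_0\in T$, i.e. $h(a_{\delta,i})=h(b_{\delta,i})=m_0$. Then in each $C^*_{\delta,i}$ the colour $m_0$ occurs at least twice, so the unique witness colour lies in $T\setminus\{m_0\}$ and already occurs exactly once inside $C_{\delta,i}$. Hence $h\restriction(C_\delta\setminus h^{-1}\{m_0\})$ is a weak conflict free colouring of $\ccal_\delta$ with at most $r-1$ colours, contradicting $w\ccf(\fcal)=r$. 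The almost disjointness bookkeeping is likewise routine once the gadgets are pairwise distinct: within one $\delta$ the $C$-parts meet in $\le 1$ point because the copy is $2$-almost disjoint and the $e_{\delta,i}$ form a matching; across distinct $\delta$ the $C$-parts are disjoint and two distinct gadget pairs meet in $\le 1$ point; and a gadget meets a palette copy in $\le 1$ point. Thus $\fcal^*$ is $2$-almost disjoint.

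The hard part, and the reason the size jumps from $2^\lambda$ to $2^{2^\lambda}$, is producing the gadgets. For $d=2\ell+1\ge 3$ in Lemma \ref{lm:l*} one may simply enumerate \emph{all} monochromatic matchings on a shared palette of size $2^\lambda$ and read off the right $\delta$ after seeing $h$; reusing a gadget across different $\delta$ is harmless because a gadget has $2\ell<d$ points. Here a gadget has exactly $2=d$ points, so reuse is fatal: two victim sets sharing a pair would meet in $2\not<2$ points. I therefore cannot simultaneously enumerate all matchings \emph{and} keep the family $2$-almost disjoint on a shared palette, while a fixed disjoint assignment of pairs is defeated by an adversary who $2$-colours the two endpoints of each prescribed pair.

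The way out is to enlarge the palette to size $2^{2^\lambda}$ and arrange the gadgets so that, for \emph{every} colouring $h$ as above, some victim copy $\ccal_\delta$ has all its pre-attached pairs monochromatic in a common colour. I expect to obtain this by a polarized Erd\H os--Rado pigeonhole: with $r$ colours dense on a palette of size $2^{2^\lambda}$, for each $h$ one extracts a colour $m_0$ and a homogeneous block of size $2^\lambda$ inside $h^{-1}\{m_0\}$ large enough to contain one of the pre-committed matchings, the abundance of disjoint candidate blocks, hence of distinct pairs, being exactly what $2^{2^\lambda}$ buys. Verifying that this extraction can be made to land on a pre-committed $\delta$ for every $h$, while the committed matchings remain pairwise pair-disjoint, is the genuine obstacle and the technical heart of the lemma.
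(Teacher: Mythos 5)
Your set-up, your reduction argument, and your diagnosis of the obstruction are all accurate, but the proposal stops exactly where the lemma's actual content begins: you explicitly leave open how to pre-commit the two-point gadgets so that every colouring $h$ is caught by some victim copy while the family stays $2$-almost disjoint, and the route you gesture at (a polarized Erd\H os--Rado extraction of a monochromatic block containing a pre-committed matching) is not substantiated and faces precisely the adversary problem you yourself describe. The paper's resolution is not a Ramsey extraction at all but a \emph{factorization of the pair}: each gadget is split into two coordinates drawn from two different palette regions, each coordinate diagonalized separately. Concretely, one takes disjoint sets $\{A_\delta:\delta<\tla\}$ carrying copies of $\fcal$, plus $\eta$-indexed blocks $B_\eta=\bigcup_{\alpha<\lambda}B_{\eta,\alpha}$ (${\eta}<\ttla$, again carrying copies of $\fcal$), enumerates \emph{all} injective functions $f:\tla\times\lambda\to A=\bigcup_\delta A_\delta$ as $\{f_\eta:\eta<\ttla\}$, and for each $\eta$ enumerates all injective $g:\lambda\to B_\eta$ as $\{g_{\eta,\delta}:\delta<\tla\}$; the victim sets are $C^*_{\eta,\delta,i}=C_{\eta,\delta,i}\cup\{f_\eta(\delta,i),g_{\eta,\delta}(i)\}$. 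Individual gadget \emph{points} are reused massively, but no \emph{pair} ever recurs: if $\eta\ne\eta'$ the $C$-parts and the $B$-blocks are disjoint, so only the $f$-points can coincide; if $\eta=\eta'$ and $\delta\ne\delta'$, injectivity of $f_\eta$ kills the $f$-points and only the $g$-points can coincide; if $\eta=\eta'$, $\delta=\delta'$, $i\ne i'$, injectivity of both maps leaves only the $\le 1$-point intersection of the $C$-parts. This is what dissolves your dilemma between ``enumerate all matchings'' and ``keep pairs disjoint'': per-level injectivity replaces pairwise pair-disjointness of matchings.

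The capturing argument is then exact rather than extractive, which is also why no common-colour pigeonhole is needed. Given $h$ with $k=w\ccf(\fcal)$ colours, every palette copy realizes every colour, so one picks $a_\delta\in A_\delta\cap h^{-1}\{0\}$ for each $\delta<\tla$; since the $A_\delta$ are disjoint, $\{a_\delta:\delta<\tla\}$ is the range of \emph{some} $f_\eta$ (this is where $\ttla$ is spent). Fixing that $\eta$, pick $b_\alpha\in B_{\eta,\alpha}\cap h^{-1}\{0\}$ and find $\delta$ with $\ran(g_{\eta,\delta})=\{b_\alpha:\alpha<\lambda\}$ (this is where the second enumeration, of size $\tla$, is spent). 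Then every gadget of $\ccal_{\eta,\delta}$ is monochromatic in colour $0$, and your own reduction finishes the proof. So the framework you built is sound and matches the paper's, but the ``technical heart'' you flag as unresolved is genuinely missing from your argument, and filling it requires the two-level split of the gadget rather than the homogeneous-block extraction you propose.
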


\begin{proof}
Let $\fcal$ be any $(\lambda,\kappa,2)$-system and, to start with,
fix pairwise disjoint sets
\begin{multline}\notag
 \{A_{\delta}:{\delta}<{\tla}\}\cup
\{B_{{\eta},{\alpha}}:{\eta}<\ttla, {\alpha}<{\lambda}\}
\cup\\\{C_{{\eta},{\delta}}:{\eta}<\ttla,{\delta}<{\tla}\}\subs
\br \ttla;{\lambda};.
\end{multline}
For every ${\delta}<{\tla}$ let $\acal_{\delta}\subs \br
A_{\zeta};{\kappa};$ be an isomorphic copy of $\fcal$ and define
similarly $\bcal_{{\eta},{\alpha}}\subs \br
B_{{\eta},{\alpha}};{\kappa};$ and $\ccal_{{\eta},{\delta}} \subs
\br C_{{\eta},{\delta}};{\kappa};$. We also enumerate, without
repetitions, each $\ccal_{{\eta},{\delta}}$ as
$\{C_{{\eta},{\delta},i}:i<{\lambda}\}$.

Let us put $A=\cup \{A_{\delta}:{\delta}<{\tla}\}$ and
$B_{\eta}=\cup \{B_{{\eta},{\alpha}}: {\alpha}<{\lambda}\}$ for
each ${\eta}<\ttla$. Then enumerate the injective functions $f :
{\tla}\times {\lambda} \to A$ as $\{f_{\eta}:{\eta}<\ttla\}$ and,
for any ${\eta}<\ttla$, enumerate the  injective functions $g :
{\lambda} \to B_{\eta}$ as $\{g_{\eta,\delta}:{\delta}<{\tla}\}$.
Finally, let $$C^*_{{\eta},{\delta},i}=
C_{{\eta},{\delta},i}\cup\{f_{\eta}({\delta},i),g_{\eta,\delta}(i)\}$$
and put
\begin{multline}\notag
\fcal^*= \bigcup\{\acal_\delta : \delta < \tla \}\cup \bigcup
\{\bcal_{{\eta},{\alpha}} : \< \eta,\alpha \> \in \ttla \times \lambda \}\cup \\
\cup\{C^*_{{\eta},{\delta},i} :\<\eta,\delta,i\> \in \ttla \times
\tla \times {\lambda}\}.
\end{multline}

\medskip

\noindent{ \bf Claim 1. }{\em $\fcal^*$ is $2$-almost disjoint.}

\smallskip
The only non-trivial task is to show that
$|C^*_{{\eta},{\delta},i}\cap C^*_{{\eta}',{\delta}',i'}|\le 1$
for $\<{\eta},{\delta},i\>\ne \<{\eta}',{\delta}',i'\>$. Clearly,
we have
\begin{multline}
C^*_{{\eta},{\delta},i}\cap C^*_{{\eta}',{\delta}',i'}\subs
(C_{{\eta},{\delta},i}\cap C_{{\eta}',{\delta}',i'})\cup\\
\cup\bigl(\{f_{\eta}({\delta},i)\} \cap
\{f_{\eta'}({\delta}',i')\}) \cup \bigl(\{g_{\eta,\delta}(i)\}
\cap \{g_{\eta',\delta'}(i')\}) \bigr).
\end{multline}
If ${\eta}\ne {\eta}'$ then $C_{{\eta},{\delta}}\cap
C_{{\eta}',{\delta}'}=\empt$, and $\{g_{\eta,\delta}(i)\} \cap
\{g_{\eta',\delta'}(i')\}\subs B_{{\eta}}\cap B_{{\eta}'}=\empt$,
hence $$C^*_{{\eta},{\delta},i}\cap
C^*_{{\eta}',{\delta}',i'}\subs \{f_{\eta}({\delta},i)\} \cap
\{f_{\eta'}({\delta}',i')\}.$$ If ${\eta}={\eta}'$ and
${\delta}\ne {\delta}'$ then $C_{{\eta},{\delta}}\cap
C_{{\eta},{\delta}'}=\empt$, and $f_{\eta}({\delta},i)\ne
f_{\eta}({\delta}',i')$ because $f_{\eta}$ is injective, hence
$$C^*_{{\eta},{\delta},i}\cap C^*_{{\eta}',{\delta}',i'}\subs
\{g_{\eta,\delta}(i)\} \cap \{g_{\eta',\delta'}(i')\}.$$ Finally,
if ${\eta}={\eta}'$, ${\delta}={\delta}'$, and $i\ne i'$ then
$f_{\eta}({\delta},i)\ne f_{\eta}({\delta},i')$ and
$g_{\eta,\delta}(i)\ne g_{\eta,\delta}(i')$ because
$g_{\eta,\delta}$ is also injective, and so
$$|C^*_{{\eta},{\delta},i}\cap C^*_{{\eta}',{\delta}',i'}| =
|C_{{\eta},{\delta},i}\cap C_{{\eta}',{\delta}',i'}|\le 1\,.$$

\medskip

\noindent{ \bf Claim 2. }$w\ccf(\fcal^*) > w\ccf(\fcal)\,$ {\em if
the latter is finite.}

\smallskip
Assume that $w\ccf(\fcal) = k < \omega$ and, contrary to our
claim, $h$ is a weak conflict-free coloring of $\fcal^*$ with
$\ran(h)=k$. Then, for each ${\delta}<{\tla}$, the equality
$\,w\ccf(\acal_{\delta}) = k$ implies that there is $a_{\delta}\in
A_{\delta}$ with $h(a_{\delta})=0$. Since
$|\{a_{\delta}:{\delta}<{\tla}\}| = {\tla}$, there is an
${\eta}<\ttla$ with $\ran (f_{\eta})=
\{a_{\delta}:{\delta}<{\tla}\}\subs h^{-1}\{0\}$.

Fix this $\eta$ and then apply $\,w\ccf(\bcal_{{\eta},{\alpha}}) =
k$ to find, for each ${\alpha}<{\lambda}$, some $b_{\alpha}\in
B_{{\eta},{\alpha}}$ with $h(b_{\alpha})=0$. Again, we have
$|\{b_{\alpha}:{\alpha}<{\lambda}\}| = {\lambda}$, hence there is
a $\,{\delta}<{\tla}$ with $\ran(g_{\eta,\delta})=
\{b_{\alpha}:{\alpha}<{\lambda}\}\subs h^{-1}\{0\}$.

But then for each $i<{\lambda}$ we have $\{f_{\eta}({\delta},i),
g_{\eta,\delta}(i)\} \subs h^{-1}\{0\}$, consequently
$h\restriction (C_{{\eta},{\delta}}\setm h^{-1}\{0\})$ must be a
weak conflict free coloring of $\ccal_{\eta,\delta}$ with $k-1$
colors, a contradiction. This contradiction proves Claim 2 and
completes the proof of the lemma.
\end{proof}

\begin{theorem}\label{tm:step3'}
For any $\kappa \ge \omega$ and $\,m \in \omega$ we have
$$w\ccf(\beth_m(\kappa),\kappa,2) \ge \left\lfloor \frac {m}{2}\right\rfloor+2\,.$$
\end{theorem}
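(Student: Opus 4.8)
The plan is to iterate the lift-up Lemma \ref{lm:2*}, anchoring the induction at Theorem \ref{tm:omegaw} and using the identity $2^{2^{\beth_m(\kappa)}} = \beth_{m+2}(\kappa)$, which holds in ZFC by the very definition of the $\beth$ function. Since $\lfloor (2j+1)/2\rfloor = \lfloor (2j)/2\rfloor = j$ and $\beth_{2j+1}(\kappa) \ge \beth_{2j}(\kappa)$, the monotonicity of $w\ccf(\cdot,\kappa,2)$ in its first variable reduces everything to the even case: it suffices to prove, by induction on $j$, that $w\ccf(\beth_{2j}(\kappa),\kappa,2) \ge j+2$, since the bound for $m = 2j+1$ then follows at once from the bound for $m = 2j$.

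For the base case $j = 0$ I would simply invoke Theorem \ref{tm:omegaw} with $d = 2$, which yields $w\ccf(\beth_0(\kappa),\kappa,2) = w\ccf(\kappa,\kappa,2) = 2$ (witnessed, in the proof of that theorem, by the family of lines in a $2$-dimensional vector space over a field of size $\kappa$).

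For the inductive step $j \to j+1$, I would first record that $w\ccf(\beth_{2j}(\kappa),\kappa,2) \le \omega$ by Theorem \ref{tm:korlatos}, and then split into two cases according to whether this value is finite. If it equals $\omega$, monotonicity in the first variable gives $w\ccf(\beth_{2(j+1)}(\kappa),\kappa,2) \ge w\ccf(\beth_{2j}(\kappa),\kappa,2) = \omega \ge (j+1)+2$, and there is nothing more to do. If it is finite, then every $(\beth_{2j}(\kappa),\kappa,2)$-system has finite weak conflict free chromatic number, while the inductive hypothesis $w\ccf(\beth_{2j}(\kappa),\kappa,2) \ge j+2$ furnishes a $(\beth_{2j}(\kappa),\kappa,2)$-system $\fcal$ with $w\ccf(\fcal) \ge j+2$ and $w\ccf(\fcal)$ finite. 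Applying Lemma \ref{lm:2*} with $\lambda = \beth_{2j}(\kappa)$ then produces a $(2^{2^{\beth_{2j}(\kappa)}},\kappa,2) = (\beth_{2(j+1)}(\kappa),\kappa,2)$-system $\fcal^*$ with $w\ccf(\fcal^*) > w\ccf(\fcal) \ge j+2$, whence $w\ccf(\beth_{2(j+1)}(\kappa),\kappa,2) \ge w\ccf(\fcal^*) \ge j+3 = (j+1)+2$. This closes the induction.

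The one genuinely delicate point is the finiteness hypothesis built into Lemma \ref{lm:2*}: in contrast to Lemma \ref{lm:l*}, it increments the weak conflict free chromatic number only when the latter is finite, so a naive ``apply the lemma $\lfloor m/2\rfloor$ times'' is not quite legitimate. The dichotomy above, which rests on the a priori upper bound $w\ccf(\beth_{2j}(\kappa),\kappa,2) \le \omega$ coming from Theorem \ref{tm:korlatos}, is precisely what circumvents this obstacle: in the infinite case monotonicity already delivers far more than the required finite bound, so the lemma need only be invoked exactly when its hypothesis is met.
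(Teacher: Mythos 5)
Your proof is correct and follows essentially the same route as the paper: anchor the induction at Theorem \ref{tm:omegaw} and climb two $\beth$-levels at a time via Lemma \ref{lm:2*}. The only difference is that you explicitly discharge the finiteness hypothesis of Lemma \ref{lm:2*} through the dichotomy on whether $w\ccf(\beth_{2j}(\kappa),\kappa,2)$ is finite (using Theorem \ref{tm:korlatos} together with monotonicity in the first variable), a point the paper's terse ``straight-forward induction'' leaves implicit.
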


\begin{proof}
By theorem \ref{tm:omegaw} this is true for $m = 0$ and $m = 1$.
Moreover, if we assume $\,w\ccf(\beth_m(\kappa),\kappa,2) \ge
\left\lfloor \frac {m}{2}\right\rfloor+2\,$ then, applying lemma
\ref{lm:2*} with $\lambda = \beth_m(\kappa)$, we obtain
$$w\ccf(\beth_{m+2}(\kappa),\kappa,2) \ge \left\lfloor \frac
{m}{2}\right\rfloor+2+1 = \left\lfloor \frac
{m+2}{2}\right\rfloor+2\,.$$ Thus the theorem follows by a
straight-forward induction.
\end{proof}

Comparing this with theorem \ref{tm:egeszresz} we get the
following result.

\begin{corollary}\label{cor:gchoup2}
For ${\kappa} \ge \omega$ and $m \in \omega$, the equality
$\beth_m(\kappa) = \kappa^{+m}$ implies
\begin{equation}\label{eq:e1}
\crcw {{\kappa}}{{\kappa}^{+m}}2=\left\lfloor \frac
{m}{2}\right\rfloor+2\,.
\end{equation}

\end{corollary}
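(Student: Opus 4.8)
The plan is to establish the equality by sandwiching $\crcw{\kappa}{\kappa^{+m}}2$ between matching upper and lower bounds, both of which are already at our disposal from earlier results; the only place where the hypothesis $\beth_m(\kappa) = \kappa^{+m}$ enters is in transporting the lower bound.

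First I would extract the upper bound from theorem \ref{tm:egeszresz}. Specializing that inequality to the case $d = 2$ gives
$$\crcw{\kappa}{\kappa^{+m}}2 \le \left\lfloor \frac{(m+1)\cdot 1 + 1}{2}\right\rfloor + 1 = \left\lfloor \frac{m+2}{2}\right\rfloor + 1.$$
The one small computation here is to observe that $\lfloor (m+2)/2\rfloor = \lfloor m/2\rfloor + 1$ for every natural number $m$; this is checked at once by separating the even and odd cases. Hence the right-hand side simplifies to $\lfloor m/2\rfloor + 2$, which is the desired upper bound. Note that this half of the argument is unconditional and uses nothing about $\beth_m(\kappa)$.

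For the matching lower bound I would invoke theorem \ref{tm:step3'}, which asserts $w\ccf(\beth_m(\kappa),\kappa,2) \ge \lfloor m/2 \rfloor + 2$ directly. Substituting the hypothesis $\beth_m(\kappa) = \kappa^{+m}$ turns this into $\crcw{\kappa}{\kappa^{+m}}2 \ge \lfloor m/2 \rfloor + 2$, and combining with the upper bound yields the claimed equality \eqref{eq:e1}.

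There is essentially no genuine obstacle: the corollary is just the juxtaposition of the upper bound of theorem \ref{tm:egeszresz} (in the instance $d=2$) with the lower bound of theorem \ref{tm:step3'}, the latter carried over from $\beth_m(\kappa)$ to $\kappa^{+m}$ by the assumed equality of these two cardinals. The only point that demands any attention at all is the floor-function simplification, which one must verify so as to confirm that the two bounds coincide \emph{exactly} rather than merely being within one of each other.
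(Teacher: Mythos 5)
Your proof is correct and coincides with the paper's own argument: the corollary is stated there precisely as the juxtaposition of the upper bound of theorem \ref{tm:egeszresz} (with $d=2$) and the lower bound of theorem \ref{tm:step3'}, transported via $\beth_m(\kappa)=\kappa^{+m}$. Your explicit check that $\left\lfloor (m+2)/2\right\rfloor = \left\lfloor m/2\right\rfloor + 1$, making the two bounds match exactly, is the only computation involved, and you carried it out correctly.
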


\medskip

\section{Attempts to compute $\ccf(\omega_k,\omega,2)$}

In the previous section we succeeded in computing the exact value
of $\,\crcw {{\kappa}}{{\kappa}^{+m}}d$ in a lot of cases, at
least under GCH. As we have $$w\ccf(\lambda,\kappa,d) \le
\ccf(\lambda,\kappa,d) \le w\ccf(\lambda,\kappa,d)+1\,,$$ this
gives us a lot of information about $\crc
{{\kappa}}{{\kappa}^{+m}}d$ as well. But can we find the exact
value of $\,\crc {{\kappa}}{{\kappa}^{+m}}d$, or even just of
$\,\crc {{\omega}}{\omega_m}d$, say under GCH and for many values
of $m$ and $d$? This turned out to be a very hard problem that we
address in the present section, admittedly with only rather meager
results. There is no problem in the simplest possible case: $m \le
1$ and $d = 2$.

\begin{proposition}\label{f:omega}
$\crc{\kappa}{\kappa}2=\crc{\kappa}{\kappa^+}2=3$ for all
${\kappa}\ge {\omega}$.
\end{proposition}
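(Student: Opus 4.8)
The plan is to squeeze both numbers between $3$ and $3$. For the upper bound I would use the general inequality $\ccf(\lambda,\kappa,\mu)\le \wccf(\lambda,\kappa,\mu)+1$ from the introduction, so that it suffices to bound the weak numbers. Theorem \ref{tm:egeszresz} with $d=2$ gives $\wccf(\kappa,\kappa,2)\le\lfloor(1\cdot 1+1)/2\rfloor+1=2$ and $\wccf(\kappa^+,\kappa,2)\le\lfloor(2\cdot 1+1)/2\rfloor+1=2$, whence $\crc{\kappa}{\kappa}2\le 3$ and $\crc{\kappa}{\kappa^+}2\le 3$. Since $\ccf$ is monotone increasing in its first variable, the matching lower bound $\crc{\kappa}{\kappa}2\ge 3$ automatically yields $\crc{\kappa}{\kappa^+}2\ge 3$; thus the whole statement reduces to exhibiting one $(\kappa,\kappa,2)$-system with no conflict free $2$-coloring.

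For this I would reuse the system $\lcal$ of all affine lines of a $2$-dimensional vector space $V$ over a field of size $\kappa$, already used in Theorem \ref{tm:omegaw}; it is a $(\kappa,\kappa,2)$-system. Suppose $c:V\to 2$ were a conflict free coloring. As each line is infinite, on every line exactly one colour occurs once and the other occurs $\kappa$ times; call the former the minority point of the line. One first checks that each colour class has size $\kappa$ and that neither is contained in a line (a class inside a line $L$ would leave every line parallel to $L$ monochromatic, which is not conflict free). Next, fixing a parallel class $\Pi$, a short argument shows that all but at most one line of $\Pi$ share the same majority colour --- otherwise a transversal line avoiding four minority points would carry two points of each colour, contradicting conflict freeness. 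Renaming colours, almost every line of $\Pi$ is ``almost all red'', so the blue class meets almost every line of $\Pi$ in exactly one point; taking coordinates in which $\Pi$ is the family of verticals, the blue class becomes essentially the graph of a function $h$, and the same reasoning applied to the horizontals forces $h$ to be a bijection.

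The heart of the matter, and the step I expect to be the real obstacle, is the resulting rigidity. Any two blue points lie on a line meeting the blue class in at least two points, which must therefore be ``almost all blue''; hence that line meets the graph of $h$ in $\kappa$ points, i.e. $h$ agrees with an affine function off a single point. Comparing the lines through the one off-affine blue point with the affine part then produces a line carrying only two blue points yet forced to be almost all blue, which is absurd; equivalently, $t\mapsto h(t)-mt$ would have to be a bijection for every slope $m$, impossible since any two graph points already determine a finite slope violating injectivity. Either way $c$ cannot exist, so $\crc{\kappa}{\kappa}2\ge 3$. The routine but fiddly points --- that each colour class has size $\kappa$, that at most one line per parallel class is exceptional, and the bookkeeping to absorb those at-most-one exceptions into the rigidity argument --- are where the care goes; the conceptual content is the planar-function-type impossibility.
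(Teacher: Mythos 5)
Your upper bound and the overall squeeze are exactly the paper's: theorem \ref{tm:egeszresz} with $m\in\{0,1\}$ and $d=2$ gives $w\ccf(\kappa^{+m},\kappa,2)\le 2$, hence $\ccf\le 3$ by the general inequality, and monotonicity in the first variable reduces everything to one $(\kappa,\kappa,2)$-system with no conflict free $2$-colouring; you even pick the same witness, the line system $\lcal$. Your observations that each line has a unique minority point, that neither colour class is collinear, and that within one parallel class all but at most one line share a majority colour are all correct. The genuine gap is the sentence ``the same reasoning applied to the horizontals forces $h$ to be a bijection''. The parallel-class argument only yields that all but at most one horizontal share \emph{some} majority colour; nothing you have proved rules out that this colour is blue while the verticals' majority is red. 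In that mixed case every non-exceptional horizontal carries $\kappa$ blue points, so $h$ has fibres of size $\kappa$, and the whole affine-rigidity endgame --- which presupposes that the blue class meets almost every horizontal exactly once --- never gets started. Counting does not exclude this configuration either: $\kappa$ one-per-vertical blue points can be distributed with $\kappa$ of them on each of $\kappa$ horizontals, so this is a missing case, not bookkeeping. It is, however, fixable by your own transversal trick: take two red-majority verticals and two blue-majority horizontals; among the $\kappa$ parallel lines of any fixed slope $m\notin\{0,\infty\}$, at most eight pass through one of the four minority points or the four crossing points, so some such line meets the two verticals in red points and the two horizontals in blue points, carrying two points of each colour and contradicting conflict freeness.

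Once that fix is in hand, compare it with the paper's proof, which shows the entire parallel-class/graph/rigidity apparatus is avoidable. The engine is the fact you half-state along the way: with two colours on infinite lines, any line containing two points of $C_i$ has \emph{exactly one} point of $C_{1-i}$. Since neither class is collinear, one chooses for each $i<2$ two lines $K^0_i,K^1_i$ each containing two points of $C_i$ (these four lines are automatically distinct), with unique opposite-colour points $P^j_i$; a generic line meeting the four $K^j_i$ in distinct points different from the $P^j_i$ then carries at least two points of each colour at once, and the proof is over. So the step you single out as ``the real obstacle'' --- the planar-function-type rigidity, together with the exceptional-line bookkeeping --- is not where the content lies; the four-lines-plus-transversal argument simultaneously closes your missing case and replaces the whole second half of your proof.
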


\begin{proof}
First, by theorem \ref{tm:egeszresz}, we have
$$\crc{\kappa}{\kappa}2 \le \crc{\kappa}{\kappa^+}2 \le 3\,.$$ We have seen in
the proof of theorem \ref{tm:omegaw} that if $V$ is any
$2$-dimensional vector space with $|V| = \kappa$ above any field
of cardinality ${\kappa}$, then the $(\kappa,\kappa,2)$-system
$\lcal$ of all lines in $V$ satisfies
$$w\ccf(\lcal) = w\ccf(\kappa,\kappa,2) = 2\,.$$ Consequently, we shall be
done if we can show that $\lcal$ does not have a conflict free
coloring with 2 colors.

Assume, on the contrary, that $f:V\to 2$ is a CF-coloring of
$\lcal$ and write $C_i=f^{-1}\{i\}$ for $i\in 2$. Since $|C_i\cap
L|\ge 1$ for each line $L$ and color $i<2$, neither $C_i$ is
collinear, i.e. $C_i\not\subset L$ for any $i<2$ and for any line
$L$. Thus there are four lines $\{K^j_i:i,j<2\} \subs \lcal$ such
that $|C_i\cap K^j_i|\ge 2$ for all $i,j < 2$. Since $f$ is a
CF-coloring, for any $i,j<2$ we have a point $P^j_i$ with $K^j_i
\cap C_{1-i} = \{P^j_i\}$.

There is a line $L$ that intersects each $K^j_i$ in distinct
points which are all different from the points $P^j_i$. Then
$|L\cap C_i|\ge 2$ for $i < 2$, hence $f$ is not a CF-coloring of
$\lcal$, a contradiction.
\end{proof}

What can we say about $\,\crc {\omega}{\omega_m}2$ for $m>1$? If
$\beth_m = \omega_m$, in particular under GCH, from corollary
\ref{cor:gchoup2}, we have, for any $m < \omega$,
$$\left\lfloor \frac {m}{2}\right\rfloor+2 \le \crc {{\omega}}{{\omega}_{m}}2
\le \left\lfloor \frac {m}{2}\right\rfloor+3\,,$$  hence, in
particular,
\begin{displaymath}
 3\le\crc {\omega}\oot2\le \crc {\omega}\ooh2  \le 4.
\end{displaymath}

We actually do not know the exact value of $\crc {\omega}\oot2$
even under GCH, but we can reformulate the problem in terms of the
strict five-parameter arrow relation that was introduced in
definition \ref{def:gen_princ}. One direction of this works in
ZFC.

\begin{theorem}\label{tm:equi}
If $[\kappa,\kappa,2,2] \to 3$  then $\ccf(\kappa^{++},\kappa,2) =
3$.

\end{theorem}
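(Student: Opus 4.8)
The plan is to establish the two inequalities $\ccf(\kappa^{++},\kappa,2) \ge 3$ and $\ccf(\kappa^{++},\kappa,2) \le 3$ separately, noting that only the second will use the hypothesis. The lower bound is immediate and unconditional: by Proposition \ref{f:omega} we have $\crc{\kappa}{\kappa}2 = 3$, and since $\ccf(\lambda,\kappa,\mu)$ is monotone increasing in its first variable, it follows that $\ccf(\kappa^{++},\kappa,2) \ge \ccf(\kappa,\kappa,2) = 3$.

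For the upper bound I would feed the hypothesis into the stepping up theorem \ref{tm:ind_new}(2) twice. Recall that for $d = 2$ that theorem reads $[\lambda,\kappa,2,k+1] \to x$ implies $[\lambda^+,\kappa,2,k] \to x$. Applying it with $\lambda = \kappa$, $k = 1$ and $x = 3$ turns the hypothesis $\Indu{\kappa}{\kappa}{2}{2}{3}$ into $\Indu{\kappa^+}{\kappa}{2}{1}{3}$. Applying it a second time with $\lambda = \kappa^+$, $k = 0$ and $x = 3$ then yields $\Indu{\kappa^{++}}{\kappa}{2}{0}{3}$. Finally I would unwind Definition \ref{def:gen_princ}: given any $2$-almost disjoint $\acal \subs [\kappa^{++}]^{\kappa}$, take the empty partial function $f = \empt$, which trivially satisfies $|A \cap \dom(f)| = 0 \le 0$ for every $A \in \acal$; the relation $\Indu{\kappa^{++}}{\kappa}{2}{0}{3}$ then produces a conflict free coloring $g : \kappa^{++} \to 3$ of $\acal$. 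Hence $\ccf(\kappa^{++},\kappa,2) \le 3$, and together with the lower bound this gives equality.

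Since the substantive content has already been isolated in Theorem \ref{tm:ind_new}, I do not expect any genuine obstacle here; the whole argument is a corollary obtained by climbing two rungs of the stepping up ladder. The only point that needs care is the parameter bookkeeping, namely that with $d = 2$ the quantity $k+d-1$ in the premise of \ref{tm:ind_new} equals $k+1$, so that the value $k = 1$ on the top rung demands precisely the width appearing in the hypothesis $\Indu{\kappa}{\kappa}{2}{2}{3}$, and that on the bottom rung the relation with last parameter $0$ collapses, via the choice $f = \empt$, to the plain bound $\ccf(\kappa^{++},\kappa,2) \le 3$. It is worth emphasising that the hypothesis $\Indu{\kappa}{\kappa}{2}{2}{3}$ is genuinely required: the unconditional results of the preceding sections only confine $\ccf(\kappa^{++},\kappa,2)$ to the interval $\{3,4\}$, so the value of this theorem is that it reduces the still-open question of which of these two values occurs to a statement purely about colorings on the ground set $\kappa$ itself.
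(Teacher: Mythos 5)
Your proof is correct and follows exactly the paper's own argument: two applications of theorem \ref{tm:ind_new}(2) turn $[\kappa,\kappa,2,2]\to 3$ into $[\kappa^{++},\kappa,2,0]\to 3$, which with $f=\emptyset$ gives the upper bound, while $\ccf(\kappa,\kappa,2)=3$ and monotonicity in the first variable give the lower bound. Your parameter bookkeeping ($k+d-1=k+1$ for $d=2$, so the rungs are $k=2,1,0$) matches the paper's intended computation precisely.
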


\begin{proof}
Starting with the relation $[\kappa,\kappa,2,2] \to 3$ and
applying theorem \ref{tm:ind_new} (2) twice we obtain
$[\kappa^{++},\kappa,2,0] \to 3$ which, of course, is just
$[\kappa^{++},\kappa,2] \to 3$, and hence, together with
$\ccf(\kappa,\kappa,2) = 3$, implies $\ccf(\kappa^{++},\kappa,2) =
3$.
\end{proof}

To go in the opposite direction, we first need the following
result concerning the relation $[\lambda,\kappa,2,k] \to x$.

\begin{lemma}\label{lm:x}
If $\,\ninduu {\kappa}{\lambda}2k{x}$ then this can be witnessed
by a $(\lambda,\kappa,2)$-system $\xcal = \{X_i : i < \lambda\}
\subs [\lambda]^\kappa$ and a map $c \in \fcal(\lambda,x)$ such
that $$Y = \dom(c) = \cup\{Y_i : i < \lambda\},$$ where $X_i \cap
Y \subs Y_i \in [Y]^k$ for each $i < \lambda$ and the $k$-element
sets $Y_i$ are pairwise disjoint.
\end{lemma}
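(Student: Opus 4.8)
The plan is to start from any pair $(\acal,f)$ witnessing $[\lambda,\kappa,2,k]\not\to x$: so $\acal\subs[\lambda]^\kappa$ is $2$-almost disjoint, $f\in\fcal(\lambda,x)$ with $|A\cap\dom(f)|\le k$ for all $A\in\acal$, and no total conflict free coloring $g:\lambda\to x$ of $\acal$ extends $f$. I may assume $\dom(f)\subs\cup\acal$ (discard irrelevant colored points) and, after padding $\acal$ with a $2$-almost disjoint family of new $\kappa$-sets disjoint from $\dom(f)$ — which cannot restore colorability, since a conflict free coloring of a larger family restricts to one of $\acal$ extending $f$ — that $|\acal|=\lambda$; fix an enumeration $\acal=\{A_i:i<\lambda\}$. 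The key idea is to \emph{disjointify the colored part by replacing each colored point by private copies}.

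For each $i<\lambda$ I introduce $k$ fresh points forming a set $Y_i$, of which the first $|A_i\cap\dom(f)|$ are labelled $\{p_i:p\in A_i\cap\dom(f)\}$ and the rest are padding; I set $c(p_i)=f(p)$ and color the padding by a fixed element of $x$. Then I put
\[
X_i=(A_i\setm\dom(f))\cup\{p_i:p\in A_i\cap\dom(f)\},\qquad Y=\dom(c)=\bigcup_{i<\lambda}Y_i .
\]
Since all new points are private to one index, the $Y_i$ are pairwise disjoint members of $[Y]^k$, while $X_i\cap Y=\{p_i:p\in A_i\cap\dom(f)\}\subs Y_i$, so $|X_i\cap\dom(c)|=|A_i\cap\dom(f)|\le k$, and $|X_i|=|A_i|=\kappa$. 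The underlying point set has size $\lambda$, so after relabelling I may take $\xcal=\{X_i:i<\lambda\}\subs[\lambda]^\kappa$ and $c\in\fcal(\lambda,x)$. It is routine to check $2$-almost disjointness: for $i\ne j$ the new points of $X_i$ and $X_j$ are distinct, so $X_i\cap X_j\subs(A_i\cap A_j)\setm\dom(f)$ has at most one element — splitting a shared colored point only separates the two sets further.

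The heart of the proof is to verify that $(\xcal,c)$ still witnesses $[\lambda,\kappa,2,k]\not\to x$, i.e. that $c$ admits no total conflict free extension to $\xcal$. Suppose $h:\lambda\to x$ were such a coloring with $h\supset c$. I would pull it back to $g:\lambda\to x$ by setting $g\restriction\dom(f)=f$ and $g(q)=h(q)$ on the unchanged uncolored points $q\in\lambda\setm\dom(f)$, and claim $g$ is a conflict free coloring of $\acal$ extending $f$, the desired contradiction. Fix $A_i$ and a color $\zeta$ with $X_i\cap h^{-1}\{\zeta\}=\{w\}$. If $w\in A_i\setm\dom(f)$ then $g(w)=\zeta$, while for each $p\in A_i\cap\dom(f)$ we get $g(p)=f(p)=c(p_i)=h(p_i)\ne\zeta$ because $p_i\in X_i\setm\{w\}$; and if $w=p_i$ is a colored copy then $\zeta=f(p)=g(p)$, while every other point of $A_i$ — colored copies $p'_i\ne w$ and uncolored points $r\ne w$, all lying in $X_i$ — again receives a $g$-color different from $\zeta$. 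In either case $\zeta$ is realised exactly once on $A_i$ by $g$, so $g$ is conflict free.

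The only genuinely delicate point is this pull-back: it is precisely the replacement of shared colored points by private copies that lets the conflict free witness for $X_i$, wherever it falls, be transported back to a witness for $A_i$, and one must track carefully that no second point of $A_i$ accidentally acquires the witnessing color. By contrast, the reduction to $|\acal|=\lambda$, the choice of padding, and the relabelling of the ground set onto $\lambda$ are all straightforward.
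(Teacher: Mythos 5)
Your proposal is correct and takes essentially the same route as the paper: both proofs ``disjointify'' the colored part by replacing each colored point with private copies carrying the same color (the paper splits only points shared by several $X_i$, you split all of them --- an immaterial difference), pad with fresh colored points to form the pairwise disjoint $k$-element sets $Y_i$, and observe that a conflict free extension of the new partial coloring would pull back to one of the original pair, contradicting $\ninduu {\kappa}{\lambda}2k{x}$. You merely spell out details the paper leaves implicit, namely the reductions $\dom(f)\subs\cup\acal$ and $|\acal|=\lambda$ and the pull-back verification it dismisses as ``easy to check,'' and these are carried out correctly.
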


\begin{proof}
Fix an arbitrary $(\lambda,\kappa,2)$-system $\xcal = \{X_i : i <
\lambda\} \subs [\lambda]^\kappa$ and a map $c \in
\fcal(\lambda,x)$ that witnesses $\,\ninduu
{\kappa}{\lambda}2k{x}$. For each $y \in Y$ consider the set $I_y
= \{ i \in \lambda : X_i \cap Y \ne \emptyset\}$ and if $|I_y| >
1$ then, for each $i \in I_y$ replace $y$ in $X_i$ by the pair
$\<y,i\>$ and ``blow up" $y$ in $Y$ to $I_y \times \{y\}$. Having
done this for all $y \in Y$ let us denote the ``new" $X_i$ by
$X'_i$ and the ``new" $Y$ by $Y'$. Also define the ``new" function
$c'$ on $Y'$ by the rule $c'(\<y,i\>) = c(y)$. We may then add, if
necessary, completely new elements to $Y'$ (and extend $c'$ to
them arbitrarily) to obtain the pairwise disjoint $k$-element sets
$Y_i \supset X'_i \cap Y'$ forming a partition of $Y'$.

It is easy to check that the $(\lambda,\kappa,2)$-system $\xcal' =
\{X'_i : i < \lambda \}$ and the map $c'$, that now are of the
desired form, also witness $\ninduu {\kappa}{\lambda}2k{x}$.

\end{proof}

\begin{theorem}\label{tm:x}
For any $\lambda \ge \kappa \ge \omega > k\,$,
$$\,\ccf(\lambda,\kappa,2) = \ccf(\beth_k(\lambda),\kappa,2) = x <
\omega$$ implies $\,[\lambda,\kappa,2,k] \to x$.
\end{theorem}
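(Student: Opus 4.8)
The plan is to argue by contradiction, converting a failure of the strict relation at $\lambda$ into a failure of $\ccf$ at $\beth_k(\lambda)$. So suppose $\Nindu{\lambda}{\kappa}{2}{k}{x}$ holds. By Lemma~\ref{lm:x} I may fix a witness in normal form: a $(\lambda,\kappa,2)$-system $\xcal=\{X_i:i<\lambda\}$ together with a partial map $c\in\fcal(\lambda,x)$ whose domain $Y=\bigcup_{i<\lambda}Y_i$ is partitioned into pairwise disjoint $k$-element sets $Y_i$ with $X_i\cap Y\subseteq Y_i$, such that no total $g\supseteq c$ with $g:\lambda\to x$ is a conflict free coloring of $\xcal$. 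Since $\ccf(\lambda,\kappa,2)=x$ and this value is finite, the supremum is attained, so I may also fix one $(\lambda,\kappa,2)$-system $\fcal_0$ with $\ccf(\fcal_0)=x$; the point of $\fcal_0$ is that, because $x$ colors are necessary, every conflict free coloring of $\fcal_0$ (or of an isomorphic copy of it) with color set $x$ must actually use all $x$ colors.

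Using these two ingredients I would build a single $(\beth_k(\lambda),\kappa,2)$-system $\xcal^*$ that internalises the precoloring $c$, following the pattern of Lemma~\ref{lm:2*} but with $k$ nested ``layers'' instead of two. The system $\xcal^*$ consists of: (a) for each choice of connector parameters across the $k$ layers, a fresh isomorphic copy $\xcal'=\{X_i':i<\lambda\}$ of $\xcal$, in which each precolored point $y\in X_i\cap Y$ is deleted and replaced by an external ``forcing point''; and (b) for each layer $j<k$ a pool of disjoint copies of $\fcal_0$ together with a family of injective connector functions, of size $\beth_{k-j}(\lambda)$, attaching the pool points to the copies of $\xcal$. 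The connectors are indexed so that, at layer $j$, there are enough of them to realise, simultaneously for all the relevant sets, every possible way of selecting a pool point of a prescribed color for each precolored point; this is exactly what forces the cardinal to climb one exponential per layer and land at $\beth_k(\lambda)$. Crucially, the disjointness of the $Y_i$ furnished by Lemma~\ref{lm:x} lets a single connector family at each layer handle all the sets $X_i'$ at once, since the forcing points of distinct $X_i'$ never collide.

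The verification then splits into the two usual claims. First, $\xcal^*$ is a genuine $(\beth_k(\lambda),\kappa,2)$-system: each member still has size $\kappa$ (only at most $k$ points of each $X_i$ are replaced), and $2$-almost disjointness follows as in Claim~1 of Lemmas~\ref{lm:l*} and~\ref{lm:2*} from the injectivity of the connectors and the disjointness of the layer pools. Second, and this is the heart of the matter, I take a conflict free coloring $h:\cup\xcal^*\to x$, which exists because $\ccf(\beth_k(\lambda),\kappa,2)=x$, and run the nested pigeonhole argument of Lemma~\ref{lm:2*} through all $k$ layers. At layer $j$, each copy of $\fcal_0$ in the pool uses all $x$ colors under $h$, so it contains a point of whatever target color $c(y)$ I need; among the $\beth_{k-j}(\lambda)$ connectors one selects, for every main copy simultaneously, pool points of exactly the right colors, and fixing it localises the argument to a subpool of the next size down. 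After descending through all $k$ layers I obtain a single copy $\xcal'$ of $\xcal$ all of whose forcing points $p$ satisfy $h(p)=c(y)$ for the corresponding precolored $y$. But then $h\restriction\cup\xcal'$, read through the isomorphism $\xcal'\cong\xcal$, is precisely a total conflict free coloring of $\xcal$ with $x$ colors extending $c$ --- contradicting the choice of the witness, and hence establishing $\Indu{\lambda}{\kappa}{2}{k}{x}$.

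The main obstacle I expect is the layer-by-layer bookkeeping in the second claim: one must set up the indexing of copies and connectors so that (i) the targets $c(y)$, which vary from point to point and are not all a single fixed color as in Lemma~\ref{lm:2*}, can each be hit using ``all colors appear'' for $\fcal_0$, and (ii) fixing the layer $j$ connector genuinely localises to a pool of size $\beth_{k-j-1}(\lambda)$, so that the remaining $k-j-1$ layers still have room to maneuver. Getting the cardinal arithmetic of pools and connector families to telescope exactly to $\beth_k(\lambda)$, while simultaneously preserving $2$-almost disjointness, is the delicate part; everything else is a direct generalisation of the two-layer construction already carried out in Lemma~\ref{lm:2*}.
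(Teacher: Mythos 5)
Your plan is correct and is essentially the paper's own proof: the paper likewise reduces via Lemma~\ref{lm:x} to extending a precoloring $c$ in normal form, builds a single $(\beth_k(\lambda),\kappa,2)$-system out of copies of $\xcal$ (one per choice of ``connector parameters'', with the precolored points $y_i^j$ replaced by forcing points) together with pools of copies of a system witnessing $\ccf(\lambda,\kappa,2)=x$, whose role is exactly your ``all $x$ colors appear'' observation, and then descends through the $k$ layers by pigeonhole to reach one copy on which the coloring extends $c$. The bookkeeping you flag as delicate is resolved in the paper by indexing everything over the product $\Pi=\lambda_k\times\cdots\times\lambda_0$ with $\lambda_j=\beth_j(\lambda)$ and, at layer $j$, taking a function $f^j$ on $\Pi$ that enumerates all selector functions $\eta\mapsto f(\eta)\in A^j_{\varrho\smallfrown\eta}$ injectively as the $j$-th coordinate ranges over $\lambda_j$ (this is legitimate since there are exactly $\lambda_j$ such selectors), which makes the varying targets $c(y_i^j)$ reachable and the cardinals telescope precisely as you predicted.
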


\begin{proof}
By the previous result, to conclude $[\lambda,\kappa,2,k] \to x$,
it suffices to show the existence of a conflict free coloring of
$\xcal$ that extends $c$ for any $(\lambda,\kappa,2)$-system
$\xcal=\{X_i:i<{\lambda}\}\subs \br \lambda;{\kappa};$ and partial
map $c \in \fcal(\lambda,x)$ satisfying the conditions of lemma
\ref{lm:x}. That is, we may assume having a partition $\{Y_i : i <
\lambda\}$ of $\dom(c) = Y$ into disjoint $k$-element sets such
that $X_i \cap Y \subs Y_i$ for all $i < \lambda$. For each $i <
\lambda$ we write $Y_i = \{y_i^j : 1 \le j \le k\}$. By
$\,\ccf(\lambda,\kappa,2) = x$, we can fix a
$(\lambda,\kappa,2)$-system $\fcal$ with $\ccf(\fcal) = x$.

We now introduce some notation. For any $j$ we write
$\beth_j(\lambda) = \lambda_j$ (so, in particular, $\lambda_0 =
\lambda$) and put $\Pi = \lambda_k \times
\lambda_{k-1}\times...\times \lambda_0$. For each $j \le k$ we
shall also write $\Pi^j = \lambda_k \times...\times
\lambda_{j+1}\times \lambda_{j-1} \times...\times \lambda_0$, that
is the members of $\Pi^j$ are obtained from the members of $\Pi$
by deleting their $j$-coordinate.

Next we choose pairwise disjoint sets $\{A_\sigma^j : j \le
k,\,\sigma \in \Pi^j\}$ of size $\lambda$, and for every $j$ with
$1 \le j \le k$ and $\sigma \in \Pi^j$ we let $\acal_\sigma^j$ be
a copy of $\fcal$ on $A_\sigma^j$.

For fixed $j$ with $1 \le j \le k$ and $\varrho \in \lambda_k
\times... \times\lambda_{j+1}$, consider the family
$\mathbb{F}_\varrho^j$ of all functions $f$ such that $\dom(f) =
\lambda_{j-1} \times...\times \lambda_0\,$ and $f(\eta) \in
A_{\varrho\smallfrown\eta}^j \,$ for all $\,\eta \in \lambda_{j-1}
\times...\times \lambda_0\,$. Then $|\mathbb{F}_\varrho^j| =
\lambda_j$, hence for every $j$ with $1 \le j \le k$ there is a
function $f^j$ with $\dom(f^j)= \Pi$ and having the property that,
if we fix $\varrho \in \lambda_k \times... \times\lambda_{j+1}$,
then the functions $\eta \mapsto f^j(\varrho\smallfrown
\<\xi\>\smallfrown \eta)$ enumerate $\mathbb{F}_\varrho^j$ in a
one-one manner, as $\xi$ ranges over $\lambda_j$.

For any $\sigma \in \Pi^0$ we put $$B_\sigma^0 = A_\sigma^0 \cup
\{f^j(\sigma\smallfrown \<i\>) : 1 \le j \le k \mbox{ and } i <
\lambda
 \}\,.$$ Then, as $|\lambda \setm Y| = \lambda$, we may fix a bijection $h_\sigma : \lambda \to B_\sigma^0$
such that $$h_\sigma[\lambda \setm Y] = A_\sigma^0 \mbox{ and }
h_\sigma(y_i^j) = f^j(\sigma\smallfrown \<i\>)$$ for any $1 \le j
\le k$ and $i < \lambda$. Now, if $\tau \in \Pi$ with $\tau =
\sigma\smallfrown <i>$ then we set $B_\tau = h_\sigma[X_i]$.

We claim that the family $$\acal = \bigcup \{ \acal_\sigma^j : 1
\le j \le k \mbox{ and } \sigma \in \Pi^j \} \cup \{B_\tau : \tau
\in \Pi\}$$ is 2-almost disjoint. Here the only problematic task
is to show that $|B_\tau \cap B_{\tau'}| \le 1$ for two distinct
members, $\tau = \<\xi_k, ...,\xi_1,i \>$ and $\tau'= \<\xi'_k,
...,\xi'_1,i' \>$, of $\Pi$. Let $\sigma = \<\xi_k, ...,\xi_1 \>$
and $\sigma' =\<\xi'_k, ...,\xi'_1 \>\,$. If $\sigma \ne \sigma'$
and $j\ge 1$ is maximal such that $\xi_j \ne \xi'_j$, then we have
$B_\tau \cap B_{\tau'} \subs \{f^j(\tau)\} \cap \{f^j(\tau')\}$.
If, however, $\sigma = \sigma'$ then $i \ne i'$ and $$B_\tau \cap
B_{\tau'} = h_\sigma[X_i] \cap h_\sigma[X_{i'}] = h_\sigma[X_i
\cap X_{i'}]\,,$$ hence we are done because $\xcal$ is 2-almost
disjoint.

Thus $\acal$ is a $(\lambda_k\,,\kappa,2)$-system and so, by our
assumption, it has a conflict free coloring $d : \cup \acal \to
x$. Our choice of $\acal_\varrho^j$ implies that, for every $j$
with $1 \le j \le k$ and $\varrho \in \Pi^j$, we have
$d[A_\varrho^j] = x$. It follows that there is a function $f \in
\mathbb{F}^k_\emptyset$ which satisfies $d(f(\varrho)) = c(y_i^k)$
for all $\varrho \in \Pi^k$, where $i$ is the last ($0$)
coordinate of $\varrho$, and there is an ordinal $\xi_k <
\lambda_k$ for which we have $f(\varrho) =
f^k(\<\xi_k\>\smallfrown \varrho)$ for all $\varrho \in \Pi^k$.

Repeating this procedure ``downward", step by step, we arrive at a
sequence $\sigma = \<\xi_k, ... ,\xi_1 \> \in \Pi^0 $ which, for
any $j$ with $1 \le j \le k$ and $i < \lambda$, satisfies the
equality $$d(f^j(\sigma\smallfrown \<i\>)) = c(y_i^j)\,.$$ But
recall that we have $h_\sigma(y_i^j) = f^j(\sigma\smallfrown
\<i\>)$ by definition, hence the composition $d\circ h_\sigma$ is
a conflict free coloring of $\xcal$ which extends $c$, completing
our proof of $\,[\lambda,\kappa,2,k] \to x$.
\end{proof}

\begin{corollary}\label{cor:equi2}
For every infinite cardinal $\kappa\,$,
$\ccf(\beth_2(\kappa),\kappa,2) = 3$ implies $[\kappa,\kappa,2,2]
\to 3$. Consequently, if $\,\beth_2(\kappa) = \kappa^{++}$, in
particular under GCH, $[\kappa,\kappa,2,2] \to 3$ is equivalent to
$\ccf(\kappa^{++},\kappa,2) = 3$.
\end{corollary}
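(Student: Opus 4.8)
The plan is to derive the entire statement by combining Theorem~\ref{tm:x}, Theorem~\ref{tm:equi}, and Proposition~\ref{f:omega}; no new set-system needs to be built, since the corollary is a bookkeeping consequence of results already in hand.

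First I would establish the implication $\ccf(\beth_2(\kappa),\kappa,2) = 3 \Rightarrow [\kappa,\kappa,2,2] \to 3$ by specializing Theorem~\ref{tm:x}. I take $\lambda = \kappa$, $k = 2$, and $x = 3$ there; the parameter constraint $\lambda \ge \kappa \ge \omega > k$ reads $\kappa \ge \omega > 2$, which holds. With these choices the hypothesis of Theorem~\ref{tm:x} becomes $\ccf(\kappa,\kappa,2) = \ccf(\beth_2(\kappa),\kappa,2) = 3$. The first of these equalities is supplied by Proposition~\ref{f:omega}, which gives $\crc{\kappa}{\kappa}2 = 3$, and the second is exactly our standing assumption. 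Hence Theorem~\ref{tm:x} delivers $[\kappa,\kappa,2,2] \to 3$, which is the first assertion of the corollary.

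For the equivalence under the extra hypothesis $\beth_2(\kappa) = \kappa^{++}$, I would argue the two directions separately. The forward direction, $[\kappa,\kappa,2,2] \to 3 \Rightarrow \ccf(\kappa^{++},\kappa,2) = 3$, is precisely the content of Theorem~\ref{tm:equi}. For the converse, observe that under $\beth_2(\kappa) = \kappa^{++}$ the assumption $\ccf(\kappa^{++},\kappa,2) = 3$ is literally the same statement as $\ccf(\beth_2(\kappa),\kappa,2) = 3$; thus the first part of the corollary applies verbatim and yields $[\kappa,\kappa,2,2] \to 3$. Combining the two directions gives the claimed equivalence, and since GCH forces $\beth_2(\kappa) = \kappa^{++}$ for every infinite $\kappa$, the ``in particular'' clause follows at once.

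I do not expect any genuine obstacle, as every implication is a direct invocation of a prior result. The only points demanding a moment's care are verifying that the strict inequality $\omega > k$ in Theorem~\ref{tm:x} is respected by the choice $k = 2$, and that Proposition~\ref{f:omega} is invoked for the \emph{exact} value $\ccf(\kappa,\kappa,2) = 3$ (so that $x$ is pinned to $3$), rather than for a mere upper bound; both are immediate from the cited statements.
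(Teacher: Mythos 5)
Your proof is correct and is exactly the argument the paper intends: the first implication is Theorem~\ref{tm:x} with $\lambda=\kappa$, $k=2$, $x=3$, where $\ccf(\kappa,\kappa,2)=3$ comes from Proposition~\ref{f:omega}, and the equivalence combines this with Theorem~\ref{tm:equi} for the converse direction under $\beth_2(\kappa)=\kappa^{++}$. Nothing is missing, and your side remarks about the parameter constraints are accurate.
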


Our next aim is to show that $\crc {\omega}\ooh2  = 4$ under GCH.
This will follow from the ZFC result $\crc {\omega}\oohp2 \ge 4$
that, in turn, follows from the negative relation $\nindu
{\omega}2033$. To prove the latter, we need the following
technical lemma.

\begin{lemma}\label{lm:fini}
There are a finite $2$-almost disjoint family $\acal$ of countably
infinite sets, a finite set $C$, and a function $c:C\to 3$ such
that
\begin{enumerate}[(1)]
\item  $|A \cap C|= 4$ for each $A\in\acal$,
\item the sets $\{A\cap C:A\in\acal\}$ are pairwise disjoint,
\item
$c$ can not be extended to a
 conflict free coloring of $\acal$ with $3$ colors.
\end{enumerate}
 \end{lemma}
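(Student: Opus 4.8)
The plan is to reduce the (infinitary) extension problem to a purely finite coloring problem about the ``shared'' points of $\acal$. First I would note that since $\acal$ is finite and each $A\in\acal$ is countably infinite, each $A$ meets the finite set $C$ in $4$ points and meets each of the finitely many other members in at most one point, so $A$ has infinitely many \emph{private} points, i.e. points lying in no other member and outside $C$. Let $S$ denote the (finite) set of points belonging to at least two members. Once the colors on $C$ (given by $c$) and on $S$ are fixed, each $A$ can be made conflict-free independently, using only its private points, precisely when some color occurs at most once on $(A\cap C)\cup(A\cap S)$: if such a color occurs zero or one times we spend one private point on it and dump the remaining infinitely many private points on another color, while if every color already occurs at least twice no coloring of the private points can help. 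Consequently $c$ fails to extend to a conflict-free $3$-coloring if and only if for \emph{every} $3$-coloring of $S$ there is a member $A$ on which all three colors occur at least twice on $(A\cap C)\cup(A\cap S)$.

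Next I would turn the freedom in choosing $c$ into combinatorial constraints. Since $|A\cap C|=4$, I can color $A$'s trace either of type $(2,2,0)$ (two colors used twice, a ``missing'' color $d_A$ absent) or of type $(2,1,1)$ (a ``doubled'' color $a_A$, the other two used once each). A short count shows that a $(2,2,0)$-member is saturated exactly when its missing color $d_A$ occurs at least twice among its shared points, while a $(2,1,1)$-member is saturated exactly when both colors different from $a_A$ occur among its shared points. Thus the lemma reduces to constructing a finite system of shared points together with members, each carrying one of these two trace-types, so that no $3$-coloring of $S$ leaves all members unsaturated; the countably infinite sets are then produced by padding.

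The key constraint, and the heart of the difficulty, is that $2$-almost-disjointness forces any two members to meet in at most one point, so \emph{each pair of shared points lies in at most one member}: the members form a linear hypergraph on $S$, and each pair of shared points can carry at most one constraint. This rules out the naive approach: if one uses only $(2,2,0)$-traces the requirement becomes ``no chosen pair of shared points is monochromatic in its assigned missing color'', and one checks (e.g. by exhibiting, for any prescribed assignment, a coloring of distribution $(2,2,1)$ whose only monochromatic pairs form a disjoint pair of edges colored in the ``wrong'' colors) that such a system is always solvable. Hence the anti-diagonal constraints coming from $(2,1,1)$-traces are indispensable, and the real work is to produce an explicit \emph{mixed} configuration on a small set $S$ of shared points for which every one of the $3^{|S|}$ colorings saturates some member.

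Finally, once such a configuration is fixed, the remaining steps are routine. For each member I would adjoin its four trace points as fresh private points (so the traces $A\cap C$ are automatically pairwise disjoint, giving (1) and (2)), keep the prescribed shared points, and add countably many further fresh private points to make each member countably infinite while preserving $|A\cap A'|\le 1$; the function $c$ on $C=\bigcup_{A\in\acal}(A\cap C)$ is defined by the chosen trace patterns. By the reduction of the first paragraph this $c$ admits no conflict-free $3$-coloring extension, which is (3). I expect the main obstacle to be locating and then verifying the explicit uncolorable mixed configuration subject to the linear-hypergraph restriction, a finite but delicate case analysis over all colorings of $S$, whereas the reduction and the padding are straightforward.
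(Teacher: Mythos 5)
Your opening reduction is correct and even illuminating: since $\acal$ is finite, $2$-almost disjoint, and $C$ is finite, every member has infinitely many private points, so $c$ extends to a conflict-free $3$-coloring iff some $3$-coloring of the set $S$ of shared points leaves every member with a color occurring at most once on $(A\cap C)\cup(A\cap S)$; and your saturation criteria for the $(2,2,0)$- and $(2,1,1)$-trace types are right. But the proposal stops exactly where the content of the lemma lies: no configuration is produced, only the assertion that a suitable ``mixed'' one exists and could be verified by a finite case analysis. Worse, the structural claim you use to steer that search --- that systems built entirely from $(2,2,0)$-traces are always extendable, so that $(2,1,1)$-traces are indispensable --- is false, and the paper's own construction is the counterexample: it uses only $(2,2,0)$-traces. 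The paper takes $\acal=\{L_{a,b}\cap\mathbb{Z}^2:\{a,b\}\in[4\times 6]^2\}$, the lattice lines spanned by pairs of points of a $4\times 6$ grid (a finite $2$-almost disjoint family of countably infinite sets), picks the four $C$-points of each line outside the grid, and colors them so that each of the $4$ vertical lines carries colors $0,1$ twice each (missing color $2$), each of the $6$ horizontal lines carries $1,2$ (missing $0$), and each oblique line carries $0,2$ (missing $1$). If $f\supset c$ were conflict-free, each vertical line $V_i$ would contain exactly one point $x_i$ of color $2$; since $6-4=2$, two rows $H_j,H_{j'}$ avoid all the $x_i$, and each of these contains exactly one point ($y_j$, resp.\ $y_{j'}$) of color $0$; since $4-2=2$, two columns $V_i,V_{i'}$ avoid $y_j,y_{j'}$, whence the grid points $a=\langle i,j\rangle$ and $b=\langle i',j'\rangle$ are forced to get color $1$, so on the oblique line through $a$ and $b$ every color occurs at least twice --- a contradiction.

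The flaw in your indispensability argument is the tacit assumption that each member meets $S$ in exactly two points, so that a $(2,2,0)$-member imposes a single pair-constraint ``this edge is not monochromatic in $d_A$''. Nothing forces $|A\cap S|=2$: in the grid construction a vertical line contains all six of its grid points, each shared with many other lines, and its constraint is that color $2$ occur at most once among those six points --- much stronger than any one pair-constraint, and immune to your $(2,2,1)$-distribution argument. (Even for genuine pair systems your solvability claim is the statement that every edge-$3$-colored complete graph admits an adapted $3$-coloring, which also fails for large complete graphs; but the grid configuration already refutes the claim within the lemma's own setting.) So while your reduction and trace classification are sound and in fact describe the paper's construction accurately, the heart of the proof --- an explicit unextendable configuration --- is missing, and the heuristic offered in its place points in the wrong direction.
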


\begin{proof}
For $\{a, b\} \in [\mathbb R]^2$ let $L_{a,b}$ be the line in
$\mathbb R^2$ which contains $a$ and $b$ and put
$E_{a,b}=L_{a,b}\cap \mathbb Z^2$. We then put
\begin{displaymath}
\acal=\{E_{a,b}:  \{a,b\}\in \br 4\times 6;2;\}.
\end{displaymath}
Let $C\subs \cup \acal \setm (4\times 6)$ be any finite set that
satisfies (1) and (2).

Write $V_i=E_{\<i,0\>,\<i,1\>}$ for $i<4$ and
$H_j=E_{\<0,j\>,\<1,j\>}$ for $j<6$. Define $c:C\to 3$ in such a
way that if $C_i=c^{-1}\{i\}$ for $i<3$, then we have
\begin{enumerate}[(a)]
\item for each $i<4$
   \begin{displaymath}
|C_0\cap  V_i|= |C_1\cap  V_i|=2
   \end{displaymath}
\item for each $j<6$
   \begin{displaymath}
|C_1\cap  H_j|= |C_2\cap  H_j|=2
  \end{displaymath}
 \item for each $i\ne i'<4$ and $j\ne j' <6$
   \begin{displaymath}
|C_0\cap  E_{\<i,j\>, \<i',j'\>}|= |C_2\cap  E_{\<i,j\>,
\<i',j'\>}|=2
   \end{displaymath}
\end{enumerate}

Assume that $f : \cup \acal \to 3$ is a conflict free coloring of
$\acal$ with $c\subset f$. Then, by (a), for each $i<4$ there is
exactly one $x_i\in V_i$ such that $f(x_i)=2$. Since $6-4=2$
 there are $j\ne j'<6$ such that
$$\{x_i:i<4\}\cap (H_j\cup H_{j'})=\empt\,.$$
By (b), there are unique $y_j\in H_j$ and $y_{j'}\in H_{j'}$,
respectively, such that $f(y_j)=f(y_{j'})=0$. Since $4-2=2$
 there are $i\ne i'<4$ such that
$$\{y_j, y_{j'}\}\cap (V_i\cup V_{i'})=\empt\,.$$

Let $a=\<i,j\>$ and $b=\<i'j'\>$. Then $a\ne x_i$ implies $f(a)\ne
2$ and  similarly,  $a\ne y_j$ implies $f(a)\ne 0$, hence
$f(a)=1$. Similarly, we have $f(b)=1$. But, as $a,b\in E_{a,b}$
and (c) holds, we have $|E_{a,b}\cap f^{-1}\{i\}| > 1$ for each $i
< 3$, which is a contradiction.
\end{proof}

\begin{theorem}\label{tm:ooh}
$\nindu {\omega}2033$.
\end{theorem}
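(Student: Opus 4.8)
The symbol $\nindu{\omega}2033$ spells out as $[\omega,\omega,2,3]\not\to 3$, so by the definition of the strict five‑parameter relation my goal is to produce a single $2$‑almost disjoint family $\acal\subs[\omega]^{\omega}$ together with a partial map $c\in\fcal(\omega,3)$ satisfying $|A\cap\dom(c)|\le 3$ for each $A\in\acal$, such that no total $g:\omega\to 3$ extending $c$ is a conflict free coloring of $\acal$. I would first note that the relation quantifies over all such pairs and does \emph{not} demand $|\acal|=\omega$: a \emph{finite} family of countably infinite sets is already a legitimate $(\omega,\omega,2)$‑configuration once $\cup\acal$ is identified with $\omega$. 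Hence it suffices to exhibit one finite obstruction.

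The plan is then to read this obstruction off lemma \ref{lm:fini}, in which all of the combinatorial content is concentrated. There the family $\acal$ consists of the integer points lying on finitely many real lines determined by the $4\times 6$ grid; it is $2$‑almost disjoint because two distinct lines meet in at most one point, its members are countably infinite, the traces $\{A\cap C:A\in\acal\}$ are pairwise disjoint, and the explicit coloring $c:C\to 3$ admits no conflict free extension to $\acal$. After identifying $\cup\acal$ with $\omega$ and setting $\dom(c)=C$, this is exactly a witness for the negative relation. Should one want the witness in the canonical normal form used elsewhere—pairwise disjoint traces partitioning $\dom(c)$, one attached to each member of the family—this is precisely what lemma \ref{lm:x} supplies, so no further construction is required.

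The genuine obstacle is not this deduction but the gadget inside lemma \ref{lm:fini}, and in particular the calibration of the per‑line incidence count against the parameter $k=3$ of the arrow. The precoloring $c$ must hit every line in just enough points to force rigidity—on each line two of the three colors are already made to repeat (clauses (a)–(c): ``two points of each of two colors''), so that the third color is the only possible unique value—yet it must respect the incidence bound imposed by the relation. Making these two requirements coexist under the severe constraint that any two members of a $2$‑almost disjoint family overlap in at most one point is the crux, and it is what dictates the choice of the dimensions $4$ and $6$ and the final collinearity clash on a diagonal line $E_{a,b}$. Once $[\omega,\omega,2,3]\not\to 3$ is established, feeding it into theorem \ref{tm:x} with $\lambda=\kappa=\omega$ and $k=x=3$, together with $\ccf(\omega,\omega,2)=3$ from proposition \ref{f:omega}, yields the promised consequence $\crc{\omega}{\oohp}2\ge 4$.
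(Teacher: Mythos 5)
There is a genuine gap, and it sits exactly where the paper's proof does its real work. The relation $[\omega,\omega,2,3]\not\to 3$ requires a witnessing pair $(\acal,c)$ with $|A\cap\dom(c)|\le 3$ for \emph{every} $A\in\acal$, but the gadget of lemma \ref{lm:fini} comes with $|A\cap C|=4$: clauses (a)--(c) place two points of each of \emph{two} colors on every relevant line, i.e.\ four precolored points per member, as your own summary in fact acknowledges. Taking $\dom(c)=C$ therefore witnesses only $[\omega,\omega,2,4]\not\to 3$, not the stated theorem. Your (correct) observation that a finite family is an admissible witness, and your appeal to the normalization lemma \ref{lm:x}, do nothing to change this incidence count. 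Worse, the ``calibration'' you single out as the crux is impossible within lemma \ref{lm:fini} alone: with only three precolored points per line one cannot force two repetitions in each of two color classes, so the rigidity mechanism you describe never gets started. Note also that the $k=4$ relation is strictly too weak for the intended application: via theorem \ref{tm:x} it yields only $\ccf(\beth_4,\omega,2)\ge 4$ rather than corollary \ref{cor:beth3}'s $\ccf(\beth_3,\omega,2)\ge 4$.

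The paper's proof of theorem \ref{tm:ooh} is precisely the device for trading the fourth precolored point for a \emph{forced} color. It picks $D\subs C$ with $|A\cap D|=1$ for each $A\in\acal$ (so $|A\cap(C\setm D)|=3$), and a $2$-almost disjoint $\bcal\subs[\omega]^{\omega}$ such that every conflict free $3$-coloring of $\bcal$ has all three color classes infinite (property (\ref{eq:sok_szin}), obtained from countably many disjoint copies of a witness to $\crc{\omega}{\omega}2=3$). It then glues onto $\bcal$ a copy $h_g[\acal]$ of the gadget for \emph{every} injective $g:D\to\omega$, sending $D$ into $\omega$ via $g$ and $X\setm D$ onto a fresh set $H_g$, and precolors only $h_g[C\setm D]$ --- three points per member, so the bound $k=3$ now holds, while $2$-almost disjointness survives exactly because $|A\cap D|=1$. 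Given any total conflict free extension $f$, its restriction to $\omega$ colors $\bcal$, so each color class is infinite there; hence some injective $g$ realizes $f(g(x))=c(x)$ for all $x\in D$, and then $f\circ h_g$ is a conflict free $3$-coloring of $\acal$ extending $c$, contradicting lemma \ref{lm:fini}. This infinite amalgamation (or some substitute gadget with trace size $3$, which you have not produced) is the missing idea in your proposal.
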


\begin{proof}
We shall  construct a $2$-almost disjoint family $\hcal \subs
[H]^\omega$ for a countable set $H$, a subset $K\subs H$, and a
function $d:K\to 3$ such that
\begin{enumerate}[(1)]
 \item  $|H\cap K|\le 3$ for each $H\in\hcal$,
\item
$d$ can not be extended to a
 conflict free coloring  of $\hcal$ with $3$ colors.
\end{enumerate}

We first choose, using $\crc {\omega}{\omega}2=3$,  a $2$-almost
disjoint family $\bcal\subs \br {\omega};{\omega};$ such that
\begin{multline}\label{eq:sok_szin}
\text{if $f:{\omega}\to 3$ is any conflict-free coloring of
$\bcal$}\\\text{then $f^{-1}\{i\}$ is infinite for each $i<3$.}
\end{multline}
(Let $\{A_n : n < \omega\}$ be a partition of $\omega$ into
infinite sets and $\bcal_n \subs [A_n]^\omega$ be a copy of a
family witnessing $\crc {\omega}{\omega}2=3$. Then $\bcal =
\cup_{n<\omega} \bcal_n$ clearly satisfies (\ref{eq:sok_szin}).)

Fix a countable set $X$,  a finite family $\acal\subs \br
X;{\omega};$, a finite set $C\subs X$, and a function $c:C\to 3$
as in Lemma \ref{lm:fini} . $D\subs C$ be such that $|A\cap D|=1$
for each $A\in \acal$.

Let $\gcal$ denote the collection of all injective functions
$g:D\stackrel{1-1}{\longrightarrow}{\omega}$ and $\{H_g:g\in
\gcal\}$ be disjoint countably infinite sets with $H_g\cap
{\omega}=\empt$. For each $g\in \gcal\,$ fix a bijection
$h'_g:(X\setm D)\to H_g$ and put $h_g=g\cup h'_g$.

Let us then define
\begin{gather}
H=\omega\cup \bigcup \{H_g:g\in \gcal\},\\
\hcal=\bcal\cup\{h_g[A]:A\in \acal,g\in \gcal\},\\
K=\cup\{h_g[C\setm D]:g\in \gcal\},
\end{gather}
and, finally, define $d:K\to 3$ as follows:
\begin{equation}\label{eq:K}
\text{if $k=h_g(x)$ for some $x\in C\setm D$ and $g \in \gcal$,
then $d(k)=c(x)$.}
\end{equation}
We claim that $H$, $\hcal$, $K$, and $d$ are as required, that is
satisfy (1) and (2). Of course, only (2) needs to be checked.

Assume, on the contrary, that $f:H\to 3$ is a conflict-free
coloring for $\hcal$ with $d\subs f$. Using (\ref{eq:sok_szin}) we
may find an injective function $g:D\to {\omega}$ such that for
each $x \in D$ we have
\begin{equation}\label{eq:fgc}
f(g(x))=c(x).
\end{equation}
Let us now define $F:{\omega}\to 3$ by $F(x)=f(h_g(x))$. Since $f$
is a conflict free coloring of $\{h_g[A]:A\in \acal\}\subs \hcal$
and $h_g$ is a bijection, $F$ is a  conflict free coloring of
$\acal$.

If $x\in D$ then $F(x)=f(h_g(x))=f(g(x))=c(x)$ by (\ref{eq:fgc})
and if $x\in C\setm D$ then $F(x)=f(h_g(x))=d(h_g(x))=c(x)$ by
(\ref{eq:K}), hence $c\subs F$. But this contradicts the choice of
$\acal$, which proves that $H$, $K$, $\hcal$, and $d$ really
satisfy conditions (1) and (2).
\end{proof}
\smallskip
\begin{corollary}\label{cor:beth3}
$\ccf(\beth_3,\omega,2) \ge 4$. Consequently, if $\beth_3 =
\omega_3$ then $\ccf(\omega_3,\omega,2) = 4$.
\end{corollary}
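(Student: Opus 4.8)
The plan is to read off the first inequality $\ccf(\beth_3,\omega,2)\ge 4$ as essentially the contrapositive of Theorem \ref{tm:x}, fed by the negative relation of Theorem \ref{tm:ooh}. I would apply Theorem \ref{tm:x} with the parameters $\lambda=\kappa=\omega$, $k=3$ and $x=3$; these satisfy its hypothesis $\lambda\ge\kappa\ge\omega>k$ since $\omega>3$, and here $\beth_k(\lambda)=\beth_3(\omega)=\beth_3$. Suppose, towards a contradiction, that $\ccf(\beth_3,\omega,2)=3$. By Proposition \ref{f:omega} we already know $\ccf(\omega,\omega,2)=3$, so the full hypothesis $\ccf(\omega,\omega,2)=\ccf(\beth_3,\omega,2)=3<\omega$ of Theorem \ref{tm:x} would hold, yielding $[\omega,\omega,2,3]\to 3$. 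This directly contradicts Theorem \ref{tm:ooh}, which asserts $\nindu {\omega}2033$, i.e. $[\omega,\omega,2,3]\not\to 3$. Hence $\ccf(\beth_3,\omega,2)\ne 3$.

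Next I would upgrade this to $\ge 4$. Since $\ccf(\cdot\,,\omega,2)$ is monotone increasing in its first argument and $\beth_3\ge\omega$, we have $\ccf(\beth_3,\omega,2)\ge\ccf(\omega,\omega,2)=3$, again using Proposition \ref{f:omega}. Because the conflict-free chromatic number is cardinal-valued and $4$ is the successor of $3$, the two facts $\ccf(\beth_3,\omega,2)\ge 3$ and $\ccf(\beth_3,\omega,2)\ne 3$ together force $\ccf(\beth_3,\omega,2)\ge 4$, which proves the first assertion. Note that no finiteness of the value is needed at this stage.

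For the ``consequently'' clause I assume $\beth_3=\omega_3$, equivalently $\beth_3(\omega)=\omega^{+3}$. The lower bound is then immediate from the first part: $\ccf(\omega_3,\omega,2)=\ccf(\beth_3,\omega,2)\ge 4$. For the matching upper bound I would invoke Corollary \ref{cor:gchoup2} with $\kappa=\omega$ and $m=3$; its hypothesis $\beth_m(\kappa)=\kappa^{+m}$ is exactly $\beth_3=\omega_3$, and it gives $\crcw {\omega}{\omega_3}2=\lfloor 3/2\rfloor+2=3$. Since $\ccf(\lambda,\kappa,d)\le w\ccf(\lambda,\kappa,d)+1$ always holds, this yields $\ccf(\omega_3,\omega,2)\le 3+1=4$, and combined with the lower bound we conclude $\ccf(\omega_3,\omega,2)=4$.

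The content of this corollary is entirely a matter of correctly assembling earlier results, so I do not expect a genuine obstacle here; the delicate work was already carried out in Theorems \ref{tm:ooh} and \ref{tm:x}. The one point to watch is the precise logical form of Theorem \ref{tm:x}, whose hypothesis is the conjunction $\ccf(\lambda,\kappa,2)=\ccf(\beth_k(\lambda),\kappa,2)=x$: it is essential that we independently know one of the two values (namely $\ccf(\omega,\omega,2)=3$), so that assuming the other equals $3$ activates the implication and produces the forbidden arrow relation.
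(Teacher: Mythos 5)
Your proof is correct and follows exactly the route the paper intends: the lower bound $\ccf(\beth_3,\omega,2)\ge 4$ comes from Theorem \ref{tm:ooh} via the contrapositive of Theorem \ref{tm:x} (with $\lambda=\kappa=\omega$, $k=x=3$, using Proposition \ref{f:omega} and monotonicity in the first variable), and the upper bound under $\beth_3=\omega_3$ from Corollary \ref{cor:gchoup2} together with $\ccf\le w\ccf+1$, which is precisely the chain $\lfloor m/2\rfloor+2\le\crc{\omega}{\omega_m}2\le\lfloor m/2\rfloor+3$ displayed in the paper. Your closing remark about the logical form of Theorem \ref{tm:x} --- that one of the two equalities in its hypothesis must be known independently --- is exactly the right point to flag.
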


\smallskip

\begin{problem}
Is $\,\ccf(\omega_2,\omega,2) = 4\,$ provable under GCH?
\end{problem}

\bigskip

\bigskip
\begin{center}
\sc Part III. The case $\lambda \ge \kappa \ge \omega = \mu$
\end{center}

\section{Consistent upper bounds for $\ccf(\lambda,\kappa,\omega)$}

We start by pointing out that $\ccf(\lambda,\kappa,\omega)$ is
always infinite. This follows immediately from the next
proposition because $\ccf(\lambda,\kappa,\omega)$ is increasing in
its first parameter.

\begin{proposition}\label{pr:kko}
For every infinite cardinal $\kappa$ we have
$$\ccf(\kappa,\kappa,\omega) \ge \omega.$$
\end{proposition}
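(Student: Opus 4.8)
The plan is to reduce the statement to the already-established values of the weak conflict-free chromatic number for finitely-almost-disjoint systems, rather than to attempt a direct construction. The key observation is that, for every natural number $d$, a $d$-almost disjoint family is in particular $\omega$-almost disjoint, since $|A\cap A'|<d\le\omega$ gives $|A\cap A'|<\omega$. Hence every $(\kappa,\kappa,d)$-system is also a $(\kappa,\kappa,\omega)$-system, so the supremum defining $\ccf(\kappa,\kappa,\omega)$ is taken over a class of systems that contains all $(\kappa,\kappa,d)$-systems. This is exactly the monotonicity of $\ccf$ in its third argument noted in the introduction, and it yields $\ccf(\kappa,\kappa,\omega)\ge\ccf(\kappa,\kappa,d)$ for every $d<\omega$.

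First I would combine this comparison with the elementary inequality $w\ccf\le\ccf$ and with Theorem \ref{tm:omegaw}, which gives the exact value $w\ccf(\kappa,\kappa,d)=\lfloor d/2\rfloor+1$ for all $\kappa\ge\omega$ and $d\ge 2$. Together these give, for every $d$ with $2\le d<\omega$,
\[
\ccf(\kappa,\kappa,\omega)\ge\ccf(\kappa,\kappa,d)\ge w\ccf(\kappa,\kappa,d)=\left\lfloor\frac d2\right\rfloor+1 .
\]
Finally I would let $d$ range over all natural numbers: since $\lfloor d/2\rfloor+1$ is unbounded as $d\to\omega$, the displayed inequality forces $\ccf(\kappa,\kappa,\omega)\ge\sup_{d<\omega}\bigl(\lfloor d/2\rfloor+1\bigr)=\omega$, which is the assertion.

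There is no genuinely hard step here; the proposition is essentially a corollary of Theorem \ref{tm:omegaw}. The only point that requires care — and the one I would make explicit — is the containment of system classes: one must verify that a $(\kappa,\kappa,d)$-system genuinely satisfies all three defining requirements of a $(\kappa,\kappa,\omega)$-system (cardinality $\kappa$, members of size $\kappa$, and $\omega$-almost disjointness), so that the supremum comparison is legitimate. An alternative, self-contained route would bypass the general theorem and invoke Lemma \ref{lm:0*} directly to produce, for each $s$, a $(\kappa,\kappa,2s)$-system with weak conflict-free chromatic number at least $s+1$; but citing Theorem \ref{tm:omegaw} is the cleaner option.
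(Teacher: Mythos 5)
Your proposal is correct, and it rests on exactly the same key ingredient as the paper's proof, namely Theorem \ref{tm:omegaw} combined with $w\ccf \le \ccf$; the only divergence is in the last step. The paper does not invoke monotonicity in the third parameter: instead it takes, for each $d \in \omega \setm 2$, a witnessing $(\kappa,\kappa,d)$-system $\acal_d$ with $\ccf(\acal_d) \ge \lfloor d/2 \rfloor + 1$ and then glues them together, forming the union $\acal$ of the families $\acal_d$ on pairwise disjoint underlying sets. This union is a single $(\kappa,\kappa,\omega)$-system (the $2$-element intersections across different pieces are empty, so $\omega$-almost disjointness is clear) satisfying $\ccf(\acal) \ge \omega$, since any conflict free coloring of $\acal$ restricts to one of each $\acal_d$. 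Your route — observing that every $(\kappa,\kappa,d)$-system is already a $(\kappa,\kappa,\omega)$-system and taking the supremum over $d$ — is equally valid for the proposition as stated, because $\ccf(\kappa,\kappa,\omega)$ is defined as a supremum, and your explicit verification of the class containment is exactly the point that needs care. What the paper's disjoint-union step buys is the marginally stronger conclusion that a \emph{single} $\omega$-almost disjoint family has infinite conflict free chromatic number, rather than merely that the supremum is infinite; nothing in the subsequent text exploits this extra strength, so your shorter argument would serve just as well where the proposition is used (e.g., to conclude that $\ccf(\lambda,\kappa,\omega)$ is always infinite via monotonicity in the first parameter).
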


\begin{proof}
By theorem \ref{tm:omegaw}, for every $d \in \omega \setm 2$ there
is a $(\kappa,\kappa,d)$-system $\acal_d$ such that
$$\ccf(\acal_d) \ge w\ccf(\acal_d) = {\left\lfloor {\frac{d}2}   \right\rfloor+1}.$$
But clearly if $\acal$ is the union of $\{\acal_d : d \in \omega
\setm 2 \}$ (taken on disjoint underlying sets) then $\acal$ is a
$(\kappa,\kappa,\omega)$-system with $\ccf(\acal) \ge \omega$.
\end{proof}

The main aim of this section is to show that we have
$\ccf(\lambda,\kappa,\omega) \le \omega_2$ for $\lambda \ge \kappa
\ge \omega_2$, provided that $\mu^\omega = \mu^+$ holds for every
$\mu < \lambda$ with $\cf(\mu) = \omega$. Moreover, if in addition
$\square_\mu$ also holds for any $\mu$ with $\cf(\mu) = \omega <
\mu < \lambda$, then we even have $\ccf(\lambda,\kappa,\omega) \le
\omega_1$ whenever $\lambda \ge \kappa \ge \omega_1$. The first
part will follow from a general stepping up result, whose
formulation needs the following definition.

\begin{definition}
Assume that $\omega \le {\rho} \le {\lambda}$ are  cardinals,
$\acal$ is any set-system, and $\vec N =
\<N_{\alpha}:{\alpha}<{\lambda}\>$ is a ${\lambda}$-chain of
elementary submodels. We say that  $\vec N$ {\em ${\rho}$-cuts}
$\acal$ iff
\begin{equation}\label{eq:dec}
\text{$\acal\in N_1$, moreover ${\alpha}<{\lambda}$ and $A\in
\acal\setm N_{\alpha}$ imply $|A\cap N_{\alpha}|<{\rho}$.}
\end{equation}
\end{definition}

\begin{theorem}\label{tm:gen_step_up}
Let  ${\omega}\le {\mu}\le {\rho}\le {\kappa}\le {\lambda}$ be
cardinals and assume that every $(\lambda,\kappa,\mu)$-system is
$\rho$-cut by a ${\lambda}$-chain of elementary submodels. Assume
also that
\begin{enumerate}[(i)]
\item if ${\kappa}={\lambda}$ then there is ${\kappa}^*<{\kappa}$ such that
$[\kappa',\kappa',\mu] \Rightarrow \rho$ whenever
${\kappa}^*\le{\kappa'}<{\kappa}$\,  (note that in this case $\rho
\le \kappa^* < \kappa = \lambda$),
\item if ${\kappa}<{\lambda}$ then $[\lambda',\kappa',\mu] \Rightarrow \rho$ whenever ${\kappa}\le
\kappa' \le {\lambda}'<{\lambda}$.
\end{enumerate}
Then $[\lambda,\kappa,\mu] \Rightarrow \rho$.
\end{theorem}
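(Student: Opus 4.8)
The plan is to adapt the elementary-chain argument from the first proof of Theorem~\ref{tm:korlatos} (its Cases~2 and~3), replacing the finite bound there by the cardinal $\rho$ and feeding in the hypotheses (i)/(ii) at the spot where that proof invoked its induction hypothesis. So I would fix a $(\lambda,\kappa,\mu)$-system $\acal$, assume $\acal\subs[\lambda]^{\kappa}$, and take a $\lambda$-chain $\vec N=\<N_{\alpha}:{\alpha}<{\lambda}\>$ that $\rho$-cuts $\acal$. Then $\acal\in N_1$, so each $A\in\acal$ sits in a unique $N_{{\alpha}(A)+1}\setm N_{{\alpha}(A)}$ and satisfies $|A\cap N_{{\alpha}(A)}|<{\rho}$. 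Writing $Y_{\alpha}={\lambda}\cap(N_{{\alpha}+1}\setm N_{\alpha})$, continuity together with $N_0=\empt$ and ${\lambda}\subs\bigcup_{\alpha}N_{\alpha}$ makes $\{Y_{\alpha}:{\alpha}<{\lambda}\}$ a partition of $\lambda$. The strategy is to build one ``local'' coloring $c_{\alpha}:Y_{\alpha}\to{\rho}$ for each $\alpha$ from the hypotheses, amalgamate them into $c=\bigcup_{\alpha}c_{\alpha}$, and then estimate $\rho\setm I_c(A)$.

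In the case ${\kappa}<{\lambda}$ I would use (ii). Here each $A$ is swallowed by its model: for ${\alpha}\ge{\kappa}$ one has ${\kappa}\subs{\alpha}\subs N_{\alpha}$, and $A\in N_{{\alpha}+1}$, $|A|={\kappa}$, ${\kappa}\subs N_{{\alpha}+1}$ force $A\subs N_{{\alpha}+1}$. Re-indexing the chain to start at index ${\kappa}$ (so that $N_1$ already contains $\kappa$ and absorbs $\acal\cap N_{\kappa}$, which keeps the chain $\rho$-cutting) lets me assume this for every $\alpha$. Then for ${\alpha}={\alpha}(A)$ the set $A':=A\cap Y_{\alpha}=A\setm N_{\alpha}$ lies in $[Y_{\alpha}]^{\kappa}$ (we delete fewer than ${\rho}\le{\kappa}$ points), and $\acal_{\alpha}:=\{A\cap Y_{\alpha}:{\alpha}(A)={\alpha}\}$ is a ${\mu}$-almost disjoint subfamily of $[Y_{\alpha}]^{\kappa}$ with ${\kappa}\le|Y_{\alpha}|=|N_{{\alpha}+1}|<{\lambda}$. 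Hypothesis (ii) then yields $c_{\alpha}:Y_{\alpha}\to{\rho}$ with $|{\rho}\setm I_{c_{\alpha}}(A')|<{\rho}$ for all $A'\in\acal_{\alpha}$.

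In the case ${\kappa}={\lambda}$ I would use (i), but now $|A|={\kappa}$ exceeds every $|N_{{\alpha}+1}|$, so $A$ is not swallowed and I would switch to the sharper cut of Case~2 of Theorem~\ref{tm:korlatos}: put $B_{\alpha}=\bigcup(\acal\cap N_{\alpha})$ and redefine $Y_{\alpha}=N_{{\alpha}+1}\cap({\kappa}\setm(B_{\alpha}\cup N_{\alpha}))$. For ${\alpha}={\alpha}(A)$, ${\mu}$-almost disjointness gives $|A\cap B_{\alpha}|\le|N_{\alpha}|\cdot{\mu}<{\kappa}$, whence $A\setm(B_{\alpha}\cup N_{\alpha})\in N_{{\alpha}+1}$ has size ${\kappa}$ and $|A\cap Y_{\alpha}|={\kappa}_{\alpha}:=|N_{{\alpha}+1}|$; so $\acal_{\alpha}:=\{A\cap Y_{\alpha}:{\alpha}(A)={\alpha}\}\subs[Y_{\alpha}]^{{\kappa}_{\alpha}}$ is ${\mu}$-almost disjoint. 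To apply $[{\kappa}_{\alpha},{\kappa}_{\alpha},{\mu}]\Rightarrow{\rho}$ I need ${\kappa}_{\alpha}\ge{\kappa}^*$; since ${\kappa}_{\alpha}\ge|{\alpha}|\ge{\kappa}^*$ as soon as ${\alpha}\ge{\kappa}^*$, re-indexing to drop the bounded initial segment (again preserving $\rho$-cutting) secures this, and (i) produces the desired $c_{\alpha}$. Here I also extend $c=\bigcup_{\alpha}c_{\alpha}$ by the constant $0$ on ${\lambda}\setm\bigcup_{\alpha}Y_{\alpha}$.

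The crux --- and the step I expect to be the real work --- is the amalgamation estimate, which is common to both cases. Fixing $A$, ${\alpha}={\alpha}(A)$ and $A'=A\cap Y_{\alpha}$, the key observation is that $A$ meets the remaining pieces only slightly: $A\cap\bigcup_{{\beta}<{\alpha}}Y_{\beta}\subs A\cap N_{\alpha}$ has size $<{\rho}$ by $\rho$-cutting, while $A\cap Y_{\beta}=\empt$ for ${\beta}>{\alpha}$ --- when ${\kappa}<{\lambda}$ because $A\subs N_{{\alpha}+1}$, and when ${\kappa}={\lambda}$ because $A\in\acal\cap N_{\beta}$ gives $A\subs B_{\beta}$, which is disjoint from $Y_{\beta}$. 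Hence every ${\xi}\in I_{c_{\alpha}}(A')$ with ${\xi}\ne 0$ and ${\xi}\notin c[A\cap N_{\alpha}]$ satisfies $|A\cap c^{-1}\{{\xi}\}|=|A'\cap c_{\alpha}^{-1}\{{\xi}\}|=1$, i.e. ${\xi}\in I_c(A)$, so that
\[
{\rho}\setm I_c(A)\;\subs\;\bigl({\rho}\setm I_{c_{\alpha}}(A')\bigr)\cup c[A\cap N_{\alpha}]\cup\{0\}.
\]
As this is a union of finitely many sets of size $<{\rho}$, and such a union has size $<{\rho}$ for every infinite ${\rho}$ (so no regularity of ${\rho}$ is required), I conclude $|{\rho}\setm I_c(A)|<{\rho}$; since $A$ was arbitrary, $c$ witnesses $[\lambda,\kappa,\mu]\Rightarrow\rho$. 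The only genuinely delicate points I foresee are the two re-indexing reductions (guaranteeing ${\kappa}\subs N_1$, resp. ${\kappa}_{\alpha}\ge{\kappa}^*$, without destroying $\rho$-cutting) and verifying that the $B_{\alpha}$-trick in the case ${\kappa}={\lambda}$ really forces $A$ to avoid all later pieces $Y_{\beta}$.
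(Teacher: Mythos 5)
Your proposal is correct and follows essentially the same route as the paper's own proof: a $\rho$-cutting $\lambda$-chain, the blocks $\acal_{\alpha}=\acal\cap(N_{{\alpha}+1}\setm N_{\alpha})$ with $Y_{\alpha}$ obtained by removing $N_{\alpha}\cup\bigcup(\acal\cap N_{\alpha})$ (the paper uses this single definition in both cases, noting that the $\bigcup(\acal\cap N_{\alpha})$ part is redundant when ${\kappa}<{\lambda}$), hypotheses (i)/(ii) applied blockwise, and the same amalgamation estimate via $|A\cap\bigcup_{{\beta}<{\alpha}}Y_{\beta}|<{\rho}$ and $A\cap Y_{\beta}=\empt$ for ${\beta}>{\alpha}$. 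The only (harmless) cosmetic difference is at the very end: you totalize $c$ by the constant $0$, sacrificing at most the one color $0$, whereas the paper extends $c$ by a new color $\rho$ into ${\rho}+1$ --- your variant in fact matches the definition of $[\lambda,\kappa,\mu]\Rightarrow\rho$ more literally.
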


\begin{proof}
Let $\acal \subs \br {\lambda};{\kappa};$ be a
$(\lambda,\kappa,\mu)$-system and let $\vec
N=\<N_{\alpha}:{\alpha}<{\lambda}\>$ be a ${\lambda}$-chain of
elementary submodels which ${\rho}$-cuts $\acal$. We can assume
that $\max({\kappa}^*+1,{\rho}+1)\subs N_1$ in case
${\kappa}={\lambda}$ and ${\kappa}+1\subs N_1$ in case
${\kappa}<{\lambda}$. For each ${\alpha}<{\lambda}$ let
\begin{displaymath}
  \acal_{\alpha}=\acal\cap (N_{{\alpha}+1}\setm N_{\alpha})\,,
\end{displaymath}
then $\<\acal_{\alpha}:{\alpha}<{\lambda}\>$ is a partition of
$\acal$ and $|\acal_{\alpha}|\le |N_{{\alpha}+1}|<{\lambda}$. We
let
\begin{equation}\label{eq:ya}
Y_{\alpha}={\lambda}\cap N_{{\alpha}+1}\setm \bigl(N_{\alpha} \cup
 \bigcup\acal\cap N_{\alpha}\bigr)
\end{equation}
and
\begin{displaymath}
\acal'_{\alpha}=\{A\cap Y_{\alpha}:A\in \acal_{\alpha}\}.
\end{displaymath}
If $A\in \acal_{\alpha}$ then $|A\cap N_{\alpha}|<{\rho} \le
\kappa$, hence
\begin{equation}\label{eq:ayr}
| A\cap \cup\{Y_{\beta}:{\beta}<{\alpha}\}|<{\rho},
\end{equation}
and, by definition,
\begin{equation}\label{eq:aydis}
A \cap\cup\{Y_{\beta}:{\beta}>{\alpha}\}=\empt\,.
\end{equation}

Assume first that ${\kappa}={\lambda}$. Then $A \in \acal_\alpha$
implies $$|A\cap \bigcup (\acal \cap N_{\alpha})|\le
{\mu}\cdot|N_{\alpha}|<{\kappa}\,,$$ hence, by elementarity, $|A
\cap Y_\alpha|=|Y_{\alpha}|=|N_{{\alpha}+1}| \ge \kappa^*$.
Consequently, $\acal'_{\alpha}\subs \br Y_{\alpha};|Y_{\alpha}|;$
is a $(|Y_{\alpha}|,|Y_{\alpha}|,\mu)$-system and thus, by (i),
there is a function $c_{\alpha}:Y_{\alpha}\to {\rho}$ such that
for each $A\in \acal_{\alpha}$ we have
\begin{equation}\label{eq:ca1}
|{\rho}\setm I_{c_\alpha}(A \cap Y_\alpha)|<{\rho}.
\end{equation}

Assume now that  ${\kappa}<{\lambda}$. Then $\cup(\acal\cap
N_{\alpha})\subs N_{\alpha}$, and so $$A\cap Y_{\alpha} = A \setm
A \cap N_\alpha \in [Y_\alpha]^\kappa\,.$$ But ${\kappa}\le
|Y_{\alpha}|=|N_{{\alpha}+1}|<{\lambda}$ and $\acal'_{\alpha}\subs
\br Y_{\alpha};{\kappa};$ is ${\mu}$-almost disjoint, so by (ii)
there is $c_{\alpha}:Y_{\alpha}\to {\rho}$ such that for each
$A\in \acal_{\alpha}$ we have
\begin{equation}\label{eq:ca2}
|{\rho}\setm I_{c_\alpha}(A \cap Y_\alpha)|<{\rho}.
\end{equation}

Let us put (in both cases)
$c=\cup\{c_{\alpha}:{\alpha}<{\lambda}\}$, then $c \in
\fcal({\lambda},{\rho})$. For $A\in \acal$ pick
${\alpha}<{\lambda}$ with $A\in \acal_{\alpha}$, then
(\ref{eq:aydis}) implies
$$I_c(A)\supset I_{c_{\alpha}}(A\cap Y_{\alpha})\setm c[A\cap
\cup\{Y_{\beta}: {\beta}<{\alpha}\}]\,. $$ But $|A\cap
\cup\{Y_{\beta}: {\beta}<{\alpha}\} |<{\rho}$ by (\ref{eq:ayr}),
hence either (\ref{eq:ca1}) or (\ref{eq:ca2}) implies $|\rho \setm
I_c(A)| < {\rho}$. Finally, if $\dom(c) \ne \lambda$ then we may
extend $c$ to a full function $d : \lambda \to \rho+1$ by mapping
every member of $\lambda \setm \dom(c)$ to $\rho$, and then we
have $|\varrho \setm I_{d}(A)| < \rho$, which completes the proof
of $[\lambda,\kappa,\mu] \Rightarrow \rho$.
\end{proof}

Now, using the trivial relation $[\rho,\rho,\mu] \Rightarrow \rho$
and theorem \ref{tm:gen_step_up}, the following result may be
established by a straight-forward transfinite induction. The
details are left to the reader.

\begin{corollary}\label{cor:gen_step_up}
Let ${\omega}\le {\mu}\le {\rho} < {\lambda}$ be cardinals. If
every $(\lambda',\kappa,\mu)$-system is $\rho$-cut by a
${\lambda}'$-chain of elementary submodels whenever $\rho <
\lambda' \le \lambda$ and $\rho \le \kappa \le \lambda'$ then
$\,\Carr {\kappa}{\lambda}{\mu}{\rho}$.
\end{corollary}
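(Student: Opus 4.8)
The plan is to prove, by transfinite induction on the cardinal $\lambda'$ running through $\rho \le \lambda' \le \lambda$, the statement $P(\lambda')$: \emph{for every $\kappa$ with $\rho \le \kappa \le \lambda'$ we have $[\lambda',\kappa,\mu] \Rightarrow \rho$}. Since $\rho < \lambda$, the desired conclusion $\Carr{\kappa}{\lambda}{\mu}{\rho}$ is exactly the instance $\lambda' = \lambda$ of $P$. The base case is $\lambda' = \rho$, where $\rho \le \kappa \le \rho$ forces $\kappa = \rho$, so $P(\rho)$ is nothing but the trivial relation $[\rho,\rho,\mu] \Rightarrow \rho$.

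For the inductive step I fix $\lambda'$ with $\rho < \lambda' \le \lambda$, assume $P(\lambda'')$ for all cardinals $\lambda''$ with $\rho \le \lambda'' < \lambda'$, and fix an arbitrary $\kappa$ with $\rho \le \kappa \le \lambda'$. I will apply Theorem \ref{tm:gen_step_up} to the quadruple $(\lambda',\kappa,\mu,\rho)$. Its cutting requirement --- that every $(\lambda',\kappa,\mu)$-system be $\rho$-cut by a $\lambda'$-chain of elementary submodels --- is furnished directly by the hypothesis of the corollary, which applies because $\rho < \lambda' \le \lambda$ and $\rho \le \kappa \le \lambda'$. Thus it only remains to check hypothesis (i) or (ii) of the theorem.

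If $\kappa < \lambda'$ I verify (ii): for any $\kappa',\lambda''$ with $\kappa \le \kappa' \le \lambda'' < \lambda'$ I have $\rho \le \kappa' \le \lambda'' < \lambda'$, whence the induction hypothesis $P(\lambda'')$ gives $[\lambda'',\kappa',\mu] \Rightarrow \rho$, as required. If $\kappa = \lambda'$ I verify (i) with the witness $\kappa^* = \rho$, which is admissible since $\rho < \lambda' = \kappa$: for every $\kappa'$ with $\rho \le \kappa' < \kappa$ one has $\kappa' < \lambda'$, so $P(\kappa')$ applied with its free parameter equal to $\kappa'$ yields $[\kappa',\kappa',\mu] \Rightarrow \rho$. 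In either case Theorem \ref{tm:gen_step_up} outputs $[\lambda',\kappa,\mu] \Rightarrow \rho$; as $\kappa$ was arbitrary, $P(\lambda')$ follows and the induction is complete.

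I expect no genuine obstacle, since all the substantive work is packaged inside Theorem \ref{tm:gen_step_up}. The only points that need care are that, for a fixed $\lambda'$, each value of $\kappa$ reduces through (i) or (ii) straight to instances of the induction hypothesis at a strictly smaller first coordinate --- so that no secondary induction on $\kappa$ is needed --- and that the base $\lambda' = \rho$, which is the one place Theorem \ref{tm:gen_step_up} cannot be invoked (its cutting hypothesis presupposes $\rho < \lambda'$), is exactly the trivial relation $[\rho,\rho,\mu] \Rightarrow \rho$ named in the statement.
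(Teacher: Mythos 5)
Your proof is correct and is precisely the argument the paper intends: the paper itself only sketches this corollary as ``a straight-forward transfinite induction'' from the trivial relation $[\rho,\rho,\mu]\Rightarrow\rho$ and theorem \ref{tm:gen_step_up}, and your induction on $\lambda'$ (with clause (i) handled via $\kappa^*=\rho$ when $\kappa=\lambda'$ and clause (ii) via the induction hypothesis when $\kappa<\lambda'$) fills in exactly the details left to the reader. No discrepancies with the paper's approach.
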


The following easy lemma will be used in the proof of the first
result that was promised in the introductory paragraph of this
section.

\begin{lemma}\label{lm:gen_small_close}
Assume that $\lambda \ge \omega_2$ and ${\mu}^{\omega}={\mu}^+$
holds for each ${\mu}<{\lambda}$ with $\cf({\mu})={\omega}$. If
$\acal$ is an $\omega$-almost disjoint set system and $\,X$ is any
set with $\, |X| < \lambda$, then
\begin{displaymath}
 \big|\{A\in \acal:|X\cap A| > \omega \}\big| \le |X|.
\end{displaymath}
\end{lemma}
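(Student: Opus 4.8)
The plan is to prove the statement by induction on the cardinal $\kappa = |X|$, noting that we may assume $\omega_1 \le \kappa < \lambda$, since for $|X| \le \omega$ no $A \in \acal$ satisfies $|A\cap X| > \omega$ and the set in question is empty. Throughout I write $\bcal = \{A\in\acal : |A\cap X| > \omega\}$, and I use repeatedly that $\acal$, hence any subfamily, is $\omega$-almost disjoint, i.e. $|A\cap A'| < \omega$ for distinct $A,A'$.

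First I would record the cardinal-arithmetic consequence of the hypothesis that I need: for every infinite $\mu < \lambda$ one has $\mu^\omega \le \mu^+$. This follows by an easy induction on $\mu$: if $\cf(\mu) = \omega$ it is exactly the hypothesis $\mu^\omega = \mu^+$ (the case $\mu = \omega$ incidentally forcing CH), while if $\cf(\mu) > \omega$ then every countable subset of $\mu$ is bounded, so $[\mu]^\omega = \bigcup_{\nu<\mu}[\nu]^\omega$ and hence $\mu^\omega \le \mu\cdot\sup_{\nu<\mu}\nu^\omega \le \mu\cdot\mu = \mu$ by the inductive hypothesis. In particular, whenever $\omega_1 \le \kappa < \lambda$ and $\cf(\kappa) > \omega$ the same computation gives $\kappa^\omega = \kappa$.

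With this in hand the case $\cf(\kappa) > \omega$ is immediate: for each $A\in\bcal$ choose a countably infinite $s_A \in [A\cap X]^\omega$; since $s_A\cap s_{A'} \subs A\cap A'$ is finite for distinct $A,A'$ while each $s_A$ is infinite, the assignment $A\mapsto s_A$ is injective into $[X]^\omega$, whence $|\bcal| \le |X|^\omega = \kappa^\omega = \kappa$, as desired.

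The crux, and the place where mere counting fails, is the case $\cf(\kappa) = \omega$: here $\kappa^\omega = \kappa^+$, so the injection above only yields $|\bcal| \le \kappa^+$ and must be replaced by an inductive argument. Writing $\kappa = \sup_{n<\omega} \kappa_n$ with $\kappa_n \nearrow \kappa$, I would partition $X = \bigsqcup_{n<\omega} X_n$ with $|X_n| < \kappa$ for all $n$. If $A\in\bcal$ then $A\cap X = \bigsqcup_n (A\cap X_n)$ exhibits an uncountable set as a countable union, so $|A\cap X_n| > \omega$ for some $n$; thus $\bcal \subs \bigcup_n \{A\in\acal : |A\cap X_n| > \omega\}$. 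Applying the inductive hypothesis to each $X_n$, which is legitimate since $|X_n| < \kappa$, bounds each of these sets by $|X_n|$, and therefore $|\bcal| \le \sum_{n<\omega}|X_n| = |X| = \kappa$, completing the induction. The main obstacle is precisely this singular case: the whole point of the cardinal-arithmetic hypothesis is to keep $\kappa^\omega$ under control for the other cofinalities while rendering the $\cf(\kappa)=\omega$ case accessible to induction rather than to direct counting.
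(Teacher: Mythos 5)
Your proof is correct and takes essentially the same route as the paper's: the same case split on $\cf(|X|)$, the same injection $A \mapsto s_A \in [X]^{\omega}$ exploiting $\omega$-almost disjointness together with the observation that $\mu^{\omega} = \mu$ when $\cf(\mu) > \omega$ and $\mu < \lambda$, and the same decomposition $X = \bigcup_{n<\omega} X_n$ with $|X_n| < |X|$ in the case $\cf(|X|) = \omega$. The only cosmetic difference is that you close the singular case by induction on $|X|$, whereas the paper avoids recursion by bounding each piece directly via $|X_n|^{\omega} \le |X_n|^+ \le |X|$.
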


\begin{proof}
It obviously follows from our assumption that if $\mu < \lambda$
and $\cf(\mu) > \omega$ then $\mu^\omega = \mu$. Thus, if
$\cf(|X|) > \omega$ then, as $\acal$ is $\omega$-almost disjoint,
we even have $$|\{A\in \acal:|X\cap A|\ge \omega\}|\le
|X|^{\omega}= |X|\,.$$ If, however, $\cf(|X|)={\omega} < |X|$ then
we may write $X=\cup\{X_n:n<{\omega}\}$ with $|X_n|<|X|$ for each
$n < \omega$. But then we have
\begin{displaymath}
 |\{A\in \acal:|X\cap A|\ge \oo\}|=
|\{A\in \acal:\exists n\, |X_n\cap A|\ge \oo\}|,
\end{displaymath}
and so
\begin{multline}\notag
|\{A\in \acal: |X\cap A|\ge \oo\}|\le \sum_{n<{\omega}}|\{A\in
\acal:
|X_n\cap A|\ge {\omega}\}|\le\\
\le \sum_{n<{\omega}}|X_n|^{\omega} = |X|.
\end{multline}
\end{proof}

\begin{theorem}\label{tm:above_oot}
Assume that $\lambda \ge \omega_2$ and ${\mu}^{\omega}={\mu}^+$
holds for each ${\mu}<{\lambda}$ with $\cf({\mu})={\omega}$. Then
$\Carr {\kappa}{\lambda}{\omega}\oot$ whenever $\oot\le
{\kappa}\le {\lambda}$.
\end{theorem}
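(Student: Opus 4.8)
The plan is to deduce the result from the general stepping-up machinery of Corollary \ref{cor:gen_step_up}, applied with $\mu=\omega$ and $\rho=\omega_2$. For $\lambda=\omega_2$ the assertion $\Carr{\omega_2}{\omega_2}{\omega}{\omega_2}$ is just the trivial relation $[\omega_2,\omega_2,\omega]\Rightarrow\omega_2$, so I may assume $\lambda>\omega_2$. Then Corollary \ref{cor:gen_step_up} reduces everything to a single combinatorial statement: for every $\lambda'$ and $\kappa$ with $\omega_2<\lambda'\le\lambda$ and $\omega_2\le\kappa\le\lambda'$, every $(\lambda',\kappa,\omega)$-system $\acal$ is $\omega_2$-cut by some $\lambda'$-chain of elementary submodels. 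Thus the whole burden is to construct, for a given such $\acal$, a suitable chain $\vec N=\langle N_\alpha:\alpha<\lambda'\rangle$.

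The key idea is to build the chain so that each successor model absorbs every member of $\acal$ that meets it in an uncountable set. Concretely, I would construct a continuous $\lambda'$-chain with $\acal\in N_1$ and $\omega_2\subseteq N_1$, $N_\alpha\in N_{\alpha+1}$, $|N_\alpha|<\lambda'$, arranging in addition the closure property
\[
(\mathrm{F})\qquad A\in\acal\ \wedge\ |A\cap N_{\alpha+1}|>\omega\ \Longrightarrow\ A\in N_{\alpha+1}.
\]
Property $(\mathrm{F})$ is obtained by an internal $\omega$-step fixed-point iteration inside the construction of $N_{\alpha+1}$: at each step one throws in all $A\in\acal$ with $|A\cap(\text{current model})|>\omega$. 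Here Lemma \ref{lm:gen_small_close} is exactly what makes the iteration legal: it guarantees that at every step there are at most $|N_\alpha|<\lambda'$ such sets, so the cardinality stays below $\lambda'$, and since these sets are indexed by a surjection from the current model they land inside $N_{\alpha+1}$. (This is where the arithmetic hypothesis $\mu^\omega=\mu^+$ enters, through the lemma.) Note that $(\mathrm{F})$ immediately yields, for a successor ordinal, that $A\notin N_{\alpha+1}$ forces $|A\cap N_{\alpha+1}|\le\omega$.

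It then remains to verify that this chain $\omega_2$-cuts $\acal$, i.e. $A\notin N_\alpha$ implies $|A\cap N_\alpha|<\omega_2$, which I would do by distinguishing the type of $\alpha$. For $\alpha$ a successor this is immediate from $(\mathrm{F})$. For a limit $\alpha$ of cofinality $\le\omega_1$ one writes $A\cap N_\alpha$ as a union, along a cofinal sequence of successor stages, of the countable sets $A\cap N_\gamma$, so $|A\cap N_\alpha|\le\omega_1<\omega_2$. For a limit $\alpha$ of cofinality $\ge\omega_2$ one argues by reflection: if $|A\cap N_\alpha|\ge\omega_1$, choose $W\subseteq A\cap N_\alpha$ with $|W|=\omega_1$; since $|W|<\cf(\alpha)$ and the chain is continuous, $W\subseteq N_\gamma$ for some $\gamma<\alpha$, whence $|A\cap N_{\gamma+1}|>\omega$ and $(\mathrm{F})$ gives $A\in N_{\gamma+1}\subseteq N_\alpha$, a contradiction; hence $|A\cap N_\alpha|\le\omega$. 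In every case $|A\cap N_\alpha|<\omega_2$, as required.

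I expect the main obstacle to be this last, cofinality-sensitive verification rather than the construction itself. The cofinality-$\omega_1$ limit stages are genuinely the bottleneck: there a set outside the model can accumulate $\omega_1$ points inside it, which is precisely why the method delivers only an $\omega_2$-cut (and hence $[\lambda,\kappa,\omega]\Rightarrow\omega_2$) and not an $\omega_1$-cut. Improving the bound to $\omega_1$ would require eliminating these stages, which is exactly what the additional hypothesis $\square_\mu$ is designed to do in the companion theorem \ref{tm:above_oo}. Care must also be taken that $(\mathrm{F})$ can be imposed even when $\kappa=\lambda'$ (members of $\acal$ are then added only as elements, not as subsets, so the cardinality bound is preserved), and that the $N_\alpha$ remain genuine elementary submodels throughout the fixed-point iteration.
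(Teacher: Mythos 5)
Your proposal is correct and takes essentially the same approach as the paper: both reduce via Corollary \ref{cor:gen_step_up} to showing that every $(\lambda',\kappa,\omega)$-system with $\omega_2 < \lambda' \le \lambda$ is $\omega_2$-cut by a $\lambda'$-chain, with Lemma \ref{lm:gen_small_close} supplying the crucial cardinality bound. The only cosmetic difference is that you enforce the absorption property $(\mathrm{F})$ explicitly by an $\omega$-step iteration at successors, whereas the paper takes an arbitrary chain $\langle M_\alpha : \alpha < \lambda' \rangle$, sets $N_\alpha = M_{\omega\alpha}$, and gets the same absorption for free from elementarity (the family $\acal' = \{A' \in \acal : |A' \cap M_\beta| \ge \omega_1\}$ lies in $M_{\beta+1}$ and is small by the lemma, hence $\acal' \subseteq M_{\beta+1}$), your three-way cofinality analysis being exactly the paper's one-line limit argument spelled out.
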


\begin{proof}
By corollary \ref{cor:gen_step_up}, it clearly suffices to show
that if $\omega_2 < \lambda' \le \lambda$ and $\acal$ is any
${\omega}$-almost disjoint set-system of cardinality ${\lambda}'$,
then $\acal$ is $\omega_2$-cut by a ${\lambda}'$-chain of
elementary submodels.

To see this, let $\<M_{\alpha}:{\alpha}<{\lambda}'\>$ be any
${\lambda}'$-chain of elementary submodels satisfying $\oot\cup
\{\acal\}\subs M_1$ and for every ${\alpha}<{\lambda}'$ write
$N_{\alpha}=M_{{\omega}{\alpha}}$. We claim that
$\<N_{\alpha}:{\alpha}<{\lambda}'\>$, also a ${\lambda}'$-chain of
elementary submodels, $\oot$-cuts $\acal$.

Indeed, assume that $\alpha < \lambda'$ and $A\in \acal$ with
$|A\cap N_{\alpha}| \ge \oot$. Since ${\omega}{\alpha}$ is a limit
ordinal, then there is ${\beta}<{\omega}{\alpha}$ such that
$|A\cap M_{\beta}|\ge\oo$. But then $\acal' = \{A'\in
\acal:|A'\cap M_{\beta}|\ge \oo\}\in M_{{\beta}+1}$ and
$|\acal'|\le |M_{\beta}|$ by lemma \ref{lm:gen_small_close}, hence
we have $A \in \acal' \subs M_{{\beta}+1}\subs
M_{{\omega}{\alpha}}=N_{\alpha}$.
\end{proof}

A very short alternative proof of theorem \ref{tm:above_oot} may
be obtained as follows. In \cite[Theorem 6]{EH3} Erd\H os and
Hajnal proved that if ${\mu}^{\omega}={\mu}^+$ holds for each
${\mu}<{\lambda}$ with $\cf({\mu})={\omega}$ then
\begin{equation}\label{eh2x}
\text{$\MM {\lambda}{\kappa}{\omega}\to \BB {{\omega}_2}$ whenever
${\omega}_1\le {\kappa}\le {\lambda}$}.
\end{equation}
Moreover, under the same assumption,  Komj\'ath proved in
\cite[Theorem 5]{KO2} that
\begin{equation}\label{ko2}
\text{$\MM {\lambda}{{\omega}_2}{\omega}\to {\bf ED}\,$ for all
$\lambda \ge \omega_2 $.}
\end{equation}
Applying proposition \ref{lm:reduction} with $\mu = \omega$ and
$\tau = \omega_2$, we may conclude that (\ref{eh2x}) and
(\ref{ko2}) together imply $[\lambda,\kappa,\omega] \Rightarrow
\omega_2$ whenever $\lambda \ge \kappa \ge \omega_2$.

Actually, the above proof yields the stronger conclusion
$$[\lambda,\kappa,\omega] \Rightarrow \omega_1 \mbox{ whenever }\,\lambda
\ge \kappa \ge \omega_1\,,$$ provided that in (\ref{ko2}) we may
replace $\omega_2$ by $\omega_1$. But by \cite[Theorem 5(c)]{KO2},
this can be done if, in addition to ${\mu}^{\omega}={\mu}^+$ for
all ${\mu}<{\lambda}$ with $\cf({\mu})={\omega}$, we also assume
$\Box_\mu$ for each ${\mu}<{\lambda}$ with $\cf({\mu})={\omega} <
\mu$. (In fact, as it is shown in \cite{HJS1}, the assumption of a
very weak version of $\Box_\mu$ suffices for this.) Thus we get
the following result.

\begin{theorem}\label{tm:above_oo}
Let ${\lambda}$ be an uncountable cardinal and assume that
\begin{enumerate}[(i)]
\item  ${\mu}^{\omega}={\mu}^+$ for each cardinal ${\mu}<{\lambda}$ with
  $\cf({\mu})={\omega}$,
\item
$\Box_\mu$ holds for each singular cardinal ${\mu}<{\lambda}$ with
$\cf({\mu})={\omega}$.
\end{enumerate}
Then $\Carr {\kappa}{\lambda}{\omega}\oo$ holds whenever $\oo\le
{\kappa}\le {\lambda}$.
\end{theorem}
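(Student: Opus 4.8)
The plan is to derive the theorem from the reduction Proposition \ref{lm:reduction}, following verbatim the short alternative proof of theorem \ref{tm:above_oot} given above, but pushing the target cardinal down from $\oot$ to $\oo$. Concretely, I would apply Proposition \ref{lm:reduction} with $\mu = \omega$ and $\tau = \oo$. Since $\omega = \mu \le \tau = \oo \le \kappa \le \lambda$ and $\tau \ge \omega$, that proposition tells us that the two relations
\[
\MM{\lambda}{\kappa}{\omega} \to \BB{\oot} \qquad\mbox{and}\qquad \MM{\lambda}{\oo}{\omega} \to {\bf ED}
\]
together imply $[\lambda,\kappa,\omega] \Rightarrow \oo$, which is exactly $\Carr{\kappa}{\lambda}{\omega}{\oo}$. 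So it suffices to verify these two arrow relations under hypotheses (i) and (ii), for every $\kappa$ with $\oo \le \kappa \le \lambda$.

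The first relation, $\MM{\lambda}{\kappa}{\omega}\to \BB{\oot}$, is available for all such $\kappa$ directly from the Erd\H os--Hajnal theorem (\ref{eh2x}), which needs only hypothesis (i). This is the very same transversal ingredient already used to prove theorem \ref{tm:above_oot}; note that it in fact produces an $\oo$-witness (equivalently an $\oot$-transversal), so no improvement is needed here and the square hypothesis plays no role for this half.

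The second relation is where the extra strength of (ii) is spent, and I expect it to be the main obstacle. For theorem \ref{tm:above_oot} we only needed essential disjointness at the level $\oot$, namely Komj\'ath's (\ref{ko2}); now we need it at $\oo$, i.e. $\MM{\lambda}{\oo}{\omega}\to {\bf ED}$. I would obtain this from the sharper \cite[Theorem 5(c)]{KO2}: assuming (i) together with $\Box_\mu$ for every singular $\mu < \lambda$ of cofinality $\omega$ (hypothesis (ii)), every $(\lambda,\oo,\omega)$-system is essentially disjoint. The square principles are precisely what is required to control the singular cardinals of cofinality $\omega$ below $\lambda$ in Komj\'ath's construction; and, as remarked after (\ref{ko2}), a very weak form of $\Box_\mu$, as isolated in \cite{HJS1}, would already suffice.

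Finally, feeding these two verified relations back into Proposition \ref{lm:reduction} yields $[\lambda,\kappa,\omega] \Rightarrow \oo$ for each $\kappa$ with $\oo \le \kappa \le \lambda$, i.e. $\Carr{\kappa}{\lambda}{\omega}{\oo}$, completing the argument. Since the only genuinely new input beyond the proof of theorem \ref{tm:above_oot} is the upgrade of the ED step from $\oot$ to $\oo$, the write-up can be kept to a few lines by pointing to that single modification.
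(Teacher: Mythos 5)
Your proposal is correct and is essentially identical to the paper's own argument: the paper likewise proves theorem \ref{tm:above_oo} by applying proposition \ref{lm:reduction} with $\mu=\omega$ and $\tau=\oo$, using the Erd\H os--Hajnal relation (\ref{eh2x}) (which needs only hypothesis (i)) for the transversal half and Komj\'ath's \cite[Theorem 5(c)]{KO2} (where hypothesis (ii), or a very weak form of $\Box_\mu$ as in \cite{HJS1}, is spent) to upgrade the ED half from $\MM{\lambda}{\oot}{\omega}\to{\bf ED}$ to $\MM{\lambda}{\oo}{\omega}\to{\bf ED}$. No gaps; your identification of the ED step as the only modification needed relative to theorem \ref{tm:above_oot} matches the paper exactly.
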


As condition (ii) of theorem \ref{tm:above_oo} is only relevant
for $\lambda > \aleph_\omega\,$, we immediately obtain the
following result.

\begin{corollary}\label{cor:box}
CH and $\,\omega_1 \le \kappa \le \lambda \le \aleph_\omega\,$
imply $\,\Carr {\kappa}{\lambda}{\omega}\oo$.
\end{corollary}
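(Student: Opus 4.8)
The plan is to obtain Corollary \ref{cor:box} as a direct specialization of Theorem \ref{tm:above_oo}: everything reduces to verifying that the two hypotheses (i) and (ii) of that theorem are automatically satisfied under the present assumptions. The decisive structural fact I would isolate first is that, when $\lambda \le \aleph_\omega$, the only infinite cardinal $\mu < \lambda$ with $\cf(\mu) = \omega$ is $\mu = \omega$ itself. Indeed, every infinite cardinal $\mu < \lambda \le \aleph_\omega$ is of the form $\aleph_n$ for some $n < \omega$, and for $1 \le n < \omega$ the cardinal $\aleph_n$ is a successor, hence regular, so $\cf(\aleph_n) = \aleph_n > \omega$. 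This leaves $\omega$ as the sole candidate.

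With this observation in hand, checking (i) collapses to the single instance $\mu = \omega$: the requirement $\mu^\omega = \mu^+$ becomes $\omega^\omega = 2^\omega = \omega_1$, which is precisely CH. Thus CH delivers (i). For (ii), I would note that it quantifies over \emph{singular} cardinals $\mu < \lambda$ with $\cf(\mu) = \omega$; but by the fact just established the only $\mu < \lambda$ of countable cofinality is $\omega$, which is regular, so there is no cardinal to which (ii) applies, and it holds vacuously. This is exactly the content of the remark preceding the corollary, that condition (ii) is only relevant for $\lambda > \aleph_\omega$.

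Finally, since the corollary assumes $\omega_1 \le \kappa \le \lambda$, we automatically have $\lambda \ge \omega_1$, so $\lambda$ is uncountable and Theorem \ref{tm:above_oo} applies. Invoking it yields $\Carr{\kappa}{\lambda}{\omega}\oo$ for all $\omega_1 \le \kappa \le \lambda$, as claimed. There is essentially no genuine obstacle here: the mathematical substance is entirely carried by Theorem \ref{tm:above_oo} (and, behind it, the results of Erd\H os--Hajnal and Komj\'ath). The only point demanding any care is confirming that the $\Box$-type hypothesis (ii) really does evaporate throughout the range $\lambda \le \aleph_\omega$, which is immediate from the cofinality computation above.
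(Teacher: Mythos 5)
Your proof is correct and is exactly the paper's argument: the authors derive the corollary from Theorem \ref{tm:above_oo} by the same observation, namely that for $\lambda \le \aleph_\omega$ the only cardinal $\mu < \lambda$ of countable cofinality is $\omega$ itself, so condition (i) reduces to CH and the $\Box_\mu$ condition (ii) holds vacuously. Your verification that $\lambda \ge \omega_1$ makes Theorem \ref{tm:above_oo} applicable is a fine (if routine) extra check.
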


\section{
Consistent lower bounds for $\ccf(\lambda,\kappa,\omega)$ }

In the previous section we gave (consistent) universal upper
bounds for $\ccf(\lambda,\kappa,\omega)$ when $\kappa \ge
\omega_2$ and $\kappa \ge \omega_1$, respectively. That no such
universal upper bound can be given for
$\ccf(\lambda,\omega,\omega)$ follows from the fact that if
$\,\clubsuit(\lambda)$ holds, that is for each $\alpha \in
E_\omega^\lambda$ there is an $\omega$-type subset $A_\alpha$
cofinal in $\alpha$ such that for every $X \in [\lambda]^\lambda$
we have $A_\alpha \subs X$ for some $\alpha \in E_\omega^\lambda$,
then clearly $$ \ccf(\lambda,\omega,\omega) \ge
\chi(\lambda,\omega,\omega) \ge \cf(\lambda)\,.$$ In particular,
if $\lambda$ is also regular then we have $$
\ccf(\lambda,\omega,\omega) = \chi(\lambda,\omega,\omega) =
\lambda\,.$$

In order to get some lower bounds for
$\ccf(\lambda,\kappa,\omega)$ with $\kappa > \omega$, and thus to
show that the results of the previous section are sharp, we shall
make use of a result in \cite{HJS1}. First we give some notation.

If $\lambda > \omega_1$ is a regular cardinal and $S \subs
E_{\omega_1}^{\lambda}$ is stationary then we denote by $\prin
{S}$ the following statement:
\begin{itemize}
\item[{$\prin {S}$}:] {\em there is  an
${\omega}$-almost disjoint family $\{A_{\alpha}:{\alpha}\in S\}$
such that  $\,A_{\alpha}\,$ is a cofinal subset of $\,\alpha$ of
order type $\omega_1$ for each ${\alpha}\in S$.}
\end{itemize}

It is an immediate consequence of Fodor's pressing down theorem
that such an $\{A_{\alpha}:{\alpha}\in S\}$ is not essentially
disjoint, hence if we assume condition (i) of theorem
\ref{tm:above_oo} then (very weak) $\Box_\mu$ must fail at some
singular $\mu < \lambda$ with $\cf(\mu) = \omega$, in particular
$\lambda > \aleph_\omega$. This implies that if $\prin {S}$ holds
then we must have some large cardinals, and in fact it was shown
in \cite{HJS1} that the existence of a supercompact cardinal
implies the consistency of GCH with $\bigstar(S)$ for some $S
\subs \aleph_{\omega+1}$.

For any set $S \subs \lambda$ we shall denote by $\clubsuit(S)$
the statement that there is a sequence $\{B_{\alpha}:{\alpha}\in
S\}$ with $\cup B_{\alpha}={\alpha}$ for each ${\alpha}\in S$ such
that for every $X \in [\lambda]^\lambda$ we have $B_\alpha \subs
X$ for some $\alpha \in S$. Then $\{B_{\alpha}:{\alpha}\in S\}$ is
called a $\clubsuit(S)$-sequence. Clearly, every
$\diamondsuit(S)$-sequence is a $\clubsuit(S)$-sequence.

\begin{theorem}\label{tm:club}
Assume that ${\lambda} > 2^\omega$ is a regular cardinal and we
have both $\prin {S}$ and $\diamondsuit(S)$  for a stationary set
$S \subs E_{\omega_1}^{\lambda}$. Then

\noindent (1) there is an $\omega$-{\em almost disjoint}
$\clubsuit(S^*)$-sequence for some $S^*\subs S$, hence
$$\ccf(\lambda,\omega_1,\omega) = \chi(\lambda,\omega_1,\omega) =
\lambda\,;$$

\noindent (2) for every cardinal $\kappa$ with $\omega_2 \le
\kappa < \lambda$ we have $\omega_2 \le
\ccf(\lambda,\kappa,\omega)$.
\end{theorem}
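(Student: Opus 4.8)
The plan is to manufacture the desired $\omega$-almost disjoint $\clubsuit(S^*)$-sequence by letting $\diamondsuit(S)$ supply the guessing while $\prin{S}$ supplies the \emph{genuine} almost-disjointness. The key point to keep in mind is that $\omega$-almost disjoint means \emph{finite} intersections: the cheap observation that two cofinal subsets of order type $\omega_1$ of distinct ordinals meet in a countable set only yields $\omega_1$-almost disjointness, which is why $\prin{S}$ (and its large-cardinal strength) is really needed. I would fix a $\prin{S}$-family $\{A_\alpha:\alpha\in S\}$ and a $\diamondsuit(S)$-sequence, assuming as usual that the latter guesses functions $\lambda\to\lambda$ (code them as subsets of $\lambda\times\lambda\cong\lambda$). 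Given $X\in[\lambda]^\lambda$, let $e_X:\lambda\to\lambda$ be its increasing enumeration; on a club of $\alpha$ we have $e_X[\alpha]\subs\alpha$ cofinal, and on a stationary set of $\alpha\in S$ the diamond guess decodes to $e_X\restriction\alpha$. This suggests setting $B_\alpha=h_\alpha[A_\alpha]$, where $h_\alpha$ is the function decoded from the $\alpha$-th diamond value, and $S^*=\{\alpha\in S: h_\alpha$ is an increasing map of $\alpha$ into $\alpha$ with $h_\alpha[A_\alpha]$ cofinal in $\alpha\}$. For $\alpha\in S^*$ the set $B_\alpha$ is cofinal in $\alpha$ of order type $\omega_1$, so $\cup B_\alpha=\alpha$; and whenever $h_\alpha=e_X\restriction\alpha$ we get $B_\alpha=e_X[A_\alpha]\subs e_X[\alpha]\subs X$, so the stationarily many $\alpha$ guessing $e_X$ witness $\clubsuit(S^*)$ for that $X$.

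The delicate part is that $\{B_\alpha:\alpha\in S^*\}$ must have \emph{finite} pairwise intersections. For two indices $\alpha<\beta$ whose guesses cohere, i.e.\ both are restrictions of one $e_X$, injectivity gives $B_\alpha\cap B_\beta=e_X[A_\alpha\cap A_\beta]$, which is finite precisely because $\{A_\alpha\}$ is a $\prin{S}$-family; this is exactly where the strength of $\prin{S}$ is spent. For \emph{incoherent} pairs the two images only meet in a countable set, so the raw image construction yields an $\omega_1$-almost disjoint family rather than an $\omega$-almost disjoint one. Closing this gap is the main obstacle. One way I would try is to run the construction recursively in $\alpha$, at each step thinning $B_\alpha$ inside $h_\alpha[A_\alpha]$ so as to keep \emph{finite} intersection with the (fewer than $\lambda$) previously chosen blocks, while preserving cofinality and the inclusion $B_\alpha\subs h_\alpha[A_\alpha]$ required for guessing. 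Since the earlier blocks are bounded in $\alpha$ but possibly $\ge\omega_1$ in number, this is a real diagonalization, and it is here that $\lambda>2^\omega$ must be used — to bound the number of countable ``intersection types'' that have to be avoided — together with the finite-intersection ladder $\{A_\alpha\}$ used to \emph{locate} the chosen points so that finiteness is inherited. This recursive fix is the technical heart of part (1).

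Once $\{B_\alpha:\alpha\in S^*\}$ is an $\omega$-almost disjoint $\clubsuit(S^*)$-sequence of sets of size $\omega_1$, the equalities in (1) are immediate: it is a $(\lambda,\omega_1,\omega)$-system, and if $f$ coloured its union with fewer than $\lambda$ colours then, as $\lambda$ is regular and $|\cup\acal|=\lambda$, some colour class would have size $\lambda$ and hence, by $\clubsuit(S^*)$, contain a whole $B_\alpha$; thus $\chi\ge\lambda$, and since $\ccf\ge\chi$ while trivially $\ccf\le|\cup\acal|\le\lambda$, we obtain $\ccf(\lambda,\omega_1,\omega)=\chi(\lambda,\omega_1,\omega)=\lambda$. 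For part (2), fix $\kappa$ with $\omega_2\le\kappa<\lambda$ and build a $(\lambda,\kappa,\omega)$-system whose members are disjoint amalgams of $\kappa$-many blocks $B_\alpha$ (the extra coordinates placed freshly, to preserve finite intersections), arranged by a doubling device in the spirit of Lemmas \ref{lm:0*} and \ref{lm:l*}. Given any $f$ using fewer than $\omega_2$ colours, i.e.\ at most $\omega_1$, every colour class of size $\lambda$ swallows a monochromatic block by $\clubsuit(S^*)$, and the pairing of blocks then forces, in one of the $\kappa$-sized amalgams $A$, every colour present to occur at least twice, so $I_f(A)=\emptyset$ and $f$ is not conflict-free; hence $\ccf(\lambda,\kappa,\omega)\ge\omega_2$. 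As in part (1), the remaining obstacle is bookkeeping: keeping the $\kappa$-sized amalgam $\omega$-almost disjoint while guaranteeing the forced repetitions.
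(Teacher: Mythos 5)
You have correctly identified the right construction (compose the $\prin{S}$-ladders with the $\diamondsuit(S)$-guesses, so that coherent pairs inherit finite intersections from $\{A_\alpha\}$) and correctly isolated the obstacle: incoherent pairs only give countable intersections. But your proposed repair — recursively \emph{thinning} each $B_\alpha$ inside $h_\alpha[A_\alpha]$ so as to have finite intersection with all previously chosen blocks — is exactly the step that does not work, and no argument is offered for it. At stage $\alpha$ the traces $B_\beta\cap h_\alpha[A_\alpha]$ ($\beta<\alpha$) form a family of up to $2^\omega$-many pairwise almost disjoint countable subsets of a set of order type $\omega_1$, and you must find an uncountable \emph{cofinal} subset meeting every one of them finitely. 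Nothing guarantees this: the traces may cover $h_\alpha[A_\alpha]$, the ground set has size only $\omega_1$ while there can be $2^\omega>\omega_1$ constraints, and $\lambda>2^\omega$ merely \emph{counts} the traces — it provides no diagonalization inside a set of size $\omega_1$. Indeed, the general impossibility of such ``essential disjointification'' in the presence of $\prin{S}$ (Fodor's theorem shows a $\prin{S}$-family is never essentially disjoint, as the paper notes before this theorem) is precisely why the principle has large-cardinal strength; a thinning lemma of the kind you postulate would defuse it.

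The paper's actual mechanism is different: it does not repair bad indices but \emph{discards} them, setting $S^*=\{\alpha\in S: |E_\alpha|=\omega_1 \mbox{ and } |E_\beta\cap E_\alpha|<\omega \mbox{ for all } \beta\in S\cap\alpha\}$, so $\omega$-almost disjointness holds by fiat; the real content is then the survival claim $\hat S\cap S^*\ne\empt$, where $\hat S$ is the stationary set of indices guessing the given $B$. If every $\alpha\in\hat S$ were discarded, press down (Fodor) to fix one $\beta$ with $E_\alpha\cap E_\beta$ infinite for stationarily many $\alpha\in\hat S$; since $|[E_\beta]^\omega|\le 2^\omega<\lambda$, two guessing indices $\alpha<\alpha'$ share one infinite $X\subs E_\alpha\cap E_{\alpha'}$; and now the \emph{coherence} of the diamond guesses, $B_\alpha=B\cap\alpha=B_{\alpha'}\cap\alpha$, makes the two increasing enumerations agree below $\alpha$, so $X$ pulls back to an infinite subset of $A_\alpha\cap A_{\alpha'}$, contradicting $\prin{S}$. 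This is where $\lambda>2^\omega$ is spent — in the counting step of the survival argument, not in a thinning recursion. Your part (2) sketch has an analogous gap: with an ``amalgam of blocks plus fresh coordinates'', a color that occurs exactly once on the fresh $\kappa$-sized part leaves $I_f(A)\ne\empt$, and pairing monochromatic blocks does not exclude this. The paper's device is to guess the colorings $h:\lambda\to\oo$ themselves, split each ladder $A_\alpha$ into $\oo$-many uncountable pieces so that $E_\alpha$ meets every color whose preimage is cofinal in $\alpha$ in a set of size $\oo$, take $F_\alpha=E_\alpha\cup[\alpha,\alpha+\kappa)$ (with $|E_\alpha\cap K_\zeta|\le 1$ securing almost disjointness), and in the verification choose the guessing $\alpha$ above $\eta=\sup h^{-1}[\oo\setm D]$, so that every color on the interval part already occurs $\oo$-often on $E_\alpha$ — this ``interval above the guessing point'' trick, absent from your sketch, is the core of (2), not bookkeeping.
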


\begin{proof}
(1) Let us fix an ${\omega}$-almost disjoint family
$\{A_{\alpha}:{\alpha}\in S\}$ witnessing $\prin {S}$ and a
$\diamondsuit(S)$-sequence $\{B_{\alpha}:{\alpha}\in S\}$. Let
$$B_\alpha = \{b(\alpha,\gamma) : \gamma < \tip(B_\alpha)\}$$ be the
increasing enumeration of $B_\alpha$.

Next, by transfinite recursion we define  sets
$\{E_{\alpha}:{\alpha}\in S\}$ as follows. Assume that
$\{E_{\beta}:{\beta}\in {\alpha}\cap S\}$ has been constructed. If
$\,\tip(B_{\alpha}) < {\alpha}$ then let $E_\alpha = \emptyset$.
Otherwise, if $\tip(B_{\alpha})={\alpha}$, set
\begin{displaymath}
 E_{\alpha}=\{b({\alpha},\gamma): \gamma \in A_\alpha \}\,,
\end{displaymath}
clearly then $E_\alpha \in [B_\alpha]^{\omega_1}$ is cofinal in
$\alpha$.

Let us next define
\begin{displaymath}
S^*=\{{\alpha}\in S : |E_{\alpha}|=\oo\land  \,\forall {\beta}\in
S\cap {\alpha}\,\,(\,|E_{\beta}\cap E_{\alpha}|<{\omega})\,\},
\end{displaymath}
and
\begin{displaymath}
  \ecal=\{E_{\alpha}:{\alpha}\in S^*\}.
\end{displaymath}
Then $\ecal\subs \br {\lambda};\oo;$ is ${\omega}$-almost disjoint
by definition and we claim that $\ecal$ is a
$\clubsuit(S^*)$-sequence.

Indeed, let $B\in \br \lambda;\lambda;$ and consider the club set
\begin{displaymath}
C=\{{\xi}<{\lambda}:\tip(B\cap {\xi})={\xi}\}
\end{displaymath}
and the stationary set
\begin{displaymath}
\hat S=\{{\alpha}\in S\cap C: B\cap {\alpha} =B_{\alpha}\}.
\end{displaymath}
Now, if $\alpha \in \hat S \cap S^*$ then $E_\alpha \subset
B_\alpha = B \cap \alpha \subs B$, hence it suffices to show that
$\hat S \cap S^* \ne \emptyset$.

Assume, on the contrary, that $\hat S \cap S^* = \emptyset$. Then
for each ${\alpha}\in \hat S$, as $\tip(B_{\alpha})={\alpha}$,
there is a $ {\beta}<{\alpha}$ such that $E_{\alpha}\cap
E_{\beta}$ is infinite. By Fodor's theorem and $2^\omega <
\lambda\,$, there are ${\beta}<{\alpha}<{\alpha'}$ and $X\in \br
E_{\beta};{\omega};$ such that $\alpha,\alpha' \in \hat S$ and
$X\subs E_{\alpha}\cap E_{\alpha'}$.  But $B_{\alpha}= \alpha \cap
B_{\alpha'}\,$, hence $b\big({\alpha},\gamma\big) =
b\big({\alpha'},\gamma\big)$ for all $\gamma < \alpha$ and
$b\big({\alpha'},\gamma\big) \notin B_\alpha$ for $\gamma \ge
\alpha$, consequently $x \in E_\alpha \cap E_{\alpha'}$ implies
that $x = b(\alpha,\,\gamma)$ for some $\gamma \in A_{\alpha}\cap
A_{\alpha'}$.  This, however contradicts $|A_{\alpha}\cap
A_{\alpha'}| < \omega$,  proving that $\hat S \cap S^* \ne
\emptyset$ and so $\ecal$ is a $\clubsuit(S^*)$-sequence.

But then $\ecal$ is a $(\lambda,\omega_1,\omega)$-system for which
$\,\ccf(\ecal) = \chi(\ecal) = \lambda\,$ holds trivially,
completing the proof of part (1).

\medskip

(2) Having fixed $\kappa$ with $\omega_1 < \kappa < \lambda$, we
shall construct a $(\lambda,\kappa,{\omega})$-system $\fcal\subs
\br {\lambda};\kappa;$ such that for every function
$h:{\lambda}\to \oo$ there is $F\in \fcal$ for which
\begin{equation}\label{eq:h}
\text{$\nu \in h[F]\,$ implies  $\,|F \cap h^{-1}\{{\nu}\}| \ge
\oo$}.
\end{equation}

Consider the club set $K = \{\kappa \cdot \xi : \xi < \lambda \}$
and, for every $\xi < \lambda$, let $K_\xi$ denote the
(half-closed) interval $\big[\kappa \cdot \xi\,, \kappa \cdot
(\xi+1) \big)$. We can assume, without any loss of generality,
that $S \subs K$.

For every $\alpha \in S$ we also fix a partition of $A_\alpha$
into $\oo$-many disjoint uncountable pieces: $A_\alpha =
\cup\{A_{\alpha,\nu}:{\nu}<\oo\}$. Finally, this time, we use
$\diamondsuit(S)$ by choosing a $\diamondsuit(S)$-sequence
$\{h_{\alpha}:{\alpha}\in S\}$ for the functions $h : {\lambda}
\to \oo$.

Next, by transfinite recursion define the sets
$\{E_{\alpha}:{\alpha}\in S\}$ as follows. Assume that $\alpha \in
S$, moreover $\{E_{\beta}:{\beta}\in {\alpha}\cap S\}$ has been
constructed. Let
\begin{displaymath}
D_{\alpha}=\{{\nu}<\oo: \tip(h_{\alpha}^{-1}\{{\nu}\})={\alpha}\},
\end{displaymath}
for every $\nu \in D_\alpha$ let $\,\{b({\alpha},{\nu},{\eta}):
{\eta}<{\alpha}\}$ be the increasing enumeration of
$h_{\alpha}^{-1}\{{\nu}\}\,$, and put
\begin{displaymath}
 E'_{\alpha}=\{b({\alpha},{\nu},\gamma):
{\nu}\in D_{\alpha}, \gamma \in A_{\alpha,\nu}\}.
\end{displaymath}
Of course, if $D_\alpha = \emptyset$ then we have $E'_\alpha =
\emptyset$ as well, and in this case we put $E_\alpha =
\emptyset$. If, however, $D_\alpha \ne \emptyset$ then for every
$\nu \in D_\alpha$ the set $B_{\alpha,\nu} =
\{b({\alpha},{\nu},\gamma): \gamma \in A_{\alpha,\nu}\}$ is
cofinal in $\alpha$. Thus, using that $\alpha = \kappa \cdot \xi$
for some $\xi$ with $\cf(\xi) = \omega_1 < \kappa$, we can find
$E_\alpha \subs E'_\alpha$ such that (i) $|E_\alpha \cap
B_{\alpha,\nu}| = \omega_1$ for each $\nu \in D_\alpha$, and (ii)
$|E_\alpha \cap K_\zeta| \le 1$ for every $\zeta < \lambda$.

Next, similarly as in the proof of (1), we let
\begin{displaymath}
S^*=\{{\alpha}\in S : |E_{\alpha}|=\oo\land  \,\forall {\beta}\in
S\cap {\alpha}\,\,(\,|E_{\beta}\cap E_{\alpha}|<{\omega})\,\},
\end{displaymath}
and then for any $\alpha = \kappa \cdot \xi  \in S^*$ we define
\begin{displaymath}
F_{\alpha}=E_{\alpha}\cup K_\xi = E_{\alpha}\cup
[\alpha,\,\alpha+\kappa)\,, \mbox{ and
}\fcal=\{F_{\alpha}:{\alpha}\in S^*\}.
\end{displaymath}
Clearly, $\fcal\subs \br {\lambda};\kappa;$ and $\fcal$ is
${\omega}$-almost disjoint because, by (ii), we have
$|F_{\alpha}\cap F_{\beta}|\le |E_{\alpha}\cap E_{\beta}|+1$ for
any $\{\alpha,\beta \} \in [S^*]^2$.

Now, consider any map $h:{\lambda}\to \oo$ and let
\begin{displaymath}
D=\{{\nu}<\oo:|h^{-1}\{{\nu}\}|={\lambda}\}\,;
\end{displaymath}
then $D\ne \empt$. For every ${\nu}\in D$ put
\begin{displaymath}
C_{\nu}=\{{\xi}<{\lambda}:\tip({\xi}\cap(h^{-1}\{{\nu}\}))={\xi}\}
\end{displaymath}
and
\begin{displaymath}
C=\cap\{(C_{\nu}:{\nu}\in D\},
\end{displaymath}
then $C$ is a club set.

We have ${\eta}=\sup (h^{-1}[\oo\setm D]) < \lambda\,$ because
$\lambda > \omega_1$ is regular. Let $T=S \cap C \setm {\eta}$,
then $h[T]\subs D$,
$$\hat S=\{{\alpha}\in T: h\restriction {\alpha} =h_{\alpha}\}$$
is stationary, and  if ${\alpha}\in \hat S$ then $ D_\alpha = D$.

Note that if $\alpha \in \hat S \cap S^*$ then $h[F_{\alpha}] =
h[E_{\alpha}]=D_{\alpha}=D$ and, by our construction,
$$|h^{-1}\{{\nu}\}\cap E_{\alpha}| = \omega_1$$ for each
${\nu}\in D$, hence $F_{\alpha} \in \fcal$ witnesses (\ref{eq:h}).
Thus, to prove part (2), it again suffices to show that $\hat S
\cap S^* \ne \emptyset$.

Assume, on the contrary, that $\hat S \cap S^* = \emptyset$. Since
$D_{\alpha}=D\ne \empt$ for every $\alpha \in \hat S \subs C$ this
would imply that for every ${\alpha}\in \hat S$ there exists
${\beta}<{\alpha}$ for which $E_{\alpha}\cap E_{\beta}$ is
infinite. But then, in the same way as in the proof of (1), we
could conclude that there is a pair $\{{\alpha},{\alpha'}\} \in
[\hat S]^2$ with $\alpha < \alpha'$ such that $E_{\alpha}\cap
E_{\alpha'}$ is infinite. Using that $h_\alpha = h_{\alpha'}
\upharpoonright \alpha$ and hence $h_{\alpha}^{-1}\{{\nu}\}$ is an
initial segment of $h_{\alpha'}^{-1}\{{\nu}\}$, this would imply
that $A_{\alpha}\cap A_{\alpha'}$ is also infinite, a
contradiction.
\end{proof}

As we noted above, it was shown in \cite{HJS1} that the existence
of a supercompact cardinal implies the consistency of GCH with
$\bigstar(S)$ for some $S \subs E^{\aleph_{\omega+1}}_{\omega_1}$.
This, together with theorem \ref{tm:club}, immediately yields the
following result which shows that the results of the previous
section are sharp, even under GCH.

\begin{corollary}\label{cor:hsj}
If it is consistent that there is a supercompact cardinal then it
is also consistent that GCH holds and
\begin{enumerate}[(1)]
\item    $\chi(\aleph_{\omega+1},\omega_1,\omega) = \ccf(\aleph_{\omega+1},\omega_1,\omega ) = \aleph_{\omega+1}$,
\item $\ccf(\aleph_{\omega+1},\omega_n,\omega) =\oot$ for  $2\le n\le{\omega}$.
\end{enumerate}
\end{corollary}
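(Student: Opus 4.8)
The plan is to assemble the corollary from the consistency statement quoted just above it together with Theorems \ref{tm:club} and \ref{tm:above_oot}; no genuinely new construction is needed. First I would pass to a model, whose consistency relative to a supercompact is guaranteed by \cite{HJS1}, in which GCH holds and $\prin{S}$ is true for some stationary $S \subseteq E^{\aleph_{\omega+1}}_{\omega_1}$. Writing $\lambda = \aleph_{\omega+1}$, this is a regular cardinal, and since GCH gives $2^\omega = \omega_1$ we have $\lambda > 2^\omega$; thus the cardinal-arithmetic requirements of Theorem \ref{tm:club} are met.

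The single ingredient not supplied directly is $\diamondsuit(S)$. I would derive it from GCH by Shelah's theorem that, for $\lambda = \mu^+$ with $2^\mu = \lambda$, $\diamondsuit_\lambda(T)$ holds for every stationary $T \subseteq \lambda$ concentrating on cofinalities different from $\cf(\mu)$. Here $\mu = \aleph_\omega$, so $\cf(\mu) = \omega$, whereas $S$ concentrates on cofinality $\omega_1 \ne \omega$; as GCH gives $2^{\aleph_\omega} = \aleph_{\omega+1}$, this yields $\diamondsuit(S)$.

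Now both $\prin{S}$ and $\diamondsuit(S)$ hold, so Theorem \ref{tm:club} applies with $\lambda = \aleph_{\omega+1}$. Its part (1) states exactly $\chi(\aleph_{\omega+1},\omega_1,\omega) = \ccf(\aleph_{\omega+1},\omega_1,\omega) = \aleph_{\omega+1}$, which is clause (1) of the corollary. Its part (2) gives the lower bound $\ccf(\aleph_{\omega+1},\kappa,\omega) \ge \omega_2$ for every $\kappa$ with $\omega_2 \le \kappa < \aleph_{\omega+1}$; this covers $\kappa = \omega_n$ for all $2 \le n < \omega$ and also $\kappa = \omega_\omega = \aleph_\omega$ (the case $n = \omega$), since $\aleph_\omega < \aleph_{\omega+1}$.

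For the matching upper bound I would apply Theorem \ref{tm:above_oot} with $\lambda = \aleph_{\omega+1}$. Its hypothesis asks that $\mu^\omega = \mu^+$ for every $\mu < \aleph_{\omega+1}$ with $\cf(\mu) = \omega$; under GCH this holds, since for each such $\mu$ (either $\mu = \omega$ or $\mu$ singular of cofinality $\omega$) we have $\mu^\omega = \mu^{\cf(\mu)} = \mu^+$. Hence $\ccf(\aleph_{\omega+1},\kappa,\omega) \le \omega_2$ for all $\omega_2 \le \kappa \le \aleph_{\omega+1}$. Combining this with the lower bounds from Theorem \ref{tm:club}(2) gives $\ccf(\aleph_{\omega+1},\omega_n,\omega) = \omega_2$ for every $2 \le n \le \omega$, which is clause (2). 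The only delicate point in the whole argument is the appeal to Shelah's diamond theorem at the successor of the singular cardinal $\aleph_\omega$; all the remaining steps are routine verifications that the hypotheses of the two quoted theorems are satisfied.
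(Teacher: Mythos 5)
Your proof is correct and follows essentially the same route as the paper: the paper likewise passes to the model of \cite{HJS1} in which GCH holds together with $\prin{S}$ for some stationary $S \subs E^{\aleph_{\omega+1}}_{\omega_1}$, applies theorem \ref{tm:club} to obtain clause (1) and the lower bound $\ccf(\aleph_{\omega+1},\omega_n,\omega) \ge \omega_2$, and gets the matching upper bound from theorem \ref{tm:above_oot}, whose hypothesis $\mu^\omega = \mu^+$ for $\cf(\mu)=\omega$ is satisfied under GCH. Your explicit derivation of $\diamondsuit(S)$ from GCH via Shelah's diamond theorem at the successor of the singular $\aleph_\omega$ is precisely the justification the paper relies on but leaves implicit in its one-line derivation (it cites the same result of \cite{Sh} only later, in the remark following theorem \ref{tm:Rr}).
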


We conclude this section with a (somewhat surprising) result
showing that consistently, e.g. under GCH, the relation
$\chi(\lambda,\omega_1,\omega) \le \omega_1$, hence
$\ccf(\lambda,\omega_1,\omega) \le \omega_1$ as well, implies $\MM
{\lambda}{{\omega}_1}{\omega}\to {\bf ED}\,$.

\begin{theorem}\label{tm:Rr}
Let ${\lambda}$ be an uncountable cardinal and assume that
\begin{enumerate}[(i)]
\item  ${\mu}^{\omega}={\mu}^+$ for any ${\mu}<{\lambda}$ with
$\cf({\mu})={\omega}$,

\smallskip

\item
if $\omega < {\mu}<{\lambda}$ with $\cf({\mu})={\omega}$ then
$\diamondsuit(S)$ holds for every stationary set $\,S \subs
E_{\omega_1}^{\,\mu^+}\,$.
\end{enumerate}
Then $\,\chi(\lambda,\omega_1,\omega) \le \omega_1$ implies $\MM
{\lambda}{{\omega}_1}{\omega}\to {\bf ED}\,$.
\end{theorem}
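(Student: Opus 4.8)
The plan is to prove the contrapositive. Assuming $\MM{\lambda}{\omega_1}{\omega}\to{\bf ED}$ fails, I want to exhibit a $(\lambda,\omega_1,\omega)$-system of chromatic number $>\omega_1$, so that $\chi(\lambda,\omega_1,\omega)>\omega_1$. The machine that manufactures high chromatic number is Theorem~\ref{tm:club}(1): it suffices to locate a regular cardinal $\nu\le\lambda$ and a stationary $S\subs E_{\omega_1}^{\nu}$ for which both $\prin{S}$ and $\diamondsuit(S)$ hold; then $\chi(\nu,\omega_1,\omega)=\nu$, and since $\chi$ is monotone increasing in its first argument and $\nu\le\lambda$, we get $\chi(\lambda,\omega_1,\omega)\ge\nu>\omega_1$ (we will have $\nu\ge\omega_2$). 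Note first that (i), applied with $\mu=\omega$, gives $2^\omega=\omega^\omega=\omega_1$, i.e. CH; hence any $\nu=\mu^+$ with $\mu\ge\omega_1$ automatically satisfies $\nu>2^\omega$, as Theorem~\ref{tm:club} demands.

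Next I would localize the failure of ED. Fix a $(\lambda,\omega_1,\omega)$-system $\acal$ that is not essentially disjoint. By the positive ED results recalled around Theorem~\ref{tm:above_oot} — Komj\'ath's relation $\MM{\lambda}{\omega_2}{\omega}\to{\bf ED}$ from (\ref{ko2}), together with the fact (a consequence of (i)) that $\mu^\omega=\mu$ for every $\mu<\lambda$ with $\cf(\mu)>\omega$ — the only possible obstruction to ED for $\omega_1$-sized sets lives at successors of singular cardinals of cofinality $\omega$. More precisely, combining (i) with the theorem of \cite{HJS1} that (very weak) $\Box_\mu$ for all singular $\mu<\lambda$ with $\cf(\mu)=\omega$ already implies $\MM{\lambda}{\omega_1}{\omega}\to{\bf ED}$, the existence of the non-ED system $\acal$ forces (very weak) $\Box_\mu$ to fail at some singular $\mu$ with $\omega<\mu<\lambda$ and $\cf(\mu)=\omega$. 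Put $\nu=\mu^+\le\lambda$; then $\nu$ is regular, $\nu\ge\omega_2$, and $\mu^\omega=\mu^+=\nu$ by (i).

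The crux is to convert this failure of the square principle into an explicit ladder system. Here I would invoke the analysis of \cite{HJS1}: from the failure of (very weak) $\Box_\mu$, together with $\mu^\omega=\mu^+$, one extracts a stationary set $S\subs E_{\omega_1}^{\nu}$ and an $\omega$-almost disjoint family $\{A_\alpha:\alpha\in S\}$ with each $A_\alpha$ a cofinal subset of $\alpha$ of order type $\omega_1$ — that is, a witness for $\prin{S}$. This is exactly the converse of the remark preceding Theorem~\ref{tm:club}, which observed via Fodor's theorem that a $\prin{S}$-family can never be essentially disjoint; the content now is that the failure of the square principle at $\mu$ is precisely what permits building such a family. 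This conversion is the main obstacle, since it rests on the general apparatus of \cite{HJS1} rather than on a short self-contained argument, and it is the point at which the large-cardinal flavor of the hypotheses is felt.

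Finally, with $\nu=\mu^+$, $\omega<\mu<\lambda$ and $\cf(\mu)=\omega$, hypothesis (ii) supplies $\diamondsuit(S)$ for the stationary $S\subs E_{\omega_1}^{\nu}$ just produced. Since $\nu$ is regular and $\nu>2^\omega=\omega_1$, Theorem~\ref{tm:club}(1) applies to $\nu$, $S$, $\prin{S}$ and $\diamondsuit(S)$, yielding $\chi(\nu,\omega_1,\omega)=\nu$. By monotonicity in the first argument, $\chi(\lambda,\omega_1,\omega)\ge\nu=\mu^+>\omega_1$, contradicting the hypothesis $\chi(\lambda,\omega_1,\omega)\le\omega_1$. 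This contradiction shows that every $(\lambda,\omega_1,\omega)$-system is essentially disjoint, i.e. $\MM{\lambda}{\omega_1}{\omega}\to{\bf ED}$, as required.
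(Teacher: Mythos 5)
Your overall architecture is fine up to one step: deriving CH from (i), localizing the failure of $\MM {\lambda}{{\omega}_1}{\omega}\to {\bf ED}$ at a successor of a singular $\mu$ of cofinality $\omega$ via Komj\'ath's (\ref{ko2}) and its (very weak) $\Box_\mu$-strengthening, and the final assembly through theorem \ref{tm:club}(1), hypothesis (ii), and monotonicity of $\chi$ in the first argument are all sound. The genuine gap is exactly the step you yourself flag as the crux: the claim that the failure of (very weak) $\Box_\mu$, together with ${\mu}^{\omega}={\mu}^+$, yields in ZFC a stationary $S\subs E_{\omega_1}^{\,\mu^+}$ carrying a $\prin{S}$-witness. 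No such theorem exists in \cite{HJS1} or in this paper. The only implication available is the opposite one, stated just before theorem \ref{tm:club}: a $\prin{S}$-family can never be essentially disjoint (by Fodor), so $\prin{S}$ entails the failure of very weak square. What \cite{HJS1} actually provides is a \emph{consistency} result --- from a supercompact cardinal one can force GCH together with $\bigstar(S)$ for some $S \subs E^{\aleph_{\omega+1}}_{\omega_1}$ --- not a ZFC extraction of $\bigstar(S)$ from the mere failure of square. Were your conversion a theorem, the large-cardinal apparatus surrounding $\bigstar$ would be largely superfluous; as it stands, your contrapositive cannot be completed at this point.

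The paper circumvents precisely this obstruction by proving the statement by induction on $\lambda$ and extracting the $\prin{S}$-witness \emph{from the given system itself} rather than from an abstract square failure. In the critical case $\lambda=\tau^+$ with $\cf(\tau)=\omega$, given an $\omega$-almost disjoint $\acal\subs[\lambda]^{\omega_1}$, one considers for each $A\in\acal$ the set $L(A)$ of its accumulation points of cofinality $\omega_1$ and puts $S=\cup\{L(A):A\in\acal\}$. If $S$ were stationary, then choosing for each $\alpha\in S$ a cofinal piece $B(A,\alpha)\subs A\cap\alpha$ of order type $\omega_1$ produces an $\omega$-almost disjoint witness to $\prin{S}$, and then (ii) together with theorem \ref{tm:club}(1) forces $\chi(\lambda,\omega_1,\omega)=\lambda>\omega_1$, contradicting the hypothesis; so $S$ is non-stationary, and a club avoiding $S$ (intersected with a second club supplied by (i)) cuts $\acal$ into subfamilies essentially concentrated on initial segments of size $<\lambda$, to which the induction hypothesis applies and whose ED-witnesses glue together (a parallel elementary-chain argument handles limit $\lambda$ and successors of cardinals of uncountable cofinality). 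If you wish to salvage your top-down localization, you must replace the square-to-$\bigstar$ implication by this per-system construction of ladders out of the sets $A$ themselves; that construction, not the cited apparatus, is the missing idea.
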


\begin{proof}
We shall prove this by induction on ${\lambda}$. It is trivially
true for $\lambda = \omega_1$, hence we can assume ${\lambda}>\oo$
and that it holds for all ${\lambda}'<{\lambda}$.

We shall make use of the following obvious corollary of our
assumption (i): If $X$ is any set with $|X| \le \lambda$ and
$\fcal \subs [X]^{\omega_1}$ is $\omega$-almost disjoint then
$|\fcal| \le |X|$. In fact, this follows from the following
consequence of (i): $\,\mu^\omega = \mu$ if $\mu \le \lambda\,$
with $\,\cf(\mu)
> \omega$.

Now, let  $\acal\subs \br {\lambda};\oo;$  be  an
${\omega}$-almost disjoint set-system, we have to show that
$\acal$ is essentially disjoint.

\medskip

\noindent{\bf Case 1:} {\em $\lambda$ is a limit cardinal or
${\lambda} = {\mu}^+$ for some ${\mu}$ with $\cf({\mu})
> {\omega}$.}

\smallskip

Condition (i) implies $\nu^\omega < \lambda$ for any $\nu <
\lambda$, hence we can find a ${\lambda}$-chain
$\<M_{\alpha}:{\alpha}<{\lambda}\>$ of elementary submodels with
$\acal \in M_1$ and $\omega_1 \subs M_1$ such that $\br
M_{{\alpha}};{\omega};\subs M_{{\alpha}+1}$ for each
${\alpha}<{\lambda}$. Let us put
$N_{\alpha}=M_{{\omega}\cdot{\alpha}}$ for ${\alpha}<{\lambda}$ ,
then $\acal$ is $\omega_1$-cut by the $\lambda$-chain
$\<N_{\alpha}:{\alpha}<{\lambda}\>$.

Indeed, if $|A\cap N_{\alpha}|=|A\cap M_{{\omega}\cdot{\alpha}}| =
\oo\,$ then there is a ${\beta}<{\omega}\cdot{\alpha}$ such that
$|A\cap M_{\beta}|\ge{\omega}$. Since $\acal$ is ${\omega}$-almost
disjoint and $\br M_{{\beta}};{\omega};\subs M_{{\beta}+1}$ then
we have $A\in M_{{\beta}+1} \subs
M_{{\omega}\cdot{\alpha}}=N_{\alpha}\,$.

For ${\alpha}<{\lambda}$ let
\begin{displaymath}
  \acal_{\alpha}=\acal\cap (N_{{\alpha}+1}\setm N_{\alpha})\,,
\end{displaymath}
then $|\acal_{\alpha}|\le |N_{{\alpha}+1}|<{\lambda}$. By this and
the inductive hypothesis there is a function
$F_{\alpha}:\acal_{\alpha}\to \br {\lambda};{\omega};$ such that
$A \cap N_\alpha \subs F_{\alpha}(A)$ for all $A \in \acal_\alpha$
and the family
\begin{displaymath}
  \{A\setm F_{\alpha}(A):A\in \mc A_{\alpha}\}
\end{displaymath}
is disjoint. Now, it is easy to check that the function $$F =
\cup_{\alpha < \lambda}F_\alpha :\mc A\to \br
{\lambda};{\omega};$$ witnesses the essential disjointness of
$\acal$.

\medskip

\noindent{\bf Case 2:} {\em ${\lambda}= {\tau}^+$  for some
singular cardinal ${\tau}$ with $\cf({\tau})={\omega}$.}

\smallskip

For any $A\in \acal$ let
\begin{equation}
L(A)=\{{\alpha}<{\lambda}:\cf({\alpha})=\oo\, \mbox{ and
}\,{\alpha}= \sup A\cap {\alpha}\}.
\end{equation}
Clearly, then $1\le |L(A)|\le \oo$. We claim that the set
  \begin{displaymath}
   S=\cup\{L(A):A\in \acal\}
  \end{displaymath}
is non-stationary in ${\lambda}$.

Indeed, by definition, for each $A\in\acal$ we may find a family
of pairwise disjoint sets
$$\{B(A,{\alpha}):{\alpha}\in L(A)\}\subs \br A;\oo;$$  such that
$\sup (B(A,{\alpha}))={\alpha}$ and $\tip(B(A,{\alpha})=\oo$. So,
if $S$ were stationary then the $\omega$-almost disjoint family
$$\bcal=\{B(A,{\alpha}):A\in \acal, {\alpha}\in L(A)\}$$ would
witness $\prin {S}\,$. But then, by condition (ii) and part (1) of
theorem \ref{tm:club}, we would have
$\,\chi(\lambda,\omega_1,\omega) = \lambda > \omega_1$, a
contradiction. So there is a club $E\subs {\lambda}$ such that
\begin{equation}
E\cap \cup\{L(A):A\in \acal\}=\empt .
\end{equation}

It follows from our introductory remark that if
${\delta}<{\lambda}$ then
\begin{equation}\label{eq:claim}
\big|\{A\in \acal:|A\cap {\delta}| = \oo\}\big|\le
{\delta}<{\lambda}\,,
\end{equation}
hence the following set $D$ is club in ${\lambda}$:
\begin{equation}
D=\{{\zeta}<{\lambda}: \forall {\delta}<{\zeta}\ \forall A\in
\acal \ (\text{ $|A\cap {\delta}| = \oo$ implies $A\subs
{\zeta}$})\}.
\end{equation}
Let $C=E\cap D$ and  $C=\{{\gamma}_{\nu}:{\nu}<{\lambda}\}$ be the
increasing enumeration of $C$.

For any $A\in \acal$ let $${\nu}_A=\min \{{\nu}<{\lambda}: |A\cap
{\gamma}_{\nu}| = \oo\}\,.$$ Then $C \subs E$ implies that
${\nu}_A$ can not be a limit ordinal, hence  ${\nu}_A={\eta}_A+1$.
This and the definition of $D$ imply
\begin{equation}
|A\cap {\gamma}_{{\eta}_A}|\le {\omega}\, \text{  and  }  \,A\subs
{\gamma}_{{\eta}_A+1}.
\end{equation}

Let us put $\acal_{\eta}=\{A\in \acal:{\eta}_A={\eta}\}\,$ for any
$\eta < \lambda\,$, then $|\acal_\eta| \le \gamma_{\eta+1} <
\lambda$. By the inductive hypothesis, for each ${\eta}<{\lambda}$
there is a function $F_{\eta}:\acal_{\eta}\to \br
{\lambda};{\omega};$ such that $A \cap \gamma_\eta \subs
F_\eta(A)$ for any $A \in \acal_\eta$ and the family
\begin{displaymath}
  \{A\setm F_{\eta}(A):A\in \mc A_{\eta}\}
\end{displaymath}
is disjoint. Now, it is again easy to check that the function $$F
= \cup_{\eta < \lambda}F_\eta :\mc A\to \br {\lambda};{\omega};$$
witnesses the essential disjointness of $\acal$.
\end{proof}

\medskip

Let us remark that, by a recent result of Shelah from \cite{Sh},
if $\omega = \cf(\mu) < {\mu}$ and $2^\mu = \mu^+$ then
$\diamondsuit(S)$ holds for every stationary set $\,S \subs
E_{\omega_1}^{\,\mu^+}\,$. Consequently, conditions (i) and (ii)
of theorem \ref{tm:Rr} together are equivalent with the following
single statement: For all $\mu < \lambda$ with $\cf(\mu) = \omega$
we have $2^\mu = \mu^+$.

If $\acal$ is an essentially disjoint
$(\lambda,\omega_1,\omega)$-system then, trivially, we have
$\chi(\acal) = 2$, moreover there is a coloring $f : \cup \acal
\to \omega_1$ that satisfies $|\omega_1 \setm I_f(A)| < \omega_1$
for all $A \in \acal$. Consequently, from theorem \ref{tm:Rr} we
immediately obtain the following result.

\begin{corollary}\label{cor:el}
Under the assumptions of theorem \ref{tm:Rr}, in particular under
GCH, the following five statements are equivalent for an
uncountable cardinal $\lambda$:

\begin{enumerate}[1)]
\item $\,[\lambda,\omega_1,\omega] \Rightarrow \omega_1\,,$

\item$\,\ccf(\lambda,\omega_1,\omega) \le \omega_1\,,$

\item$\,\chi(\lambda,\omega_1,\omega) \le \omega_1\,,$

\item$\,\chi(\lambda,\omega_1,\omega) = 2\,,$

\item$\,\MM
{\lambda}{{\omega}_1}{\omega}\to {\bf ED}\,$.

\end{enumerate}
\end{corollary}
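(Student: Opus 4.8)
The plan is to deduce the corollary entirely by chaining implications around Theorem \ref{tm:Rr}, which already carries all the genuine difficulty; the remaining steps are just unwindings of the definitions of $\Rightarrow$, $\chi$, $\ccf$, and essential disjointness. Concretely, I would establish the cycle $1) \Rightarrow 2) \Rightarrow 3) \Rightarrow 5) \Rightarrow 1)$, which makes $1), 2), 3), 5)$ equivalent, and then attach the two short links $5) \Rightarrow 4)$ and $4) \Rightarrow 3)$ to fold statement $4)$ into the same class. Since the hypotheses (i), (ii) are inherited by the corollary (and, as its statement notes, hold in particular under GCH), Theorem \ref{tm:Rr} is available throughout.

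First the cheap implications. For $1) \Rightarrow 2)$, a function $f$ witnessing $[\lambda,\omega_1,\omega]\Rightarrow\omega_1$ is total on $\cup\acal$ and satisfies $|\omega_1\setm I_f(A)|<\omega_1$, so in particular $I_f(A)\neq\emptyset$ for every $A$; hence $f$ is an honest conflict free coloring with $\omega_1$ colors, giving $\ccf(\lambda,\omega_1,\omega)\le\omega_1$. The step $2)\Rightarrow 3)$ is immediate from the always-valid inequality $\chi(\acal)\le\ccf(\acal)$ (the opening proposition), after taking suprema. The link $4)\Rightarrow 3)$ is trivial since $2\le\omega_1$, and $5)\Rightarrow 4)$ is the easy half of essential disjointness: if $\acal$ is ED with kernels $A\setm F(A)$, which are uncountable because $|F(A)|\le\omega$, I would pick one point in each kernel, color these $0$ and every other point $1$; each $A$ then receives both colors, so $\chi(\acal)=2$ for every system and thus $\chi(\lambda,\omega_1,\omega)=2$.

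The two informative links are $3)\Rightarrow 5)$ and $5)\Rightarrow 1)$. The first, $3)\Rightarrow 5)$, is \emph{exactly} Theorem \ref{tm:Rr} invoked under (i) and (ii); this is where the entire weight of the argument sits, and I would simply cite it. For $5)\Rightarrow 1)$ I would run the construction sketched just before the corollary: given an arbitrary $(\lambda,\omega_1,\omega)$-system $\acal$, use ED to fix $F(A)\in[A]^{\le\omega}$ with the kernels $\{A\setm F(A)\}$ pairwise disjoint, and define $f:\cup\acal\to\omega_1$ by mapping each kernel $A\setm F(A)$ bijectively onto $\omega_1$ (consistent, since the kernels are disjoint so each point lies in at most one) and sending every remaining point to $0$. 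Then for each $A$ the only colors that can fail to be attained exactly once are those occurring on $A\cap F(A)=F(A)$, so $\omega_1\setm I_f(A)\subseteq f[F(A)]$ has size at most $\omega<\omega_1$, which is precisely $[\lambda,\omega_1,\omega]\Rightarrow\omega_1$.

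Assembling the pieces yields
\[
1) \Rightarrow 2) \Rightarrow 3) \Rightarrow 5) \Rightarrow 1), \qquad 5) \Rightarrow 4) \Rightarrow 3),
\]
and hence the equivalence of all five statements. The only obstacle of real substance is $3)\Rightarrow 5)$, i.e. Theorem \ref{tm:Rr} itself; everything else is definitional bookkeeping. The conceptual point I would emphasize is the bootstrapping: the apparently weak hypothesis $\chi(\lambda,\omega_1,\omega)\le\omega_1$ (indeed even the bound $\chi=2$) already forces the strongest conclusions, namely full essential disjointness together with the refined coloring relation $[\lambda,\omega_1,\omega]\Rightarrow\omega_1$.
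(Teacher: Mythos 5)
Your proof is correct and takes essentially the same route as the paper: the authors derive the corollary from the remark preceding it, namely that an essentially disjoint $(\lambda,\omega_1,\omega)$-system trivially has $\chi(\acal)=2$ and admits a coloring $f:\cup\acal\to\omega_1$ with $|\omega_1\setm I_f(A)|<\omega_1$ (your steps $5)\Rightarrow 4)$ and $5)\Rightarrow 1)$ via the kernel-bijection coloring), combined with theorem \ref{tm:Rr} for $3)\Rightarrow 5)$ and the trivial implications $1)\Rightarrow 2)\Rightarrow 3)$ and $4)\Rightarrow 3)$. Your chaining and all verifications are accurate, so there is nothing to add.
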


\bigskip

\section{On $\ccf({\omega_1},\omega_1,\omega)$ and  $\ccf({\omega_1},\omega,\omega)$}

\medskip

Our previous results give no help in deciding the exact values of
$\ccf({\omega_1},\omega_1,\omega)$ and
$\ccf({\omega_1},\omega,\omega)$, except proposition \ref{pr:kko}
which implies that both are equal to either $\omega$ or
$\omega_1$. We shall show below that actually both equal
$\omega_1$ under CH and both equal $\omega$ under $MA_{\aleph_1}$.
We also remark that, as any $(\omega_1,\omega_1,\omega)$-system
clearly has an $\omega$-witness, we have
\begin{equation}\label{eq:le}
\omega \le \ccf({\omega_1},\omega_1,\omega) \le
\ccf({\omega_1},\omega,\omega) \le \omega_1
\end{equation}
in ZFC. However, we do not know if their equality is provable in
ZFC.

That CH implies $\ccf({\omega_1},\omega,\omega) = \omega_1$ is an
immediate consequence of the following ZFC result of Komj\'ath
\cite{KO}.

\begin{theorem}
  $$\chi({\cont},\omega,\omega) =
  \cont\,.$$
\end{theorem}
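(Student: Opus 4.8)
The upper bound is immediate: since $|\cup\acal|\le\cont$ for any $(\cont,\omega,\omega)$-system $\acal$, a one-to-one coloring of $\cup\acal$ into $\cont$ assigns distinct colors to the (at least two) points of every $A\in\acal$, so $\chi(\acal)\le\cont$ always. The whole content is therefore the reverse inequality, i.e. the existence of a single $(\cont,\omega,\omega)$-system $\acal$ with $\chi(\acal)=\cont$.

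First I would reduce, exactly in the spirit of the proof of Proposition \ref{pr:kko}, to producing for each infinite cardinal $\rho<\cont$ an $\omega$-almost disjoint family $\acal_\rho$ of countably infinite sets with $\chi(\acal_\rho)>\rho$, all living on pairwise disjoint ground sets of total size $\le\cont$. Setting $\acal=\bigcup_{\rho<\cont}\acal_\rho$ gives an $\omega$-almost disjoint family with $|\cup\acal|\le\cont$; and if $f$ colors $\cup\acal$ with $\sigma<\cont$ colors (we may assume $\sigma$ infinite), then $f$ restricted to the ground set of $\acal_\sigma$ uses at most $\sigma$ colors, so by $\chi(\acal_\sigma)>\sigma$ some member of $\acal_\sigma$ is monochromatic. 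Thus no coloring with fewer than $\cont$ colors is proper, whence $\chi(\acal)=\cont$. Note this requires families $\acal_\rho$ for cofinally many $\rho$ below $\cont$.

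The main construction tool is a $\cont$-\emph{dense} almost disjoint family: an $\omega$-almost disjoint $\acal\subseteq[\cont]^{\omega}$ in which every $Y\in[\cont]^{\cont}$ contains some $A\in\acal$. Given density, any coloring $f:\cont\to\rho$ with $\rho<\cf(\cont)$ has, by the pigeonhole principle, a color class of size $\cont$, which then contains a monochromatic member; this yields $\chi>\rho$ for every $\rho<\cf(\cont)$ and in particular settles the case in which $\cont$ is regular outright. To build such a family I would exploit the Polish structure of the ground set $\cont=2^{\omega}$: every $Y\in[\cont]^{\cont}$ has a condensation point $z$ (every neighbourhood of which meets $Y$ in $\cont$ points), and one can thread a countable sequence converging to $z$ entirely inside $Y$. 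Sequences with distinct limits automatically meet in a finite set, so cross-limit almost disjointness comes for free, while within a fixed limit one arranges disjoint supports; the equality $\cont^{\omega}=\cont$ keeps $|[\cont]^{\le\omega}|=\cont$, so the family has the correct size.

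The hard part, and the heart of Komj\'ath's argument, is twofold. First, even the density construction cannot proceed by a naive transfinite recursion of length $\cont$ dedicating one member to each target, since there are $2^{\cont}$ sets $Y\in[\cont]^{\cont}$ to capture but only $\cont$ members available; monochromatic members must therefore be \emph{forced} by the structure of $2^{\omega}$ (its condensation points and the box-capturing behaviour of eventually-different branches through $\cont^{\omega}$) rather than enumerated. Second, when $\cont$ is singular the reduction above demands families $\acal_\rho$ with $\chi(\acal_\rho)>\rho$ for $\rho$ in the range $\cf(\cont)\le\rho<\cont$; there a coloring may split $\cont$ into fewer than $\cont$ classes none of which has size $\cont$, so the pigeonhole-plus-density argument breaks down and a genuinely finer, PCF-sensitive construction is required. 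I expect this singular range, together with the non-enumerability of the colorings, to be the principal obstacle.
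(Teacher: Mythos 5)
You should first be aware that the paper contains no proof of this statement at all: it is quoted verbatim as a known ZFC theorem of Komj\'ath \cite{KO} (``Dense systems of almost-disjoint sets''), so your proposal cannot be checked against an argument in the text and must stand on its own. It does not. Your upper bound and the union-over-$\rho$ reduction are fine, and you have correctly identified the two genuinely hard points --- but identifying them is all you do. First, the dense family itself: you observe that a naive recursion fails because there are $2^{\cont}$ sets $Y\in[\cont]^{\cont}$ to capture with only $\cont$ members, and you then say the members ``must be forced by the structure of $2^{\omega}$'' without supplying any mechanism. The condensation-point idea does not supply one: threading a sequence through a condensation point of $Y$ produces a member \emph{after} $Y$ is given, which is exactly the enumeration strategy you just ruled out. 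Nor does the obvious repair work: enumerating $[\cont]^{\omega}=\{z_{\alpha}:\alpha<\cont\}$ (using $\cont^{\omega}=\cont$) and trying at stage $\alpha$ to extract an infinite $A_{\alpha}\subseteq z_{\alpha}$ almost disjoint from all earlier members can fail without $z_{\alpha}$ being covered by finitely many earlier sets --- the earlier family may simply trace a maximal almost disjoint family on $z_{\alpha}$ --- so the recursion needs a serious additional idea, which is precisely the content of Komj\'ath's theorem.

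Second, the singular case. Your own reduction requires, for cofinally many $\rho$ with $\cf(\cont)\le\rho<\cont$, an $\omega$-almost disjoint family of countable sets on a ground set of size $\le\cont$ with chromatic number exceeding $\rho$; density plus pigeonhole only reaches $\rho<\cf(\cont)$, as you note. At this point the proposal offers nothing beyond the phrase ``a genuinely finer, PCF-sensitive construction is required'' and the admission that you expect this to be ``the principal obstacle.'' That is an accurate forecast, not a proof: the entire lower bound --- the construction of the dense system against $2^{\cont}$ targets, and the treatment of the range $[\cf(\cont),\cont)$ --- is missing. As submitted, the proposal establishes only $\chi(\cont,\omega,\omega)\le\cont$ together with $\chi(\cont,\omega,\omega)\ge\cf(\cont)$ \emph{conditional} on a dense family whose existence is asserted but not proved; the statement itself remains unproven and should be cited to \cite{KO} rather than claimed.
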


Before giving our proof that CH also implies
$\ccf({\omega_1},\omega_1,\omega) = \omega_1$, we need a
preparatory lemma.

\begin{lemma}\label{lm:eqx}
Let $\acal \subs \br \oo;\oo;$ be $\omega$-almost disjoint and
$\ical(\acal)$ be the ideal generated by $\acal$, that is, $X \in
\ical(\acal)$ iff there is $\bcal \in [\acal]^{<\omega}$ with $X
\subs \cup \bcal$. Then, for any $X \subs \omega_1\,$, $\,X \cap
\alpha \in \ical(\acal)$ for all $\alpha < \omega_1$ implies $X
\in \ical(\acal)$.
\end{lemma}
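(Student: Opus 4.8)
The plan is to reduce first to the case where $X$ is uncountable (equivalently unbounded in $\oo$): if $X$ is countable then $\gamma = \sup X + 1 < \oo$ and $X = X\cap\gamma \in \ical(\acal)$ outright. For uncountable $X$, I would control the covering numbers. For $\gamma < \oo$ let $n(\gamma)$ be the least cardinality of a finite $\bcal \subseteq \acal$ with $X\cap\gamma \subseteq \cup\bcal$; this is well defined by hypothesis. Since $X\cap\gamma \subseteq X\cap\gamma'$ for $\gamma<\gamma'$, the map $n:\oo\to\omega$ is non-decreasing, and a monotone map from the regular cardinal $\oo$ into $\omega$ must be eventually constant — otherwise its countably many jump points would be cofinal in $\oo$, contradicting $\cf(\oo)>\omega$. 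Fix the eventual value $N$ and, for each $\gamma$, a cover $\bcal_\gamma$ with $|\bcal_\gamma| = n(\gamma) \le N$.

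The key step — and the one I expect to be the main obstacle — is a \emph{trace absorption} observation exploiting almost disjointness: for every $\gamma$ and every $B\in\acal$, if $B\cap X\cap\gamma$ is infinite then $B\in\bcal_\gamma$. Indeed, if $B\notin\bcal_\gamma$ then each $B'\in\bcal_\gamma$ is distinct from $B$, so $B\cap B'$ is finite; as $B\cap X\cap\gamma \subseteq \cup\bcal_\gamma$, this forces $B\cap X\cap\gamma \subseteq \bigcup_{B'\in\bcal_\gamma}(B\cap B')$, a finite union of finite sets, a contradiction. The naive attempt of separating members by a single ``private point'' fails here, since a stray member outside a chosen finite list can contain two such points; the repair is precisely to argue with whole infinite traces, which almost disjointness swallows wholesale. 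Consequently at most $N$ members $B$ can have $B\cap X\cap\gamma$ infinite.

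From this I would deduce that $\acal_\infty = \{B\in\acal : |B\cap X|\ge\omega\}$ is finite, of size $\le N$: each such $B$ has a countable threshold $\gamma_B$ (the supremum of any $\omega$ of its trace points, plus one) with $B\cap X\cap\gamma_B$ infinite; were there $N+1$ such members, a $\gamma$ above all their thresholds would force all $N+1$ into $\bcal_\gamma$, contradicting $|\bcal_\gamma|\le N$. Now set $Y = X\setminus\bigcup\acal_\infty$. Every member of $\acal$ meets $Y$ finitely — members in $\acal_\infty$ are disjoint from $Y$, and members outside $\acal_\infty$ already have finite trace on $X\supseteq Y$. Since $X\subseteq\bigcup\acal$ (each $x\in X$ lies in $X\cap(x+1)\subseteq\cup\bcal_{x+1}$), we get $Y\cap\gamma \subseteq \bigcup_{B\in\bcal_\gamma}(B\cap Y)$, a union of $\le N$ finite sets, so $Y\cap\gamma$ is finite for every $\gamma<\oo$. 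Hence $Y$ is bounded (an unbounded subset of $\oo$ is uncountable and would meet some $\gamma$ infinitely), so $Y = Y\cap\gamma_0 \subseteq X\cap\gamma_0 \in \ical(\acal)$ for a suitable $\gamma_0$, and $Y\in\ical(\acal)$ by downward closure of the ideal.

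Finally I would assemble: choosing a finite $\bcal_0$ with $Y\subseteq\cup\bcal_0$, we have $X \subseteq Y\cup\bigcup\acal_\infty \subseteq \cup(\bcal_0\cup\acal_\infty)$, a finite subfamily of $\acal$, whence $X\in\ical(\acal)$. I note in passing that the argument never actually uses that the members of $\acal$ are uncountable, only that $\acal$ is $\omega$-almost disjoint; the hypothesis $\acal\subseteq\br \oo;\oo;$ merely reflects the intended application.
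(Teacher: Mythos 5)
Your proof is correct, but it takes a genuinely different route from the paper's. The paper picks, for each $\alpha<\omega_1$, a $\subseteq$-minimal finite $\bcal_\alpha\subseteq\acal$ covering $X\cap\alpha$ \emph{modulo a finite set}, applies the $\Delta$-system lemma to the family $\{\bcal_\alpha:\alpha<\omega_1\}$ to get an uncountable $I$ with kernel $\bcal$, and then shows $\bcal_\alpha=\bcal$ for all $\alpha\in I$: a member $A\in\bcal_\alpha\setminus\bcal$ would, by minimality, have infinite trace $Y=A\cap(X\cap\alpha\setminus\cup\bcal)$, which for $\beta\in I$ above $\alpha$ cannot be almost covered by the finitely many members of $\bcal_\beta\setminus\bcal$, all distinct from $A$ --- the same mechanism as your trace absorption step, but deployed inside the $\Delta$-system. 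This gives $X\subseteq^*\cup\bcal$, and the finitely many leftover points are absorbed using $X\subseteq\cup\acal$. You instead work with exact covers, stabilize the minimal cover size $N$ via monotonicity and the regularity of $\omega_1$, bound by $N$ the family $\acal_\infty$ of members meeting $X$ infinitely (since each such member must lie in \emph{every} finite cover of a suitable $X\cap\gamma$), and show the residue $Y=X\setminus\cup\acal_\infty$ has finite trace below every $\gamma$, hence is bounded (in fact finite) and already in the ideal. Your argument avoids the $\Delta$-system lemma entirely, replacing it with elementary counting, and yields the sharper structural conclusion that $X$ is covered by the at most $N$ members of $\acal$ meeting it infinitely together with a single bounded piece; your closing remark that uncountability of the members is never used is accurate, and applies equally to the paper's proof. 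What the paper's version buys is brevity and uniformity of technique, since $\Delta$-system arguments recur elsewhere in the paper (e.g., in the proof of theorem \ref{tm:ma1}).
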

\begin{proof}
For each ${\alpha}<\oo$ we may pick a $\subs$-minimal
$\bcal_\alpha \in [\acal]^{<\omega}$ such that $X\cap {\alpha}
\subs^* \cup \bcal_\alpha$, i.e. $|X \cap {\alpha} \setm \cup
\bcal_\alpha| < \omega$. There is $I\in \br \oo;\oo;$ for which
$\{\bcal_{\alpha}:{\alpha}\in I\}$ forms a $\Delta$-system with
kernel $\bcal$. We claim that $\bcal_{\alpha}=\bcal$ for all
${\alpha}\in I$. Then we are done because this implies $X \subs^*
\cup \bcal$ and hence $X \in \ical(\acal)$ by $X \subs \cup
\acal$.

So assume, on the contrary, that $\alpha \in I$ and $A \in
\bcal_\alpha \setm \bcal$. By the $\subs$-minimality of
$\bcal_\alpha$ then $$Y = A \cap (X \cap \alpha \setm \cup
\bcal)$$ must be infinite. But, for any $\beta \in I$ with $\beta
> \alpha$, if $B \in \bcal_\beta \setm \bcal$ then $|B \cap Y| \le |B \cap A| <
\omega$, contradicting $$Y \subs X \cap \beta \setm \cup \bcal
\subs^* \cup (\bcal_\beta \setm \bcal).$$
\end{proof}

\begin{theorem}\label{tm:ch_w1w1}
CH implies $$\ccf({\omega_1},\omega_1,\omega) =
\ccf({\omega_1},\omega,\omega) = \omega_1\,.$$
\end{theorem}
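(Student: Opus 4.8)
The plan is to treat the two equalities separately, since the ZFC chain (\ref{eq:le}) already gives $\omega \le \ccf(\omega_1,\omega_1,\omega) \le \ccf(\omega_1,\omega,\omega) \le \omega_1$; thus it suffices to push both quantities up to $\omega_1$. For $\ccf(\omega_1,\omega,\omega)$ this is immediate: by the preceding theorem of Komj\'ath \cite{KO} we have $\chi(2^\omega,\omega,\omega)=2^\omega$, so under CH, where $2^\omega=\omega_1$, the monotonicity $\chi\le\ccf$ yields $\ccf(\omega_1,\omega,\omega)\ge\chi(\omega_1,\omega,\omega)=\omega_1$, and together with the upper bound in (\ref{eq:le}) we get equality. (The lower bound can also be seen directly: in a $(\omega_1,\omega,\omega)$-system with $\chi=\omega_1$, every $f:\omega_1\to\omega$ renders some member $B$ monochromatic, and then $|B\cap f^{-1}\{n\}|=|B|\ge 2$ for its single color $n$, so $I_f(B)=\emptyset$.)

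The real work is to establish $\ccf(\omega_1,\omega_1,\omega)\ge\omega_1$ under CH, and for this I would construct an $\omega$-almost disjoint family $\acal=\{A_\alpha:\alpha<\omega_1\}\subs[\omega_1]^{\omega_1}$ admitting no conflict free coloring with countably many colors. I would build $\acal$ by a transfinite recursion of length $\omega_1$, exploiting CH in the form that there are exactly $\omega_1$ countable partial colorings, i.e. maps $g:c\to\omega$ with $c\in[\omega_1]^{\le\omega}$; fix an enumeration $\{g_\eta:\eta<\omega_1\}$ of these, arranged so that each countable coloring is listed cofinally often. At each stage I would adjoin a further cofinal block to the sets under construction, maintaining throughout the two invariants that the members have size $\omega_1$ and that the family stays $\omega$-almost disjoint. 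The substantive content of the recursion is to guarantee that, for every countable coloring $g$, the collection of points that could serve as a uniquely-colored point of $\acal$ for an extension of $g$ is already absorbed into the ideal $\ical(\acal)$ generated by the sets built so far.

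To verify that the resulting family has no countable conflict free coloring, I would argue by contradiction. Suppose $f:\omega_1\to\omega$ were conflict free for $\acal$; then for each $\alpha$ there are a color $n_\alpha$ and a point $t_\alpha$ with $A_\alpha\cap f^{-1}\{n_\alpha\}=\{t_\alpha\}$. The plan is to choose an appropriate trace set $X$ attached to this unique-point structure (the natural first candidate being $\{t_\alpha:\alpha<\omega_1\}$, or a related union of the ``multiply hit'' parts) and to show that on every countable initial segment $\omega_1\cap\delta$ the restriction $f\restriction\delta$ is one of the countable colorings the recursion has already defeated, so that $X\cap\delta\in\ical(\acal)$ for all $\delta<\omega_1$. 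Lemma \ref{lm:eqx} then upgrades this to $X\in\ical(\acal)$, i.e. $X$ is covered by \emph{finitely} many members of $\acal$; the construction is to be set up so that this global finite covering collides with a largeness property designed into $\acal$ (for instance, that no finite subfamily covers a cofinal set, together with the fact that each member meets the others only finitely), yielding the contradiction.

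The main obstacle is exactly this reconciliation. As in every argument that runs a single $\omega_1$-recursion against the $2^{\omega_1}$ colorings $f:\omega_1\to\omega$, a set of size $\omega_1$ can never be certified bad for $f$ on the basis of a countable fragment of $f$, so one cannot literally kill colorings stage by stage. The device that makes the scheme work is to diagonalize only against the $\omega_1$ many \emph{countable} colorings and to let Lemma \ref{lm:eqx} transport the resulting countable obstructions up to the full coloring. Getting the bookkeeping to do all three things at once — keep the members uncountable, keep all pairwise intersections finite, and guarantee the ideal-absorption of every countable trace while preserving the global largeness that the final contradiction feeds on — is the delicate heart of the construction, and is where I would expect to spend essentially all of the effort.
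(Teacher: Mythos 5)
Your framing is sound and matches the paper's: the $\kappa=\omega$ case does follow from Komj\'ath's theorem under CH (and is in fact redundant, since by (\ref{eq:le}) the hard lower bound $\ccf(\omega_1,\omega_1,\omega)\ge\omega_1$ already settles both equalities), and your intended engine --- a CH-recursion of length $\omega_1$ against countable objects, with lemma \ref{lm:eqx} transporting countable obstructions up to an arbitrary full coloring --- is exactly the engine the paper runs. But what you have written is a plan, not a proof, and the one concrete invariant you propose is not workable as stated. For a countable partial coloring $g:c\to\omega$, ``the collection of points that could serve as a uniquely-colored point of $\acal$ for an extension of $g$'' is essentially all of $\omega_1\setm c$, since any point can be given a fresh color by an extension; there is no meaningful countable set here to absorb into $\ical(\acal)$, so diagonalizing against the $\omega_1$ countable colorings directly in this way does not converge to a construction. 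You acknowledge that the bookkeeping is ``where I would expect to spend essentially all of the effort'' --- and that is precisely the part that is missing.

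The paper resolves this by diagonalizing against countable \emph{sets}, not countable colorings: fix (by CH) an enumeration $\{S_\beta:\beta<\omega_1\}$ of $[\omega_1]^\omega$ and a partition $\{T_\alpha:\alpha<\omega_1\}$ of $\omega_1$ into uncountable sets with $T_\alpha\subs\omega_1\setm\alpha$; at stage $\alpha\in T_\gamma$ put $A_\alpha=B_\alpha\cup T_\alpha$, where $B_\alpha\subs\alpha\setm\as{\gamma}$ meets each earlier $A_\beta$ finitely and meets infinitely every $S_\beta\subs\alpha\setm\as{\gamma}$ $(\beta<\alpha)$ whose every co-finite part escapes each finite union $\as{a}$, an easy recursive choice. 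Given $h:\cup\acal\to\omega$, the crucial move your sketch lacks is the split of colors into $I=\{n:\exists\delta\ (h^{-1}\{n\}\subs\as{\delta})\}$ and $K=\omega\setm I$: choosing $\gamma$ with $h^{-1}(I)\subs\as{\gamma}$, each $X_n=h^{-1}\{n\}\setm\as{\gamma}$ for $n\in K$ is not in $\ical(\acal)$, lemma \ref{lm:eqx} (in contrapositive) yields a countable piece $X_n\cap\alpha_n\notin\ical(\acal)$ occurring as some $S_{\beta_n}$, and a sufficiently late $\alpha\in T_\gamma$ produces an $A_\alpha$ meeting every color class of $K$ infinitely while avoiding $h^{-1}(I)$ altogether, because $A_\alpha\cap\as{\gamma}=\empt$ (this is why $T_\gamma$ must be cofinal and why both $B_\alpha$ and, automatically, $T_\alpha$ avoid $\as{\gamma}$). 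Without some such device for the ``bounded'' colors in $I$, your final step fails: a coloring could hand each member its unique point from a color class contained entirely in a bounded region, and no collision between a finite covering and a cofinality/largeness property of the kind you gesture at would detect this. Note also that the paper's conclusion is not a contradiction with a covering property: the constructed $A_\alpha$ directly witnesses (\ref{eq:oooo}), i.e.\ every color appearing on $A_\alpha$ appears there infinitely often, so $h$ is simply not conflict free.
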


\begin{proof}
By induction on ${\alpha}$, we shall construct an
${\omega}$--almost disjoint family
$\acal=\{A_{\alpha}:{\alpha}<\oo\}\subs \br \oo;\oo;$ such that
for any coloring $h:\oo\to {\omega}$ there is $A_\alpha \in \acal$
satisfying
\begin{equation}\label{eq:oooo}
 \forall \, n \in h[A_\alpha]\,\,\big(\,|h^{-1}\{n\}\cap A_\alpha|={\omega}
\big).
\end{equation}
To start with, using CH, let
\begin{itemize}
\item $\{T_{\alpha}:{\alpha}<\oo\}$ be a partition of $\oo$ into
uncountable sets such that $T_{\alpha}\subs \oo\setm {\alpha}\,$
for every $\alpha < \omega_1$;
\item $\{S_{\alpha}:{\alpha}<\oo\}$ be an enumeration of
$[\omega_1]^\omega$.
\end{itemize}

Assume that $\{A_{\beta}:{\beta}<{\alpha}\}$ has been constructed
and we have $\alpha \in T_\gamma$. For any subset $a\subs
{\alpha}$ we write $\as a =\cup\{A_{\beta}:{\beta}\in a\}$, in
particular, $\as \xi = \cup_{\eta <\xi}A_\eta$. Consider the set
\begin{displaymath}
H_{\alpha}=\{{\beta}<{\alpha}: S_\beta \subs \alpha \setm \as
\gamma\, \mbox{ and } \,\forall a\in \br {\alpha};<{\omega};\
\big|S_{\beta}\setm \as a \big|={\omega}\}.
\end{displaymath}

We can choose $B_{\alpha} \subs {\alpha} \setm \as \gamma$ such
that
\begin{enumerate}
\item
$|B_{\alpha}\cap A_\beta|<{\omega}$ for each $\beta < \alpha$,

\medskip

\item
$|B_{\alpha} \cap S_\beta|={\omega}\,$ whenever $\,{\beta}\in
H_{\alpha}\,$.
\end{enumerate}
Indeed, if $H_\alpha = \empt$ then $B_\alpha = \empt$ works, and
otherwise $B_\alpha$ can be obtained by a simple recursive
construction. Finally, let us put $A_{\alpha}=B_{\alpha}\cup
T_{\alpha}$. Note that, by definition, $A_\beta \cap A_\alpha =
A_\beta \cap B_\alpha$ is finite for every $\beta < \alpha$.

Let $A = \as {\,\omega_1} = \cup \acal$ and consider any coloring
$h:A \to {\omega}$. We set
\begin{displaymath}
I =\{n \in {\omega}:\,\exists\,\delta < \omega_1\, (\,h^{-1}\{n\}
\subs \as \delta\,) \}
\end{displaymath}
and  $K = \omega \setm I$. We may then find $\gamma < \omega_1$
such that $h^{-1}(I) \subs \as \gamma$.

For any $n\in K$ consider the set
\begin{displaymath}
X_n=h^{-1}\{n\}\setm \as {\gamma}\,,
\end{displaymath}
then obviously $X_n \notin \ical(\acal)\,$. Thus, by  lemma
\ref{lm:eqx}, there is ${\alpha}_n<\oo$ such that $X_n \cap
\alpha_n \notin \ical(\acal)\,$ as well.  For each $n \in K$ pick
$\beta_n<\oo$ with $S_{\beta_n}=X_n\cap {\alpha}_n$  and choose
${\alpha}\in T_{\gamma}$ such that ${\alpha}>\sup\{\beta_n:n\in
K\}$.

Clearly, then $\{\beta_n:n\in K\}\subs H_{\alpha}$, hence
$$B_{\alpha}\cap h^{-1}\{n\}\supset B_{\alpha}\cap (X_n\cap
{\alpha}_{n}) = B_{\alpha}\cap S_{\beta_n}$$ is infinite for every
$n \in K$. If, however, $n\in I$ then $h^{-1}\{n\}\subs \as
{\gamma}$, and so $B_\alpha \cap \as \gamma = A_\alpha \cap \as
\gamma = \empt$ implies $h^{-1}\{n\}\cap A_{\alpha}=\empt$. Thus
$A_{\alpha}$ witnesses (\ref{eq:oooo}).
\end{proof}

Now we turn to our other promised result, namely that
$MA_{\omega_1}$ implies $$\ccf({\omega_1},\omega_1,\omega) =
\ccf({\omega_1},\omega,\omega) = \omega\,.$$ In fact, we prove the
following stronger theorem.

\begin{theorem}\label{tm:ma1}
If $MA_{\omega_1}$ holds then

\noindent (1) $\,[\,{\omega_1},\omega,\omega] \Rightarrow \omega$,

\smallskip

\noindent (2) $\,[\,{\omega_1},\omega_1,\omega] \Rightarrow
\omega$.
\end{theorem}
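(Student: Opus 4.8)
The plan is to establish (1) by a finite-condition forcing argument together with $MA_{\omega_1}$, and then to deduce (2) from (1) by restricting to an $\omega$-witness. For (1), fix a $(\oo,\omega,\omega)$-system $\acal\subs\br\oo;\omega;$ and force with the poset $P$ whose conditions are triples $p=(f_p,\acal_p,R_p)$, where $f_p\in Fn(\oo,\omega)$ is a finite partial colouring, $\acal_p\in\br\acal;<\omega;$ is a finite set of \emph{activated} members of $\acal$, and $R_p:\acal_p\to\br\omega;<\omega;$ assigns to each activated $A$ a finite set of \emph{exceptional colours}, subject to the single demand that $|A\cap f_p^{-1}\{n\}|\le 1$ for every $A\in\acal_p$ and every $n\in\omega\setm R_p(A)$. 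I order $P$ by putting $q\le p$ iff $f_q\supseteq f_p$, $\acal_q\supseteq\acal_p$ and, crucially, $R_q\restriction\acal_p=R_p$: freezing $R$ at the moment of activation is exactly what will keep the eventual exceptional set of each $A$ finite. The intended generic colouring $f=\bigcup\{f_p:p\in G\}$ uses, on each activated $A$, every colour outside its frozen finite set at most once.

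The two families of dense sets are $E_A=\{p:A\in\acal_p\}$ for $A\in\acal$, and $D_{A,n}=\{p:A\in\acal_p \text{ and } n\in\ran(f_p\restriction A)\}$ for $A\in\acal$, $n<\omega$. Density of $E_A$ is immediate: given $p$, activate $A$ with $R_p(A)=\{n:|A\cap f_p^{-1}\{n\}|\ge 2\}$. For $D_{A,n}$ I use $\omega$-almost disjointness: after activating $A$, since $\acal_p$ is finite the set $A\cap\bigcup(\acal_p\setm\{A\})$ is finite, so I may pick $\xi\in A\setm(\dom f_p\cup\bigcup(\acal_p\setm\{A\}))$ and set $f_q=f_p\cup\{\<\xi,n\>\}$; as $\xi$ lies in no activated set other than $A$, no frozen promise is broken. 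Meeting all the $\aleph_1$ many sets $E_A$ and $D_{A,n}$ with an $MA_{\omega_1}$-generic filter $G$ then yields, for each $A$, an exceptional $R=R_p(A)$ (for any $p\in G\cap E_A$) such that every $n\notin R$ is used at least once in $A$ (by $D_{A,n}$) and at most once (by the frozen promise enforced along $G$), so $\omega\setm I_f(A)\subs R$ is finite, i.e. $[\oo,\omega,\omega]\Rightarrow\omega$.

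The real work is showing $P$ is ccc, and I expect to prove the stronger property $K$. Given uncountably many conditions I would thin by the $\Delta$-system lemma so that the domains $\dom f_{p_\alpha}$ form a $\Delta$-system with root $D$, the index sets of the $\acal_{p_\alpha}$ form a $\Delta$-system with root $r$, and (there being only countably many possibilities for each) the restrictions $f_{p_\alpha}\restriction D$, the colour-patterns of the $f_{p_\alpha}$, and the values $R_{p_\alpha}$ are all constant. Writing $f'_\alpha=f_{p_\alpha}\restriction(\dom f_{p_\alpha}\setm D)$, a direct check shows $p_\alpha$ and $p_\beta$ are compatible as soon as $\dom f'_\beta\cap\bigcup\acal_{p_\alpha}=\empt$ and $\dom f'_\alpha\cap\bigcup\acal_{p_\beta}=\empt$: then $f_{p_\alpha}\cup f_{p_\beta}$ restricts to $f_{p_\alpha}$ on every $A\in\acal_{p_\alpha}$ and to $f_{p_\beta}$ on every $A\in\acal_{p_\beta}$, the constant root-values of $R$ make the union well defined, and no frozen promise is violated. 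Here is where $|A|=\omega$ is essential: since each member of $\acal$ is countable and the $\dom f'_\beta$ are pairwise disjoint, the set mapping $F(\alpha)=\{\beta:\dom f'_\beta\cap\bigcup\acal_{p_\alpha}\ne\empt\}$ has countable values, so Hajnal's free set theorem gives a free set of size $\oo$ on which all conditions are pairwise compatible. This countability is precisely what fails when the members have size $\oo$, which is the main obstacle and the reason I do \emph{not} attempt to force (2) directly.

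Instead I deduce (2) from (1) via an $\omega$-witness. Given a $(\oo,\oo,\omega)$-system $\acal=\{A_\alpha:\alpha<\oo\}$, I first build in ZFC a set $X$ with $|X\cap A_\alpha|=\omega$ for all $\alpha$: since $|A_\alpha\cap\bigcup_{\beta<\alpha}A_\beta|\le\omega$, the set $A_\alpha\setm\bigcup_{\beta<\alpha}A_\beta$ has size $\oo$, so I choose a countably infinite $X_\alpha$ inside it and put $X=\bigcup_\alpha X_\alpha$; then $X_\gamma\subs X\cap A_\gamma$, while for $\alpha>\gamma$ one has $X_\alpha\cap A_\gamma=\empt$ and for $\alpha<\gamma$ the intersection $X_\alpha\cap A_\gamma$ is finite, whence $|X\cap A_\gamma|=\omega$. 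Now $\acal'=\{A\cap X:A\in\acal\}$ is an $\omega$-almost disjoint family of countably infinite sets, so part (1) supplies $c\in\fcal(X,\omega)$ with $\omega\setm I_c(A\cap X)$ finite for each $A$. Extending $c$ by the constant value $0$ on $\cup\acal\setm X$ yields $f$ with $I_f(A)\supseteq I_c(A\cap X)\setm\{0\}$, since a colour $n\ne 0$ used exactly once in $A\cap X$ is used nowhere else in $A$; hence $\omega\setm I_f(A)$ is finite and $[\oo,\oo,\omega]\Rightarrow\omega$ follows.
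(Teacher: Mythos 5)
Your overall architecture is the paper's: force with finite conditions plus $MA_{\omega_1}$ for (1), then reduce (2) to (1) via an $\omega$-witness (your explicit construction of the witness and the extension-by-$0$ argument are fine, and your triples $(f_p,\acal_p,R_p)$ with frozen exception sets are just an explicit repackaging of the paper's pairs $\<f,I\>$, where the exceptional colours of an activated $A_\alpha$ are implicitly frozen by demanding that new values on $A_\alpha$ be injective and avoid $f[A_\alpha]$). The genuine gap is at the chain condition, which is the heart of the proof. Hajnal's free set theorem requires the images of the set mapping to have size $<\mu$ for some $\mu<\kappa$; on $\omega_1$ this means \emph{finite} images. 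For countable images the conclusion fails outright: the set mapping $F(\alpha)=\alpha$ has countable values and admits no free set with even two elements. So your appeal to the free set theorem for $F(\alpha)=\{\beta:\dom f'_\beta\cap\bigcup\acal_{p_\alpha}\ne\empt\}$, whose values are merely countable, establishes neither property $K$ nor ccc. (The free set theorem is correctly invoked in theorem \ref{tm:ubma}, but there the members of $\acal$ are finite, so the corresponding set mapping has finite values; the passage from finite to countably infinite members is exactly where the difficulty concentrates --- ironically the same countability obstruction you identify as the reason not to force (2) directly.)

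What the paper does instead is asymmetric. After the $\Delta$-system thinning one arranges in addition that $\bigcup\acal_{p_\nu}$ lies entirely below $\dom f'_\mu$ whenever $\nu<\mu$; this is possible because each $\bigcup\acal_{p_\nu}$ is countable while the finite sets $\dom f'_\mu$ are pairwise disjoint, so only countably many of them meet any fixed countable set. That disposes of one direction of your compatibility criterion for free. The other direction --- finding $\nu<\mu$ with $\dom f'_\nu$ disjoint from the off-root activated sets of $p_\mu$ --- is precisely where the $\omega$-almost disjointness of $\acal$ must be exploited: claim \ref{cl:NM} shows, by a pigeonhole argument with a non-principal ultrafilter on an infinite set $N$ of indices played against $n\cdot m+1$ indices $\mu$, that if every such pair failed then two distinct members of $\acal$ would share an infinite set of the points $\delta_{\nu,i}$, contradicting almost disjointness. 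This produces a compatible pair, i.e. ccc, which suffices for $MA_{\omega_1}$; it does not obviously produce an uncountable pairwise compatible family, so you should not expect to get property $K$ by this route. A minor separate point: you meet only the sets $E_A$ and $D_{A,n}$, so your generic colouring need not be total on $\cup\acal$, whereas the relation $[\oo,\omega,\omega]\Rightarrow\omega$ demands a total function; this is harmless --- either add the dense sets $\{p:\xi\in\dom f_p\}$ (dense, since the finitely many activated sets block only finitely many colours at $\xi$), as the paper does with its sets $D_\alpha$, or extend by a constant colour at the cost of one more exceptional colour, exactly as you do in part (2).
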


\begin{proof}
Let us start by noting that (2) follows from (1) because every
$({\omega_1},\omega_1,\omega)$-system admits an $\omega$-witness.

Now, to prove (1), let us consider any
$({\omega_1},\omega,\omega)$-system $\acal = \{A_\alpha : \alpha <
\omega_1\} \subs [\omega_1]^\omega$. We then define a poset
$\pcal = \<P,\preceq\>$ as follows. Let $P = Fn(\omega_1,\omega)
\times [\omega_1]^{<\omega}$ and for $\<f,I\>, \<g,J\>\in P$ put
$\<g,J\>\preceq \<f,I\>$ iff $g \supset f\,,J \supset I\,$, and
for all ${\alpha}\in I$ we have

\begin{enumerate}[(i)]
\item $(g\setm f)\restriction A_{\alpha}$ is 1--1, and

\smallskip

\item $(g\setm f)[A_{\alpha}] \cap f[A_\alpha] = \empt$.
\end{enumerate}
It is easy to check that $\preceq$ is indeed a partial order on
$P$.

We next show that $\pcal$ is CCC. To see this, consider first two
members of $P$, say $p = \<f,I\>$ and $q = \<g,J\>$, such that the
following conditions hold with $D = \dom f$ and $E = \dom g\,$:

\begin{enumerate}[(a)]
\item $f \upharpoonright D \cap E = g \upharpoonright D \cap E\,$,
i.e. $f$ and $g$ are compatible functions;

\smallskip

\item $A[I] \cap (E \setm D) = \empt = A[J] \cap (D \setm E)\,$.
\end{enumerate}
(Here, as in the proof of theorem \ref{tm:ch_w1w1}, $\as x = \cup
\{A_\alpha : \alpha \in x \}$.) Then, trivially, $r = \< f \cup g,
I \cup J \> \in P$ and $r \preceq p,q$. Indeed, for instance, $r
\preceq p$ because $(g \setm f) \upharpoonright A_\alpha = \empt$
for each $\alpha \in I$. Thus, to show that $\pcal$ is CCC, it
will suffice to prove that among any $\omega_1$ members of $P$
there are two that satisfy (a) and (b).

\newcommand{\fel}[1]{\<f_{#1}, I_{#1}\>}

So let $\{p_{\nu}:{\nu}<\oo\}\subs P$ with $p_{\nu}=\fel {\nu}$.
Using standard $\Delta$-system and counting arguments  we can
assume the following:
\begin{enumerate}[1)]
\item $\{\dom(f_{\nu}):{\nu}<\oo\}$ forms a $\Delta$-system
with kernel $D$ and we have $D<D_{\nu}<D_{\mu}$ for
${\nu}<{\mu}<\oo$, where $D_{\nu}=\dom(f_{\nu})\setm D$.

\smallskip

\item\label{iii} $f_{\nu}\restriction D=f$ and $|D_{\nu}|=n$ for all $\nu <
\omega_1$.

\smallskip

\item $\{I_{\nu}:{\nu}<\oo\}$ forms a $\Delta$-system
with kernel $I$ and $I<J_{\nu}<J_{\mu}$ for ${\nu}<{\mu}<\oo$,
where $J_{\nu}=I_{\nu}\setm I$. Moreover, $|J_{\nu}|=m$ for all
$\nu < \omega_1$.

\smallskip

\item $\as I < D_0$ and $\as {I_\nu} < D_\mu$ whenever $\nu < \mu <
\omega_1$.
\end{enumerate}

\begin{claim}\label{cl:NM}
If $N\in \br \oo;{\omega};$ and $M\in \br \oo;n \cdot m+1;$
satisfy $\, N < M\,$ then there are ${\nu}\in N$ and ${\mu}\in M$
such that $D_{\nu}\cap \as {J_{\mu}} =\empt$.
\end{claim}

\begin{proof}[Proof of the claim]
Let $\ucal$ be a non-principal ultrafilter on $N$. Write
$D_{\nu}=\{{\delta}_{{\nu},i}:i<n\}$ and
$J_{\mu}=\{{\alpha}_{{\mu},j}:j<m\}$.

Assume, on the contrary, that for any ${\nu}\in N$ and ${\mu}\in
M$ there are $i<n$ and $j<m$ such that ${\delta}_{{\nu}, i}\in
A_{{\alpha}_{{\mu},j}}$. This implies that, for any fixed $\mu \in
M$, there is a pair $\<i,j\> \in n \times m$ for which
$$V^{i,j}_\mu = \{\nu \in N : {\delta}_{{\nu}, i}\in
A_{{\alpha}_{{\mu},j}}\} \in \ucal\,.$$ Then, as $|M| > n \cdot
m$, there are two distinct $\mu,\,\mu' \in M$ and a pair $\<i,j\>
\in n \times m$ such that both $V^{i,j}_\mu \in \ucal$ and
$V^{i,j}_{\mu'} \in \ucal$ and hence  $V^{i,j}_{\mu} \cap
V^{i,j}_{\mu'} \in \ucal$ is infinite. This, however, would imply
that
$$A_{\alpha_\mu,j} \cap A_{\alpha_{\mu',j}} \supset\, \{\delta_{\nu,i} : \nu \in   V^{i,j}_{\mu} \cap V^{i,j}_{\mu'}\}$$
is also infinite, a contradiction.
\end{proof}

But if $\nu,\,\mu$ are as in claim \ref{cl:NM}, then clearly (a)
and (b) are satisfied for $p_\nu$ and $p_\mu$, and hence they are
compatible. This completes the proof that $\pcal$ is CCC.

Let us now consider, for every $\alpha < \omega_1$ and $n <
\omega$,  the sets
\begin{displaymath}
D_{\alpha}=\{\<f,I\>\in P:{\alpha}\in \dom (f)\,\},
\end{displaymath}
and
\begin{displaymath}
E^n_{\alpha}=\{\<f,I\>\in P:{\alpha}\in I \mbox{ and } n \in
f[A_\alpha]\}.
\end{displaymath}
It is easy to check that all these sets are dense in $\pcal$, let
us only do it for the $E^n_{\alpha}$. Indeed,  any $\<f,I\>\in P$
is extended by $\<f,I \cup \{\alpha\}\>\,$, so we may assume that
$\alpha \in I$, to begin with. Now, if $n \notin \ran(f)$ then
pick first $\gamma \in A_\alpha \setm \as {I \setm \{\alpha\}}\,$.
Obviously, we have then $\<f \cup \{\<\gamma,n \>\},I\> \preceq
\<f,I\>$ and $\<f \cup \{\<\gamma,n \>\},I\> \in E^n_{\alpha}$.

By $MA_{\omega_1}$ there is a filter $\gcal$ in $\pcal $ that
meets all the dense sets $D_\alpha$ and $E^n_{\alpha}$. Let us put
\begin{displaymath}
F=\cup\{f:\<f,I\>\in \gcal\}.
\end{displaymath}
Then $F : \omega_1 \to \omega$ because $\gcal$ meets every
$D_\alpha$ and we claim that $I_F(A_\alpha) =^* \omega$ for each
$\alpha < \omega_1$. Indeed, $\gcal \cap E^n_{\alpha} \ne \empt$
for all $n < \omega$ implies $F[A_\alpha] = \omega$. Moreover,
there is some $\<f,I\>\in \gcal$ with $\alpha \in I$,
consequently, by the definition of $\preceq$, we clearly have
$I_F(A_\alpha) \supset \omega \setm f[A_\alpha]$.
\end{proof}

\smallskip

\begin{problem}
Is $\,\ccf(\omega_1,\omega_1,\omega) =
\ccf(\omega_1,\omega,\omega)\,$ provable in ZFC?
\end{problem}

Recall that ``stick" is the following combinatorial statement, a
common weakening of CH and $\clubsuit = \clubsuit(\omega_1)$:
There is a family $\acal \subs [\omega_1]^\omega$ such that
$|\acal| = \omega_1$ and for every uncountable set $S \subs
\omega_1$ we have an $A \in \acal$ with $A \subs S$. We know that
stick implies $\,\ccf(\omega_1,\omega,\omega) = \omega_1$.

\begin{problem}
Does stick imply $\,\ccf(\omega_1,\omega_1,\omega) = \omega_1$?
\end{problem}

\begin{problem}
Is $\,\ccf(2^\omega,2^\omega,\omega) = 2^\omega\,$ provable in
ZFC?
\end{problem}

\bigskip

\end{document}